\setlist[enumerate]{itemsep=0.15cm,label=\emph{\upshape(\alph*)}}
\DeclareMathAlphabet{\mathscrbf}{OMS}{mdugm}{b}{n}
\DeclareMathAlphabet{\mathpzc}{LS1}{stixscr}{m}{n}
\let\emph\relax
\DeclareTextFontCommand{\emph}{\bfseries\em}
\definecolor{mygray}{gray}{0.6}
\definecolor{mygraydark}{gray}{0.4}
\definecolor{mygraylight}{gray}{0.8}
\definecolor{cherry}{RGB}{222,49,99}
\definecolor{cream}{RGB}{255,253,208}
\definecolor{corn}{RGB}{251,236,93}
\definecolor{citron}{RGB}{190,180,90}
\definecolor{spinach}{RGB}{46,139,87}
\definecolor{tomato}{RGB}{255,99,71}
\definecolor{pumpkin}{RGB}{224,180,80}
\definecolor{orchid}{RGB}{143,40,194}
\definecolor{lava}{RGB}{207,16,32}
\definecolor{mydarkblue}{RGB}{10,10,150}
\definecolor{myorange}{RGB}{225,127,0}
\definecolor{mygreen}{RGB}{0,225,0}
\definecolor{mypurple}{RGB}{128,0,128}
\definecolor{myred}{RGB}{255,0,0}
\definecolor{myblue}{RGB}{0,0,195}
\definecolor{myyellow}{RGB}{210,210,0}
\tikzstyle{densely dotted}=[dash pattern=on \pgflinewidth off .5pt]
\tikzset{anchorbase/.style={baseline={([yshift=-0.5ex]current bounding box.center)}},
tinynodes/.style={font=\tiny, text height=0.25ex, text depth=0.05ex},
smallnodes/.style={font=\scriptsize, text height=0.75ex, text depth=0.15ex},
crossline/.style={preaction={draw=white,line width=5.0pt,-},preaction={draw=black,line width=0.9pt,-}},
usual/.style={line width=1.0,color=black},
mor/.style={line width=1.0,black,fill=mygray,fill opacity=0.35},
morl/.style={draw,rectangle,minimum height=0.5cm,minimum width=0.75cm,
line width=1.0,fill=mygray,fill opacity=0.35,path picture={\draw[solid,line width=5.0,black]
([xshift=0pt] current path bounding box.south west)--([xshift=0pt] current path bounding box.north west)
;}},
morr/.style={draw,rectangle,minimum height=0.5cm,minimum width=0.75cm,
line width=1.0,black,fill=mygray,fill opacity=0.35,path picture={
\draw[solid,line width=5.0,black]
(current path bounding box.south east)--(current path bounding box.north east);
}},
JW/.style={line width=1.0,color=black},
pQJWl/.style={draw,rectangle,minimum height=0.5cm,minimum width=0.75cm,
line width=1.0,color=black,fill=corn!60,path picture={
\draw[solid,line width=5.0,black]
(current path bounding box.south east)--(current path bounding box.north east);
}},
pJWl/.style={draw,rectangle,minimum height=0.5cm,minimum width=0.75cm,
line width=1.0,color=black,fill=orchid!70,path picture={\draw[solid,line width=5.0,black]
([xshift=0pt] current path bounding box.south east)--([xshift=0pt] current path bounding box.north east)
;}},
pQJW/.style={draw,rectangle,minimum height=0.5cm,minimum width=0.75cm,
line width=1.0,color=black,fill=corn!60,path picture={\draw[solid,line width=5.0,black]
([xshift=0pt] current path bounding box.south west)--([xshift=0pt] current path bounding box.north west)
;}},
pJW/.style={draw,rectangle,minimum height=0.5cm,minimum width=0.75cm,
line width=1.0,color=black,fill=orchid!70,path picture={\draw[solid,line width=5.0,black]
([xshift=0pt] current path bounding box.south west)--([xshift=0pt] current path bounding box.north west)
;}},
}
\tikzstyle directed=[postaction={decorate,decoration={markings,
mark=at position #1 with {\arrow[line width=0.15mm, black]{>}}}}]
\tikzstyle odirected=[postaction={decorate,decoration={markings,
mark=at position #1 with {\arrow[line width=0.15mm, myorange]{>}}}}]
\tikzstyle smarked=[postaction={decorate,decoration={markings,
mark=at position #1 with {\fill[spinach] (0,0) circle (.065cm);}}}]
\tikzstyle tmarked=[postaction={decorate,decoration={markings,
mark=at position #1 with {\fill[tomato] (0,0) circle (.065cm);}}}]
\tikzstyle wmarked=[postaction={decorate,decoration={markings,
mark=at position #1 with {\fill[white] (0,0) circle (.1cm);}}}]
\newcommand{\tm}{\text{-}}
\newcommand{\neatfrac}[2]{{\raisebox{0.2em}{$#1$}\!\big/\raisebox{-0.2em}{$#2$}}}
\newcommand{\ie}{\textsl{i.e.}}
\newcommand{\eg}{\textsl{e.g.}}
\newcommand{\cf}{\textsl{cf.}}
\newcommand{\aka}{\textsl{a.k.a.}}
\newcommand{\ver}{\textsl{verbatim}}
\newcommand{\vive}{\textsl{vice versa}}
\newcommand{\muta}{\textsl{mutatis mutandis}}
\renewcommand{\dots}{\text{...}}
\renewcommand{\geq}{\geqslant}
\renewcommand{\leq}{\leqslant}
\newcommand{\C}{\mathbb{C}}
\newcommand{\Z}{\mathbb{Z}}
\newcommand{\R}{\mathbb{R}}
\newcommand{\Q}{\mathbb{Q}}
\newcommand{\K}{\mathbb{K}}
\newcommand{\kk}{\mathbbm{k}}
\newcommand{\N}[1][]{\mathbb{N}_{#1}}
\newcommand{\xpar}{\mathbbm{x}}
\newcommand{\vpar}{\mathbbm{v}}
\newcommand{\qpar}{\mathbbm{q}}
\newcommand{\lpar}{\ell}
\newcommand{\ppar}{\mathsf{p}}
\newcommand{\plpar}{\ppar{\vee}\lpar}
\newcommand{\Zv}{\Z[\vpar^{\pm 1}]}
\newcommand{\Zvs}{\Z[\vpar^{\pm 1/2}]}
\newcommand{\Nv}[1][0]{\mathbb{N}_{#1}[\vpar^{\pm 1}]}
\newcommand{\Qv}{\Q(\vpar)}
\newcommand{\kkv}{\kk(\vpar)}
\newcommand{\kkvs}{\kk(\vpar^{1/2})}
\newcommand{\kklocal}{\mathbb{L}}
\newcommand{\mchar}[1][\kk,\qpar]{m\mathrm{char}(#1)}
\newcommand{\qnum}[2]{[#1]_{#2}}
\newcommand{\qfac}[2]{[#1]_{#2}!}
\newcommand{\qbin}[3]{{\textstyle\genfrac{[}{]}{0pt}{}{#1}{#2}_{#3}}}
\newcommand{\qbinn}[3]{\genfrac{[}{]}{0pt}{}{#1}{#2}_{#3}}
\newcommand{\placeholder}{{}_{-}}
\newcommand{\vcirc}{\circ}
\newcommand{\hcirc}{\otimes}
\newcommand{\munit}{\mathbbm{1}}
\newcommand{\setstuff}[1]{\mathrm{#1}}
\newcommand{\catstuff}[1]{\mathbf{#1}}
\newcommand{\functorstuff}[1]{\mathpzc{#1}}
\newcommand{\obstuff}[1]{\mathtt{#1}}
\newcommand{\morstuff}[1]{\mathrm{#1}}
\newcommand{\idmor}{\morstuff{id}}
\newcommand{\End}{\setstuff{End}}
\newcommand{\Hom}{\setstuff{Hom}}
\newcommand{\mainfunctor}{\functorstuff{F}}
\newcommand{\losp}{losp}
\newcommand{\lucas}{\epsilon}
\newcommand{\F}[1][\ppar]{\mathbb{F}_{#1}}
\newcommand{\ord}[1][{\ppar,\lpar}]{\nu_{#1}}
\newcommand{\eve}{\setstuff{Eve}}
\newcommand{\pbase}[2]{[#1]_{#2}}
\newcommand{\plbase}[1]{\pbase{#1}{\ppar,\lpar}}
\newcommand{\ancest}[1][v]{\setstuff{A}(#1)}
\newcommand{\supp}[1][v]{\nabla\setstuff{supp}(#1)}
\newcommand{\mother}[1][v]{\obstuff{m}_{#1}}
\newcommand{\motherr}[2]{\obstuff{m}^{#2}_{#1}}
\newcommand{\generation}[1][v]{\obstuff{gen}_{#1}}
\newcommand{\fancest}[2]{a_{#1,#2}}
\newcommand{\zigzag}[1][v]{\obstuff{Z}^{\kk,\qpar}}
\newcommand{\zigzagmod}{\catstuff{Proj}\tm\zigzag}
\newcommand{\SLtwo}{\mathrm{SL}_{2}}
\newcommand{\SLtwolie}{\mathfrak{sl}_{2}}
\newcommand{\tilt}[1][{\kk,\qpar}]{\catstuff{Tilt}^{#1}}
\newcommand{\allmod}[1][{\K,\qpar}]{\catstuff{fdMod}^{#1}}
\newcommand{\verlinde}[1][{\K,\qpar}]{\catstuff{Ver}^{#1}}
\newcommand{\tmod}{\obstuff{T}}
\newcommand{\wmod}{\Delta}
\newcommand{\dwmod}{\nabla}
\newcommand{\lmod}{\obstuff{L}}
\newcommand{\ch}[1]{\chi_{#1}}
\newcommand{\wmult}[2]{\big(\tmod(#1-1):\wmod(#2-1)\big)}
\newcommand{\wdmult}[2]{\big(\tmod(#1-1):\dwmod(#2-1)\big)}
\newcommand{\lmult}[2]{\big[\wmod(#1-1):\lmod(#2-1)\big]}
\newcommand{\ldmult}[2]{\big[\dwmod(#1-1):\lmod(#2-1)\big]}
\newcommand{\lsupp}[1][v]{\lmod\setstuff{supp}(#1)}
\newcommand{\MTr}[1]{\setstuff{Tr}^{#1}}
\newcommand{\MtTr}[1]{\tilde{\setstuff{Tr}}^{#1}}
\newcommand{\mTr}[2]{\setstuff{Tr}^{#1}_{#2}}
\newcommand{\kdim}[2]{\setstuff{dim}^{(#1)}_{#2}}
\newcommand{\digscalar}[2]{d_{#1}^{\,#2}} %% general digon scalar. #1 is of the form $(v,S)$, #2 of the form $(v_{1},S_{1}),(v_{2},S_{2})$, or just $v_{1},v_{2}$
\newcommand{\twebs}[1][v]{\digscalar{#1,\emptyset}{a\ppar^{(k)},b\ppar^{(t)}}}
\newcommand{\twebss}[1][v]{\digscalar{#1,S}{a\ppar^{(k)},b\ppar^{(t)}}}
\newcommand{\digscalartild}[2]{\tilde{d}_{#1}^{\,#2}} %% general digon scalar. #1 is of the form $(v,S)$, #2 of the form $(v_{1},S_{1}),(v_{2},S_{2})$, or just $v_{1},v_{2}$
\newcommand{\twebstild}[1][v]{\digscalartild{#1,\emptyset}{a\ppar^{(k)},b\ppar^{(t)}}}
\newcommand{\twebsstild}[1][v]{\digscalartild{#1,S}{a\ppar^{(k)},b\ppar^{(t)}}}
\newcommand{\fusscalar}[2]{x^{#1}_{#2}} %% general fusion scalar. #1 is of the form $(v,S)$, #2 of the form $(v_{1},S_{1}),(v_{2},S_{2})$, or just $v_{1},v_{2}$
\newcommand{\TL}[1][{\K,\qpar}]{\catstuff{TL}^{#1}}
\newcommand{\tlfunctor}[1][{\K,\qpar}]{\functorstuff{D}^{#1}}
\newcommand{\sbas}[1][v-1,w-1]{\setstuff{B}^{int}_{#1}}
\newcommand{\awbas}[1][v-1,w-1]{\setstuff{B}^{\vpar}_{#1}}
\newcommand{\pbas}[1][v-1,w-1]{\setstuff{B}^{\qpar}_{#1}}
\newcommand{\dist}{\mathsf{d}}
\newcommand{\idtl}[1][v{-}1]{\idmor_{#1}}
\newcommand{\fliph}[1][\phantom{a}]{{#1}^{\updownarrow}}
\newcommand{\flipv}[1][\phantom{a}]{{#1}^{\leftrightarrow}}
\newcommand{\qjw}[1][v{-}1]{\tilde{\obstuff{e}}_{#1}}
\newcommand{\pqjw}[1][v{-}1]{\overline{\obstuff{e}}_{#1}}
\newcommand{\pjw}[1][v{-}1]{\obstuff{E}_{#1}}
\newcommand{\qjwm}[1][v{-}1]{\mathtt{#1}}
\newcommand{\pqjwm}[1][v{-}1]{\mathtt{#1}}
\newcommand{\pjwm}[1][v{-}1]{\mathtt{#1}}
\newcommand{\Up}[1]{\mathrm{U}_{#1}}
\newcommand{\upo}[1]{\tilde{\mathrm{u}}_{#1}}
\newcommand{\Down}[1]{\mathrm{D}_{#1}}
\newcommand{\downo}[1]{\tilde{\mathrm{d}}_{#1}}
\newcommand{\loopdowngen}[2]{\tilde{\mathrm{L}}^{#1}_{#2}}
\newcommand{\loopdownqgen}[2]{\overline{\mathrm{L}}^{#1}_{#2}}
\newcommand{\loopdown}[2]{\mathrm{L}^{#1}_{#2}}
\newcommand{\funcf}{\obstuff{f}}
\newcommand{\funcg}{\obstuff{g}}
\newcommand{\funcF}[1][S]{\obstuff{f}_{#1}}
\newcommand{\funcG}[1][S]{\obstuff{g}_{#1}}
\newcommand{\funcH}[1][S]{\obstuff{h}_{#1}}
\newcommand{\hull}[1][S]{\overline{#1}}
\newcommand{\xmor}[2]{\morstuff{X}^{#2}_{#1}}
\newcommand{\dmor}[3]{{}_{#3}^{\phantom{v}}\morstuff{D}{}^{#1}_{#2}}
\newcommand{\fusidem}[2]{\morstuff{B}^{#2}_{#1}}
\newcommand{\fusidema}[2]{\morstuff{A}^{#2}_{#1}} %asymmetric
\newcommand{\fusidemb}[2]{\fliph[(\fusidema{#1}{#2})]} %the flipped asymmetric thing
\newcommand{\fuscaseidem}[2]{\morstuff{P}^{#2}_{#1}}
\newcommand{\taill}[1]{\mathrm{tl}(#1)}
\newcommand{\fus}{\mathbf{F}}
\newcommand{\trp}{\mathbf{PT}}
\newcommand{\tru}[5]{
\draw [JW] (-#4-#2/2-#2/4,#5) to (-#4-#1,#5) to (-#4-#1,#5+#2) to (-#4,#5+#2) to (-#4-#2/2-#2/4,#5);
\node at (-#4-#1/2-#1/16,#5+#2/2-0.05) {#3};
}
\newcommand{\trd}[5]{
\draw [JW] (-#4-#2/2-#2/4,#5) to (-#4-#1,#5) to (-#4-#1,#5-#2) to (-#4,#5-#2) to (-#4-#2/2-#2/4,#5);
\node at (-#4-#1/2-#1/16,#5-#2/2-0.05) {#3};
}
\newcommand{\tr}[5]{
\draw [JW] (-#4-#1,#5) to (-#4-#1,#5-#2) 
to (-#4,#5-#2) to (-#4-#2/2-#2/4,#5) to (-#4,#5+#2) to (-#4-#1,#5+#2) to (-#4-#1,#5);
\node at (-#4-#1/2-#1/16,#5-0.05) {#3};
}
\newcommand{\TRU}[1]{
\begin{tikzpicture}[anchorbase,tinynodes]
\tru{1.1}{0.6}{#1}{0}{0}
\end{tikzpicture}
}
\newcommand{\TRD}[1]{
\begin{tikzpicture}[anchorbase,tinynodes]
\trd{1.1}{0.6}{#1}{0}{0}
\end{tikzpicture}
}
\newcommand{\TRUD}[2]{
\begin{tikzpicture}[anchorbase,tinynodes]
\tru{1.1}{0.3}{#1}{0}{0}
\trd{1.1}{0.3}{#2}{0}{0}
\end{tikzpicture}
}
\newcommand{\TR}[1]{
\begin{tikzpicture}[anchorbase,tinynodes]
\tr{1.1}{0.3}{#1}{0}{0}
\end{tikzpicture}
}
\newcommand{\ptru}[5]{
\draw [pJW] (-#4-#2/2-#2/4,#5) to (-#4-#1,#5) to (-#4-#1,#5+#2) to (-#4,#5+#2) to (-#4-#2/2-#2/4,#5);
\node at (-#4-#1/2-#1/16,#5+#2/2-0.05) {#3};
}
\newcommand{\ptrd}[5]{
\draw [pJW] (-#4-#2/2-#2/4,#5) to (-#4-#1,#5) to (-#4-#1,#5-#2) to (-#4,#5-#2) to (-#4-#2/2-#2/4,#5);
\node at (-#4-#1/2-#1/16,#5-#2/2-0.05) {#3};
}
\newcommand{\ptr}[5]{
\draw [pJW] (-#4-#1,#5) to (-#4-#1,#5-#2) 
to (-#4,#5-#2) to (-#4-#2/2-#2/4,#5) to (-#4,#5+#2) to (-#4-#1,#5+#2) to (-#4-#1,#5);
\node at (-#4-#1/2-#1/16,#5-0.05) {#3};
}
\newcommand{\pTRU}[1]{
\begin{tikzpicture}[anchorbase,tinynodes]
\ptru{1.1}{0.6}{#1}{0}{0}
\end{tikzpicture}
}
\newcommand{\pTRD}[1]{
\begin{tikzpicture}[anchorbase,tinynodes]
\ptrd{1.1}{0.6}{#1}{0}{0}
\end{tikzpicture}
}
\newcommand{\pTR}[1]{
\begin{tikzpicture}[anchorbase,tinynodes]
\ptr{1.1}{0.3}{#1}{0}{0}
\end{tikzpicture}
}
\newtheorem{theoremm}{Theorem}[section]
\declaretheoremstyle[
headfont=\bfseries, 
notebraces={[}{]},
bodyfont=\normalfont\itshape,
headpunct={},
postheadspace=1em,
spacebelow=10pt,
spaceabove=10pt, 
]{ourtheo}
\declaretheoremstyle[
headfont=\normalfont\bfseries,
notefont=\mdseries,
notebraces={(}{)},
bodyfont=\normalfont\slshape,
headpunct={},
postheadspace=1em,
spacebelow=10pt,
spaceabove=10pt, 
]{ourdef}
\declaretheorem[style=ourtheo,name=Theorem,numberlike=theoremm]{theorem}
\declaretheorem[style=ourtheo,name=Lemma,numberlike=theoremm]{lemma}
\declaretheorem[style=ourtheo,name=Proposition,numberlike=theoremm]{proposition}
\declaretheorem[style=ourtheo,name=Corollary,qed=$\square$,numberlike=theoremm]{corollary}
\declaretheorem[style=ourdef,name=Definition,numberlike=theorem]{definition}
\declaretheorem[style=ourdef,name=Example,numberlike=theorem]{example}
\declaretheorem[style=ourdef,name=Remark,numberlike=theorem]{remark}
\declaretheorem[style=ourdef,name=Convention,numberlike=theorem]{convention}
\numberwithin{equation}{section}
\renewcommand{\theequation}{\thesection-\arabic{equation}}
\let\fullref\autoref
\def\makeautorefname#1#2{\expandafter\def\csname#1autorefname\endcsname{#2}}
\newcommand{\nnfootnote}[1]{%
\begin{NoHyper}
\renewcommand\thefootnote{}\footnote{#1}%
\addtocounter{footnote}{-1}%
\end{NoHyper}
}
\begin{document}
\title[\texorpdfstring{$\mathrm{SL}_{2}$}{SL2} tilting modules in the mixed case]
{\texorpdfstring{$\mathrm{SL}_{2}$}{SL2} tilting modules in the mixed case}
\author[Louise Sutton, Daniel Tubbenhauer, Paul Wedrich, and Jieru Zhu]{Louise Sutton, Daniel Tubbenhauer, Paul Wedrich, and Jieru Zhu}

\address{L.S.: Heilbronn Institute for Mathematical Research and The University of Manchester, Alan Turing Building, Oxford Road, Manchester, M13 9PL, United Kingdom,
\newline
\href{https://louisesuttonblog.wordpress.com/}{louisesuttonblog.wordpress.com}}
\email{louise.sutton@manchester.ac.uk}

\address{D.T.: Institut f{\"u}r Mathematik, Universit{\"a}t Z{\"u}rich, 
Winterthurerstrasse 190, Campus Irchel, Office Y27J32, CH-8057 Z{\"u}rich, 
Switzerland, \href{www.dtubbenhauer.com}{www.dtubbenhauer.com}}
\email{daniel.tubbenhauer@math.uzh.ch}

\address{P.W.: Max Planck Institute for Mathematics,
Vivatsgasse 7, 53111 Bonn, Germany 
AND Mathematical Institute, University of Bonn,
Endenicher Allee 60, 53115 Bonn, Germany
\href{http://paul.wedrich.at}{paul.wedrich.at}}
\email{p.wedrich@gmail.com}

\address{J.Z.: Okinawa Institute of Science and Technology, 1919-1 Tancha, Onna-son, Kunigami-gun
Okinawa, Japan 904-0495
\href{http://cyclotomic.squarespace.com}{cyclotomic.squarespace.com}}
\email{jieruzhu699@gmail.com}

\nnfootnote{\textit{Mathematics Subject Classification 2020.} Primary: 18M15, 20G42; Secondary: 17B10, 20C20, 20G05.}
\nnfootnote{\textit{Keywords.} Tilting modules in the mixed case, 
diagrammatic algebra, Temperley--Lieb algebras and categories, fusion rules, braided structures.}

\begin{abstract}
Using the non-semisimple Temperley--Lieb calculus, we study the additive and
monoidal structure of the category of tilting modules for $\SLtwo$ in the mixed
case. This simultaneously generalizes the semisimple situation, the case of the complex quantum
group at a root of unity, and the algebraic group case in positive
characteristic.

We describe character formulas and give a presentation of the category of
tilting modules as an additive category via a quiver with relations. Turning to
the monoidal structure, we describe fusion rules and obtain an explicit
recursive description of the appropriate analog of Jones--Wenzl projectors. We
also discuss certain theta values, the tensor ideals, mixed Verlinde quotients and
the non-degeneracy of the braiding.
\end{abstract}

\maketitle

\tableofcontents

\renewcommand{\theequation}{\thesection-\arabic{equation}}

\addtocontents{toc}{\protect\setcounter{tocdepth}{1}}

\section{Introduction}\label{section:intro}

Let $\kk$ be a field of characteristic $\ppar$, 
fix a non-zero element
$\qpar\in\kk^{*}$, and let $\K$ be an 
algebraically closed field containing
$\kk$. Tilting modules for $\SLtwo$, the 
reductive group $\SLtwo(\K)$ if
$\qpar=\pm 1$ or Lusztig's divided power 
quantum group for $\SLtwolie$ if
$\qpar\neq\pm 1$, are among the most 
well-studied objects in representation
theory. 
In this paper, we use diagrammatic methods to study monoidal categories of tilting modules in the mixed case, {\ie} for arbitrary $(\kk,\qpar)$.
As a modern day perspective, the mixed case can be thought of as the culmination of the following cases:

\begin{enumerate}

\item The semisimple case, where {\eg} $\kk=\C$ and $\qpar=1$.

\item The complex quantum group case (at a root of unity), where {\eg} $\kk=\C$ and $\qpar^{2\lpar}=1$.

\item The characteristic $\ppar$ 
case, where {\eg} $\kk=\overline{\F[]}_{\ppar}$ and $\qpar=1$.

\item The (strictly) mixed case, where {\eg} $\kk=\overline{\F[]}_{\ppar}$ and $\qpar^{2\lpar}=1$.

\end{enumerate}

Tilting modules form a monoidal category, so one can ask questions concerning
objects, morphisms, and how these behave under the tensor product. Concentrating
on objects and their characters is the classical approach in representation
theory. Recently, the focus has shifted to understanding morphisms between
tilting modules, especially from a monoidal perspective, which has been driven
by work from quantum topology and categorification. A more thorough
understanding of the associated diagrammatic and combinatorial model that
underpins the behavior of these tilting modules, known as the Temperley--Lieb
category, was a key ingredient in recent progress.

\makeautorefname{section}{Sections}

In this paper, we let $\tilt$ for arbitrary $(\kk,\qpar)$ denote the monoidal
category obtained by idempotent completion from the Temperley--Lieb category
$\TL[{\kk,\qpar}]$, see \fullref{rem:abuse}. We study $\tilt$ with a focus on
the behavior of objects and morphisms with respect to its monoidal structure, a
natural progression of previous work \cite{TuWe-quiver} and \cite{TuWe-center}.
The main results of this paper are contained within
\fullref{section:TL-additive} to \ref{section:app} and can be summarized as
follows.

\makeautorefname{section}{Section}

In \fullref{subsection:projectors} we define \emph{mixed JW projectors}
$\pjw[v{-}1]$ in $\TL[{\kk,\qpar}]$ for $v\in \N$ and show that they correspond
to indecomposable tilting modules $\tmod(v-1)$ of highest weight $v-1$. These
idempotents have been constructed independently in \cite{MaSp-lp-jw} and they
are a simultaneous generalization of the classical Jones--Wenzl projectors
\cite{Jo-index-subfactors}, \cite{We-projectors}, the projectors of Goodman--Wenzl \cite{GoWe-tl-at-roots-of-1},
and the $\ppar$Jones--Wenzl projectors of Burrull--Libedinsky--Sentinelli
\cite{BuLiSe-tl-char-p}.

In \fullref{subsection:quiver} we study morphisms between mixed JW projectors in
$\TL[{\kk,\qpar}]$ and obtain a presentation of $\tilt$ as an additive category
by generators and relations. Specifically, we exhibit $\tilt$ as the category of
projective modules for the path algebra of a quiver with relations explicitly
described in \fullref{theorem:main-tl-section}, which can be interpreted as the
(semi-infinite) Ringel dual of $\SLtwo$. 

In \fullref{section:fusion} we turn to the monoidal structure and study fusion
rules for $\tilt$ and their categorified analogs in $\TL[{\kk,\qpar}]$.
Classically, fusion rules express the structure constants for the representation
ring, \ie~ the decomposition multiplicities of tensor products of modules, such
as $\tmod(v)\hcirc\tmod(w)$, into indecomposable modules. On the categorified
level one is interested in explicitly describing the projection and inclusion
maps realizing such decompositions. In the Temperley--Lieb context this means
decomposing the tensor products $\pjw[v]\otimes \pjw[w]$ into idempotents that
project onto the indecomposable summands predicted by the fusion rule. 

A famous example is the recursion for the classical Jones--Wenzl projectors 
\begin{gather}\label{eq:jw-recursion}
\begin{tikzpicture}[anchorbase,scale=0.25,tinynodes]
\draw[JW] (-2,-1) rectangle (2,1);
\node at (0,-0.2) {$\pjwm[v{-}1]$};
\draw[usual] (2.5,-1) to (2.5,1);
\end{tikzpicture}
=
\begin{tikzpicture}[anchorbase,scale=0.25,tinynodes]
\draw[JW] (-2,-1) rectangle (2,1);
\node at (0,-0.2) {$\pjwm[v]$};
\end{tikzpicture}
+
\frac{1}{\funcG[*]}\cdot
\begin{tikzpicture}[anchorbase,scale=0.4,tinynodes]
\draw[JW] (-0.5,-1.5) rectangle (2.5,-2.5);
\node at (1,-2.2) {$\pjwm[v{-}1]$};
\draw[JW] (-0.5,-4.5) rectangle (2.5,-5.5);
\node at (1,-5.2) {$\pjwm[v{-}1]$};
\draw[JW] (-0.5,-3.25) rectangle (1.75,-3.75);
\node at (0.625,-3.6) {\scalebox{0.8}{$\pjwm[v{-}2]$}};
\draw[usual] (2,-2.5) to[out=270,in=180] (2.5,-3) to[out=0,in=270] 
(3,-2.5) to (3,-1.5);
\draw[usual] (2,-4.5) to[out=90,in=180] (2.5,-4) to[out=0,in=90] 
(3,-4.5) to (3,-5.5);
\draw[usual] (0,-2.5) to (0,-3.25);
\draw[usual] (0,-3.75) to (0,-4.5);
\end{tikzpicture}
,\quad
\frac{1}{\funcG[*]}=-\frac{\qnum{v-1}{\qpar}}{\qnum{v}{\qpar}}
,
\end{gather}
which witnesses the decomposition $\tmod(v-1)\hcirc\tmod(1)
\cong\tmod(v)\oplus\tmod(v-2)$ that holds whenever all involved tilting modules
are simple. In fact, the Jones--Wenzl recursion \eqref{eq:jw-recursion} is often
taken as (part of) the definition of the Jones--Wenzl projectors.    

In \fullref{theorem:timest1} we establish decompositions analogous to
\eqref{eq:jw-recursion} in the mixed setting of $\TL[{\kk,\qpar}]$. These
provide a recursive description of the mixed JW projectors, which appear to be
new in this generality, even new when specialized to the positive characteristic
or complex quantum group cases, \cf~ \cite{BlDeReMu-dia-small-qgroup}. As an
example, we show an instance going beyond \eqref{eq:jw-recursion}, which
witnesses a decomposition
$\tmod(v-1)\hcirc\tmod(1)\cong\tmod(v)\oplus\tmod(v-2)$ with summands that need
not be simple:
\begin{gather}\label{eq:intro-fusion}
\begin{tikzpicture}[anchorbase,scale=0.25,tinynodes]
\draw[pJW] (-2,-1) rectangle (2,1);
\node at (0,-0.2) {$\pjwm[v{-}1]$};
\draw[usual] (2.5,-1) to (2.5,1);
\end{tikzpicture}
=
\begin{tikzpicture}[anchorbase,scale=0.25,tinynodes]
\draw[pJW] (-2,-1) rectangle (2,1);
\node at (0,-0.2) {$\pjwm[v]$};
\end{tikzpicture}
+
\left(
\frac{1}{\funcG[*]}\cdot
\begin{tikzpicture}[anchorbase,scale=0.4,tinynodes]
\draw[pJW] (-0.5,-1.5) rectangle (2.5,-2.5);
\node at (1,-2.2) {$\pjwm[v{-}1]$};
\draw[pJW] (-0.5,-4.5) rectangle (2.5,-5.5);
\node at (1,-5.2) {$\pjwm[v{-}1]$};
\draw[pJW] (-0.5,-3.25) rectangle (1.75,-3.75);
\node at (0.625,-3.6) {\scalebox{0.8}{$\pjwm[v{-}2]$}};
\draw[usual] (2,-2.5) to[out=270,in=180] (2.5,-3) to[out=0,in=270] 
(3,-2.5) to (3,-1.5);
\draw[usual] (2,-4.5) to[out=90,in=180] (2.5,-4) to[out=0,in=90] 
(3,-4.5) to (3,-5.5);
\draw[usual] (0,-2.5) to (0,-3.25);
\draw[usual] (0,-3.75) to (0,-4.5);
\end{tikzpicture}  
-
\frac{\funcF[*]}{\funcG[*]}\cdot
\begin{tikzpicture}[anchorbase,scale=0.4,tinynodes]
\draw[pJW] (-0.5,-1.5) rectangle (2.5,-2.5);
\node at (1,-2.2) {$\pjwm[v{-}1]$};
\draw[pJW] (-0.5,-4.5) rectangle (2.5,-5.5);
\node at (1,-5.2) {$\pjwm[v{-}1]$};
\draw[pJW] (-0.5,-2.5) rectangle (1.75,-3);
\node at (0.625,-2.85) {\scalebox{0.8}{$\pjwm[v{-}2]$}};
\draw[pJW] (-0.5,-4.5) rectangle (1.75,-4);
\node at (0.625,-4.35) {\scalebox{0.8}{$\pjwm[v{-}2]$}};
\draw[usual] (2,-2.5) to[out=270,in=180] (2.5,-3) to[out=0,in=270] 
(3,-2.5) to (3,-1.5);
\draw[usual] (2,-4.5) to[out=90,in=180] (2.5,-4) to[out=0,in=90] 
(3,-4.5) to (3,-5.5);
\draw[usual] (0,-3) to (0,-4);
\draw[usual] (0.5,-4) to[out=90,in=180] (1,-3.6) to[out=0,in=90] 
(1.5,-4);
\draw[usual] (0.5,-3) to[out=270,in=180] (1,-3.4) to[out=0,in=270] 
(1.5,-3);
\end{tikzpicture}\,
\right).
\end{gather}
(Here and throughout the paper we use colored boxes to encode mixed JW
projectors corresponding to tilting modules that need not be simple.) The middle
part of the rightmost diagram in \eqref{eq:intro-fusion} corresponds to a
nilpotent endomorphism of $\tmod(v-2)$. In particular, if $\tmod(v-2)$ is
simple, then the rightmost diagram is zero and we recover
\eqref{eq:jw-recursion}. In general, however, the decompositions provided by
\fullref{theorem:timest1} are more complex than suggested by the example
\eqref{eq:intro-fusion}. In particular, arbitrary many summands can appear, with
multiplicities up to two.
 
In \fullref{subsection:theta} we consider the more general problem of splitting
the tensor product of projectors $\pjw[w{-}1]\otimes \pjw[x{-}1]$, the first
step towards a non-semisimple version of recoupling theory in the sense of
Kauffman--Lins \cite{KaLi-TL-recoupling}. Realizing that a solution to this
problem is well beyond current reach, we restrict to the case when the tilting
modules corresponding to both factors are simple. In
\fullref{theorem:eve-fusion} we give a complete description of the splitting in
certain cases, including the complex quantum group case. Along the way we obtain
results that may be of independent interest, including
\fullref{theorem:eve-times-eve} which computes the non-semisimple analogs of
theta values:
% , that appear as local intersection forms for the
% splitting of such $\tmod(w-1)\hcirc\tmod(x-1)$:
\begin{gather*}
\begin{tikzpicture}[anchorbase,scale=0.25,tinynodes]
\draw[JW] (-2.5,3) rectangle (2.5,5);
\node at (0,3.8) {$\pjwm[w{-}1]$};
\draw[JW] (3.5,3) rectangle (8.5,5);
\node at (6,3.8) {$\pjwm[x{-}1]$};
\draw[pJW] (0.5,-1) rectangle (5.5,1);
\node at (3,-0.2) {$\pjwm[v{-}1]$};
\draw[pJW] (0.5,7) rectangle (5.5,9);
\node at (3,7.8) {$\pjwm[v{-}1]$};
\draw[usual] (2,5) to[out=90,in=180] (3,6) to[out=0,in=90] (4,5);
\draw[usual] (-1.5,5) to[out=90,in=270] (1.5,7);
\draw[usual] (7.5,5) to[out=90,in=270] (4.5,7);
\draw[usual] (2,3) to[out=270,in=180] (3,2) to[out=0,in=270] (4,3);
\draw[usual] (-1.5,3) to[out=270,in=90] (1.5,1);
\draw[usual] (7.5,3) to[out=270,in=90] (4.5,1);
\end{tikzpicture}
=
\digscalar{\ast}{}
\cdot
\begin{tikzpicture}[anchorbase,scale=0.25,tinynodes]
\draw[pJW] (0.5,-1) rectangle (5.5,1);
\node at (3,-0.2) {$\pjwm[v{-}1]$};
\end{tikzpicture}
+
\digscalar{\ast}{\prime}
\cdot
\begin{tikzpicture}[anchorbase,scale=0.25,tinynodes]
\draw[JW] (-1.5,1) rectangle (1.5,2);
\node at (0,1.3) {\scalebox{0.65}{$\pjwm[v{-}c{-}1]$}};
\draw[JW] (-1.5,-1) rectangle (1.5,-2);
\node at (0,-1.7) {\scalebox{0.65}{$\pjwm[v{-}c{-}1]$}};
\draw[usual] (0,-1) to (0,1);
\draw[usual] (1,-1) to[out=90,in=180] (1.5,-0.5) 
to[out=0,in=90] (2,-1) to (2,-2) node[right,xshift=-2pt,yshift=1pt]{$c$};
\draw[usual] (1,1) to[out=270,in=180] (1.5,0.5) 
to[out=0,in=270] (2,1) to (2,2) node[right,xshift=-2pt,yshift=-2pt]{$c$};
\end{tikzpicture}
.
\end{gather*}
The \emph{digon removal scalars} appearing here will be computed explicitly in
\fullref{subsection:theta}.

In \fullref{section:app} we apply the results from the previous sections to
classify tensor ideals in $\tilt[{\kk,\qpar}]$ and describe modified traces on
them. Finally, assuming that $\qpar$ has a square root in $\kk$, we consider
$\tilt[{\kk,\qpar}]$ as a braided monoidal category and compute its M{\"u}ger
center. In particular, we find that the braiding on $\tilt[{\kk,\qpar}]$ is
non-degenerate whenever $\qpar\neq \pm 1$.

\makeautorefname{figure}{Figures}

Throughout, we will describe various fractal-like patterns 
arising from the characteristic $\ppar$ and the strictly mixed cases, 
which may be interesting in their own right. 
We have collected illustrations of several such fractals 
in \fullref{figure:tiltcartan} to \ref{figure:cartan}.

\makeautorefname{figure}{Figure}

\subsection*{A bit of historical background and other works}

Tilting modules for $\SLtwo$ have 
played a crucial role 
in representation theory and 
low-dimensional topology, even before their 
introduction by Donkin 
\cite{Do-tilting-alg-groups} 
and Ringel \cite{Ri-good-filtrations}.
Let us recall parts of this story.

In the semisimple case, $\tilt$ is well-understood on the level of objects and
morphisms: The characters are given by Weyl's character formula and the fusion
rules by the Clebsch--Gordan rule. On the morphism level, $\tilt$ was given a
diagrammatic presentation early on by Rumer--Teller--Weyl
\cite{RuTeWe-cup-diagrams} using what is nowadays called the Temperley--Lieb
algebra or category $\TL[{\kk,\qpar}]$. This diagrammatic presentation, in its
quantum version, lies at the heart of constructions and calculations for the
Jones-type invariants of links and $3$-manifolds via Jones--Wenzl projectors
(short: JW projectors) and recoupling theory, see {\eg}
\cite{KaLi-TL-recoupling}.

In the complex quantum group case, many of our results have previously appeared
in the literature. The fusion rules on the object level 
in this case are certainly well-known since the end of the
1980s, but a bit hard to track down, see however \cite{Do-q-schur} for a
slightly later reference. The category $\TL[{\kk,\qpar}]$ plays a major role as
it provides the diagrammatic and integral model of $\tilt$. (While we don't know
an explicit exposition, this can be deduced from \cite{DuPaSc-schur-weyl}.) The
appropriate analog of JW projectors in this case were defined by Goodman--Wenzl
\cite{GoWe-tl-at-roots-of-1}, the Ringel dual quiver was computed in
\cite{AnTu-tilting}, and (parts of) recoupling theory was developed under the
umbrella of non-semisimple $3$-manifold invariants, see {\eg}
\cite{BlDeReMu-dia-small-qgroup} or \cite{DGGPR19}.

Historically speaking, the 
characteristic $\ppar$ case came long before the 
complex quantum group case, for example, 
due to its relationship to 
projective modules of the finite 
group $\SLtwo(\F[\ppar^{k}])$.
On the level 
of objects, the characteristic $\ppar$ 
case is intensively studied throughout the literature, 
see {\eg} \cite{CaCl-submodule-weyl-a1}, 
\cite{AnJoLa-sl2-projectives}, 
\cite{Do-tilting-alg-groups},
\cite{ErHe-ringel-schur}, 
\cite{ErHe-ringel-schur-symmetric-group} 
and \cite{DoHe-char-p-sl2}. Of particular importance, are Steinberg's and
Donkin's tensor product formulas, which give the characters of simple and
tilting modules. However, not much appears to be known about fusion of objects
beyond special cases, {\eg} coming from studying Verlinde quotients, see for
example \cite{An-simple-tl} or \cite{BeEtOs-pverlinde}, or the situation of the
finite group $\SLtwo(\F[\ppar^{k}])$, see for example
\cite{Cr-tensor-simple-modules}. On the morphism level, the use of
$\TL[{\kk,\qpar}]$ is crucial, specifically in the work of
Burrull--Libedinsky--Sentinelli \cite{BuLiSe-tl-char-p}, which introduced the
$\ppar$JW projectors, a main ingredient to find the quiver with relations and
the center of $\tilt$, see \cite{TuWe-quiver} and \cite{TuWe-center}. When it
comes to fusion and recoupling theory for morphisms, our results are new.

In the (strictly) mixed case, most of the results in the present paper are new.
See, however, {\eg} \cite{Do-q-schur} and \cite{An-simple-tl} for character and
fusion formulas, and \cite{MaSp-lp-jw} for their (independent) construction of
the mixed JW projectors.

\subsection*{Further directions}

A potential application concerns quotients of $\tilt$, especially in the
characteristic $\ppar$ and the strictly mixed cases, when the category $\tilt$
has infinitely many $\hcirc$-ideals. This is in stark contrast to the semisimple
and the complex quantum group case, where one has no or only one non-trivial
$\hcirc$-ideal. The strictly mixed cases turn out to be very appealing in two
directions. First, in generalizing {\eg} the results of \cite{BeEtOs-pverlinde}
to the mixed situation, where $\tilt$ may have universal properties that are
similar to those studied in the characteristic $\ppar$ case. Second, as we will
see, $\tilt[\kk,\qpar^{1/2}]$ has a non-degenerate braiding in the mixed case.
This is particularly interesting from the viewpoint of non-semisimple $3d$ TQFT,
where one could try to apply the strategy of modified traces from {\eg}
\cite{GeKuPaMi-gen-traces-modified-dimensions} and
\cite{GeKuPaMi-ambidextrous-objects}, to obtain new non-semisimple $3d$ TQFTs.

The fusion rules for $\tilt$ are also of importance in physics (from which its
name arose), although the focus has been on the semisimple and complex quantum
group cases. In fact, this was one motivation to develop the Temperley--Lieb
calculus \cite{KaLi-TL-recoupling} and its variations, which appear under
different names in the physics- and mathematics-oriented literature. For
example, idempotent truncations by tensor products of classical JW projectors
are studied under the names valenced Temperley--Lieb algebras in
\cite{FlPe-jwalgebras1} and symmetric webs in \cite{RoTu-symmetric-howe}. (See
also \cite{Sp-modular-jwtl-algebra} for a discussion using the $\ppar\lpar$JW
projectors.) Other recent work concerns the non-semisimple complex quantum group
case and its relation to mathematical physics, see {\eg} \cite{KoSt-fusion}, but
a non-semisimple recoupling theory along the lines of \cite{KaLi-TL-recoupling}
seems largely undeveloped.

Finally, the algorithm given in \cite{JeWi-p-canonical} to compute
$\ppar$-Kazhdan--Lusztig basis elements of affine type $A_{1}$ played a key role
in \cite{BuLiSe-tl-char-p}, and one could hope that this is a two-way street.
For example, via quantum Satake \cite{El-q-satake} and the approach in
\cite{RiWi-tilting-p-canonical} it might be possible to study analogs of mixed
Kazhdan--Lusztig bases.
\medskip

\noindent\textbf{Acknowledgments.} 
We thank Elijah Bodish, David Craven, 
Nicolle Gonz{\'a}lez, Amit Hazi, Robert Spencer 
and Catharina Stroppel for stimulating 
discussions and helpful exchanges of emails. 
Special thanks to Robert Spencer for 
comments on a draft of this paper.

Significant parts of this work were done when all the authors met at
the Hausdorff Research Institute for Mathematics (HIM) in the fall 2020 
during the Junior Trimester Program New Trends in Representation Theory. 
Hospitality and support of the HIM are gratefully acknowledged.

Parts of this paper needed extensive computer-based experiments, and all these
calculations were done using Mathematica 12. The first author thanks the
Heilbronn Institute for Mathematical Research for financial support; the second
author thanks Covid19 for sponsoring his computer literacy.

\section{Preliminaries}\label{section:prelim}

In this section we introduce necessary $\ppar\lpar$-adic notation, 
and recall how tilting modules of $\SLtwo$ and the Temperley--Lieb 
calculus are related.

\subsection{Basics of $\ppar\lpar$-adic expansions}\label{subsection:ellp}

Let $\kk$ denote a field, and fix an invertible element 
$\qpar\in\kk$ throughout. 
We also let $\vpar$ denote a formal variable. (The element 
$\qpar$ is allowed to be $\vpar$, for 
example if $\kk=\Q(\vpar)$. But in contrast to 
$\qpar$ the variable $\vpar$ is always formal.) 
For any $\xpar\in\kk$ and $a\in\N$ 
we will also use quantum numbers:
\begin{gather*}
\qnum{0}{\xpar}=0
,\quad
\qnum{a}{\xpar}=
\xpar^{-(a-1)}+\xpar^{-(a-3)}+\dots+\xpar^{a-3}+\xpar^{a-1}
,\quad
\qnum{-a}{\xpar}=-\qnum{a}{\xpar}.
\end{gather*}

\begin{remark}
For the purpose of working with the 
Temperley--Lieb category, it is possible to
start from a parameter $\delta\in\kk$ and to 
define associated quantum numbers
inductively by setting $\qnum{0}{\qpar}=0$, 
$\qnum{1}{\qpar}=1$ and 
$\delta\qnum{n}{\qpar}=\qnum{n+1}{\qpar}+\qnum{n-1}{\qpar}$. 
On the side of tilting
module, however, we need 
$\delta=\qpar+\qpar^{-1}$, so we decided to use the more
standard definition of quantum numbers from the start.
\end{remark}

\begin{definition}
\leavevmode
\begin{enumerate}

\item Let $\ppar\in\N\cup\{\infty\}$ denote the additive order of $1$ in $\kk$,
i.e. $\ppar=\infty$ if $\kk$ is of characteristic zero, and
$\ppar=\mathrm{char}(\kk)$ otherwise.

\item For $\qpar\neq\pm 1$ let $\lpar\in\N\cup\{\infty\}$ be minimal such that
$\qnum{\lpar}{\qpar}=0$, with $\lpar=\infty$ if $\qnum{a}{\qpar}\neq0$ 
for all $a\in\N$. For $\qpar=\pm 1$ we set $\lpar=\ppar$.

\end{enumerate}
We call $\mchar=(\ppar,\lpar)$ the 
\emph{mixed characteristic}, while 
$\ppar$ and $\lpar$, respectively, are 
called the \emph{characteristic} and 
the \emph{quantum characteristic} (of the pair $(\kk,\qpar)$).
\end{definition}

Note that $\ppar$ is a prime number, if finite, but $\lpar$ can be any element in
$\N[\geq 2]\cup\{\infty\}$. Moreover, for finite $\lpar$ the equation
$\qnum{\lpar}{\qpar}=
\tfrac{(1-\qpar^{-\lpar})(\qpar^{\lpar}+1)}{\qpar-\qpar^{-1}}=0$ implies that
$\qpar^{\lpar}=\pm 1$.
Conversely, the order $n=\mathrm{ord}(\qpar)$
of the root of unity $\qpar$, if finite, determines $\lpar$ 
and the signs $\qpar^{\lpar}$ and $(-\qpar)^{\lpar}$ as
follows:
\begin{gather*}
\lpar=
\begin{cases}
n&\text{if }n\equiv 1\bmod 2,
\\
n/2&\text{if }n\equiv 0\bmod 2,
\end{cases}
\;
\qpar^{\lpar}
=
\begin{cases}
1&\text{if }n\equiv 1\bmod 2,
\\
-1&\text{if }n\equiv 0\bmod 2,
\end{cases}
\;
(-\qpar)^{\lpar}
=
\begin{cases}
-1&\text{if }n\equiv 0,1,3\bmod 4,
\\
1&\text{if }n\equiv 2\bmod 4.
\end{cases}  
\end{gather*}
The signs $\qpar^{\lpar}$ and $(-\qpar)^{\lpar}$ will appear in \eqref{eq:f-g}.

\begin{example}\label{example:pl}
The examples for $(\kk,\qpar)$ that the reader should keep in mind are: 
\begin{enumerate}

\item The \emph{integral case}, where the pair is $\big(\Zv,\vpar\big)$. Beware
that here $\kk$ is not a field, and we will always treat this case separately.

\item The \emph{semisimple case}, where $\ppar$ is arbitrary and $\lpar=\infty$.
Explicit examples include $\big(\Q(\vpar),\vpar\big)$,
and in fact $\big(\kkv,\vpar\big)$ for any field $\kk$.

\item The \emph{complex quantum group case} (at a root of unity), where
$\ppar=\infty$ and $\lpar<\infty$. For example, one could take $\kk=\C$ with
$\qpar=\exp(\pi i/\lpar)$ or $\qpar=\exp(2\pi i/\lpar)$, the former for 
all possible $\lpar$ and the latter for $\lpar$ being odd.

\item The \emph{characteristic $\ppar$ case}, where $\ppar=\lpar<\infty$, {\eg} $(\F,1)$ or $(\F[\ppar^{k}],1)$.
As a word of warning, when we refer to the characteristic $\ppar$ case we always mean $\qpar=1$ and not $\qpar=-1$ since the signs $\qpar^{\lpar}$ and $(-\qpar)^{\lpar}$ for these two cases are different.

\item The \emph{(strictly) mixed cases} are all other cases, {\ie}
$\ppar<\infty$, $\lpar<\infty$ with $\ppar\neq\lpar$. An explicit example is the
pair $(\F[7],2)$ for which the mixed characteristic is $(7,3)$.

\end{enumerate}
\end{example}

For the rest of this paper, with the exception of concrete examples, we fix a
pair $(\kk,\qpar)$ of mixed characteristic $(\ppar,\lpar)$.
The numbers $\ppar$ and $\lpar$ will play a crucial role in this
paper, {\eg} via \emph{$\ppar\lpar$-adic expansions}:

\begin{definition}
Set $\ppar^{(0)}=1$, and for $i\in\N$ let $\ppar^{(i)}=\ppar^{i-1}\lpar$. For
any $v\in\N$ we write $\plbase{a_{j},\dots,a_{0}}=\sum_{i=0}^{j}a_{i}
\ppar^{(i)}=v$ with $a_{j}\neq 0$. The digits are from the sets
$a_{i}\in\{0,\dots,\ppar-1\}$ for $i>0$, and $a_{0}\in\{0,\dots,\lpar-1\}$. The
higher digits are declared to be zero $a_{>j}=0$.

Conversely, any tuple $(b_{j},\dots,b_{0})\in\Z^{j+1}$ defines an integer
$\plbase{b_{j},\dots,b_{0}}=\sum_{i=0}^{j}b_{i}\ppar^{(i)}\in\Z$. Here we
explicitly allow negative digits.
\end{definition}

The $\ppar\lpar$-adic expansion of a natural number $v$ as defined above is
clearly unique: $a_{0}$ is uniquely determined as the remainder
of $v$ upon division by $\lpar$, and the remaining digits
$\pbase{a_{j},\dots,a_{1}}{\ppar}$ are determined by the usual $\ppar$-adic
expansion of the quotient $\frac{v-a_{0}}{\lpar}$. We also point out that the
two digits $a_{j}$, called the \emph{leading digit}, and $a_{0}$, the
\emph{zeroth digit}, will play slightly different roles than the other digits.

\begin{example}
The $\ppar\lpar$-adic expansion for $\ppar=\lpar<\infty$ is the
usual $\ppar$-adic expansion. Moreover, if $\lpar=\infty$, then the
$\ppar\lpar$-adic expansion of $v$ is simply $\plbase{v}$.	
Explicitly, for $v=68$ we have
\begin{gather*}
68=\pbase{68}{\ppar,\infty}=
\pbase{66,2}{\infty,3}=\pbase{1,2,5}{7,7}=\pbase{3,1,2}{7,3}.
\end{gather*}
\end{example}

\begin{remark}
As we will see, $\ppar$ is the 
crucial number for all digits $a_{i}$ with $i>0$, 
while $\lpar$ appears only in connection with the zeroth digit. We will 
henceforth write $\plpar$ for either $\ppar$ or $\lpar$, depending on the 
involved digit.
\end{remark}

\begin{remark}
We will repeatedly encounter the 
\emph{law of small primes}, {\losp} for short: we see
special behavior in cases when relevant digits are (close to) $0$ modulo
$\plpar$. For large characteristics 
such cases are exceptions, while for small
ones they are the rule. 
\end{remark}

For any $\xpar\in\kk$ we will also use quantum factorials
and binomials:
\begin{gather*}
\qfac{0}{\xpar}=1
,\quad
\qfac{a}{\xpar}=\qnum{a}{\xpar}\qnum{a-1}{\xpar}\dots\qnum{1}{\xpar},
\quad
\qbin{b}{0}{\xpar}=1
,\quad
\qbin{b}{a}{\xpar}=\tfrac{\qnum{b}{\xpar}\qnum{b-1}{\xpar}\dots\qnum{b-a+1}{\xpar}}{\qnum{a}{\xpar}\qnum{a-1}{\xpar}\dots\qnum{1}{\xpar}}.
\end{gather*}
Here $a\in\N$ and $b\in\Z$, and all of these are elements of
$\kk$. Note that $\qbin{b}{a}{1}=\binom{b}{a}$.
We recall the \emph{quantum Lucas' theorem}:

\begin{proposition}\label{proposition:qlucas}
For $v=\plbase{a_{j},\dots,a_{0}}$, $w=\plbase{b_{j},\dots,b_{0}}$ 
and $v-w=\plbase{c_{j},\dots,c_{0}}$ we have
\begin{gather*}
\qbinn{v}{w}{\qpar}
=
\lucas
\binom{a_{j}}{b_{j}}
\dots
\binom{a_{1}}{b_{1}}
\qbinn{a_{0}}{b_{0}}{\qpar}
=
\lucas
\binom{a_{j}}{c_{j}}
\dots
\binom{a_{1}}{c_{1}}
\qbinn{a_{0}}{c_{0}}{\qpar}
=
\qbinn{v}{v-w}{\qpar}.
\end{gather*}
The scalar is $\lucas=(\qpar^{\lpar})^{A_{1}b_{0}+a_{0}B_{1}
+\lpar(A_{1}B_{1}-B_{1}^{2})}=\pm 1$, where 
$A_{1}=\frac{v-a_{0}}{\lpar}$ and $B_{1}=\frac{w-b_{0}}{\lpar}$.
\end{proposition}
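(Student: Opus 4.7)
The $\ppar\lpar$-adic expansion is designed so that the zeroth digit encapsulates the ``quantum'' behaviour at the $\lpar$-th root of unity $\qpar$, while the higher digits encapsulate the ``classical'' behaviour in characteristic $\ppar$. My plan is therefore to prove two one-step reductions corresponding to these two contributions, and then chain them.

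\emph{Step 1: Quantum Lucas at a root of unity.} Writing $v = A_{1}\lpar + a_{0}$ and $w = B_{1}\lpar + b_{0}$ with $0 \leq a_{0}, b_{0} < \lpar$, I aim to establish
$$
\qbinn{v}{w}{\qpar}
=
(\qpar^{\lpar})^{A_{1}b_{0} + a_{0}B_{1} + \lpar(A_{1}B_{1} - B_{1}^{2})}
\binom{A_{1}}{B_{1}}
\qbinn{a_{0}}{b_{0}}{\qpar}.
$$
My preferred route is the symmetric Gaussian polynomial identity $\prod_{i=0}^{v-1}(1 + \qpar^{v-1-2i} x) = \sum_{w=0}^{v} \qbinn{v}{w}{\qpar} x^{w}$, obtained by induction on $v$ from the basic $q$-Pascal recursion. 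One then groups the $v$ factors on the left into $A_{1}$ blocks of $\lpar$ consecutive factors plus a trailing block of $a_{0}$. Using $\qpar^{2\lpar} = 1$ together with $[\lpar]_{\qpar} = 0$, each length-$\lpar$ block collapses (up to an explicit monomial in $\qpar^{\lpar}$) to a factor of the form $1 \pm x^{\lpar}$. Extracting the coefficient of $x^{w}$ partitions the contribution according to how many full blocks contribute an $x^{\lpar}$ (there are $\binom{A_{1}}{B_{1}}$ such choices, accounting for $B_{1}\lpar$ units of total degree) and how many factors are picked from the residual block (contributing $\qbinn{a_{0}}{b_{0}}{\qpar}$). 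The $\qpar^{\lpar}$-monomials accrued during this process collect into the claimed prefactor.

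\emph{Step 2: Classical Lucas in characteristic $\ppar$.} By the definition of the $\ppar\lpar$-adic expansion, $A_{1}$ and $B_{1}$ have ordinary $\ppar$-adic expansions $\pbase{a_{j}, \ldots, a_{1}}{\ppar}$ and $\pbase{b_{j}, \ldots, b_{1}}{\ppar}$ respectively. The classical Lucas theorem, applied inside the field $\kk$ of characteristic $\ppar$, then yields $\binom{A_{1}}{B_{1}} = \binom{a_{j}}{b_{j}} \cdots \binom{a_{1}}{b_{1}}$ in $\kk$. Substituting into Step 1 gives the first claimed equality with $\lucas = (\qpar^{\lpar})^{A_{1}b_{0} + a_{0}B_{1} + \lpar(A_{1}B_{1} - B_{1}^{2})}$. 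The identity involving the digits $c_{i}$ of $v-w$ then follows from the symmetry $\qbinn{v}{w}{\qpar} = \qbinn{v}{v-w}{\qpar}$, which is immediate from the symmetric definition of quantum factorials, applied with $v - w = \plbase{c_{j}, \ldots, c_{0}}$; the point is that we run Steps 1 and 2 with $w$ replaced by $v - w$, which has $\ppar\lpar$-adic digits $(c_{j}, \ldots, c_{0})$ (possibly differing from $(a_{i} - b_{i})_{i}$ because of carries, but the formula is insensitive to this).

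The main obstacle is the sign bookkeeping in Step 1. The $\qpar^{\lpar}$-exponent accumulates from several sources: the internal $\qpar$-monomials within each length-$\lpar$ block when rearranged into the form $1 \pm x^{\lpar}$, the $\qpar$-monomials appearing when selecting particular factors from each block, and cross-interactions between blocks at different shifts. Collecting them and simplifying (using $(\qpar^{\lpar})^{2} = 1$) to land on exactly $A_{1}b_{0} + a_{0}B_{1} + \lpar(A_{1}B_{1} - B_{1}^{2})$ is the only genuinely combinatorial part of the argument; everything else is either a direct invocation of the classical Lucas theorem, or a definitional unpacking of the $\ppar\lpar$-adic expansion.
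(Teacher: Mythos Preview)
Your approach is sound and is the standard direct proof of the quantum Lucas theorem: peel off the zeroth digit via the Gaussian product formula at a $2\lpar$-th root of unity, then apply the classical Lucas theorem in characteristic $\ppar$ to the remaining binomial $\binom{A_{1}}{B_{1}}$. The paper, by contrast, does not give a proof at all: it simply cites Olive's paper \cite{Ol-gpower} for the result in the non-symmetric convention and remarks that translating to the symmetric convention for quantum numbers produces the sign $\lucas$. So you are doing considerably more than the paper does, and what you outline is correct in structure.

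Two small points worth tightening. First, you correctly identify the sign bookkeeping in Step~1 as the only genuinely nontrivial part, but you do not carry it out; since this exponent is precisely the content of the proposition beyond the classical statement, a fully self-contained proof would need to track the $\qpar^{\lpar}$-monomials through the block decomposition and verify that they sum to $A_{1}b_{0}+a_{0}B_{1}+\lpar(A_{1}B_{1}-B_{1}^{2})$ modulo~$2$. Second, your justification of the middle equality (with the digits $c_{i}$ of $v-w$ and the \emph{same} $\lucas$) via symmetry is slightly incomplete: running Step~1 with $v-w$ in place of $w$ produces a sign expressed in terms of $C_{1}$ and $c_{0}$, not $B_{1}$ and $b_{0}$, so one must still check that either both products vanish (which happens whenever $a_{0}<b_{0}$, since then $\qbinn{a_{0}}{b_{0}}{\qpar}=0=\qbinn{a_{0}}{c_{0}}{\qpar}$) or the two sign exponents agree modulo~$2$ (which is a short parity calculation when $c_{0}=a_{0}-b_{0}$ and $C_{1}=A_{1}-B_{1}$).
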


\begin{proof}
This is folklore, but the first written account might be 
\cite[(1.2.4)]{Ol-gpower}. 
(Translating from that paper, which uses a non-symmetric definition of 
quantum numbers often used in combinatorics, 
to our conventions gives the sign $\lucas$.)
\end{proof}

\begin{example}
For $(\F[7],2)$ and $a=71=\pbase{3,2,2}{7,3}$ and $b=1=\pbase{0,0,1}{7,3}$
the quantum Lucas' theorem gives $\qnum{71}{2}=\qbin{71}{1}{2}=
\binom{3}{0}
\binom{2}{0}
\qbinn{2}{1}{2}=\qnum{2}{2}=6$. However, note that 
$\qnum{71}{3}=-\qnum{2}{2}$
because $\lucas=-1$ in this case since $3^{3}=-1\bmod 7$.
In general, $\qnum{v}{\qpar}=(\qpar^{\lpar})^{A_{1}}\qnum{a_{0}}{\qpar}
=\pm\qnum{a_{0}}{\qpar}$. 
One can also see the 
order of vanishing, say for $a=63=\pbase{3,0,0}{7,3}$. 
Using the second expression one gets $\qnum{63}{2}=\qbin{63}{62}{2}=
\binom{3}{2}\binom{0}{6}\qbinn{0}{2}{2}$, which is thus divisible by $7$ and $\qnum{3}{2}$.
\end{example}

Another useful tool for quantum calculations is 
using \emph{a change of variables method}:

\begin{example}
Take $v=\plbase{a_{j},\dots,a_{k},0,\dots,0}$ and
$w=\plbase{b_{j},\dots,b_{k},0,\dots,0}$ for $k>0$ and $b_{k}\neq 0$.  Clearly,
$\qnum{v}{\qpar}=\qnum{w}{\qpar}=0$, but the quotient
$\tfrac{\qnum{v}{\qpar}}{\qnum{w}{\qpar}}$ is well-defined (and non-zero if
$a_{k}\neq 0$). To see this we use $\qpar^{\ppar^{(k)}}=
(\qpar^{\lpar})^{\ppar^{k-1}}=\qpar^{\lpar}=\pm 1$ (the second equation holds
trivially if $\ppar=2$)  to calculate
\begin{gather*}
\tfrac{\qnum{v}{\vpar}}{\qnum{w}{\vpar}}
=
\tfrac{\tfrac{\vpar^{v}-\vpar^{-v}}{\vpar-\vpar^{-1}}}{\tfrac{\vpar^{w}-\vpar^{-w}}{\vpar-\vpar^{-1}}}
=
\tfrac{\tfrac{\vpar^{v}-\vpar^{-v}}{\vpar^{\ppar^{(k)}}-\vpar^{-\ppar^{(k)}}}}{\tfrac{\vpar^{w}-\vpar^{-w}}{\vpar^{\ppar^{(k)}}-\vpar^{-\ppar^{(k)}}}}
=
\tfrac{\qnum{v/\ppar^{(k)}}{\vpar^{\ppar^{(k)}}}}
{\qnum{w/\ppar^{(k)}}{\vpar^{\ppar^{(k)}}}}
\xmapsto{\vpar \mapsto \qpar}
\pm
\tfrac{\pbase{a_{j},\dots,a_{k}}{\ppar,\ppar}}
{\pbase{b_{j},\dots,b_{k}}{\ppar,\ppar}}
=
\pm
\tfrac{a_{k}}{b_{k}},
\end{gather*}
where the sign is $(\qpar^{\lpar})^{\sum_{i=k}^{j}a_{i}-b_{i}}$.
\end{example}

The following is taken from 
\cite{TuWe-quiver}, but for $\ppar\lpar$-adic
expansions.

\begin{definition}\label{definition:ancestry}
If $v=\plbase{a_{j},\dots,a_{0}}\in\N$ has only 
a single non-zero digit, then $v$ is 
called an \emph{eve}. The set of eves is 
denoted by $\eve$.
If $v\notin\eve$, then the \emph{mother} 
$\mother$ of $v$ is obtained by setting
the rightmost non-zero digit of $v$ to zero.

Assume that $v\notin\eve$ has $k$ non-zero, non-leading digits.
We will also consider the set
$\ancest=\{\mother,\motherr{v}{2}=\mother[\mother],\dots,\motherr{v}{k},\plbase{a_{j},0,\dots,0}=\motherr{v}{\infty}\}$ of (matrilineal)
\emph{ancestors} of $v$, whose size $\generation$ is called the
\emph{generation} of $v$. By convention, $\ancest=\emptyset$ 
and $\generation=0$ for $v\in\eve$.
The \emph{support} $\supp\subset\N$ is 
the set of the $2^{\generation}$ integers of the form
$w=\plbase{a_{j},\pm a_{j-1},\dots,\pm a_{0}}$.
\end{definition}

Note that every $v\notin\eve$ has an 
associated eve $\motherr{v}{\infty}$. We think of the generation
and the ancestry chart as a measure of 
the complexity of the associated
$\SLtwo$ modules. For example, in
\fullref{proposition:multiplicities} 
we will see that a tilting module is simple
if and only if its $\rho$-shifted highest weight is an eve.

\begin{example}\label{example:support} 
In the semisimple case $\lpar=\infty$ every $v\in\N[0]$ is an eve and has no ancestors. In the complex quantum
group case $\ppar=\infty$, $\lpar<\infty$ every $v\in\N[0]$ is either
an eve or of generation $1$. In the other cases the generation can
be any number in $\N[0]$. For example,
$68=\pbase{68}{\infty,\infty}=\pbase{66,2}{\infty,3}=\pbase{1,2,5}{7,7}=\pbase{3,1,2}{7,3}$
has generation $0$, $1$, $2$ and $2$ in the listed mixed characteristics. In mixed
characteristic $(7,3)$ we have
$\ancest=\{66=\pbase{3,1,0}{7,3},63=\pbase{3,0,0}{7,3}\}$ and $\supp[{68=\pbase{3,1,2}{7,3}}]=\{68=\pbase{3,1,2}{7,3},64=\pbase{3,1,-2}{7,3},62=\pbase{3,-1,2}{7,3},58=\pbase{3,-1,-2}{7,3}\}$.
\end{example}

The elements $w$ in the support $\supp$ of 
$v\in\N$ can be described by the
sets of indices of digits of 
$v$, which are negated (or ``reflected'') to obtain
an expression for $w$. To obtain a bijection 
between elements in $\supp$ and
sets of indices, we enforce certain 
admissibility conditions on the latter:

\begin{definition}\label{definition:adm}
For $S\subset\N[0]$ a finite set, we consider partitions $S=\bigsqcup_{i}S_{i}$
of $S$ into subsets $S_{i}$ of consecutive integers that we call
\emph{stretches}. For the rest of the definition, we let $S=\bigsqcup_{i}S_{i}$
be the coarsest such partition into stretches.

The set $S$ is called \emph{down-admissible} for $v=\plbase{a_{j},\dots,a_{0}}$ if:
\begin{enumerate}[label=(\roman*)]

\item $a_{\min(S_{i})}\neq 0$ for every $i$, and

\item if $s\in S$ and $a_{s+1}=0$, then $s+1\in S$.
\end{enumerate}
If $S\subset\N[0]$ is down-admissible for $v=\plbase{a_{j},\dots,a_{0}}$, then we define its 
\emph{downward reflection} along $S$ as
\begin{gather*}
v[S]=\plbase{a_{j},
\epsilon_{j-1}a_{j-1},\dots,\epsilon_{0}\,a_{0}},\quad
\epsilon_{k}
=
\begin{cases}
1 &\text{if }k\notin S,
\\
-1 &\text{if }k\in S.
\end{cases}
\end{gather*}

Conversely, $S$ is 
\emph{up-admissible} for $v=\plbase{a_{j},\dots,a_{0}}$ if the following conditions are satisfied:
\begin{enumerate}[label=(\roman*)]

\item $a_{\min(S_{i})}\neq 0$ for every $i$, and

\item if $s\in S$ and $a_{s{+}1}=\ppar-1$, then we also have $s+1\in S$.

\end{enumerate}
If $S\subset\N[0]$ is up-admissible 
for $v=\pbase{a_{j},\dots,a_{0}}{\ppar}$, then we define its 
\emph{upward reflection} along $S$ as
\begin{gather*}
v(S)=
\plbase{a_{r(S)}^{\prime},\dots,a_{0}^{\prime}},
\quad
a_{k}^{\prime}= 
\begin{cases}
a_{k} &\text{if }k\notin S,k-1\notin S,
\\
a_{k}+2 &\text{if }k\notin S,k-1\in S,
\\
-a_{k} &\text{if }k\in S,
\end{cases}
\end{gather*}
where we extend the digits of $v$ by $a_{h}=0$ for $h>j$ if necessary, and
$r(S)$ is the biggest integer such that $a_{k}^{\prime}\neq 0$.

Any down- or up-admissible set $S$ has a 
unique finest partition into down- or
up-admissible sets, each of which consist 
of consecutive integers and which we
call \emph{minimal down-} respectively 
\emph{up-admissible stretches}. 
\end{definition}

A stretch $\{k,k-1,\dots,l+1,l\}$ is minimal 
down-admissible if and only if
\begin{gather*}
(a_{k+1},a_{k},\dots,a_{l+1},a_{l})=(a_{k+1},0,\dots,0,a_{l})
\quad\text{with }a_{k+1}\neq 0,a_{l}\neq 0.
\end{gather*}
It is minimal up-admissible if and only if
\begin{gather*}
(a_{k+1},a_{k},\dots,a_{l+1},a_{l})=(a_{k+1},\ppar-1,\dots,
\ppar-1,a_{l})\quad\text{with }a_{k+1}\neq\ppar-1,a_{l}\neq 0.
\end{gather*}
Very often (unless {\losp} applies), 
the minimal stretches will just be singleton
sets $\{i\}$ specifying a single digit 
in which we reflect. We also tend to omit
the set brackets of down- or 
up-admissible sets if no confusion can arise, {\eg}
we write $v[i]$ instead of $v[\{i\}]$.

For $v\in\N$ a finite set $S\subset\N[0]$ 
is down-admissible if and only if it
is up-admissible for $v[S]$, and in this case $v[S](S)=v$. For a
representation-theoretic interpretation of the admissibility conditions see
\fullref{remark:admissibility}. 
Note that $\supp=\{v[S]\mid S\text{ down-admissible}\}$.

For the zigzag relation in \fullref{theorem:main-tl-section} 
we need one more definition related to {\losp}:

\begin{definition}
If $S$ is up-admissible for $v\in N$, 
then we denote by $\hull\subset\N[0]$ the
down-admissible hull of $S$, the 
smallest down-admissible set containing $S$, if
it exists.
\end{definition}

Note that $\hull$ is only defined 
for up-admissible $S$, which excludes
stretches with rightmost digit 
zero. The singleton containing the leading digit
is always up-admissible and its 
down-admissible hull does not exist.

\begin{example}
The down-admissible sets for
$68=\pbase{3,1,2}{7,3}$ as in \fullref{example:support} are
\begin{gather*}
S=\emptyset\leftrightsquigarrow\pbase{3,1,2}{7,3}
,\;
S=\{0\}\leftrightsquigarrow\pbase{3,1,\underline{2\!\!\phantom{,}}}{7,3}
,\;
S=\{1\}\leftrightsquigarrow\pbase{3,\underline{1\!\!\phantom{,}},2}{7,3}
,\;
S=\{1,0\}\leftrightsquigarrow\pbase{3,\underline{1,2}}{7,3}.
\end{gather*}	
Moreover, for 
$v=\pbase{3,1,2,2,4,4,1,0,2,3,2,4,1,0}{5,7}$, and 
let $S=\{10,9,8,7\}\cup\{4,3,2,1\}$. Then $S$ is down-admissible for $v$ with 
\begin{align*}
v[S]
&=\pbase{3,1,2,\underline{2,4,4,1},0,2,\underline{3,2,4,1},0}{5,7}
\\
&=\pbase{3,1,2,-2,-4,-4,-1,0,2,-3,-2,-4,-1,0}{5,7}
=\pbase{3,1,1,2,0,0,4,0,1,1,2,0,4,0}{5,7}.
\end{align*}

Finally, for $v$ as above we have $\hull[\{4\}]=\{6,5,4\}$.
\end{example}

\begin{remark}
The above admissibility condition it is taken from \cite[Definition
2.8]{TuWe-quiver}. The whole discussion after \cite[Definition
2.8]{TuWe-quiver} works {\ver}. More examples appear there and in \cite[Example 2.9]{TuWe-center}.
\end{remark}

\subsection{Tilting modules and their diagrams}\label{subsection:TL-tilting}

Let $\kk\subset\K$ denote an 
algebraically closed field containing $\kk$. We use the symbol
$\SLtwo$ to denote the reductive group 
$\SLtwo$ over $\K$ if $\qpar=\pm 1\in\K$ and
Lusztig's divided power quantum group 
(using the conventions from \cite{AnPoWe-representation-qalgebras})
associated to $\SLtwolie$ for
other values of $\qpar$. We will identify 
dominant integral weights of $\SLtwo$ 
with $\N[0]$ and weights with $\Z$ in the 
usual way.

We consider finite-dimensional (left) 
$\SLtwo$-modules of type $1$ over $\K$.
These form an abelian, $\K$-linear 
category $\allmod=\SLtwo\text{-}\allmod$, for
which we additionally choose a monoidal and a pivotal structure using the
comultiplication $\SLtwo$, and the antipode of $\SLtwo$ and the analog of the
involution $\omega$ from \cite[Lemma 4.6]{Ja-lectures-qgroups}. The category
$\allmod$ contains four families of highest weight modules of particular
interest for our purpose, all parameterized by $\N[0]$. 

\begin{remark}
Here and in the following, we write the 
highest weights of these modules 
often as $v-1$ for
$v\in\N$. This puts an emphasis on the 
quantity $v$, the $\rho$-shifted highest
weight, which will play a greater role 
than the highest weight itself.
\end{remark}

The first two families are formed by 
the \emph{Weyl} modules $\wmod(v-1)$ and the \emph{dual Weyl}
modules $\dwmod(v-1)$. These do not depend on the mixed
characteristic in the sense that they can be defined integrally, {\ie} for
$\big(\Zv,\vpar\big)$.

The other two families of modules are 
formed by the \emph{simple} modules $\lmod(v-1)$
and the \emph{indecomposable tilting} modules $\tmod(v-1)$. These modules
do not admit a construction independent of the mixed characteristic. 
Their characters are given by 
\fullref{proposition:multiplicities} below.

Let
$\tilt[{\K,\qpar}]=\SLtwo\text{-}\tilt[{\K,\qpar}]$ 
be the full subcategory of $\allmod$ whose objects
are direct sums of $\tmod(v-1)$ for $v\in\N$. We also write 
$\tmod(z)$ for $z<0$ which is zero, by 
convention. The category $\tilt[{\K,\qpar}]$ is
additive, idempotent closed, Krull--Schmidt 
(meaning there is a essential unique 
decompositions into indecomposables, and an object 
is indecomposable if and only 
if its endomorphism ring is local), $\K$-linear, and pivotal
(restricting the structures from 
$\allmod$ to $\tilt[{\K,\qpar}]$). It is the main object
under study in this paper and 
called the \emph{category of tilting modules} of
$\SLtwo$.

\begin{remark}\label{remark:filtrations} 
Classically $\tilt[{\K,\qpar}]$ would
be defined as the full subcategory of 
$\allmod$ whose objects have Weyl and dual
Weyl filtrations, and its closure under 
tensor product would be a theorem. The
above definition is equivalent to the 
classical one for $\SLtwo$, because the
sole fundamental representation (it is $\tmod(1)$) 
is tilting and thus all
indecomposable tiltings appear as 
direct summands of tensor powers thereof. This
may fail for other types in small characteristic.
\end{remark}

Generally these four types of 
modules (Weyl, dual Weyl, simple, and
indecomposable tilting) for a 
fixed highest weight are distinct from one
another. If, however, two are 
isomorphic {\eg} $\tmod(v-1)\cong\dwmod(v-1)$,
then it follows that all four 
types of modules of the same highest weight are
isomorphic. An example is
$\tmod(0)\cong\wmod(0)\cong\dwmod(0)\cong\lmod(0)\cong\K$, which is the
monoidal unit of $\tilt[{\K,\qpar}]$ and which we denote by $\munit$.

\begin{remark}
Let us comment on the references for the above and some of the material below,
using the terminology from \fullref{example:pl}. In the semisimple case,
$\tilt[{\K,\qpar}]$ is equivalent to $\allmod$, is semisimple and has the
classical combinatorics of $\mathrm{SL}_{2}(\C)$, which is covered in many
textbooks. Otherwise $\tilt[{\K,\qpar}]$ is non-semisimple and we refer to
\cite{AnPoWe-representation-qalgebras} and \cite{An-tensor-q-tilting-modules}
in the complex quantum group case, to \cite{Ri-good-filtrations} and \cite{Do-tilting-alg-groups} 
in the characteristic $\ppar$ case, and to
\cite{AnWe-mixed-qgroup} as well as \cite{Do-q-schur} and \cite{An-simple-tl}
in the mixed case. A summary for tilting modules 
can also be found in \cite{AnStTu-cellular-tilting-2}.
\end{remark}

The diagrammatic incarnation of $\tilt[{\K,\qpar}]$ is sometimes called the
\emph{Temperley--Lieb category} 
(abbreviated to TL category) and can be defined as follows.
Let $\TL[{\Zv,\vpar}]$ 
denote the $\Zv$-linear category with objects indexed by
$m\in\N[0]$, and with 
morphisms from $m$ to $n$ being $\Zv$-linear 
combinations
of unoriented string diagrams drawn in a horizontal strip $\R\times[0,1]$
between $m$ marked points on the lower 
boundary $\R\times\{0\}$ and $n$ marked
points on the upper boundary $\R\times\{1\}$, 
considered up to planar isotopy
relative to the boundary and the 
relation that a circle evaluates to
$-\qnum{2}{\vpar}$. The category 
$\TL[{\Zv,\vpar}]$ is (strict) monoidal with $\hcirc$ given by
horizontal concatenation and admits a 
(strict) pivotal structure given by the duality
maps being cups and caps, and all objects being self-dual.

We write $\morstuff{F}\morstuff{G}=\morstuff{F}\vcirc\morstuff{G}$ 
for the composition of morphisms in $\TL[{\Zv,\vpar}]$, and 
we read diagrams from bottom to top and left to right, {\eg}
\begin{gather*}
(\idmor\hcirc\morstuff{G})(\morstuff{F}\hcirc\idmor) 
=
\begin{tikzpicture}[anchorbase,scale=0.22,tinynodes]
\draw[mor] (-5.5,2) rectangle (-2,0.5);
\draw[mor] (-0.5,3) rectangle (3,4.5);
\draw[thick,densely dotted] (-5.5,2.5) node[left,yshift=-2pt]{$\vcirc$} 
to (3.5,2.5) node[right,yshift=-2pt]{$\vcirc$};
\draw[thick,densely dotted] (-1.25,5) 
node[above,yshift=-2pt]{$\hcirc$} to (-1.25,0) node[below]{$\hcirc$};
\draw[usual] (-5,2) to (-5,3.75) node[right,xshift=1pt]{$\dots$} to (-5,5);
\draw[usual] (-2.5,2) to (-2.5,5);
\draw[usual] (0,0) to (0,1.25) node[right,xshift=1pt]{$\dots$} to (0,3);
\draw[usual] (2.5,0) to (2.5,3);
\draw[usual] (-5,0) to (-5,0.25) node[right,xshift=1pt]{$\dots$} to (-5,0.5);
\draw[usual] (-2.5,0) to (-2.5,0.5);
\draw[usual] (2.5,4.5) to (2.5,5);
\draw[usual] (0,4.5) to (0,4.75) node[right,xshift=1pt]{$\dots$} to (0,5);
\node at (-3.75,1.0) {$\morstuff{F}$};
\node at (1.25,3.5) {$\morstuff{G}$};
\end{tikzpicture}
=
\begin{tikzpicture}[anchorbase,scale=0.22,tinynodes]
\draw[mor] (-5.5,1.75) rectangle (-2,3.25);
\draw[mor] (-0.5,1.75) rectangle (3,3.25);
\draw[usual] (-5,3.25) to (-5,4.125) node[right,xshift=1pt]{$\dots$} to (-5,5);
\draw[usual] (-2.5,3.25) to (-2.5,5);
\draw[usual] (0,0) to (0,0.625) node[right,xshift=1pt]{$\dots$} to (0,1.75);
\draw[usual] (2.5,0) to (2.5,1.75);
\draw[usual] (-5,0) to (-5,0.625) node[right,xshift=1pt]{$\dots$} to (-5,1.75);
\draw[usual] (-2.5,0) to (-2.5,1.75);
\draw[usual] (2.5,3.25) to (2.5,5);
\draw[usual] (0,3.25) to (0,4.125) node[right,xshift=1pt]{$\dots$} to (0,5);
\node at (-3.75,2.25) {$\morstuff{F}$};
\node at (1.25,2.25) {$\morstuff{G}$};
\end{tikzpicture}
=
\begin{tikzpicture}[anchorbase,scale=0.22,tinynodes]
\draw[mor] (5.5,2) rectangle (2,0.5);
\draw[mor] (0.5,3) rectangle (-3,4.5);
\draw[thick, densely dotted] (5.5,2.5) 
node[right,yshift=-2pt]{$\vcirc$} to (-3.5,2.5) node[left,yshift=-2pt]{$\vcirc$};
\draw[thick, densely dotted] (1.25,5) 
node[above,yshift=-2pt]{$\hcirc$} to (1.25,0) node[below]{$\hcirc$};
\draw[usual] (2.5,2) to (2.5,3.75) node[right,xshift=1pt]{$\dots$} to (2.5,5);
\draw[usual] (5,2) to (5,5);
\draw[usual] (0,0) to (0,3);
\draw[usual] (-2.5,0) to (-2.5,1.25) node[right,xshift=1pt]{$\dots$} to (-2.5,3);
\draw[usual] (2.5,0) to (2.5,0.25) node[right,xshift=1pt]{$\dots$} to (2.5,0.5);
\draw[usual] (5,0) to (5,0.5);
\draw[usual] (-2.5,4.5) to (-2.5,4.75) node[right,xshift=1pt]{$\dots$} to (-2.5,5);
\draw[usual] (0,4.5) to (0,5);
\node at (3.75,1.0) {$\morstuff{G}$};
\node at (-1.25,3.5) {$\morstuff{F}$};
\end{tikzpicture}
=(\morstuff{F}\hcirc\idmor)(\idmor\hcirc\morstuff{G}).
\end{gather*} 
There is an antiinvolution 
$\fliph[{(\placeholder)}]$ on $\TL[{\Zv,\vpar}]$ which fixes 
objects and reflects diagrams in a horizontal line, as well as an involution
$\flipv[{(\placeholder)}]$ which mirrors long the vertical axis. 
The following summarizes the important relations and conventions:
\begin{gather*}
\begin{tikzpicture}[anchorbase,scale=0.25,tinynodes]
\draw[usual] (0,-2) to (0,0) to[out=90,in=180] (1,1) to[out=0,in=90] (2,0);
\draw[usual] (2,0) to[out=270,in=180] (3,-1) to[out=0,in=270] (4,0) to (4,2);
\end{tikzpicture}
=
\begin{tikzpicture}[anchorbase,scale=0.25,tinynodes]
\draw[usual] (0,-2) to (0,2);
\end{tikzpicture}
\;,\quad
\begin{tikzpicture}[anchorbase,scale=0.25,tinynodes]
\draw[usual] (0,-2) to (0,0) to[out=90,in=0] (-1,1) to[out=180,in=90] (-2,0);
\draw[usual] (-2,0) to[out=270,in=0] (-3,-1) to[out=180,in=270] (-4,0) to (-4,2);
\end{tikzpicture}
=
\begin{tikzpicture}[anchorbase,scale=0.25,tinynodes]
\draw[usual] (0,-2) to (0,2);
\end{tikzpicture}
,\quad
\begin{tikzpicture}[anchorbase,scale=0.25,tinynodes]
\draw[usual] (0,0) to[out=90,in=180] (1,1) to[out=0,in=90] (2,0);
\draw[usual] (0,0) to[out=270,in=180] (1,-1) to[out=0,in=270] (2,0);
\end{tikzpicture}
=-\qnum{2}{\vpar}
,\quad
\fliph[n]=\flipv[n]=n
,\quad
\bigg(\begin{tikzpicture}[anchorbase,scale=0.22,tinynodes]
\draw[thin,black,fill=mygray,fill opacity=0.35] (-5.5,2) rectangle (-2,0.5);
\draw[usual] (-5,0) to (-5,0.25) node[right,xshift=1pt]{$\dots$} to (-5,0.5);
\draw[usual] (-2.5,0) to (-2.5,0.5);
\draw[usual] (-2.5,2) to (-2.5,2.5);
\draw[usual] (-5,2) to (-5,2.25) node[right,xshift=1pt]{$\dots$} to (-5,2.5);
\node at (-3.75,1.0) {$\morstuff{F}$};
\end{tikzpicture}
\bigg)^{\updownarrow}
=
\begin{tikzpicture}[anchorbase,scale=0.22,tinynodes]
\draw[thin,black,fill=mygray,fill opacity=0.35] (-5.5,2) rectangle (-2,0.5);
\draw[usual] (-5,0) to (-5,0.25) node[right,xshift=1pt]{$\dots$} to (-5,0.5);
\draw[usual] (-2.5,0) to (-2.5,0.5);
\draw[usual] (-2.5,2) to (-2.5,2.5);
\draw[usual] (-5,2) to (-5,2.25) node[right,xshift=1pt]{$\dots$} to (-5,2.5);
\node at (-3.75,1.75) {\reflectbox{\rotatebox{180}{$\morstuff{F}$}}};
\end{tikzpicture}
,\quad
\bigg(\begin{tikzpicture}[anchorbase,scale=0.22,tinynodes]
\draw[thin,black,fill=mygray,fill opacity=0.35] (-5.5,2) rectangle (-2,0.5);
\draw[usual] (-5,0) to (-5,0.25) node[right,xshift=1pt]{$\dots$} to (-5,0.5);
\draw[usual] (-2.5,0) to (-2.5,0.5);
\draw[usual] (-2.5,2) to (-2.5,2.5);
\draw[usual] (-5,2) to (-5,2.25) node[right,xshift=1pt]{$\dots$} to (-5,2.5);
\node at (-3.75,1.0) {$\morstuff{F}$};
\end{tikzpicture}
\bigg)^{\leftrightarrow}
=
\begin{tikzpicture}[anchorbase,scale=0.22,tinynodes]
\draw[thin,black,fill=mygray,fill opacity=0.35] (-5.5,2) rectangle (-2,0.5);
\draw[usual] (-5,0) to (-5,0.25) node[right,xshift=1pt]{$\dots$} to (-5,0.5);
\draw[usual] (-2.5,0) to (-2.5,0.5);
\draw[usual] (-2.5,2) to (-2.5,2.5);
\draw[usual] (-5,2) to (-5,2.25) node[right,xshift=1pt]{$\dots$} to (-5,2.5);
\node at (-3.75,1) {\reflectbox{$\morstuff{F}$}};
\end{tikzpicture}
.
\end{gather*}

Let $\TL[{\kk,\qpar}]=\TL[{\Zv,\vpar}]\hcirc_{\Zv}\kk$ be the scalar extension
and specialization 
$\Zv\ni\vpar\mapsto\qpar\in\kk$. Recall that $\K$ denotes an
algebraically closed field containing $\kk$. Recall also that $\tmod(1)$
generates $\tilt[{\K,\qpar}]$ as a monoidal category.

\begin{proposition}\label{proposition:TLtilt} 
We have a $\K$-linear, pivotal functor 
\begin{gather*}
\tlfunctor\colon
\TL[{\K,\qpar}]\to\tilt[{\K,\qpar}],
\quad
\tlfunctor(d)=\tmod(1)^{\hcirc(d)},
\end{gather*}
which induces an equivalence of 
$\K$-linear, pivotal 
categories upon additive idempotent completion.
\end{proposition}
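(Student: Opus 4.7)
The strategy is to define $\tlfunctor$ first integrally over $\Zv$, then specialize to $(\kk,\qpar)$ and extend scalars to $\K$, and finally verify full faithfulness on hom spaces and essential surjectivity after additive idempotent completion.

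First I would define $\tlfunctor_{\Zv}\colon\TL[{\Zv,\vpar}]\to\tilt[{\Zv,\vpar}]$ by sending the object $d$ to $\tmod(1)^{\hcirc d}$ and by sending the cup and cap generators to the coevaluation $\munit\to\tmod(1)\hcirc\tmod(1)$ and evaluation $\tmod(1)\hcirc\tmod(1)\to\munit$ afforded by the pivotal structure on $\allmod$ (antipode twisted by $\omega$) restricted to $\tmod(1)$. The defining relations of $\TL[{\Zv,\vpar}]$ are planar isotopy, the two snake identities, and the circle relation $\vcirc=-\qnum{2}{\vpar}$. The first two are part of the pivotal axioms for $\tmod(1)$; the circle relation is the computation of the categorical dimension of $\tmod(1)$, where the $\omega$-twist contributes a global sign $-1$ and the character of $\tmod(1)$ contributes $\vpar+\vpar^{-1}=\qnum{2}{\vpar}$. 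Base change $\Zv\to\kk$, $\vpar\mapsto\qpar$, followed by extension of scalars to $\K$, produces the desired $\K$-linear pivotal functor $\tlfunctor$.

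Next, I would verify that $\tlfunctor$ becomes an equivalence on additive idempotent completions. Essential surjectivity follows from \fullref{remark:filtrations}: since $\tmod(1)$ tensor-generates $\tilt[{\K,\qpar}]$, every indecomposable $\tmod(v-1)$ is a summand of some $\tmod(1)^{\hcirc m}$ and hence lies in the image after splitting idempotents. For full faithfulness, it suffices to check that
\begin{gather*}
\tlfunctor_{m,n}\colon\Hom_{\TL[{\K,\qpar}]}(m,n)\to\Hom_{\SLtwo}\big(\tmod(1)^{\hcirc m},\tmod(1)^{\hcirc n}\big)
\end{gather*}
is a $\K$-linear isomorphism for all $m,n\in\N[0]$. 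I would deduce this from the integral statement: $\tlfunctor_{\Zv,m,n}$ is a map of free $\Zv$-modules, the source has $\Zv$-basis the crossingless matchings on $m+n$ points, integral Schur--Weyl duality (extractable from \cite{DuPaSc-schur-weyl}) identifies the target as a free $\Zv$-module of the same rank with compatible basis, and the map is bijective; base change preserves this bijection.

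The main obstacle I anticipate is precisely this integral Schur--Weyl input, \ie~ the assertion that TL diagrams form a $\Zv$-basis of the relevant hom spaces between tensor powers of $\tmod(1)$ for $\SLtwo$ over $\Zv$. This is classical in the semisimple case \cite{RuTeWe-cup-diagrams} and at generic quantum parameters, but the uniform integral version is what makes the proposition hold across all mixed characteristics $(\kk,\qpar)$ simultaneously. Once this is established, the remaining verifications---Krull--Schmidt, transport of the pivotal structure, and splitting of idempotents---are formal.
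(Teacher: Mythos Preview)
Your proposal is correct and outlines exactly the standard argument; the paper itself gives no proof at all, merely labeling the result folklore and citing \cite{RuTeWe-cup-diagrams}, \cite[Theorem 2.58]{El-ladders-clasps}, and \cite[Proposition 2.3]{AnStTu-semisimple-tilting}. Your sketch---define the functor on generators, verify the circle value, then use integral Schur--Weyl duality (as in \cite{DuPaSc-schur-weyl}) for full faithfulness and tensor-generation of $\tmod(1)$ for essential surjectivity---is precisely what those references do, and you have correctly isolated the only nontrivial ingredient.
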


\begin{proof}
This is folklore, the semisimple case dates back to 
\cite{RuTeWe-cup-diagrams}, and a proof 
in general can be found in {\eg} \cite[Theorem 2.58]{El-ladders-clasps} 
or \cite[Proposition 2.3]{AnStTu-semisimple-tilting}.
\end{proof}

Recall that $\TL[{\Zvs,\vpar}]$ 
(we need to add a formal square root of $\vpar$)
admits the structure of a braided category. 
The braiding is determined on the
generating object by Kauffman's \emph{skein relation}
\begin{gather}\label{eq:kauffman}
\begin{tikzpicture}[anchorbase,scale=0.25,tinynodes]
\draw[usual] (2,0) to (0,2);
\draw[usual,crossline] (0,0) to (2,2);
\end{tikzpicture}
=
\vpar^{1/2}\cdot
\begin{tikzpicture}[anchorbase,scale=0.25,tinynodes]
\draw[usual] (0,0) to (0,2);
\draw[usual] (2,0) to (2,2);
\end{tikzpicture}
+
\vpar^{-1/2}\cdot
\begin{tikzpicture}[anchorbase,scale=0.25,tinynodes]
\draw[usual] (0,0) to[out=90,in=180] (1,0.75) to[out=0,in=90] (2,0);
\draw[usual] (0,2) to[out=270,in=180] (1,1.25) to[out=0,in=270] (2,2);
\end{tikzpicture}
,\quad
\begin{tikzpicture}[anchorbase,scale=0.25,tinynodes]
\draw[usual] (0,0) to (2,2);
\draw[usual,crossline] (2,0) to (0,2);
\end{tikzpicture}
=
\vpar^{-1/2}\cdot
\begin{tikzpicture}[anchorbase,scale=0.25,tinynodes]
\draw[usual] (0,0) to (0,2);
\draw[usual] (2,0) to (2,2);
\end{tikzpicture}
+
\vpar^{1/2}\cdot
\begin{tikzpicture}[anchorbase,scale=0.25,tinynodes]
\draw[usual] (0,0) to[out=90,in=180] (1,0.75) to[out=0,in=90] (2,0);
\draw[usual] (0,2) to[out=270,in=180] (1,1.25) to[out=0,in=270] (2,2);
\end{tikzpicture}
.
\end{gather}
There is also a braiding on $\tilt[{\K,\qpar}]$, assuming that $\qpar$ has a
square root in $\K$, given by the so-called R-matrix, see {\eg} \cite[Section
IX.7]{Ka-quantum-groups}. (We clear 
the denominators in these formulas by using
divided powers, and observe that the expression is well-defined on all
finite-dimensional modules without further adjustments.) These two braidings,
the only ones we will consider in this paper, are compatible, as can be seen, {\eg} by comparing on generating objects:

\begin{proposition}
If $\qpar$ has a square root in $\K$, then the 
functor $\tlfunctor$ from \fullref{proposition:TLtilt} 
is an equivalence of braided categories.\qed
\end{proposition}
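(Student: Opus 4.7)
The plan is to use the fact, recalled earlier in the excerpt, that the category $\tilt[{\K,\qpar}]$ is generated as a monoidal category by the single object $\tmod(1)$, together with the observation that a braiding on a monoidal category generated by one object is determined by its value on the self-crossing of that generator (via naturality and the hexagon axioms). Thus it suffices to verify that the functor $\tlfunctor$ intertwines the two braidings on the generating object, i.e.\ that
\[
\tlfunctor\left(\begin{tikzpicture}[anchorbase,scale=0.2,tinynodes]
\draw[usual] (2,0) to (0,2);
\draw[usual,crossline] (0,0) to (2,2);
\end{tikzpicture}\right)
\]
equals the R-matrix braiding on $\tmod(1)\hcirc\tmod(1)$.

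First, I would record that the endomorphism algebra $\End_{\tilt}(\tmod(1)\hcirc\tmod(1))$ is two-dimensional, spanned by the identity and the image under $\tlfunctor$ of the cup-cap diagram; this already follows from \fullref{proposition:TLtilt}, since both generators of $\End_{\TL[{\K,\qpar}]}(2)$ are sent to linearly independent endomorphisms. Consequently, any braiding of $\tmod(1)$ with itself is uniquely determined by two scalar coefficients in $\K$, and the claim reduces to computing them for the R-matrix.

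Second, I would carry out (or cite) the standard computation of the Lusztig R-matrix acting on $\tmod(1)\hcirc\tmod(1)$. Using the conventions of \cite{AnPoWe-representation-qalgebras} (or equivalently \cite[Section~IX.7]{Ka-quantum-groups} with the divided power truncation), applied to the basis of weight vectors of $\tmod(1)$, the R-matrix comes out as $\qpar^{1/2}$ times the identity plus $\qpar^{-1/2}$ times the rank-one endomorphism that factors as a cap followed by a cup (up to the pivotal identifications). This is precisely the image under $\tlfunctor$ (with $\vpar\mapsto\qpar$) of the right-hand side of Kauffman's skein relation \eqref{eq:kauffman}, and similarly for the inverse crossing.

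The main obstacle is purely bookkeeping: matching the normalizations of the pivotal structure, the R-matrix, and Kauffman's skein relation so that the scalars $\qpar^{\pm 1/2}$ appear on the correct sides. Once this normalization check is in place, the hexagon axioms promote the agreement on the generator to an isomorphism of braided structures on the entire monoidally (and then idempotent-completed) generated category. In particular, there is no obstruction arising from the non-semisimplicity of $\tilt[{\K,\qpar}]$, since the argument takes place entirely on the pair of tensor-generating objects $(2,\tmod(1)\hcirc\tmod(1))$, where both sides are already semisimple (indeed $\tmod(1)\hcirc\tmod(1)\cong\tmod(2)\oplus\tmod(0)$ by \eqref{eq:jw-recursion}).
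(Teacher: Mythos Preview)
Your approach is essentially the same as the paper's, which simply remarks that compatibility ``can be seen, e.g.\ by comparing on generating objects'' and leaves the rest to the reader. Your write-up fleshes this out correctly: the monoidal equivalence is already in place by \fullref{proposition:TLtilt}, and matching the two braidings on $\tmod(1)\hcirc\tmod(1)$ via the Kauffman skein relation and the explicit R-matrix is exactly the intended check.

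One small caveat: your final parenthetical remark that $\tmod(1)\hcirc\tmod(1)\cong\tmod(2)\oplus\tmod(0)$ and is ``already semisimple'' fails when $\lpar=2$ (there $\tmod(1)\hcirc\tmod(1)\cong\tmod(2)$ is indecomposable with a Weyl filtration by $\wmod(2)$ and $\wmod(0)$). This does not damage your argument, since the two-dimensionality of $\End(\tmod(1)\hcirc\tmod(1))$ already follows from the equivalence with $\TL[{\K,\qpar}]$ and does not rely on any splitting; you should simply drop that justification.
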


\begin{remark}
	\label{rem:abuse}
Note that \fullref{proposition:TLtilt} allows us to identify the additive
Karoubi closure of $\TL[{\K,\qpar}]$ with $\tilt[{\K,\qpar}]$. Motivated by
this, we will also denote the additive Karoubi closure of $\TL[{\kk,\qpar}]$ by
$\tilt[{\kk,\qpar}]$ in case of the not necessary algebraically closed field
$\kk$, and the objects $\tmod(v-1)\in\tilt[{\kk,\qpar}]$ are defined as the
images of primitive idempotents under $\tlfunctor[{\kk,\qpar}]$. In this
notation the functor $\tlfunctor[{\kk,\qpar}]$ will be the universal embedding
of $\TL[{\kk,\qpar}]$ into its Karoubi closure, and we omit it from the
notation.
\end{remark}

\begin{remark}
At this point, the reader be warned that $\tilt[{\kk,\qpar}]$ may not be
equivalent to the category 
of tilting modules over $\kk$ (as defined via (dual)
Weyl filtrations), even in semisimple cases, if $\kk$ is finite, see \cite[Section 5]{BeDo-schur-weyl-finite-fields}.
\end{remark}

In the semisimple situation 
of $\TL[\kkv,\vpar]$, the primitive idempotents that
are mapped to the indecomposable tilting modules $\tmod(v-1)$ in
$\tmod(1)^{\hcirc(v{-}1)}$ are the well-known Jones--Wenzl projectors. 
Since $\tmod(v-1)$ is a simple module in this case, we will
call these idempotents 
\emph{simple Jones--Wenzl projectors} (simple JW projectors for
short), also to 
distinguish them from their non-simple analogs. All we need to
know about these projectors is summarized in the following proposition, 
see {\eg} \cite{KaLi-TL-recoupling} for a proof.

\begin{proposition}\label{proposition:generic-jw-properties}
For all $v\in\N$ there exists a unique idempotent
$\qjw[v{-}1]\in\End_{\TL[\kkv,\vpar]}(v-1)$, which is invariant under duality
$\fliph[{(\qjw)}]=\flipv[{(\qjw)}]=\qjw$ (this implies that the 
following relations hold mirrored as well) and which satisfies:\\
\noindent
\begin{minipage}{0.29\textwidth}
\begin{gather}\label{eq:0absorb}
\begin{tikzpicture}[anchorbase,scale=0.25,tinynodes]
\draw[JW] (-0.6,0) rectangle (1.6,2);
\draw[usual] (-1,0) to (-1,2);
\draw[usual] (2,0) to (2,2);
\node at (0.5,0.9) {$\qjwm[w{-}1]$};
\draw[JW] (-1.5,-2) rectangle (2.5,0);
\node at (0.5,-1.1) {$\qjwm$};
\node at (0,2.5) {$\phantom{a}$};
\node at (0,-2.5) {$\phantom{a}$};
\end{tikzpicture}
=
\begin{tikzpicture}[anchorbase,scale=0.25,tinynodes]
\draw[JW] (-1.5,-1) rectangle (2.5,1);
\node at (0.5,-0.1) {$\qjwm$};
\node at (0,2.5) {$\phantom{a}$};
\node at (0,-2.5) {$\phantom{a}$};
\end{tikzpicture},
\end{gather}
\end{minipage}
\begin{minipage}{0.24\textwidth}
\begin{gather}\label{eq:0kill}
\quad
\begin{tikzpicture}[anchorbase,scale=0.25,tinynodes]
\draw[JW] (-1.2,-1) rectangle (1.2,1);
\node at (0,-0.1) {$\qjwm$};
\draw[usual] (-0.5,1) to[out=90,in=180] (0,1.5) 
node[above,yshift=-2pt]{$k$} to[out=0,in=90] (0.5,1);
\node at (0,2.5) {$\phantom{a}$};
\node at (0,-2.5) {$\phantom{a}$};
\end{tikzpicture}
=
0,
\end{gather}
\end{minipage}
\begin{minipage}{0.45\textwidth}
\begin{gather}\label{eq:0trace}
\begin{tikzpicture}[anchorbase,scale=0.25,tinynodes]
\draw[JW] (1.5,-1) rectangle (-1,1);
\node at (0.25,-0.1) {$\qjwm$};
\draw[usual] (1,1) to[out=90,in=180] (1.5,1.5) to[out=0,in=90] 
(2,1) to (2,0) node[right,xshift=-2pt]{$k$} to (2,-1) 
to[out=270,in=0] (1.5,-1.5) to[out=180,in=270] (1,-1);
\node at (0,2.5) {$\phantom{a}$};
\node at (0,-2.5) {$\phantom{a}$};
\end{tikzpicture}
=(-1)^{k}
\tfrac{\qnum{v}{\vpar}}{\qnum{v{-}k}{\vpar}}\cdot
\begin{tikzpicture}[anchorbase,scale=0.25,tinynodes]
\draw[JW] (-1.8,-1) rectangle (1.8,1);
\node at (0,-0.1) {$\qjwm[{v{-}1{-}k}]$};
\node at (0,2.5) {$\phantom{a}$};
\node at (0,-2.5) {$\phantom{a}$};
\end{tikzpicture}
.
\end{gather}
\end{minipage}

\noindent Here we use the usual box notation for these projectors, a number $k$ next to a strand means $k$ parallel strands, and the 
projector $\qjw[w{-}1]$ in \eqref{eq:0absorb} respectively the cup or cap in 
\eqref{eq:0kill} can be at arbitrary positions. The idempotent 
$\qjw[v{-}1]$ satisfies the recursion in \eqref{eq:jw-recursion}
\qed
\end{proposition}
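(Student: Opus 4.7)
The plan is the classical one going back to Wenzl: construct $\qjw[v-1]$ recursively and verify the properties by induction on $v$. Since we work over $\kkv$, every quantum integer $\qnum{n}{\vpar}$ is a nonzero rational function, so all inverses appearing in \eqref{eq:jw-recursion} are well-defined. I would set $\qjw[0]$ to be the identity on a single strand and define $\qjw[v-1]$ from $\qjw[v-2]$ via the right-hand side of \eqref{eq:jw-recursion}. The killing property \eqref{eq:0kill} is then verified inductively: by the pivotal structure it suffices to treat a cap on the rightmost two strands, which becomes a direct computation using the recursive formula, the inductive hypothesis for $\qjw[v-2]$, and the circle relation $\bigcirc = -\qnum{2}{\vpar}$. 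Cups or caps at other positions reduce to the rightmost case once one has established the absorption \eqref{eq:0absorb} inductively alongside \eqref{eq:0kill}. Idempotency follows immediately, since writing $\qjw[v-1] = \idmor_{v-1} + r$ with $r$ a sum of diagrams containing cup-caps, the square $(\qjw[v-1])^{2}$ collapses to $\qjw[v-1]$ by \eqref{eq:0kill}.

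Uniqueness and the duality invariances are then a purely algebraic consequence. The Temperley-Lieb algebra $\End_{\TL[\kkv,\vpar]}(v-1)$ contains the two-sided ideal $I_{v-1}$ spanned by non-identity planar diagrams, with one-dimensional quotient. Any idempotent annihilated by $I_{v-1}$ on both sides must lie in the $\kkv$-line through $\idmor_{v-1}$ modulo $I_{v-1}$, and idempotency forces the scalar to be $1$. Hence $\qjw[v-1]$ is unique, and since $\fliph[(\qjw)]$ and $\flipv[(\qjw)]$ manifestly satisfy the same defining conditions, they must equal $\qjw$.

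For the partial trace \eqref{eq:0trace} I would induct on $k$. The base case $k=1$ follows by applying \eqref{eq:jw-recursion} with $v$ replaced by $v-1$ to express $\qjw[v-1]$ as $\qjw[v-2] \hcirc \idmor_{1}$ minus a correction term, then closing the rightmost strand: the first summand contributes $-\qnum{2}{\vpar} \cdot \qjw[v-2]$ via the circle relation, the correction simplifies to a scalar multiple of $\qjw[v-2]$ once the closure is absorbed using \eqref{eq:0absorb} into the internal $\qjw[v-2]$, and collecting coefficients yields the ratio $-\qnum{v}{\vpar}/\qnum{v-1}{\vpar}$. For general $k$ one iterates the base case, the product of ratios telescoping to $(-1)^{k}\qnum{v}{\vpar}/\qnum{v-k}{\vpar}$. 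The main obstacle throughout is the scalar bookkeeping: getting the right signs and quantum-integer ratios from the interaction of the recursion \eqref{eq:jw-recursion} with the circle relation and the absorption identity, and ensuring the inductive verification of \eqref{eq:0kill} and \eqref{eq:0absorb} is carried out in parallel so that each step has the hypotheses it needs.
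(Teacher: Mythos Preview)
Your outline is correct and follows the classical Wenzl approach. The paper itself does not supply a proof of this proposition: it is stated as folklore with a reference to Kauffman--Lins \cite{KaLi-TL-recoupling} and marked with a terminal $\square$. Your sketch is essentially what one finds in that reference, so there is no divergence in approach to discuss.

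One small indexing slip: in the paper's conventions $\qjw[v-1]\in\End_{\TL[\kkv,\vpar]}(v-1)$ is an endomorphism of $v-1$ strands, so $\qjw[0]$ is the identity on zero strands (the empty diagram), not on a single strand. The base cases for the recursion \eqref{eq:jw-recursion} are $\qjw[0]$ (empty) and $\qjw[1]=\idmor_{1}$; the recursion then produces $\qjw[v]$ from $\qjw[v-1]$ and $\qjw[v-2]$. This does not affect the correctness of your argument, only the wording of the initialization.
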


In \fullref{definition:lightladders} and \fullref{convention:du} we will define
various different bases of morphism spaces in Temperley--Lieb categories. The
first example are the \emph{integral bases} given by sets of crossingless
matchings ({\aka} \emph{Temperley--Lieb diagrams}) $\sbas$ of $v+w-2$ points.
These are integral in the sense that they provide isomorphisms
$\Hom_{\TL[{\Zv,\vpar}]}(v-1,w-1)\cong\Zv\sbas$ of $\Zv$-modules. Second,
$\TL[{\kk,\qpar}]$ has \emph{projector bases} $\pbas$ given by decomposing
$\tmod(1)^{\hcirc(v{-}1)}$ into indecomposable summands. (For
$\TL[{\kkv,\vpar}]$ a basis of the form $\awbas$ is an \emph{Artin--Wedderburn
basis} since these summands are simple.) We stress that these bases are not
unique unless one specifies further properties that these should satisfy. The
existence of these bases follows from abstract theory, see
\cite{AnStTu-cellular-tilting}, and all of these are cellular and related by
unitriangular basis change matrices. 
To construct these bases explicitly we can
use the \emph{light ladder strategy}. (The terminology is borrowed from
\cite{El-ladders-clasps} and, in fact, all of the bases from
\cite{AnStTu-cellular-tilting} are of light ladder type. The light
ladder strategy can be seen as a general philosophy how to construct bases.)

\begin{definition}\label{definition:lightladders} 
Fix a family of morphisms
$\morstuff{G}_{v{-}1}\in\End_{\TL[{\kk,\qpar}]}(v-1)$ for $v\in\N$. Then for
each $\morstuff{F}\in\Hom_{\TL[{\kk,\qpar}]}(w-1,v-1)$ we define morphisms
$\tilde{\varepsilon}_{1}(\morstuff{F})\in\Hom_{\TL[{\kk,\qpar}]}(w,v)$ and
(provided $v>1$) $\tilde{\varepsilon}_{-1}(\morstuff{F})\in
\Hom_{\TL[{\kk,\qpar}]}(w,v-2)$ by sending:
\begin{gather*}
F=
\begin{tikzpicture}[anchorbase,scale=1,tinynodes]
\draw[mor] (0,0) rectangle (1,0.5) node[pos=0.5,text opacity=1,yshift=-2pt]{$\morstuff{F}$};
\draw[usual] (0.5,0.5) to (0.5,0.75) node[above,yshift=-2pt]{$v-1$};
\end{tikzpicture}
\mapsto
\begin{tikzpicture}[anchorbase,scale=1,tinynodes]
\draw[mor] (0,0) rectangle (1,0.5) node[pos=0.5,text opacity=1,yshift=-2pt]{$\morstuff{F}$};
\draw[mor] (0,0.5) rectangle (1.5,1) node[pos=0.5,text opacity=1,yshift=-2pt]{$\morstuff{G}_{v}$};
\draw[usual] (1.25,0) to (1.25,0.5);
\end{tikzpicture}
=\tilde{\varepsilon}_{1}(\morstuff{F})
,\quad
F=
\begin{tikzpicture}[anchorbase,scale=1,tinynodes]
\draw[mor] (0,0) rectangle (1,0.5) node[pos=0.5,text opacity=1,yshift=-2pt]{$\morstuff{F}$};
\draw[usual] (0.5,0.5) to (0.5,0.75) node[above,yshift=-2pt]{$v-1$};
\end{tikzpicture}
\mapsto
\begin{tikzpicture}[anchorbase,scale=1,tinynodes]
\draw[mor] (0,0) rectangle (1,0.5) node[pos=0.5,text opacity=1,yshift=-2pt]{$\morstuff{F}$};
\draw[mor] (0,0.5) rectangle (0.75,1) node[pos=0.5,text opacity=1,yshift=-2pt]{$\morstuff{G}_{v{-}2}$};
\draw[usual] (1.25,0) to (1.25,0.5) to [out=90,in=0] (1.05,0.7) to [out=180,in=90] (0.85,0.5);
\end{tikzpicture}
=\tilde{\varepsilon}_{-1}(\morstuff{F}).
\end{gather*}
For any path $\pi$ in the positive Weyl chamber, considered as a finite sequence
of $\pm 1$ whose partial sums are non-negative, we associated a \emph{down
morphism} $\delta(\pi)$ by using the operators $\tilde{\varepsilon}_{\pm 1}$
in order specified by $\pi$, starting with $\morstuff{F}$ being the empty diagram. Similarly, we
define an \emph{up morphism} $\upsilon(\pi)$ as
$\fliph[\delta(\pi)]$, and for a pair $(\pi,\pi^{\prime})$ an element
$\morstuff{c}_{\pi,\pi^{\prime}}^{\lambda}=\upsilon(\pi^{\prime})\delta(\pi)$
whenever that makes sense, {\ie} for
$\delta(\pi)\in\Hom_{\TL[{\kk,\qpar}]}(v-1,\lambda)$ and
$\upsilon(\pi^{\prime})\in\Hom_{\TL[{\kk,\qpar}]}(\lambda,w-1)$.
\end{definition}

\begin{convention}\label{convention:du}
We will use the light ladder strategy from
\fullref{definition:lightladders} in several different contexts. The associated
down and up morphisms are consistently distinguished throughout this paper by
the following notation convention.
\begin{enumerate}

\item For $\morstuff{G}_{v-1}=\idmor_{v-1}$, which works for any ground ring
(in particular for $\big(\Zv,\vpar\big)$), we obtain the integral bases
$\sbas$ for morphism spaces. We reserve the following notation for these
morphisms:
\begin{gather*}
\morstuff{G}_{v-1}=\idmor_{v-1}\quad\implies\quad\morstuff{d}(\pi),
\morstuff{u}(\pi).
\end{gather*}

\item For $\morstuff{G}_{v-1}=\qjw[v{-}1]$ and working over
$\big(\kkv,\vpar\big)$ we get the Artin--Wedderburn basis. The associated
morphisms will be denoted with tilde symbols:
\begin{gather*}
\morstuff{G}_{v-1}=\qjw[v{-}1]\quad\implies\quad\tilde{\morstuff{d}}(\pi),
\tilde{\morstuff{u}}(\pi).
\end{gather*}

\item For $\morstuff{G}_{v-1}=\pjw[v{-}1]$, {\ie} for the projectors constructed
in \fullref{subsection:projectors} for non-semisimple situations, we will
use capital letters:
\begin{gather*}
\morstuff{G}_{v-1}=\pjw[v{-}1]\quad\implies\quad\Down{}(\pi),
\Up{}(\pi).
\end{gather*}
These are specializations of morphisms that one gets 
for $\morstuff{G}_{v-1}=\pqjw[v{-}1]$, and we will use an overline 
in this situation:
\begin{gather*}
\morstuff{G}_{v-1}=\pqjw[v{-}1]\quad\implies\quad\overline{\morstuff{d}}(\pi),
\overline{\morstuff{u}}(\pi).
\end{gather*}

\end{enumerate}
\end{convention}

\begin{definition}\label{definition:left-right}
A family of morphisms $\morstuff{G}_{v{-}1}\in\End_{\TL[{\kk,\qpar}]}(v-1)$
for $v\in\N$ is \emph{left-aligned} if
\begin{gather*}
\morstuff{G}_{v{-}1}(\morstuff{G}_{w{-}1}\hcirc\idtl[v{-}w]) =(\morstuff{G}_{w{-}1}\hcirc\idtl[v{-}w])\morstuff{G}_{v{-}1} =\morstuff{G}_{v{-}1},\quad\text{for all }1\leq w\leq v,
\end{gather*}
and \emph{right-aligned} if
\begin{gather*}
\morstuff{G}_{v{-}1}(\idtl[v{-}w]\hcirc\morstuff{G}_{w{-}1}) =(\idtl[v{-}w]\hcirc\morstuff{G}_{w{-}1})\morstuff{G}_{v{-}1} =\morstuff{G}_{v{-}1}
,\quad\text{for all }1\leq w\leq v. 
\end{gather*}
\end{definition}

We draw morphisms from a left-aligned family as boxes with a bar at the
left-hand side, and {\vive} for right-aligned. Using this notation the two
conditions in \fullref{definition:left-right} read:
\begin{gather*}
\begin{tikzpicture}[anchorbase,scale=0.25,tinynodes]
\draw[morl] (0.5,0) rectangle (-2.5,2);
\draw[usual] (1,0) to (1,2);
\node at (-1,0.9) {\scalebox{0.75}{$\morstuff{G}_{w{-}1}$}};
\draw[morl] (1.5,-2) rectangle (-2.5,0);
\node at (-0.5,-1.1) {$\morstuff{G}_{v{-}1}$};
\node at (-1,2.5) {$\phantom{a}$};
\node at (-1,-2.5) {$\phantom{a}$};
\end{tikzpicture}
=
\begin{tikzpicture}[anchorbase,scale=0.25,tinynodes]
\draw[morl] (1.5,-1) rectangle (-2.5,1);
\node at (-0.5,-.1) {$\morstuff{G}_{v{-}1}$};
\node at (-1,2.5) {$\phantom{a}$};
\node at (-1,-2.5) {$\phantom{a}$};
\end{tikzpicture}
=
\begin{tikzpicture}[anchorbase,scale=0.25,tinynodes]
\draw[morl] (0.5,0) rectangle (-2.5,2);
\draw[usual] (1,0) to (1,2);
\node at (-1,0.9) {\scalebox{0.75}{$\morstuff{G}_{w{-}1}$}};
\draw[morl] (1.5,2) rectangle (-2.5,4);
\node at (-0.5,2.9) {$\morstuff{G}_{v{-}1}$};
\node at (-1,4.5) {$\phantom{a}$};
\node at (-1,-0.5) {$\phantom{a}$};
\end{tikzpicture}
,\quad
\begin{tikzpicture}[anchorbase,scale=0.25,tinynodes]
\draw[morr] (-0.5,0) rectangle (2.5,2);
\draw[usual] (-1,0) to (-1,2);
\node at (1,0.9) {\scalebox{0.75}{$\morstuff{G}_{w{-}1}$}};
\draw[morr] (-1.5,-2) rectangle (2.5,0);
\node at (0.5,-1.1) {$\morstuff{G}_{v{-}1}$};
\node at (1,2.5) {$\phantom{a}$};
\node at (1,-2.5) {$\phantom{a}$};
\end{tikzpicture}
=
\begin{tikzpicture}[anchorbase,scale=0.25,tinynodes]
\draw[morr] (1.5,-1) rectangle (-2.5,1);
\node at (-0.5,-.1) {$\morstuff{G}_{v{-}1}$};
\node at (1,2.5) {$\phantom{a}$};
\node at (1,-2.5) {$\phantom{a}$};
\end{tikzpicture}
=
\begin{tikzpicture}[anchorbase,scale=0.25,tinynodes]
\draw[morr] (-0.5,0) rectangle (2.5,2);
\draw[usual] (-1,0) to (-1,2);
\node at (1,0.9) {\scalebox{0.75}{$\morstuff{G}_{w{-}1}$}};
\draw[morr] (-1.5,2) rectangle (2.5,4);
\node at (0.5,2.9) {$\morstuff{G}_{v{-}1}$};
\node at (1,4.5) {$\phantom{a}$};
\node at (1,-0.5) {$\phantom{a}$};
\end{tikzpicture}
.
\end{gather*}
Note that left- and right-aligned families 
of morphisms are always idempotents,
by the $v=w$ case of the defining relation.

\begin{remark}
The families of identity morphisms 
$\idtl$ are both left- and right-aligned 
and so are 
simple JW projectors by
\eqref{eq:0absorb}. However, in the 
mixed case the corresponding projectors $\pjw[v{-}1]$
form a family that is only left-aligned, 
see \fullref{example:left-right}. (Of
course, there are also right-aligned 
versions $\flipv[{(\pjw[v{-}1])}]$.) This
asymmetry will play an important role 
within our setup. For example, in
\fullref{definition:lightladders} we 
presented a version of the light ladders
strategy that favors left-aligned families 
of projectors and when discussing
fusion rules for morphisms this will play 
an important roles.
\end{remark}

\begin{example}\label{example:ladder-basis}
There are six paths of length four 
which stay in the positive Weyl chamber, and 
six corresponding down morphisms 
(simplified by using \eqref{eq:0absorb}):
\begin{align*}
\lambda=4\colon&
\quad
\tilde{\morstuff{d}}(\varepsilon_{1}\varepsilon_{1}\varepsilon_{1}\varepsilon_{1})=
\begin{tikzpicture}[anchorbase,scale=1,tinynodes]
\draw[JW] (-0.5,0.5) rectangle (0.5,1) node[pos=0.5,yshift=-0.05cm]{$4$};
\end{tikzpicture},
\\
\lambda=2\colon&
\quad
\tilde{\morstuff{d}}(\varepsilon_{1}\varepsilon_{-1}\varepsilon_{1}\varepsilon_{1})
=
\begin{tikzpicture}[anchorbase,scale=1,tinynodes]
\draw[JW] (-0.25,0.5) rectangle (1.25,1) node[pos=0.5,yshift=-0.05cm]{$2$};
\draw[JW] (-0.25,0) rectangle (0.25,-0.5) node[pos=0.5,yshift=-0.05cm]{$1$};
\draw[usual] (0,0) to[out=90,in=180] (0.25,0.25) to[out=0,in=90] (0.5,0);
\draw[usual] (1,0) to[out=90,in=270] (0.5,0.5);
\draw[usual] (1.5,0) to[out=90,in=270] (1,0.5);
\draw[usual] (0.5,-0.5) to (0.5,0);
\draw[usual] (1,-0.5) to (1,0);
\draw[usual] (1.5,-0.5) to (1.5,0);
\end{tikzpicture}
,\;
\tilde{\morstuff{d}}(\varepsilon_{1}\varepsilon_{1}\varepsilon_{-1}\varepsilon_{1})=
\begin{tikzpicture}[anchorbase,scale=1,tinynodes]
\draw[JW] (-0.25,0.5) rectangle (1.25,1) node[pos=0.5,yshift=-0.05cm]{$2$};
\draw[JW] (-0.25,0) rectangle (0.75,-0.5) node[pos=0.5,yshift=-0.05cm]{$2$};
\draw[usual] (0,0) to[out=90,in=270] (0.5,0.5);
\draw[usual] (0.5,0) to[out=90,in=180] (0.75,0.25) to[out=0,in=90] (1,0);
\draw[usual] (1.5,0) to[out=90,in=270] (1,0.5);
\draw[usual] (1,-0.5) to (1,0);
\draw[usual] (1.5,-0.5) to (1.5,0);
\end{tikzpicture}
,\;
\tilde{\morstuff{d}}(\varepsilon_{1}\varepsilon_{1}\varepsilon_{1}\varepsilon_{-1})=
\begin{tikzpicture}[anchorbase,scale=1,tinynodes]
\draw[JW] (-0.25,0.5) rectangle (1.25,1) node[pos=0.5,yshift=-0.05cm]{$2$};
\draw[JW] (-0.25,0) rectangle (1.25,-0.5) node[pos=0.5,yshift=-0.05cm]{$3$};
\draw[usual] (1,0) to[out=90,in=180] (1.25,0.25) to[out=0,in=90] (1.5,0);
\draw[usual] (0,0) to[out=90,in=270] (0.5,0.5);
\draw[usual] (0.5,0) to[out=90,in=270] (1,0.5);
\draw[usual] (1.5,-0.5) to (1.5,0);
\end{tikzpicture}
,
\\
\lambda=0\colon&
\quad
\tilde{\morstuff{d}}(\varepsilon_{1}\varepsilon_{-1}\varepsilon_{1}\varepsilon_{-1})=
\begin{tikzpicture}[anchorbase,scale=1,tinynodes]
\draw[usual,white] (1,0) to (1,0.5);
\draw[JW] (-0.25,0) rectangle (0.25,-0.5) node[pos=0.5,yshift=-0.05cm]{$1$};
\draw[JW] (0.75,0) rectangle (1.25,-0.5) node[pos=0.5,yshift=-0.05cm]{$1$};
\draw[usual] (0,0) to[out=90,in=180] (0.25,0.25) to[out=0,in=90] (0.5,0);
\draw[usual] (1,0) to[out=90,in=180] (1.25,0.25) to[out=0,in=90] (1.5,0);
\draw[usual] (0.5,-0.5) to (0.5,0);
\draw[usual] (1.5,-0.5) to (1.5,0);
\end{tikzpicture}
,\quad
\tilde{\morstuff{d}}(\varepsilon_{1}\varepsilon_{1}\varepsilon_{-1}\varepsilon_{-1})=
\begin{tikzpicture}[anchorbase,scale=1,tinynodes]
\draw[usual,white] (1,0) to (1,0.5);
\draw[JW] (-0.25,0) rectangle (0.75,-0.5) node[pos=0.5,yshift=-0.05cm]{$2$};
\draw[usual] (0,0) to[out=90,in=180] (0.75,0.425) to[out=0,in=90] (1.5,0);
\draw[usual] (0.5,0) to[out=90,in=180] (0.75,0.25) to[out=0,in=90] (1,0);
\draw[usual] (1,-0.5) to (1,0);
\draw[usual] (1.5,-0.5) to (1.5,0);
\end{tikzpicture}
.
\end{align*}
(Here we chose to omit drawing zero boxes.)
Flipping these pictures bottom to top, and 
stacking them together whenever that makes sense, one obtains the 
Artin--Wedderburn basis of $\End_{\TL[{\kkv,\vpar}]}(4)$. 
Using identities instead of the simple JW projectors 
gives the integral basis of $\End_{\TL[{\Zv,\vpar}]}(4)$, while using the 
projectors from \fullref{definition:pl-jw-q} below gives the non-semisimple 
projector basis of $\End_{\TL[{\kk,\qpar}]}(4)$.
\end{example}

The (defining) recursion of the 
simple JW projectors recalled in
\eqref{eq:jw-recursion} is an expression 
of the classical fusion rules of $\SLtwo$
and it uses morphisms of light ladder type. 
To define the non-semisimple
projectors we use a different approach. 
But we will see in
\fullref{subsection:fusion-mor} that 
the non-semisimple projectors still admit a
recursive description in terms of 
morphisms of light ladder type, using the
non-semisimple fusion rules in 
\fullref{proposition:times-t1}.

\section{Additive structure}\label{section:TL-additive}

In this section, we explain the 
additive structure of the category of tilting
modules. Some of results in this 
section are well-known, others generalize the
results from \cite{BuLiSe-tl-char-p} 
and \cite{TuWe-quiver}. We also added a few
new observations.

\subsection{Character formulas}\label{subsection:character-formulas}

The Weyl and dual Weyl modules have classical Weyl characters, {\ie}
\begin{gather*}
\ch{\wmod(v-1)}=\ch{\dwmod(v-1)}=\qnum{v}{\vpar},
\end{gather*}
which we view as elements of $\Nv$ where the coefficient of $\vpar^{k}$ is the dimensions of the weight space
of weight $k$.

Each $\tmod(v-1)$ has a (dual) Weyl filtration and we denote the \emph{(dual)
Weyl multiplicities} by $\wmult{v}{w}=\wdmult{v}{w}$.

\begin{remark}\label{remark:admissibility} 
The purpose of the admissibility
conditions on finite sets $S\subset \N[0]$ from \fullref{definition:adm} is
so that for $v\in\N$ we have bijections
\begin{align*}
\{S\subset\N[0]\mid S\text{ is down-admissible for } v\}
\to\{w\in\N\mid\wmult{v}{w}=1\},\;&S\mapsto v[S], 
\\
\{S\subset\N[0]\mid S \text{ is up-admissible for } v\}
\to\{w\in\N\mid\wmult{w}{v}=1\},\;&S\mapsto v(S).
\end{align*}
\end{remark}

Moreover, each (dual) Weyl module has a filtration by simple modules, and we denote the
corresponding \emph{simple multiplicities} by $\lmult{v}{w}=\ldmult{v}{w}$.
These have a similar description as the Weyl multiplicities:

\begin{definition}\label{definition:bgg}
Let $v=\plbase{a_{j},\dots,a_{0}}$. 
The \emph{$\lmod$-support} $\lsupp[v]\subset\N$ is defined
as follows.

\begin{enumerate}

\item If $a_{i}\neq 0,\ppar-1$ for all $j>i>0$, then for all $v[S]\in\supp[v]$
we set 
\[
v[S]^{\lmod}=v[S]-2\sum_{S_{i},i>0}\ppar^{\min(S_{i})}
\qquad\text{and}\qquad
\lsupp[v]=\{v[S]^{\lmod}\mid S\}.
\]

\item Otherwise, there is a recursive description of $\lsupp[v]$ as in
\cite[Section 5B]{TuWe-quiver}, working with $\plpar$ instead of $\ppar$. 

\end{enumerate}
\end{definition}

One can check that $\supp[v]$ is always of order $2^{\generation}$, while 
$\lsupp[v]$ can be of other sizes as soon as {\losp} applies.

\begin{proposition}\label{proposition:multiplicities}
Let $v=\plbase{a_{j},\dots,a_{0}}$.

\begin{enumerate}

\item We have 
\begin{align*}
\wmult{v}{w}&=
\begin{cases}
1,&\text{if }w\in\supp[v],
\\
0,&\text{else},
\end{cases}
\end{align*}
and
\begin{align*}
\lmult{v}{w}=
\begin{cases}
1,&\text{if }w\in\lsupp[v],
\\
0,&\text{else}.
\end{cases}
\end{align*}
Thus, the tilting characters are
\[
\ch{\tmod(v-1)}={%\textstyle
	\sum_{w\in\supp[v]}}\,\ch{\wmod(w-1)}=
{%\textstyle
	\sum_{w\in\supp[v]}}\,\qnum{w}{\vpar},
\] 
while the simple characters can be obtained by inverting $\ch{\wmod(v-1)}={\textstyle\sum_{w\in\lsupp[v]}}\,\ch{\lmod(w-1)}$.

\item We have a version of (Brauer--Humphreys or BGG) \emph{reciprocity}, {\ie} if $a_{i}\neq 0,\ppar-1$ for all $j>i>0$, then
\begin{gather*}
\wmult{v}{w}=\lmult{v}{w^{\lmod}}
=
\begin{cases}
1,&\text{if }w=v[S]\text{ and }w^{\lmod}=v[S]^{\lmod},
\\
0,&\text{if }w\neq v[S]\text{ and }w^{\lmod}\neq v[S]^{\lmod}.
\end{cases}
\end{gather*}
In particular, $\wmult{v}{w}=\lmult{v}{w}$ for $v=\plbase{a,b}$.

\end{enumerate}
\end{proposition}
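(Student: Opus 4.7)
The plan is to derive parts (a) and (b) by assembling three ingredients: Donkin's tensor product theorem for $\SLtwo$ tilting modules in characteristic $\ppar$ \cite{Do-tilting-alg-groups}, its Lusztig-Andersen quantum analog at roots of unity \cite{An-tensor-q-tilting-modules}, and the mixed Steinberg-Lusztig decomposition from \cite{AnWe-mixed-qgroup} (see also \cite{Do-q-schur}). Iterating these along the $\ppar\lpar$-adic expansion $v=\plbase{a_j,\ldots,a_0}$ expresses $\tmod(v-1)$ as a tensor product of factors $\tmod(a_i-1)$ pulled back by successive Frobenius-type twists (mixed at the zeroth step, classical for the higher ones). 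Each factor is of ``single-digit'' type: the zeroth factor is $\wmod(a_0-1)=\lmod(a_0-1)$ since $a_0<\lpar$, and each higher factor is a restricted tilting module in characteristic $\ppar$ since $a_i<\ppar$.

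Part (a) then follows by passing to characters and invoking the known Weyl multiplicities on each single-digit factor: the leading digit $a_j$ admits no reflection, whereas each non-leading non-zero digit $a_i$ contributes the two choices $\pm a_i$, subject to the collapses imposed by {\losp}. Multiplying these out matches the combinatorial definition of $\supp[v]$, and on the level of characters a Frobenius twist of order $\ppar^{(i)}$ corresponds to the substitution $\vpar\mapsto\vpar^{\ppar^{(i)}}$, so that the product of single-digit Weyl characters expands to $\sum_{w\in\supp[v]}\qnum{w}{\vpar}$. An analogous argument using the known simple characters in both the quantum and the characteristic $\ppar$ case yields the $\lmult{v}{w}$ formula, where the recursion in \fullref{definition:bgg} is precisely what tracks the digit-collapses dictated by {\losp}.

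Part (b) follows from (a) once one observes that, under the {\losp}-avoidance hypothesis on the middle digits, the assignments $S\mapsto v[S]$ and $S\mapsto v[S]^{\lmod}$ are both bijections from down-admissible sets onto $\supp[v]$ and $\lsupp[v]$ respectively; hence the matching $w=v[S]\leftrightarrow w^{\lmod}=v[S]^{\lmod}$ couples the two formulas in (a) term by term. The two-digit ``in particular'' case is immediate, since the {\losp} hypothesis is vacuous and the correction $w^{\lmod}-w$ is supported on indices $i>0$ within admissible stretches, of which there are none. The main obstacle, and the only real work, is the delicate {\losp} bookkeeping required when some $a_i\in\{0,\ppar-1\}$; this is handled recursively, {\muta} from the characteristic $\ppar$ treatment in \cite[Section 5B]{TuWe-quiver}.
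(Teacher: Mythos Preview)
Your strategy is essentially the paper's own: both appeal to the tensor product formulas (Donkin's for tilting modules, Steinberg's for simples, in their mixed versions) and read off the combinatorics. The paper's proof is even terser, citing \cite[Section 3.4]{Do-q-schur} for the Weyl multiplicities and deriving the simple multiplicities by direct calculation from the Steinberg formula \eqref{eq:steinberg}, with reciprocity following immediately.

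One point in your account needs fixing, however. Your description of Donkin's decomposition is wrong: the non-leading factors in \eqref{eq:donkin} are $\tmod(a_i+\plpar-1)^{\ppar^{(i)}}$, not $\tmod(a_i-1)^{\ppar^{(i)}}$. In particular the zeroth factor is $\tmod(a_0+\lpar-1)$, which for $a_0\neq 0$ is of generation one with the two Weyl factors $\wmod(\lpar-1\pm a_0)$, not the simple module $\wmod(a_0-1)=\lmod(a_0-1)$ you wrote. Had the factors really been $\tmod(a_i-1)$ they would all be simple (since $a_i<\plpar$) and $\tmod(v-1)$ would have only a single Weyl factor---contradicting your own correct assertion that each non-leading non-zero digit contributes the two choices $\pm a_i$. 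With the correct factors the rest of your argument goes through unchanged.
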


\begin{proof}
The Weyl multiplicities are known, see \cite[Section 3.4]{Do-q-schur} for the
potentially first written account in the mixed case. The simple
multiplicities can be obtained by direct calculation using the simple characters
in \eqref{eq:steinberg} below. The reciprocity follows immediately from these.
\end{proof}

See \fullref{figure:tiltcartan} for an illustration of $\wmult{v}{w}$
(compare to \cite[Figure 1]{JeWi-p-canonical}), 
and \fullref{figure:weylcartan} for an illustration of $\lmult{v}{w}$. 
Note that comparing these four illustrations 
shows why reciprocity only holds up to a certain point.

\begin{figure}[ht]
\includegraphics[width=0.49\textwidth]{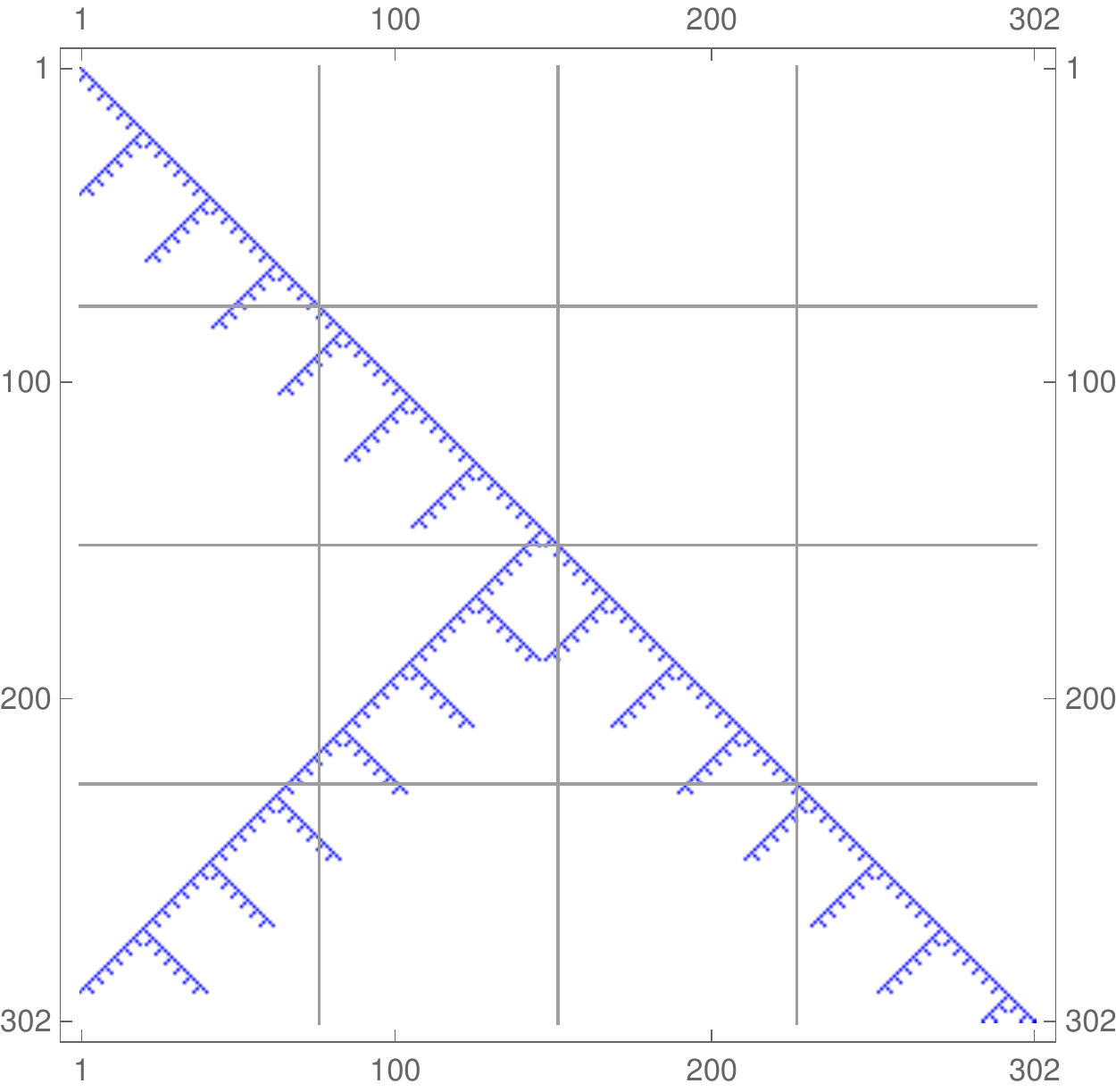}
\includegraphics[width=0.49\textwidth]{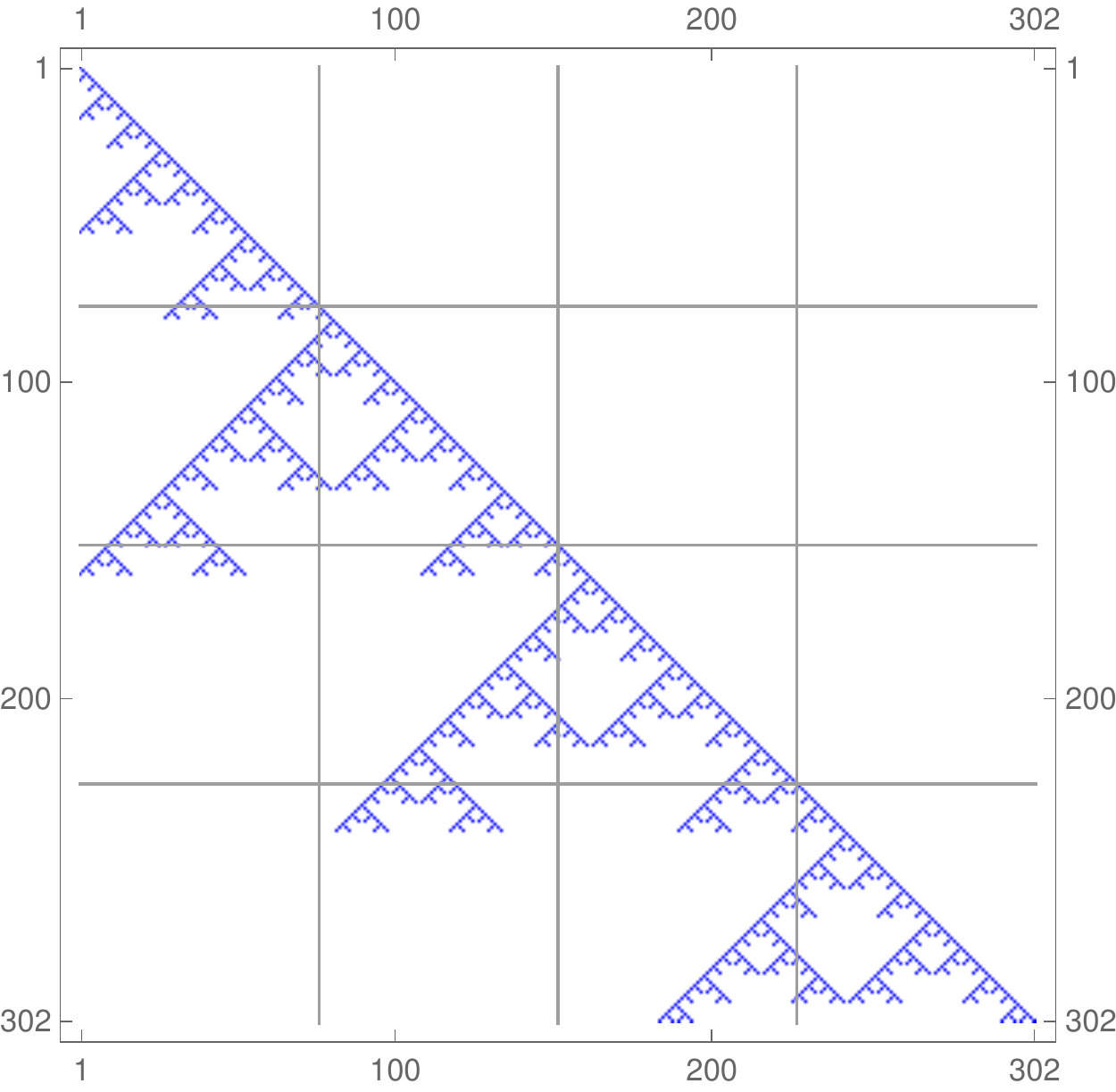}
\caption{The tilting-Cartan matrix, counting $\wmult{v}{w}$, with $v$ increasing along rows. 
The first illustration shows mixed characteristic $\mchar=(7,3)$. 
Note that the smallest visible gaps are of size $3$, the others of size $7$. 
The second illustration shows characteristic $\ppar=3$.}
\label{figure:tiltcartan}
\end{figure}

\begin{figure}[ht]
\includegraphics[width=0.49\textwidth]{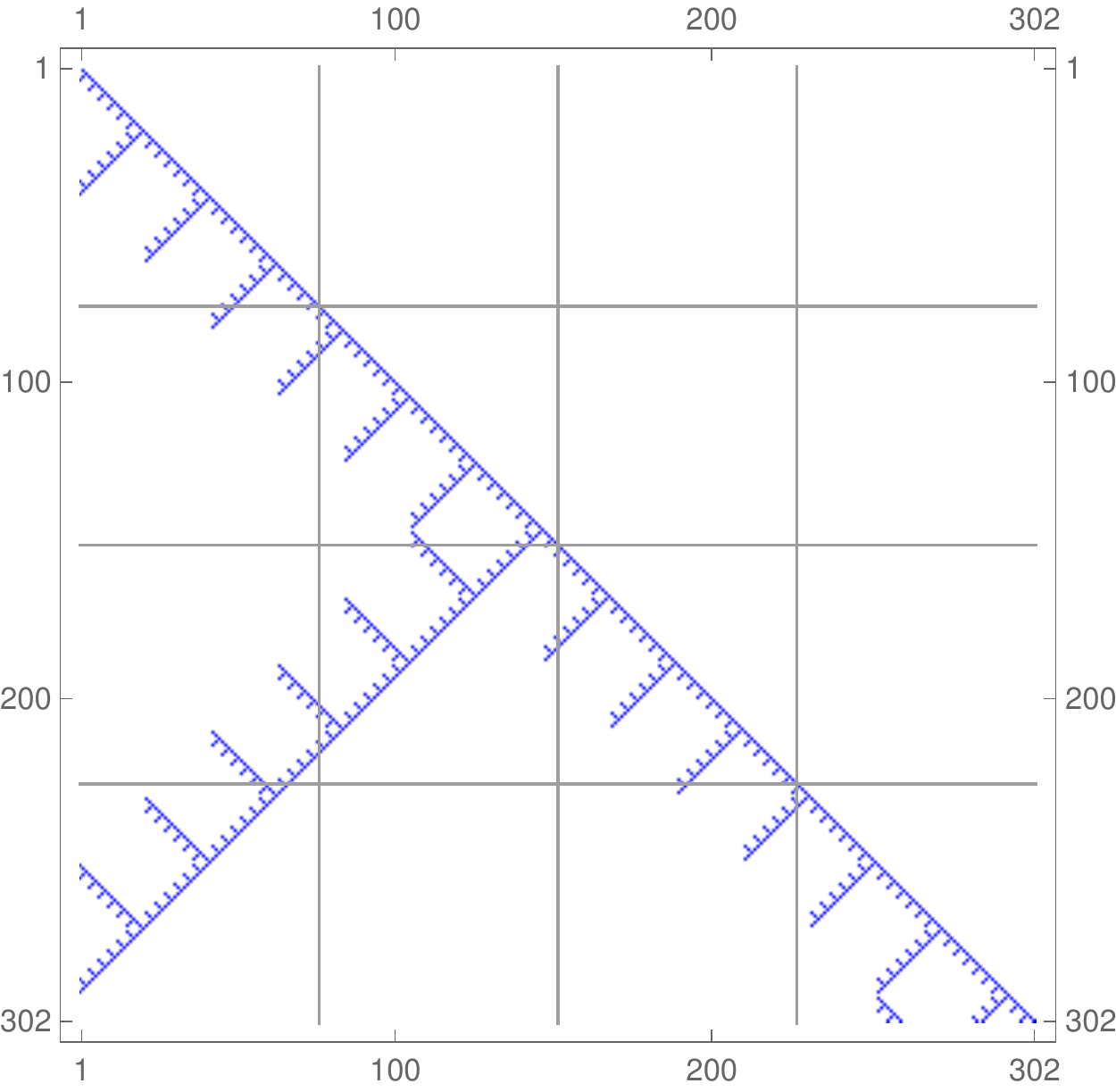}
\includegraphics[width=0.49\textwidth]{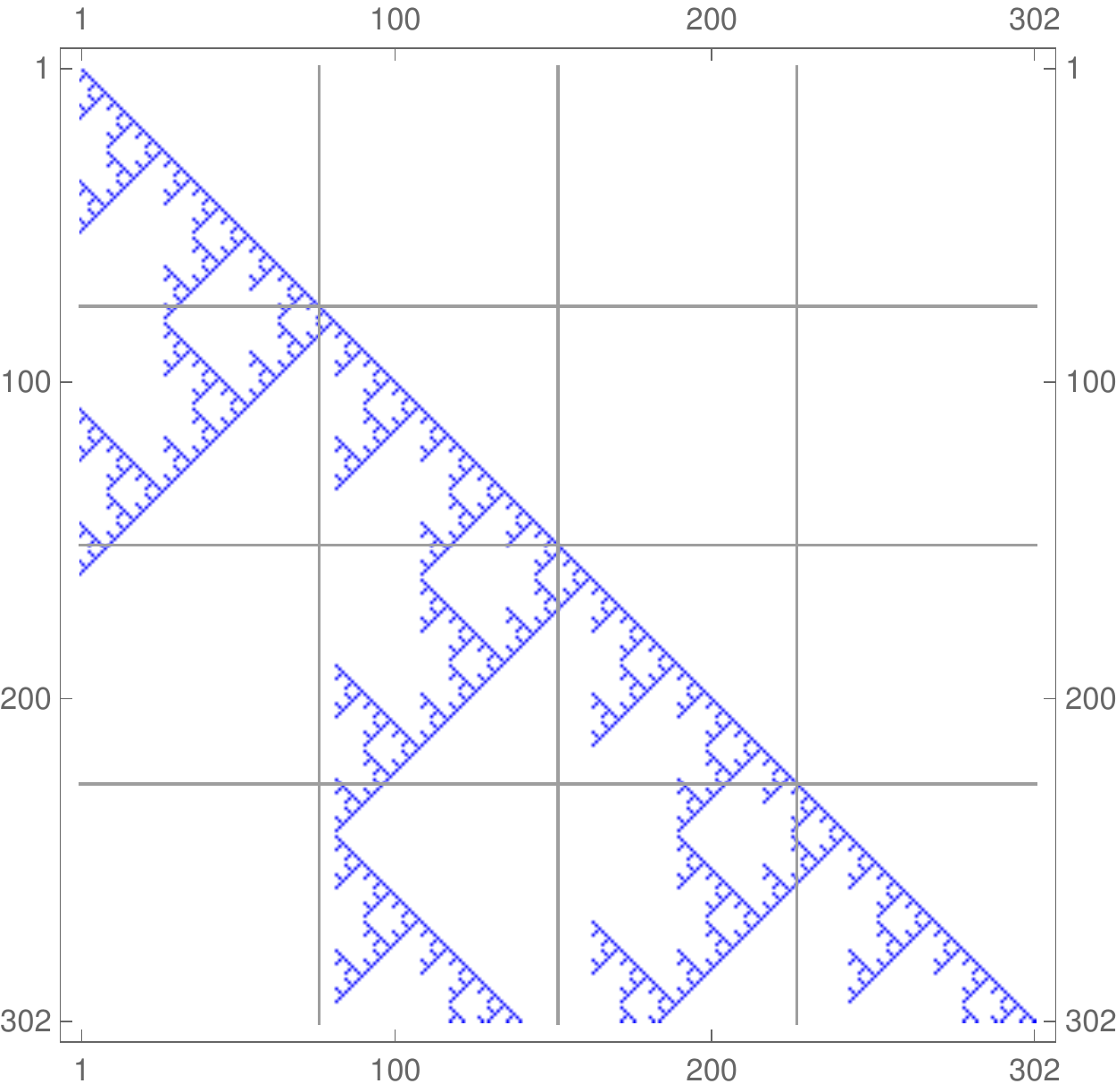}
\caption{The Weyl-Cartan matrix, counting $\lmult{v}{w}$, with $v$ increasing along rows. 
The first illustration shows mixed characteristic $\mchar=(7,3)$, the second illustration shows characteristic $\ppar=3$.}
\label{figure:weylcartan}
\end{figure}

Two remarkable results describing the structure of objects in $\allmod$ are
Donkin's \eqref{eq:donkin} and Steinberg's \eqref{eq:steinberg} \emph{tensor
product formulas}, which we recall in the following proposition. Both formulas
describe modules of highest weight $v-1$ in terms of tensor products of
\emph{Frobenius--Lusztig twist} of modules of lower weight, following the
$\ppar\lpar$-adic expansion of $v$. The $i$th Frobenius--Lusztig twist will
be denoted by $(\placeholder)^{\ppar^{(i)}}$. It acts as the Frobenius twist on
digits $a_{i}$ for $i>0$ and as its quantum analog on the zeroth digit.
Furthermore, we will accompany the two famous tensor product formulas with a
third one. To this end, we note that we can naively apply
$(\placeholder)^{a_{i}\ppar^{(i)}}$ to weight spaces, although we loose the
module structure for $a_{i}\neq 1$.

\begin{proposition}\label{proposition:donkin-etc}
Let $v=\plbase{a_{j},\dots,a_{0}}$ and $v-1=\plbase{b_{j},\dots,b_{0}}$.

\begin{enumerate}

\item We have
\begin{gather}\label{eq:donkin}
\tmod(v-1)\cong
\tmod(a_{j}-1)^{\ppar^{(j)}}\hcirc{\textstyle\bigotimes_{a_{i}}}\,\tmod(a_{i}+\plpar-1)^{\ppar^{(i)}},
\end{gather}
where the monoidal product runs over all non-leading digits of $v$. 
Thus, $\ch{\tmod(v-1)}=\qnum{a_{j}}{\vpar^{\ppar^{(j)}}}{\textstyle\prod_{a_{i}}}\,
\big(\qnum{a_{i}+\plpar}{\vpar^{\ppar^{(i)}}}+\qnum{-a_{i}+\plpar}{\vpar^{\ppar^{(i)}}}\big)$.

\item We have
\begin{gather}\label{eq:steinberg}
\lmod(v-1)\cong
{\textstyle\bigotimes_{b_{i}\neq 0}}\,\lmod(b_{i})^{\ppar^{(i)}},
\end{gather}
where the monoidal product runs over all (non-zero) digits of $v-1$. Thus,
$\ch{\lmod(v-1)}={\textstyle\prod_{b_{i}\neq
0}}\,\qnum{b_{i}+1}{\vpar^{\ppar^{(i)}}}$.

\item We have an isomorphism of $\Z$-graded vector spaces
\begin{gather}\label{eq:partial-trace-result}
\tmod(v-1)\cong
\tmod(\motherr{v}{\infty}-1)\hcirc{\textstyle\bigotimes_{a_{i}\neq 0}}\,\tmod(1)^{(a_{i}\ppar^{(i)})},
\end{gather}
where the monoidal product runs over all non-zero and non-leading digits of $v$. 
Thus, $\ch{\tmod(v-1)}=\qnum{\motherr{v}{\infty}}{\vpar}{\textstyle\prod_{a_{i}\neq 0}}\,\qnum{2}{\vpar^{a_{i}\ppar^{(i)}}}$.

\item \eqref{eq:partial-trace-result} can be realized as an isomorphism of
$\SLtwo$-modules if all non-zero digits $a_{i}$ are equal to $1$. In this case
$\tmod(v-1)$ is a tensor product of simple modules.

\end{enumerate}
\end{proposition}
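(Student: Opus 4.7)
Parts (a) and (b) are the classical tensor product formulas of Donkin and Steinberg, respectively, for $\SLtwo$. In the semisimple case both are trivial, while in the non-semisimple cases they are available from the cited literature: \cite{Do-tilting-alg-groups} in the characteristic $\ppar$ case, \cite{AnPoWe-representation-qalgebras} in the complex quantum group case, and \cite{Do-q-schur}, \cite{An-simple-tl}, \cite{AnWe-mixed-qgroup} in the mixed case. The accompanying character identities then follow by taking characters of these module isomorphisms and using that the $i$-th Frobenius--Lusztig twist replaces $\vpar$ by $\vpar^{\ppar^{(i)}}$ in all quantum numbers; the leading factor $\tmod(a_j-1)$ is eve and hence simple tilting with character $\qnum{a_j}{\vpar}$, while by Proposition \ref{proposition:multiplicities}(a) each non-leading factor $\tmod(a_i+\plpar-1)$ has support $\{a_i+\plpar,-a_i+\plpar\}$, yielding the sum $\qnum{a_i+\plpar}{\vpar}+\qnum{-a_i+\plpar}{\vpar}$.

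For part (c), the plan is to establish the asserted character identity and then invoke the classification of finite-dimensional $\Z$-graded vector spaces by their Hilbert series. Parametrize $w\in\supp[v]$ by sign choices $\epsilon_i\in\{\pm 1\}$, one per non-leading non-zero digit, writing $w = a_j\ppar^{(j)} + \sum_{i<j,\, a_i\neq 0}\epsilon_i a_i\ppar^{(i)}$. Applying the elementary identity
\[
\qnum{a+b}{\vpar} + \qnum{a-b}{\vpar} = \qnum{2}{\vpar^b}\qnum{a}{\vpar}
\]
iteratively, once for each such digit, collapses $\sum_{w\in\supp[v]}\qnum{w}{\vpar}$ from Proposition \ref{proposition:multiplicities}(a) to $\qnum{\motherr{v}{\infty}}{\vpar}\prod_{a_i\neq 0,\,i<j}\qnum{2}{\vpar^{a_i\ppar^{(i)}}}$. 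This matches the character of the right-hand side of \eqref{eq:partial-trace-result}, since the $(a_i\ppar^{(i)})$-twist of $\tmod(1)$ is a two-dimensional graded vector space with character $\qnum{2}{\vpar^{a_i\ppar^{(i)}}}$.

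For part (d), assume every non-zero digit $a_i$ equals $1$. Donkin's formula (a) then produces tensor factors $\munit = \tmod(0)^{\ppar^{(j)}}$ at the leading digit, $\tmod(\plpar-1)^{\ppar^{(i)}}$ at each non-leading $a_i=0$, and $\tmod(\plpar)^{\ppar^{(i)}}$ at each non-leading $a_i=1$. The key input is the $\SLtwo$-module isomorphism $\tmod(\plpar)\cong\tmod(\plpar-1)\hcirc\tmod(1)$, which holds because the right-hand side is tilting with a Weyl filtration having subquotients $\wmod(\plpar-2)$ and $\wmod(\plpar)$ each with multiplicity one (from the classical short exact sequence $0\to\wmod(\plpar-2)\to\wmod(\plpar-1)\hcirc\wmod(1)\to\wmod(\plpar)\to 0$), and these multiplicities coincide with those of $\tmod(\plpar)$ by Proposition \ref{proposition:multiplicities}(a); Krull--Schmidt then forces the isomorphism. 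Substituting and regrouping, the collected factors $\bigotimes_{i<j}\tmod(\plpar-1)^{\ppar^{(i)}}$ recombine via Steinberg's formula (b), applied to $\ppar^{(j)}-1$ whose digits are all equal to $\plpar-1$, into $\lmod(\ppar^{(j)}-1) = \tmod(\motherr{v}{\infty}-1)$, while each $a_i=1$ contributes one extra simple twist $\tmod(1)^{(\ppar^{(i)})}$. This produces the right-hand side of \eqref{eq:partial-trace-result} as an honest $\SLtwo$-module isomorphism with all tensor factors simple. The main obstacle is the module-level step underlying $\tmod(\plpar)\cong\tmod(\plpar-1)\hcirc\tmod(1)$; everything else reduces to character bookkeeping and the standard behavior of Frobenius--Lusztig twists available in the cited references.
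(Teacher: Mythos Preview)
Your treatment of (a) and (b) matches the paper's: both defer to the literature for Donkin's and Steinberg's tensor product formulas in the mixed setting.

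For (c) and (d), your arguments are correct but differ from the paper's. For (c), the paper forward-references the diagrammatic computation in \fullref{proposition:qdim}, where the product formula is obtained by iterating the partial trace rule from \fullref{proposition:jw-properties}(c); you instead derive it directly from the quantum number identity $\qnum{a+b}{\vpar}+\qnum{a-b}{\vpar}=\qnum{2}{\vpar^{b}}\qnum{a}{\vpar}$ applied once per non-leading non-zero digit. Your route is shorter and entirely elementary; the paper's route has the advantage of exhibiting the identity as a consequence of the diagrammatic calculus that is central to the rest of the work. For (d), the paper argues that the right-hand side of \eqref{eq:partial-trace-result} is tilting (because $\bigotimes_{a_i\neq 0}\tmod(1)^{(a_i\ppar^{(i)})}$ is simple via Steinberg, and then the mixed analog of \cite[Lemma 3.3]{BeEtOs-pverlinde} applies), whence the character equality from (c) forces the module isomorphism. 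You instead start from Donkin's formula, split each factor $\tmod(\plpar)^{\ppar^{(i)}}$ via $\tmod(\plpar)\cong\tmod(\plpar-1)\hcirc\tmod(1)$, and then recombine the $\tmod(\plpar-1)^{\ppar^{(i)}}$ factors into $\tmod(\motherr{v}{\infty}-1)$ via Steinberg. Your argument is self-contained (it does not invoke the external lemma) and exhibits the isomorphism constructively, at the cost of tracking the monoidality of the various Frobenius--Lusztig twists across the different source categories.
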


\begin{proof}
For the tensor product formulas \eqref{eq:donkin} and \eqref{eq:steinberg} see
\cite[Proposition 5.2]{An-simple-tl} (to be precise, the above is
\cite[Proposition 4.7]{TuWe-center} adjusted to mixed characteristic) and
\cite[Theorem 1.10]{AnWe-mixed-qgroup} for the mixed versions. We will give a
diagrammatic proof of the (apparently new) character formula in (c) in
\fullref{proposition:qdim} below. For the final statement, by (c), it suffices
to observe that ${\textstyle\bigotimes_{a_{i}\neq
0}}\,\tmod(1)^{(a_{i}\ppar^{(i)})}$ is simple by \eqref{eq:steinberg}, which
implies that the right-hand side of \eqref{eq:partial-trace-result} is tilting
by the mixed characteristic analog of \cite[Lemma 3.3]{BeEtOs-pverlinde}.
\end{proof}

\begin{example}
Recall from \fullref{example:support} that $\supp[{68=\pbase{3,1,2}{7,3}}]=\{68,64,62,58\}$.

\begin{enumerate}	

\item For $68$ we get $\ch{\tmod(68-1)}=\qnum{68}{\vpar}
+\qnum{64}{\vpar}+\qnum{62}{\vpar}+\qnum{58}{\vpar}$, as well as 
$\ch{\tmod(68-1)}=\qnum{3}{\vpar^{21}}(\qnum{8}{\vpar^{3}}+\qnum{6}{\vpar^{3}})(\qnum{5}{\vpar}+\qnum{1}{\vpar})$ 
and $\ch{\tmod(68-1)}=\qnum{63}{\vpar}\qnum{2}{\vpar^{3}}\qnum{2}{\vpar^{2}}$.

\item From $\supp[68]$ we obtain $\lsupp[68]=\{68,64,58,48\}$, since we need to adjust
$62=\pbase{3,-1,2}{7,3}$ to $62-2\cdot 7=48$. We thus 
get $\ch{\lmod(68-1)}=\qnum{4}{\vpar^{21}}\qnum{2}{\vpar^{3}}\qnum{2}{\vpar}$, using $67=\pbase{3,1,1}{7,3}$, 
and $\ch{\lmod(68-1)}=\qnum{68}{\vpar}-\ch{\lmod(64-1)}-\ch{\lmod(58-1)}-\ch{\lmod(48-1)}$.

\end{enumerate}

\end{example}

Note that (d) of \fullref{proposition:donkin-etc} implies 
a remarkable appearance of {\losp}:

\begin{corollary}
All indecomposable tilting modules 
are tensor products of simple modules in characteristic $\ppar=2$.
\end{corollary}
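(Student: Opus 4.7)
My plan is to derive this corollary as an essentially one-line consequence of part (d) of \fullref{proposition:donkin-etc}. That statement provides a tensor decomposition of $\tmod(v-1)$ as an $\SLtwo$-module into simple factors as soon as every non-zero digit in the $\ppar\lpar$-adic expansion $v=\plbase{a_{j},\dots,a_{0}}$ equals $1$. So the entire task reduces to checking that this hypothesis holds automatically in the setting of the corollary.

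To this end I will invoke the convention fixed in \fullref{example:pl}: the \emph{characteristic $\ppar$ case} refers implicitly to $\qpar=1$ (and not $\qpar=-1$), which forces $\lpar=\ppar$. Setting $\ppar=2$ thus also gives $\lpar=2$. The allowed digit ranges $a_{i}\in\{0,\dots,\ppar-1\}$ for $i>0$ and $a_{0}\in\{0,\dots,\lpar-1\}$ both collapse to $\{0,1\}$, so every non-zero digit of $v$ is trivially $1$.

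Applying (d) then produces the desired $\SLtwo$-module isomorphism
\[
\tmod(v-1)\cong\tmod(\motherr{v}{\infty}-1)\hcirc{\textstyle\bigotimes_{a_{i}\neq 0}}\,\tmod(1)^{(\ppar^{(i)})}.
\]
The eve factor $\tmod(\motherr{v}{\infty}-1)$ is simple because its $\rho$-shifted highest weight is an eve (noted after \fullref{definition:ancestry}, and also a consequence of $\supp[\motherr{v}{\infty}]=\{\motherr{v}{\infty}\}$ in \fullref{proposition:multiplicities}), while each $\tmod(1)^{(\ppar^{(i)})}$ is a Frobenius--Lusztig twist of the natural representation, hence simple by Steinberg \eqref{eq:steinberg}.

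There is no real obstacle; the argument is a digit-range bookkeeping observation specific to $\ppar=2$. The only point of care is recognizing that the paper's convention pins $\lpar$ to $\ppar$: otherwise (e.g.\ for $\ppar=2$ with $\qpar$ a primitive root of unity of larger order) the zeroth digit $a_{0}$ could exceed $1$ and criterion (d) would no longer apply verbatim.
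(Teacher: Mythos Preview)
Your proof is correct and matches the paper's approach exactly: the paper presents this corollary as an immediate consequence of part (d) of \fullref{proposition:donkin-etc} without further argument, and you have simply spelled out the digit-range observation that makes (d) applicable when $\ppar=\lpar=2$. Your care in noting the convention from \fullref{example:pl} that fixes $\lpar=\ppar$ in the characteristic $\ppar$ case is appropriate and the only subtlety worth flagging.
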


The tilting modules $\tmod(v-1)$ for $v\in\eve$ will also be called \emph{eves}.
By \fullref{proposition:multiplicities} these are the only simple tilting
modules, {\ie} $\tmod(v-1)\cong\lmod(w-1)$ if and only of $v=w\in\eve$. The
\emph{prime eves} are those where $v=\ppar^{(i)}$, and they play special role,
see {\eg} \fullref{theorem:cells}.

\subsection{Non-semisimple projectors}\label{subsection:projectors}

In order to define the projectors $\pjw[v{-}1]$, we need a few notions. Crucial
will be certain down and up morphisms that are defined in the same spirit as
those in \fullref{definition:lightladders}, but with an emphasis on good
compositional properties.

\begin{definition}\label{definition:jw-cupscaps} 
Fix a left-aligned family of
morphisms $\morstuff{G}_{v{-}1}\in\End_{\TL[{\kk,\qpar}]}(v-1)$ 
for $v\in\N$
as in \fullref{definition:lightladders}. 
Let $v=\plbase{a_{j},\dots,a_{0}}$
and $0\leq i<j$ with $a_{i}\neq 0$. Consider the ancestors
$v^{\prime}=\plbase{a_{j},\dots,a_{i},0\dots,0}$ and
$v^{\prime\prime}=\plbase{a_{j},\dots,a_{i+1},0\dots,0}$ 
as well as the difference
$x=v-v^{\prime}=\plbase{a_{i-1},\dots,a_{0}}$. 
Then we define morphisms in
$\TL[{\kk,\qpar}]$ as follows. 
\begin{gather*}
\delta_{i}\idtl
=
\idtl[{v[i]{-}1}]\delta_{i}\idtl
=
\begin{tikzpicture}[anchorbase,scale=0.25,tinynodes]
\draw[morl] (0,0) rectangle (-4,1);
\node at (0,1.85) {\scalebox{0.9}{$a_{i}\ppar^{(i)}$}};	
\draw[usual] (2,0) to (2,3) node[above,yshift=-2pt]{$x$};
\draw[usual] (-1.5,1) to(-1.5,2) to[out=90,in=180] (0,3) to[out=0,in=90]  (1.5,2) to (1.5,0);
\draw[usual] (-2,1) to (-2,3)node[above,yshift=-2pt]{$~$};
\node at (0,-.25) {$\phantom{.}$};
\node at (0,3.25) {$\phantom{.}$};
\end{tikzpicture}
,\quad
\idtl\upsilon_{i}
=
\fliph[(\delta_{i}\idtl)]
.
\end{gather*}
The box represents the morphisms 
$\morstuff{G}_{v^{\prime\prime}-1}$, and we will
consider the three variations with their corresponding notation (meaning $\morstuff{d}_{i}$, $\tilde{\morstuff{d}}_{i}$, $\Down{i}$) that were
introduced in \fullref{convention:du}. 

Similarly, if $S=\{s_{k}>\cdots>s_{1}>s_{0}\}$ is a down-admissible stretch
for $v$, then we define
\begin{gather*}
\delta_{S}\idtl
=
\idtl[{v[S]{-}1}]\delta_{s_{0}}\cdots\delta_{s_{k}}\idtl
=
\begin{tikzpicture}[anchorbase,scale=0.25,tinynodes]
\draw[morl] (0,0) rectangle (-4,1);
\node at (0,1.85) {\scalebox{0.9}{$S$}};	
\draw[usual] (2,0) to (2,3);
\draw[usual] (-1.5,1) to(-1.5,2) to[out=90,in=180] (0,3) to[out=0,in=90]  (1.5,2) to (1.5,0);
\draw[usual] (-2,1) to (-2,3);
\node at (0,-.25) {$\phantom{.}$};
\node at (0,3.25) {$\phantom{.}$};
\end{tikzpicture}
.
\end{gather*}
For the final equation we have used that the morphisms
$\morstuff{G}_{v{-}1}$ form a left-aligned family. The corresponding upwards
version of these morphisms are defined by $\upsilon_{S}=\fliph[\delta_S]$.

We will also use the case $S=\emptyset$ for which all involved 
operations are identities.
\end{definition}

For simplicity of notation we often only indicate the number of strands at the
beginning or end of a composite of such morphisms, since the other numbers are
then determined.

\begin{definition}\label{definition:jw-cupscaps2}
Suppose that $S=\{s_{k}>\cdots>s_{1}>s_{0}\}$ is down-admissible for $v$ 
and $S^{\prime}=\{s_{l}^{\prime}>\cdots>s_{1}^{\prime}>s_{0}^{\prime}\}$ is 
up-admissible for $v$. Then we define \emph{simple trapezes and loops}
\begin{gather*}
\begin{gathered}
\TRD{$\qjwm[S]$}=
\qjw[{v[S]{-}1}]\downo{S}
=\qjw[{v[S]{-}1}]\downo{s_{0}}\cdots\downo{s_{k}}\idtl 
,\quad
\TRU{$\qjwm[S^{\prime}]$}=
\upo{S^{\prime}}\qjw[{v{-}1}]=\idtl[{v(S^{\prime})-1}]\upo{s_{l}^{\prime}}\cdots\upo{s_{0}^{\prime}}\qjw[{v{-}1}],
\\
\TR{$\qjwm[S]$}=\loopdowngen{S}{v{-}1}=\upo{S}\qjw[{v[S]{-}1}]\downo{S},
\end{gathered}
\end{gather*}
which we also define for the other three variations from \fullref{convention:du},
with the appropriate adjustment of notation.
\end{definition}

Note that in all cases loops carry an
idempotent $\morstuff{G}_{v[S]-1}$ in the center and down and up morphisms
carry this idempotent on their thin end.

\begin{remark}
Using \fullref{convention:du}, we can give an alternative description of the
simple trapezes. If $S$ is down-admissible for
$v=\plbase{a_{j},\dots,a_{0}}$, then we define a sign sequence
\begin{gather*}
\pi_{S}(v)= 
\underbrace{\phantom{\epsilon_{j-1}}+\dots+}_{a_{j}\ppar^{(j)}-1}
\underbrace{\epsilon_{j-1}\dots\epsilon_{j-1}}_{a_{j-1}\ppar^{(j-1)}}\dots
\underbrace{\epsilon_{0}\dots\epsilon_{0}}_{a_{0}\ppar^{(0)}}
\in\{+,-\}^{v-1}
,\quad 
\epsilon_{i}=
\begin{cases}
- & \text{ if } i\in S,
\\
+ & \text{ if } i\notin S.
\end{cases}
\end{gather*}
We get
\begin{gather*}
\TRD{$\qjwm[S]$}=\tilde{\morstuff{d}}(\pi_{S}(v))
,\quad
\TRU{$\qjwm[S]$}=\tilde{\morstuff{u}}(\pi_{S}(v)). 
\end{gather*}
Note the subtle, but
important difference that $\tilde{\morstuff{d}}(\pi_{S}(v))$ includes an
idempotent $\qjw[{v[S]{-}1}]$ on the left, while $\downo{S}$ does not.
As a consequence, composites of morphisms of type
$\tilde{\morstuff{d}}(\pi)$ are automatically zero, while the morphisms
of type $\downo{S}$ can be composed in interesting ways. Remarkably,
this distinction disappears when considering analogs of such morphisms
built from mixed projectors, see \fullref{proposition:jw-properties}.
\end{remark}

\begin{example}\label{example:trapezes}
For $v=\plbase{a,b,c}$ we have: 
\begin{gather*}
\TRD{$\qjwm[\emptyset\,]$}
=
\begin{tikzpicture}[anchorbase,scale=0.25,tinynodes]	
\draw[JW] (-1.5,1) rectangle (1.5,2);
\draw[JW] (-1.5,-1) rectangle (1.5,-2);
\draw[usual] (0,-1) to (0,1);
\node at (0,-2.3) {$\phantom{a}$};
\end{tikzpicture}
,\quad
\TRD{$\qjwm[\{0\}]\,$}
=
\begin{tikzpicture}[anchorbase,scale=0.25,tinynodes]
\draw[JW] (-1.5,1) rectangle (1.5,2);
\draw[JW] (-1.5,-1) rectangle (1.5,-2);
\draw[usual] (0,-1) to (0,1);
\draw[usual] (1,-1) to[out=90,in=180] (1.5,-0.5) 
to[out=0,in=90] (2,-1) to (2,-2) node[below,yshift=0pt]{$c$};
\node at (0,-2.1) {$\phantom{a}$};
\end{tikzpicture}
,\quad
\TRD{$\qjwm[\{1\}]\,$}
=
\begin{tikzpicture}[anchorbase,scale=0.25,tinynodes]
\draw[JW] (-1.5,1) rectangle (1.5,2);
\draw[JW] (-1.5,-1) rectangle (1.5,-2);
\draw[usual] (0,-1) to (0,1);
\draw[usual] (1,-1) to[out=90,in=180] (1.5,-0.5) 
to[out=0,in=90] (2,-1) to (2,-2) node[below,yshift=0pt]{\scalebox{0.75}{$b\ppar^{(1)}$}};
\draw[usual] (1,1) to[out=270,in=90] (3,-0.5) to (3,-2) node[below,yshift=0pt]{$c$};
\node at (0,-2.1) {$\phantom{a}$};
\end{tikzpicture}
,\quad
\TRD{$\qjwm[\{1,0\}]\,$}
=
\begin{tikzpicture}[anchorbase,scale=0.25,tinynodes]
\draw[JW] (-1.5,1) rectangle (1.5,2);
\draw[JW] (-1.5,-1) rectangle (1.5,-2);
\draw[usual] (-1,-1) to (-1,1);
\draw[usual] (1,-1) to[out=90,in=180] (1.5,-0.5) 
to[out=0,in=90] (2,-1) to (2,-2) node[below,yshift=0pt]{\scalebox{0.75}{$b\ppar^{(1)}$}};
\draw[usual] (0,-1) to[out=90,in=180] (1.5,0) 
to[out=0,in=90] (3,-1) to (3,-2) node[below,yshift=0pt]{$c$};
\node at (0,-2.1) {$\phantom{a}$};
\end{tikzpicture}
.
\end{gather*}
\end{example}

Recall that we use $v=\plbase{a_{j},\dots,a_{0}}$ and write
$\ppar^{(i)}=\ppar^{i-1}\lpar$ for $i>0$. For $v\in\N$ and $s\in\N[0]$ let
$\fancest{v}{s}$ denote the youngest ancestor of $v$ whose $s$th digit is zero.
(By convention, $\fancest{v}{{-}1}=v$.) For each down-admissible $S$ for $v$ we
let 
\begin{gather*}
\lambda_{v,S}
={\textstyle\prod_{s\in S}}\,
(-1)^{a_{s}\ppar^{(s-1)}}\tfrac{\qnum{\fancest{v}{s{-}1}[S]}{\vpar}}{\qnum{\fancest{v}{s}[S]}{\vpar}}
\in\kkv.
\end{gather*}

\fullref{definition:pl-jw-q} generalizes \cite[Definition 2.22]{TuWe-quiver} and \fullref{lemma:pl-jw-q} below 
generalizes \cite[Proposition 3.3]{BuLiSe-tl-char-p} to the mixed case.

\begin{definition}\label{definition:pl-jw-q} 
For $v\in\N$ the
\emph{semisimple $\ppar\lpar$JW projector}
$\pqjw[v{-}1]\in\End_{\TL[{\kkv,\vpar}]}(v-1)$ is defined to be
\begin{gather}\label{eq:pljwdef}
\begin{tikzpicture}[anchorbase,scale=0.25,tinynodes]
\node[pQJW,label={[yshift=-0.5cm]$\pqjwm[v{-}1]$}] at (0,0){};
\end{tikzpicture}
=
\pqjw[v{-}1]
=
{\textstyle\sum_{v[S]\in\supp[v]}}\,
\lambda_{v,S}\,\loopdowngen{S}{v{-}1}
=
{\textstyle\sum_{v[S]\in\supp[v]}}\,
\lambda_{v,S}\cdot\TR{$\qjwm[S]$}.
\end{gather}
\end{definition}

The choice of name for $\pqjw[v{-}1]$ is because the associated tilting 
module is a direct sum of simple tilting modules, {\ie}
\begin{gather}\label{eq:tilting-semisimple}
\overline{\tmod}(v-1)\cong
{\textstyle\bigoplus_{v[S]\in\supp[v]}}\,\tmod(v[S]-1)
\cong
{\textstyle\bigoplus_{v[S]\in\supp[v]}}\,\wmod(v[S]-1).
\end{gather}

Note that $\mchar[{\kkv,\vpar}]=(\ppar,\infty)$, so \eqref{eq:pljwdef} is well-defined.
In \fullref{theorem:well-defined} we will see that the 
semisimple $\ppar\lpar$JW
projectors can be base changed to $(\kk,\qpar)$ with
$\mchar=(\ppar,\lpar)$.

\begin{example}\label{example:left-right}
By construction, $\fliph[{(\pqjw[v{-}1])}]=\pqjw[v{-}1]$. However, 
$\flipv[{(\pqjw[v{-}1])}]\neq\pqjw[v{-}1]$ in general (we will address 
this in \fullref{lemma:left-right}), as can be seen by
\begin{gather*}
\begin{tikzpicture}[anchorbase,scale=0.25,tinynodes]
\draw[pQJW] (0,-1) rectangle (4,1);
\node at (2,-0.1) {$\pqjwm[3]$};
\end{tikzpicture}
\;\;=\;\;
\begin{tikzpicture}[anchorbase,scale=0.25,tinynodes]
\draw[usual] (0,0) to (0,2);
\draw[usual] (2,0) to (2,2);
\draw[usual] (4,0) to (4,2);
\end{tikzpicture}
+\tfrac{1}{2}\cdot
\begin{tikzpicture}[anchorbase,scale=0.25,tinynodes]
\draw[usual] (0,0) to[out=90,in=180] (1,0.75) to[out=0,in=90] (2,0);
\draw[usual] (0,2) to[out=270,in=180] (1,1.25) to[out=0,in=270] (2,2);
\draw[usual] (4,0) to (4,2);
\end{tikzpicture}
\;\;\neq\;\;
\begin{tikzpicture}[anchorbase,scale=0.25,tinynodes]
\draw[usual] (0,0) to (0,2);
\draw[usual] (2,0) to (2,2);
\draw[usual] (4,0) to (4,2);
\end{tikzpicture}
+\tfrac{1}{2}\cdot
\begin{tikzpicture}[anchorbase,scale=0.25,tinynodes]
\draw[usual] (0,0) to (0,2);
\draw[usual] (2,0) to[out=90,in=180] (3,0.75) to[out=0,in=90] (4,0);
\draw[usual] (2,2) to[out=270,in=180] (3,1.25) to[out=0,in=270] (4,2);
\end{tikzpicture}
\;\;=\;\;
\begin{tikzpicture}[anchorbase,scale=0.25,tinynodes]
\draw[pQJWl] (0,-1) rectangle (4,1);
\node at (2,-0.1) {$\pqjwm[3]$};
\end{tikzpicture}
\end{gather*}
which is an example in characteristic $\ppar=3$. 
\end{example}

\begin{remark}
Further concrete examples for projector expansions \eqref{eq:pljwdef} can be found in
\cite[Examples 2.20 and 2.23]{TuWe-quiver}. Relative to the treatment there, we
allow the following two generalizations. First, the $\ppar$-adic expansions should be replaced by the
$\ppar\lpar$-adic expansions, and $\ppar^{k}$ therein by $\ppar^{(k)}$.
Second, all coefficients use quantum numbers instead of integers.
\end{remark}

The following is reproduced from \cite[Section 3B]{TuWe-quiver}, adjusting 
the scalars.

\begin{lemma}\label{lemma:trapezes}
\leavevmode

\begin{enumerate}
\item Suppose that $S$ and $S^{\prime}$ are down-admissible for $v$. Then we have 
\begin{gather*}
\qjw[{v[S]{-}1}]\downo{S}\upo{S^{\prime}}\qjw[{v[S^{\prime}]{-}1}] 
=
\begin{tikzpicture}[anchorbase,tinynodes]
\draw[JW] (-0.15,0.3) rectangle (-1.25,0.6);
\node at (-0.7,0.4) {$\qjwm[{v[S]{-}1}]$};
\draw[JW] (-0.15,-0.3) rectangle (-1.25,-0.6);
\node at (-0.7,-0.5) {\scalebox{0.875}{$\qjwm[{v[S^{\prime}]{-}1}]$}};
\trd{1.325}{0.3}{$\qjwm[S]$}{-0.075}{0.3}
\tru{1.325}{0.3}{\scalebox{0.9}{$\qjwm[S^{\prime}]$}}{-0.075}{-0.3}
\end{tikzpicture}
=
\delta_{S,S^{\prime}}\lambda^{-1}_{v,S^{\prime}}\cdot
\begin{tikzpicture}[anchorbase,tinynodes]
\draw[JW] (0.15,-0.3) rectangle (1.25,0.3);
\node at (0.7,-0.05) {$\qjwm[{v[S^{\prime}]{-}1}]$};
\end{tikzpicture}
=
\delta_{S,S^{\prime}}\lambda^{-1}_{v,S^{\prime}}\qjw[{v[S^{\prime}]{-}1}].
\end{gather*}

\item Suppose $S$ is down-admissible for $v$, 
and $S^{\prime}=\{s,\dots,s^{\prime}-1\}$ is a minimal 
down-admissible stretch for $v$. Then we have 
\begin{gather*}
\begin{tikzpicture}[anchorbase,tinynodes]
\tru{1.2}{0.6}{$\qjwm[S]$}{0}{0}
\draw[usual] (-0.125,0.6) 
to[out=90,in=0] (-0.25,0.725) node[above,yshift=-2pt]{$S^{\prime}$} to[out=180,in=90] (-0.375,0.6);
\end{tikzpicture}
=	
\begin{cases}
(-1)^{a_{s}\ppar^{(s)}}
\tfrac{\qnum{\fancest{v}{s}[S]}{\vpar}}{\qnum{\fancest{v}{s{-}1}[S]}{\vpar}}\cdot
\TRU{\,$\qjwm[S{\setminus}S^{\prime}]$}
& \text{ if }s\in S,s^{\prime}\notin S,
\\[5pt]
\TRU{\,$\qjwm[S\cup S^{\prime}]$}
& \text{ if }s\notin S,s^{\prime}\in S,
\\[5pt]
0 & \text{ otherwise}.
\end{cases}
\end{gather*}

\item Suppose that $S^{\prime}=\{s,\dots,s^{\prime}-1\}$ is the smallest 
minimal down-admissible stretch for $v$ and let $S$ be down-admissible for $\fancest{v}{s}=\mother$. Then we have
\begin{gather*}
\begin{tikzpicture}[anchorbase,scale=0.3,tinynodes]
\tr{2.4}{0.6}{\raisebox{-0.05cm}{$\qjwm[S]$}}{0}{0}
\draw[usual] (0.6,-0.6) to (0.6,0.6) 
to[out=90,in=0] (0,1.2) node[above,yshift=-2pt]{$S^{\prime}$} to[out=180,in=90] (-0.6,0.6);
\node at (-1,1.2) {$\phantom{a}$};
\node at (-1,-0.6) {$\phantom{a}$};
\end{tikzpicture}	
=
\begin{cases}
\TRUD{$\qjwm[S]$}{\scalebox{0.875}{$\qjwm[S\cup S^{\prime}]$}}
=
\upo{S}\qjw[{v[S\cup S^{\prime}]{-}1}]\downo{S\cup S^{\prime}} 
& \text{ if }s^{\prime}\notin S,
\\[5pt]
\TRUD{\scalebox{0.875}{$\qjwm[S\cup S^{\prime}]$}}{$\qjwm[S]$}
=
\upo{S\cup S^{\prime}}\qjw[{v[S]{-}1}]\downo{S}
& \text{ if }s^{\prime}\in S.
\end{cases}
\end{gather*}
\end{enumerate}
\end{lemma}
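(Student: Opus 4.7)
The plan is to adapt the argument from \cite[Section 3B]{TuWe-quiver} by systematically replacing integers with quantum numbers, relying on the three defining properties of the simple JW projectors collected in \fullref{proposition:generic-jw-properties}: absorption \eqref{eq:0absorb}, cap-killing \eqref{eq:0kill}, and partial trace \eqref{eq:0trace}. All three parts follow by unfolding the simple trapezes into their cup, cap, and JW components, then applying these three identities in a nested fashion.

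For (a), the composition $\downo{S}\upo{S^{\prime}}$, sandwiched between the two JW projectors $\qjw[{v[S]-1}]$ and $\qjw[{v[S^{\prime}]-1}]$, consists of nested cups and caps grouped by stretches of $S$ and $S^{\prime}$. If $S\neq S^{\prime}$, then some minimal stretch belongs to exactly one of them, and this produces a cap adjacent to a JW projector in a configuration killed by \eqref{eq:0kill}. When $S=S^{\prime}$, the cups and caps nest perfectly; applying \eqref{eq:0trace} once for each $s\in S$, working from the innermost stretch outward, produces a factor $\pm\qnum{\fancest{v}{s-1}[S]}{\vpar}/\qnum{\fancest{v}{s}[S]}{\vpar}$ at each step whose product is $\lambda^{-1}_{v,S}$, leaving behind the idempotent $\qjw[{v[S]-1}]$.

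For (b) and (c), the analysis is local around the small stretch $S^{\prime}$. In (b), the cap labeled $S^{\prime}$ at the top can pair with a matching cup of an existing stretch of $S$ (reducing $S$ to $S\setminus S^{\prime}$ and contributing a scalar from \eqref{eq:0trace}), create a new nested cup extending $S$ to $S\cup S^{\prime}$ (no scalar needed, since the resulting configuration is still clean), or land against the JW projector at a non-matching position and vanish via \eqref{eq:0kill}. In (c), the hypothesis that $S^{\prime}$ is the \emph{smallest} minimal down-admissible stretch means that the relevant cups and caps occupy the rightmost part of the diagram and slide freely past everything coming from $S$; the two sub-cases then distinguish whether $S^{\prime}$ joins the set of reflections on the upper or on the lower half of the resulting figure, with no scalar contribution in either case.

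The main obstacle is scalar bookkeeping. Each invocation of \eqref{eq:0trace} contributes a ratio of quantum numbers, and one must verify that the widths of the projectors after the successive cap reductions match the pattern $\fancest{v}{s-1}[S]$ versus $\fancest{v}{s}[S]$ appearing in the definition of $\lambda_{v,S}$. This is where the combinatorial interpretation of $\fancest{v}{s}[S]$ as the ancestor obtained after applying all reflections in $S$ below index $s$ becomes essential, and it is also where the argument genuinely differs from the integer-valued computation in \cite{TuWe-quiver}: we work over $\kkv$, where the individual quantum numbers need not be invertible in $\kk$, but the ratios are well-defined thanks to the change of variables method illustrated in \fullref{subsection:ellp}.
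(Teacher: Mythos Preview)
Your proposal is correct and matches the paper's approach, which simply refers to \cite[Lemmas 3.7--3.9]{TuWe-quiver} with the remark that the argument carries over word-for-word using the three properties in \fullref{proposition:generic-jw-properties}. Your closing concern about invertibility in $\kk$ is misplaced, however: the lemma is stated and proved entirely in $\TL[\kkv,\vpar]$, where nonzero quantum numbers are units of the field $\kkv$, and the specialization to $\qpar$ (with its attendant valuation issues) only enters later via \fullref{theorem:well-defined}.
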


\begin{proof}
Word-by-word as in \cite[Lemmas 3.7, 3.8 and 3.9]{TuWe-quiver}.
\end{proof}

\begin{lemma}\label{lemma:pl-jw-q}  
The semisimple $\ppar\lpar$JW projectors can be expanded as
\begin{gather}\label{eq:recursion-formula}
\begin{tikzpicture}[anchorbase,scale=0.25,tinynodes]
\draw[pQJW] (-1.5,0) rectangle (1.5,2);
\node at (0,0.9) {$\qjwm[v{-}1]$};
\end{tikzpicture}
=
{\textstyle\sum_{\mother[v][S]\in\supp[{\mother[v]}]}}\,
\lambda_{\mother[v],S}\,
\left(
\begin{tikzpicture}[anchorbase,tinynodes]
\tru{1.65}{0.3}{$\qjwm[S]$}{0}{0.2}
\draw[JW] (0.3,0.2) rectangle (-1.65,-0.2);
\trd{1.65}{0.3}{$\qjwm[S]$}{0}{-0.2}
\node at (-0.675,-0.05) {$\qjwm[{v[S]{-}1}]$};
\draw[usual] (0.15,0.2) to (0.15,0.5) node[above,yshift=-2pt]{$a_{s}\ppar^{(s)}$};
\draw[usual] (0.15,-0.2) to (0.15,-0.5) node[below,yshift=-3pt]{$a_{s}\ppar^{(s)}$};
\end{tikzpicture}
+(-1)^{a_{s}\ppar^{(s)}}\tfrac{\qnum{v[S][s]}{\vpar}}{\qnum{\mother{[S]}}{\vpar}}
\cdot
\begin{tikzpicture}[anchorbase,tinynodes]
\tru{1.65}{0.3}{$\qjwm[S]$}{0}{0.2}
\draw[JW] (-0.45,-0.2) rectangle (-1.65,0.2);
\trd{1.65}{0.3}{$\qjwm[S]$}{0}{-0.2}
\node at (-1.05,-0.05) {$\qjwm[{v[S][s]{-}1}]$};
\draw[usual] (-0.3,0.2) to[out=270,in=180] (-0.075,0.075) 
to[out=0,in=270] (0.15,0.2) to (0.15,0.5) node[above,yshift=-2pt]{$a_{s}\ppar^{(s)}$};
\draw[usual] (-0.3,-0.2) to[out=90,in=180] (-0.075,-0.075) 
to[out=0,in=90] (0.15,-0.2) to (0.15,-0.5) node[below,yshift=-3pt]{$a_{s}\ppar^{(s)}$};
\end{tikzpicture}
\right),
\end{gather}
where $a_{s}$ is the first non-zero digit of $v$. 
As a consequence, for any ancestor $\motherr{v}{j}$ of $v$, we have
\begin{gather*}
\begin{tikzpicture}[anchorbase,scale=0.25,tinynodes]
\draw[pQJW] (0.5,0) rectangle (-2.5,2);
\draw[usual] (1,0) to (1,2);
\node at (-1,0.9) {$\;\pjwm[\motherr{v}{j}{-}1]$};
\draw[pQJW] (1.5,-2) rectangle (-2.5,0);
\node at (-0.5,-1.1) {$\pjwm[v{-}1]$};
\node at (-1,2.5) {$\phantom{a}$};
\node at (-1,-2.5) {$\phantom{a}$};
\end{tikzpicture}
=
\begin{tikzpicture}[anchorbase,scale=0.25,tinynodes]
\draw[pQJW] (1.5,-1) rectangle (-2.5,1);
\node at (-0.5,-0.1) {$\pjwm[v{-}1]$};
\node at (-1,2.5) {$\phantom{a}$};
\node at (-1,-2.5) {$\phantom{a}$};
\end{tikzpicture}
.
\end{gather*} 
\end{lemma}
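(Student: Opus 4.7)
The plan is to prove the recursion \eqref{eq:recursion-formula} by unpacking the definition $\pqjw[v-1]=\sum_T\lambda_{v,T}\loopdowngen{T}{v-1}$ from \fullref{definition:pl-jw-q}, grouping the sum over down-admissible sets $T$ for $v$ according to whether $T$ contains the rightmost non-zero digit position $s$, and applying \fullref{lemma:trapezes}(c). Throughout, let $a_s$ be the rightmost non-zero digit of $v$ (so that $\mother[v]=v-a_s\ppar^{(s)}$) and let $S'$ denote the smallest minimal down-admissible stretch for $v$, which contains $s$. Down-admissible sets $T$ for $v$ are in bijection with pairs $(S,\epsilon)$, where $S$ is down-admissible for $\mother[v]$ and $\epsilon\in\{0,1\}$ records whether $S'$ is included in $T$: one takes $T=S$ when $\epsilon=0$ and $T=S\sqcup S'$ when $\epsilon=1$. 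Using the full stretch $S'$ rather than $\{s\}$ is necessary precisely because of possibly-zero digits adjacent to position $s$.

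For $\epsilon=0$, since $S$ only records reflections at positions above $s$, the cups and caps constituting $\upo{S}$ and $\downo{S}$ sit inside the leftmost $\mother[v]-1$ strands and the rightmost $a_s\ppar^{(s)}$ strands pass through as identities; the resulting $\loopdowngen{S}{v-1}$ is exactly the first summand $A_S$ in \eqref{eq:recursion-formula}. For $\epsilon=1$, I would invoke \fullref{lemma:trapezes}(c) with $S'$ taken to be the smallest minimal down-admissible stretch for $v$, which rewrites $\loopdowngen{S\sqcup S'}{v-1}$ in terms of the middle projector $\qjw[v[S][s]-1]$ and a cup-cap decoration on the $a_s\ppar^{(s)}$ strands, producing precisely the second summand $B_S$.

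The main obstacle is matching the scalar prefactors. When $\epsilon=0$, the product $\lambda_{v,S}=\prod_{t\in S}(-1)^{a_t\ppar^{(t-1)}}\qnum{\fancest{v}{t-1}[S]}{\vpar}/\qnum{\fancest{v}{t}[S]}{\vpar}$ coincides with $\lambda_{\mother[v],S}$, because $S$ only reflects above $s$ and for every $t\geq 1$ both $\fancest{v}{t}$ and $\fancest{\mother[v]}{t}$ agree. When $\epsilon=1$, the additional factors appearing in $\lambda_{v,S\sqcup S'}$ at indices $t\in S'$ telescope to produce the scalar $(-1)^{a_s\ppar^{(s)}}\qnum{v[S][s]}{\vpar}/\qnum{\mother[v][S]}{\vpar}$ displayed in \eqref{eq:recursion-formula}; verifying this cancellation requires carefully unwinding the definition of $\fancest{v}{t}$ across each index in the stretch $S'$ and is the most technical part of the argument.

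Finally, for the absorption consequence I would use the recursion just proved. The $A_S$ contributions sum to $\pqjw[\mother[v]-1]\hcirc\idtl[a_s\ppar^{(s)}]$ by the defining formula for $\pqjw[\mother[v]-1]$, so post-composing $\pqjw[v-1]$ with $\pqjw[\mother[v]-1]\hcirc\idtl[a_s\ppar^{(s)}]$ from above leaves this sum invariant by idempotency of $\pqjw[\mother[v]-1]$, while each $B_S$-term is annihilated because the cap at the top of $B_S$ creates a turn-back on the simple Jones--Wenzl projector $\qjw[\mother[v][S]-1]$ living inside $\pqjw[\mother[v]-1]$ and so vanishes by \eqref{eq:0kill}. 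Iterating this absorption along the ancestry chain $v\to\mother[v]\to\motherr{v}{2}\to\cdots\to\motherr{v}{j}$ extends the conclusion to every ancestor $\motherr{v}{j}$ of $v$, as required.
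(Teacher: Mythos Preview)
Your treatment of the recursion formula \eqref{eq:recursion-formula} is sound: the bijection between down-admissible sets $T$ for $v$ and pairs $(S,\epsilon)$ is correct, the identification of the $\epsilon=0$ summands with the $A_S$ diagrams is right, and the scalar-matching you flag as technical does indeed go through by telescoping. (A minor point: \fullref{lemma:trapezes}(c) is not strictly needed for the $\epsilon=1$ case; it suffices to observe directly from the definition of $\downo{S\sqcup S'}$ that its topmost layers form $\downo{S'}$, producing the cup-cap decoration, while the remaining layers form $\downo{S}(\mother[v])\hcirc\idtl$.)

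The absorption argument, however, contains a genuine error. Your claim that $\sum_{S}\lambda_{\mother[v],S}A_{S}=\pqjw[\mother[v]-1]\hcirc\idtl[a_{s}\ppar^{(s)}]$ is false: the middle projector in $A_{S}$ is $\qjw[v[S]-1]$, spanning all $v[S]-1$ strands, whereas the middle projector in $\loopdowngen{S}{\mother[v]-1}\hcirc\idtl$ is only $\qjw[\mother[v][S]-1]\hcirc\idtl$. These are different morphisms already for $v$ of generation one, where the only $S$ is $\emptyset$ and $A_{\emptyset}=\qjw[v-1]\neq\qjw[\mother[v]-1]\hcirc\idtl$. Moreover, even granting your two claims, the conclusion would be $(\pqjw[\mother[v]-1]\hcirc\idtl)\circ\pqjw[v-1]=\pqjw[\mother[v]-1]\hcirc\idtl$ rather than $\pqjw[v-1]$, which is not what is asserted. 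Finally, the $B_{S}$ terms are \emph{not} annihilated: the cup in $B_{S}$ sits below the up-trapeze, so it does not meet any simple JW inside $\pqjw[\mother[v]-1]$ and \eqref{eq:0kill} does not apply.

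The correct mechanism is that \emph{every} term---each $A_{S}$ and each $B_{S}$---is individually fixed under post-composition with $\pqjw[\mother[v]-1]\hcirc\idtl$. The key is that both types of terms carry, at the bottom of their $\upo{S}(\mother[v])$-part, a copy of $\qjw[\mother[v][S]-1]$: for $A_{S}$ this comes from \eqref{eq:0absorb} applied to the central $\qjw[v[S]-1]$, and for $B_{S}$ it is the box built into the top of $\upo{S'}(v[S])$ (indeed, zeroing the digits of $v[S]$ at positions $\leq\max(S')$ yields exactly $\mother[v][S]$). One then expands $\pqjw[\mother[v]-1]$ via its defining sum and applies the orthogonality in \fullref{lemma:trapezes}(a) to collapse the sum to a single term, recovering the original. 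Iterating up the ancestry chain then gives the statement for all $\motherr{v}{j}$, as you say.
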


\begin{proof}
With the properties listed in \fullref{lemma:trapezes}, 
the proof follows {\ver} as in \cite[Lemma 2.24]{TuWe-quiver} and \cite[Proposition 3.3]{BuLiSe-tl-char-p}. 
\end{proof}

The next statement of this section enables us to related the left and right
version of the $\ppar\lpar$JW projectors.
Let $v=\plbase{a_{j},\dots,a_{0}}$, as
usual, and let $\setstuff{S}_{v-1}$ 
denote the symmetric group on $v-1$ letters.
Assuming the existence of square roots, 
we can use \eqref{eq:kauffman} to define
$\morstuff{g}=\morstuff{g}(v-1)$ to be the positive braid lift of the longest
element of $\setstuff{S}_{v-1}$ (the positive half twist, a Garside element),
and $\morstuff{r}=\morstuff{r}(a_{j},\dots,a_{0})$ to be the positive braid lift of a shortest coset representative 
for $\setstuff{S}_{v-1}/(\setstuff{S}_{a_{j}\ppar^{(j)}-1}\times
\setstuff{S}_{a_{j-1}\ppar^{(j-1)}}\times\dots\times
\setstuff{S}_{a_{0}})$. See \eqref{eq:twisting} for
an example which fixes conventions.

\begin{lemma}\label{lemma:left-right}
Assume that $\vpar$ has a square root in $\kkv$, we have
\begin{gather*}
\begin{tikzpicture}[anchorbase,tinynodes]
\draw[mor] (-0.15,0.6) rectangle (-1.25,0.9);
\node at (-0.7,0.7) {\scalebox{0.8}{$\morstuff{g}^{-1}$}};
\draw[pQJW] (-0.15,0.3) rectangle (-1.25,0.6);
\node at (-0.7,0.4) {$\pqjwm[{v{-}1}]$};
\draw[mor] (-0.15,0) rectangle (-1.25,0.3);
\node at (-0.7,0.1) {$\morstuff{g}$};
\end{tikzpicture}
=
\begin{tikzpicture}[anchorbase,tinynodes]
\draw[mor] (-0.15,0.6) rectangle (-1.25,0.9);
\node at (-0.7,0.65) {$\morstuff{r}^{-1}$};
\draw[pQJW] (-0.15,0.3) rectangle (-1.25,0.6);
\node at (-0.7,0.4) {$\pqjwm[{v{-}1}]$};
\draw[mor] (-0.15,0) rectangle (-1.25,0.3);
\node at (-0.7,0.1) {$\morstuff{r}$};
\end{tikzpicture}
=
\begin{tikzpicture}[anchorbase,tinynodes]
\draw[pQJWl] (-0.15,0.3) rectangle (-1.25,0.6);
\node at (-0.7,0.4) {$\pqjwm[{v{-}1}]$};
\end{tikzpicture}
.
\end{gather*}
\end{lemma}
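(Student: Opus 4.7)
The plan is to split the statement into two assertions: (i) $\morstuff{g}^{-1}\pqjw[v{-}1]\morstuff{g} = \flipv[{\pqjw[v{-}1]}]$, and (ii) $\pqjw[v{-}1]$ commutes with every positive braid lifting an element of the Young subgroup $W_Y = \setstuff{S}_{a_j\ppar^{(j)}-1}\times\setstuff{S}_{a_{j-1}\ppar^{(j-1)}}\times\cdots\times\setstuff{S}_{a_0}$. Granting these, the lemma follows: the right-aligned projector in the rightmost term of the statement equals $\flipv[{\pqjw[v{-}1]}]$ by definition, matching (i); and since $\morstuff{g}\morstuff{r}^{-1}$ is a positive braid lift of the longest element of $W_Y$ (using the standard factorization $w_{0}=u\cdot r$ with $\ell(w_{0})=\ell(u)+\ell(r)$, where $u$ is the longest element of $W_Y$ and $r$ is the shortest coset representative), (ii) yields $\morstuff{g}^{-1}\pqjw[v{-}1]\morstuff{g} = \morstuff{r}^{-1}\pqjw[v{-}1]\morstuff{r}$.

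For (i), I invoke the diagrammatic fact that conjugation by the positive half-twist on $n$ strands in $\TL[{\kkvs,\vpar}]$ implements the vertical reflection $\flipv$ on every endomorphism. Resolving each crossing via Kauffman's skein relation \eqref{eq:kauffman}, one verifies on the two-strand generators that $\sigma_{1}^{-1}\idtl[2]\sigma_{1}=\flipv[{\idtl[2]}]$ and $\sigma_{1}^{-1}U\sigma_{1}=\flipv[{U}]$ for the cup-cap $U$; the general case follows by induction on $n$ together with the observation that the underlying permutation of $\morstuff{g}$ is the strand-reversing longest element of $\setstuff{S}_{v-1}$. Specializing to $\pqjw[v{-}1]$ gives (i).

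For (ii), I proceed by induction on the generation of $v$. The base case $v\in\eve$ is immediate: $\pqjw[v{-}1]=\qjw[v{-}1]$ is a simple JW projector that absorbs every elementary positive braid on $v-1$ strands into $\vpar^{1/2}\qjw[v{-}1]$ on either side, hence commutes with every positive braid. For the inductive step, use the recursion \eqref{eq:recursion-formula} of \fullref{lemma:pl-jw-q} to express $\pqjw[v{-}1]$ in terms of $\pqjw[{\mother{-}1}]$ flanked by outer cup/cap layers on the strands of the youngest non-zero block. By the inductive hypothesis, $\pqjw[{\mother{-}1}]$ commutes with within-block braid conjugation for the Young subgroup of $\mother$; it remains to check compatibility for the newly introduced youngest block. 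This reduces to diagrammatically sliding the within-block crossings through the outer cup/cap layers into an absorbing simple JW, with Kauffman scalars canceling symmetrically across the two sides of the conjugation.

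The main obstacle will be the careful bookkeeping of Kauffman scalars through the cup/cap structure in the inductive step, particularly when {\losp} produces minimal down-admissible stretches of length greater than one so that multiple consecutive digits must be treated simultaneously. An alternative, more direct route avoids the induction entirely: expanding $\pqjw[v{-}1]$ via \eqref{eq:pljwdef} and showing term by term that each Young-subgroup generator either lies within the simple JW factor $\qjw[{v[S]{-}1}]$ of a term (absorbed directly with scalar $\vpar^{1/2}$) or propagates through the cup/cap structure of $\downo{S}$ and $\upo{S}$ to be absorbed at a simple JW elsewhere, with the scalars arising on the top and bottom sides of the conjugation canceling pairwise.
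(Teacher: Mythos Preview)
Your plan matches the paper's argument: both establish that half-twist conjugation realizes $\flipv$ on Temperley--Lieb diagrams, and both pass from $\morstuff{g}$ to $\morstuff{r}$ by absorbing within-block crossings into simple JW projectors with scalars $\vpar^{\pm 1/2}$ that cancel between the top and bottom of the conjugation. In fact, your ``alternative direct route'' at the end---expanding $\pqjw[v{-}1]$ via \eqref{eq:pljwdef} and pushing each Young-subgroup crossing into a simple JW factor term by term---is precisely the paper's argument (it invokes the recursion \eqref{eq:recursion-formula} inductively, but the mechanism is identical).

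One caution on your step (i): the check on the two-strand generators is vacuous, since both $\idtl[2]$ and the cup-cap $U$ are $\flipv$-invariant, so this verifies nothing and gives no inductive leverage. The operative fact, which the paper states without proof, is that half-twist conjugation reverses the boundary labeling of any crossingless matching (equivalently, sends $U_i\mapsto U_{n-i}$); this is what yields $\flipv$ in general. With that correction your argument goes through.
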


\begin{proof}
Using the integral basis 
$\sbas$ of crossingless matchings, 	
it is easy to see that 
the conjugation of
$\morstuff{f}\in\sbas$ with the half twist $\morstuff{g}$ is precisely the 
operation $\fliph[{(\placeholder)}]$. This implies that the 
left and the right semisimple $\ppar\lpar$JW projectors 
are related by conjugation with $\morstuff{g}$.

Next, we use that JW projectors absorb 
crossings up to scalars, {\ie}
\begin{gather*}
\begin{tikzpicture}[anchorbase,scale=0.25,tinynodes]
\draw[usual,white] (1,-2.55) to[out=90,in=270] (3,0);
\draw[usual,white] (3,-2.55) to[out=90,in=270] (1,0);
\draw[usual] (3,1) to[out=90,in=270] (1,3);
\draw[usual,crossline] (1,1) to[out=90,in=270] (3,3);
\draw[JW] (0,0) rectangle (4,1);
\node at (2,0.3) {$\qjwm[{v{-}1}]$};
\end{tikzpicture}
=
\vpar^{1/2}\cdot
\begin{tikzpicture}[anchorbase,scale=0.25,tinynodes]
\draw[JW] (0,0) rectangle (4,1);
\node at (2,0.3) {$\qjwm[{v{-}1}]$};
\end{tikzpicture}
,\quad
\begin{tikzpicture}[anchorbase,scale=0.25,tinynodes]
\draw[usual,white] (1,-2.55) to[out=90,in=270] (3,0);
\draw[usual,white] (3,-2.55) to[out=90,in=270] (1,0);
\draw[usual] (1,1) to[out=90,in=270] (3,3);
\draw[usual,crossline] (3,1) to[out=90,in=270] (1,3);
\draw[JW] (0,0) rectangle (4,1);
\node at (2,0.3) {$\qjwm[{v{-}1}]$};
\end{tikzpicture}
=
\vpar^{-1/2}\cdot
\begin{tikzpicture}[anchorbase,scale=0.25,tinynodes]
\draw[JW] (0,0) rectangle (4,1);
\node at (2,0.3) {$\qjwm[{v{-}1}]$};
\end{tikzpicture}
,
\end{gather*}
which are direct consequences 
of \eqref{eq:kauffman} and \eqref{eq:0kill}. Using
this, we see that crossings of 
the half twist $\morstuff{g}$ are absorbed up to
a scalar unless they 
reorder the bundles of 
strands corresponding to the digits
of $v$. After removing all such crossings, the remaining braid is
$\morstuff{r}$. (To see this use \eqref{eq:recursion-formula} inductively, the
case of $v\in\eve$ being clear.) By symmetry, the same works for
$\morstuff{g}^{-1}$, but with exactly the opposite scalars, so the scalars
balance out.
\end{proof}

\begin{example} 
Continuing \fullref{example:left-right} we have
\begin{gather}\label{eq:twisting}
\begin{tikzpicture}[anchorbase,scale=0.25,tinynodes]
\draw[usual] (0.5,1) to[out=90,in=270] (3.5,4);
\draw[usual,crossline] (2,1) to[out=90,in=270] (0.5,2.5) to[out=90,in=270] (2,4);
\draw[usual,crossline] (3.5,1) to[out=90,in=270] (0.5,4);
\draw[usual] (3.5,-4) to[out=90,in=270] (0.5,-1);
\draw[usual,crossline] (2,-4) to[out=90,in=270] (0.5,-2.5) to[out=90,in=270] (2,-1);
\draw[usual,crossline] (0.5,-4) to[out=90,in=270] (3.5,-1);
\draw[pQJW] (0,-1) rectangle (4,1);
\node at (2,-0.1) {$\pqjwm[3]$};
\end{tikzpicture}
=
\begin{tikzpicture}[anchorbase,scale=0.25,tinynodes]
\draw[usual] (0.5,1) to[out=90,in=270] (2,3);
\draw[usual] (2,1) to[out=90,in=270] (3.5,3);
\draw[usual,crossline] (3.5,1) to[out=90,in=270] (0.5,3);
\draw[usual] (2,-3) to[out=90,in=270] (0.5,-1);
\draw[usual] (3.5,-3) to[out=90,in=270] (2,-1);
\draw[usual,crossline] (0.5,-3) to[out=90,in=270] (3.5,-1);
\draw[pQJW] (0,-1) rectangle (4,1);
\node at (2,-0.1) {$\pqjwm[3]$};
\end{tikzpicture}
=
\begin{tikzpicture}[anchorbase,scale=0.25,tinynodes]
\draw[usual] (0.5,1) to[out=90,in=270] (3.5,3) 
node[above,xshift=0.15cm]{\phantom{$a_{1}\ppar^{(1)}{-}1$}} node[above,xshift=0.15cm,yshift=-2pt]{$a_{1}\ppar^{(1)}{-}1$};
\draw[usual,crossline] (3.5,1) to[out=90,in=270] (0.5,3) node[above,yshift=-2pt]{$a_{0}$};
\draw[usual] (3.5,-3) node[below,xshift=0.15cm]{$a_{1}\ppar^{(1)}{-}1$} to[out=90,in=270] (0.5,-1);
\draw[usual,crossline] (0.5,-3) node[below]{$a_{0}$} to[out=90,in=270] (3.5,-1);
\draw[pQJW] (0,-1) rectangle (4,1);
\node at (2,-0.1) {$\pqjwm[3]$};
\end{tikzpicture}
\hspace{-0.5cm}
=
\begin{tikzpicture}[anchorbase,scale=0.25,tinynodes]
\draw[pQJWl] (1.5,-1) rectangle (-2.5,1);
\node at (-0.5,-0.1) {$\pqjwm[3]$};
\end{tikzpicture}
,\quad
v=\pbase{a_{1},a_{0}}{3,3}=\pbase{1,1}{3,3}
.
\end{gather}
\end{example}

By coefficients of a morphism in $\Hom_{\TL[{\kkv,\vpar}]}(v-1,w-1)$
we always mean with respect to the integral basis.
Let $\F$ be the prime field of $\kk$, with $\F[\infty]=\Q$.
The question we would like to address next is whether 
the denominators of these coefficients are non-zero after specialization to 
$(\kk,\qpar)$. With respect to the 
below definition the example to keep in mind is:

\begin{example}\label{example:vanishing}
For any $\qpar$ with $\qpar^{\pm 3}=1$ we have 
$\qnum{63}{\qpar}=7^{2}\qnum{3}{\qpar}$.
Thus, when specializing to $(\F[7],2)$ the quantum 
number $\qnum{63}{\vpar}$ vanishes of order $3$, 
but for $(\C,\exp(\pi i/3))$ it only vanishes 
of order $1$.
\end{example}

The semisimple $\ppar\lpar$JW projectors $\pqjw[v{-}1]$ are defined 
over $\F[\ppar](\vpar)$, but the algorithm to construct them
generates coefficient that we can view as elements of $\Qv$, which 
we will below.

The \emph{$\ppar\lpar$-adic valuation} $\ord\colon\Qv\to\Z\cup\{\infty\}$ is
defined as follows. 
\begin{gather*}
\ord[\lpar](r)=\max\{j\in\N[0]\cup\{\infty\}\text{ such that }\qnum{\lpar}{\vpar}^{j}\div r\},
\\
\ord(r)=
\ord[\lpar](r)+
\max\big\{j\in\N[0]\cup\{\infty\}\text{ such that }
\ppar^{j}\div (\neatfrac{r_{\lpar}}{\qnum{\lpar}{\vpar}^{\ord[\lpar](r)}})\big\}.
\end{gather*}
Note that $\ord(0)=\infty$. 
For a reduced fraction
$\neatfrac{r}{s}\in\Qv$ set $\ord(\neatfrac{r}{s})=\ord(r)-\ord(s)$.
Moreover, for a morphism 
$\morstuff{D}\in\Hom_{\TL[{\Qv,\vpar}]}(v-1,w-1)$ we let 
$\ord(\morstuff{D})$ be the entry-wise minimum of the $\ppar\lpar$-adic valuations of the coefficients 
of $\morstuff{D}$ when expressed in terms of the integral basis of $\Hom_{\TL[{\Qv,\vpar}]}(v-1,w-1)$. 

Furthermore, for a $\morstuff{D}\in\Hom_{\TL[{\kkv,\vpar}]}(v-1,w-1)$ 
we define $\ord(\morstuff{D})$ of the same diagram when considered 
in $\Hom_{\TL[{\Qv,\vpar}]}(v-1,w-1)$.
Note that $\ord(\morstuff{D})\geq 0$ implies 
that we can specialize to $(\kk,\qpar)$ 
and obtain a well-defined 
morphism $\mathrm{sp}_{\ppar,\lpar}(\morstuff{D})
\in\Hom_{\TL[{\kk,\qpar}]}(v-1,w-1)$.

\begin{theorem}\label{theorem:well-defined}
We have $\ord(\pqjw[v{-}1])\geq 0$.  
Thus, 
\begin{gather*}
\pjw[v{-}1]:=
\begin{tikzpicture}[anchorbase,scale=0.25,tinynodes]
\draw[pJW] (-1.5,0) rectangle (1.5,2);
\node at (0,0.9) {$\pqjwm[v{-}1]$};
\end{tikzpicture}
:=\mathrm{sp}_{\ppar,\lpar}(\pqjw[v{-}1])
\in\End_{\TL[{\kk,\qpar}]}(v-1)
\end{gather*}
is a well-defined idempotent whose coefficients are elements of $\F(\qpar)$. Moreover, 
$\tlfunctor[{\kk,\qpar}](\pjw[v{-}1])=\idmor_{\tmod(v{-}1)}$, and $\tilt[{\kk,\qpar}]$ is Krull--Schmidt.
\end{theorem}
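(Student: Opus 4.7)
The plan is to establish the four claims---non-negativity of the valuation, well-definedness of the idempotent with coefficients in $\F(\qpar)$, identification of its image under $\tlfunctor$, and the Krull--Schmidt property---in that order, with the bulk of the work devoted to bounding the valuation.

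For $\ord(\pqjw[v{-}1])\geq 0$, I would induct on the generation $\generation$. The base case $v\in\eve$ reduces to the classical simple JW projectors, whose coefficients are ratios of quantum numbers $\qnum{a}{\vpar}$ with $a<v$; the change-of-variables technique illustrated after Proposition~\ref{proposition:qlucas} shows these specialize to well-defined elements of $\F(\qpar)$. For the induction step, apply the recursion \eqref{eq:recursion-formula}, which expresses $\pqjw[v{-}1]$ in terms of $\pqjw[{v[S]{-}1}]$ and $\pqjw[{v[S][s]{-}1}]$---all of strictly smaller generation, so the inductive hypothesis applies---together with trapeze and cup-cap morphisms. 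The only new scalars produced are ratios of the form $\qnum{v[S][s]}{\vpar}/\qnum{\mother{[S]}}{\vpar}$, arising inside $\lambda_{\mother,S}$, and by the same change-of-variables trick these are $\ppar\lpar$-integral. The main obstacle is controlling potential cancellations in the full expansion \eqref{eq:pljwdef}: the local recursion formula is specifically designed so that each summand is separately controllable, avoiding the need to monitor interference between the many terms in the direct expansion.

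Once the valuation bound is in place, $\pjw[v{-}1]=\mathrm{sp}_{\ppar,\lpar}(\pqjw[v{-}1])$ is well-defined; idempotency and the duality invariance $\fliph[{(\pjw[v{-}1])}]=\pjw[v{-}1]$ specialize directly from the corresponding identities in $\TL[{\kkv,\vpar}]$. Tracking through the recursion shows that every scalar introduced lies in $\F(\qpar)$, as claimed.

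For the image, I would pass to the algebraic closure $\K$: by the semisimple character formula \eqref{eq:tilting-semisimple} together with Proposition~\ref{proposition:multiplicities}(a), the character of the subobject of $\tmod(1)^{\hcirc(v-1)}$ cut out by $\pqjw[v{-}1]$ matches $\ch{\tmod(v-1)}$. Combined with the ancestor-absorption property established in Lemma~\ref{lemma:pl-jw-q} (which identifies $\pjw[v{-}1]$ as a primitive refinement inside the idempotent of any ancestor), the resulting summand is forced to be the indecomposable tilting module $\tmod(v{-}1)$, since the alternative---a direct sum of smaller tiltings---would contradict primitivity. Finally, Krull--Schmidt for $\tilt[{\kk,\qpar}]$ follows because the endomorphism ring $\End_{\TL[{\kk,\qpar}]}(v-1)$ is finite-dimensional (its integral basis has Catalan-number size), so the corner ring at the primitive idempotent $\pjw[v{-}1]$ is a finite-dimensional local algebra by Fitting's lemma; decomposition of an arbitrary object into $\tmod(w{-}1)$'s follows by iteratively splitting off such primitive idempotents, and the local endomorphism rings then give the essential uniqueness.
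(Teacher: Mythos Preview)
Your argument for the valuation bound has a genuine gap. The recursion \eqref{eq:recursion-formula} does \emph{not} express $\pqjw[v{-}1]$ in terms of $\pqjw[{v[S]{-}1}]$ and $\pqjw[{v[S][s]{-}1}]$; the boxes on the right-hand side are \emph{simple} JW projectors $\qjw[{v[S]{-}1}]$ and $\qjw[{v[S][s]{-}1}]$ (compare \eqref{eq:danger} and the conventions in Definition~\ref{definition:jw-cupscaps2}). In fact \eqref{eq:recursion-formula} is just the defining expansion \eqref{eq:pljwdef} regrouped in pairs according to whether the lowest non-zero digit is reflected. The leading term ($S=\emptyset$) is $\qjw[v{-}1]$ itself, and for $v\notin\eve$ this has strictly negative $\ppar\lpar$-adic valuation: already for $v=\lpar+1=\plbase{1,1}$ the projector $\qjw[\lpar]$ has denominators $\qnum{\lpar}{\vpar}$, so $\ord(\qjw[\lpar])<0$. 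The other summand $\lambda_{v,\{0\}}\loopdowngen{\{0\}}{v{-}1}$ has the same pole via $\lambda_{v,\{0\}}$, and these poles \emph{cancel} in the sum but not in the individual terms. So your claim that ``each summand is separately controllable'' is false, and the induction on generation as written does not go through.

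The paper circumvents any direct computation. It proceeds instead by an indirect lifting argument: over $\kk$ one already knows (abstractly, from the theory of tilting modules) that an idempotent $E'$ projecting onto the summand $\tmod(v-1)$ of $\tmod(\mother-1)\hcirc\tmod(1)^{\hcirc(v-\mother)}$ exists and absorbs the mother idempotent. One lifts $E'$ along the specialization from the local ring $\kklocal=\kk[\vpar]_{(\vpar-\qpar)}$, then pushes forward to $\kkv$. The crucial observation (via \fullref{lemma:translation}) is that in $\kkv$ there is a \emph{unique} idempotent with the correct character and the absorption property, namely $\pqjw[v{-}1]$; hence the lift must equal $\pqjw[v{-}1]$, which therefore has coefficients in $\kklocal$, i.e.\ $\ord(\pqjw[v{-}1])\geq 0$. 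This uniqueness-plus-lifting strategy is what replaces your attempted term-by-term estimate. Your arguments for the image identification and for Krull--Schmidt are essentially sound once the valuation bound is in hand, though note that primitivity of $\pjw[v{-}1]$ is a \emph{consequence} of identifying its image with the indecomposable $\tmod(v-1)$ via the character argument, not an independent input.
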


In particular, under the equivalence induced by $\tlfunctor$, see
\fullref{proposition:TLtilt}, the image of the idempotent $\pjw[v{-}1]$ is
mapped to $\tmod(v-1)$. We call the $\pjw[v{-}1]$ \emph{mixed JW projectors}.

\begin{proof}
Note that $\pqjw[v{-}1]$ has the correct character,
namely $\ch{\mathrm{Im}\pqjw[v{-}1]}=
\ch{\tmod(v-1)}$. By \fullref{lemma:pl-jw-q},
$\pqjw[v{-}1]\in\End_{\tilt[{\kkv,\vpar}]}
\big(\tmod(1)^{\hcirc(v-1)}\big)$ is an idempotent and it absorbs the tensor
product $\pqjw[\mother{-}1]\hcirc\idtl[{v{-}\mother}]$
of the idempotent for the mother with extra strands. Now we claim there is
exactly one idempotent in
$\End_{\tilt[{\kkv,\vpar}]}\big(\tmod(1)^{\hcirc(v-1)}\big)$ with this property
and the correct character. To see that this is true let us denote by
$\overline{\tmod}(\mother-1)\in\tilt[{\kkv,\vpar}]$ the direct sum of Weyl
modules with the correct character. Now $\overline{\tmod}(\mother-1)\hcirc
\tmod(1)^{\hcirc(v{-}\mother)}\in\tilt[{\kkv,\vpar}]$ contains each Weyl factor
of $\tmod(v-1)$ exactly once, see \fullref{lemma:translation}, so there is
exactly one idempotent in $\End_{\tilt[{\kkv,\vpar}]}
\big(\tmod(1)^{\hcirc(v-1)}\big)$ with the correct character and absorption
property, and the claim follows.

Now let $\kklocal$ be the localization of $\kk[\vpar]$ at the ideal
$(\vpar-\qpar)$. In other words, the elements of $\kklocal$ are rational
functions in $\vpar$ whose denominators do not have a zero at $\qpar$. Note that
there are specialization maps and functors
\begin{gather*}
\begin{tikzcd}[ampersand replacement=\&,column sep=3em]
\& \kklocal\ar[dl,"\vpar\mapsto\qpar",swap]\ar[dr,"\vpar\mapsto\vpar"] \&
\\
\kk \& \& \kkv
\end{tikzcd}
,\quad
\begin{tikzcd}[ampersand replacement=\&,column sep=-3em]
\& \End_{\tilt[{\kklocal,\qpar}]}\big(\tmod(1)^{\hcirc(v-1)}\big)
\ar[dl,"\mathcal{F}_{\vpar\mapsto\qpar}",swap]
\ar[dr,"\mathcal{F}_{\vpar\mapsto\vpar}"] \&
\\
\End_{\tilt[{\kk,\qpar}]}\big(\tmod(1)^{\hcirc(v-1)}\big) 
\& \& \End_{\tilt[{\kkv,\vpar}]}\big(\tmod(1)^{\hcirc(v-1)}\big)
\end{tikzcd}
.
\end{gather*}
To prove $\ord(\pqjw[v{-}1])\geq 0$, we show that
$\pqjw[v{-}1]$ can be lifted to
$\End_{\tilt[{\kklocal,\qpar}]}\big(\tmod(1)^{\hcirc(v-1)}\big)$ and 
its specialization to $\kk$ projects to 
$\tmod(v-1)$. To this end, we use induction over the ancestry of  
$v$, with the case of $v\in\eve$ being clear since 
$\tmod(v-1)\cong\wmod(v-1)$ in these cases.
So let $\pqjw[\mother]$ be liftable and let 
$l_{\vpar\mapsto\vpar}(\pqjw[\mother])$ 
be its lift. Induction implies
\begin{gather*}
\mathcal{F}_{\vpar\mapsto\qpar}
\big(l_{\vpar\mapsto\vpar}(\pqjw[\mother])\big)
=\idmor_{\tmod(\mother-1)}.
\end{gather*}
We further know that $\tmod(v-1)$ is a direct summand of $\tmod(\mother-1)\hcirc
\tmod(1)^{\hcirc(v{-}\mother)}\in\tilt[{\kk,\qpar}]$, so, there is some
projector
$E^{\prime}\in\End_{\tilt[{\kk,\qpar}]}\big(\tmod(1)^{\hcirc(v-1)}\big)$
projecting to this summand, which absorbs the idempotent corresponding to the
mother tensor product with strands. By idempotent lifting, {\cf} \cite[Theorem
21.31]{La-first-non-commutative-rings}, we can pull $E^{\prime}$ back to
$\tilt[{\kklocal,\qpar}]$ giving us another projector
$l_{\vpar\mapsto\qpar}(E^{\prime})$. Pushing this forward gives a projector
$\mathcal{F}_{\vpar\mapsto\vpar}\big(l_{\vpar\mapsto\qpar}(E^{\prime})\big)$ in
the semisimple case with the correct character and absorption property. However, as
we have seen, such a projector is unique and thus, has to be $\pqjw[v{-}1]$.
Hence, we get $\ord(\pqjw[v{-}1])\geq 0$ and $l_{\vpar\mapsto\qpar}(E^{\prime})$
is a lift of $\pqjw[v{-}1]$.

Thus, we can specialize $\pqjw[v{-}1]$ to $\pjw[v{-}1]=E^{\prime}$, and the
claims about the coefficients and
$\pjw[v{-}1]=\idmor_{\tmod(v{-}1)}$ follow by
construction of $\pqjw[v{-}1]$. The Krull--Schmidt property then follows
inductively as the above constructs all highest weight projectors.
\end{proof}

\begin{remark}
The crucial ingredient in the 
proof of \fullref{theorem:well-defined} and, even
more fundamentally, the construction of the projectors $\pqjw[v{-}1]$ are the
tilting characters $\ch{\tmod(v-1)}$, a certain numerical datum. There are two
other ways to get equivalent numerical data: First, one could use the Soergel
category for affine type $A_{1}$ and the $p$-Kazhdan--Lusztig basis as in
\cite{BuLiSe-tl-char-p}. A second method is to calculate the simple
multiplicities within the projective cover of the trivial Temperley--Lieb
module, as done in \cite{MaSp-lp-jw} 
(which is a follow-up of \cite{Sp-modular-tl-algebra} 
where the decomposition numbers of the Temperley--Lieb 
algebra are computed). However, it might be that only the tilting
characters approach generalizes beyond $\SLtwo$, {\eg} see \cite{So-tilting-a}
and \cite{So-tilting-b} for the complex quantum group case, very explicitly
worked out in \cite{St-diplom}. The same method as above should cover this
case, so we decided to spell out the argument above. 
\end{remark} 

Diagrammatically, the three types of projectors are distinguished 
as follows:
\begin{gather}\label{eq:danger}
\qjw[v{-}1]=
\begin{tikzpicture}[anchorbase,scale=0.25,tinynodes]
\draw[JW] (-1.5,0) rectangle (1.5,2);
\node at (0,0.9) {$\qjwm[v{-}1]$};
\end{tikzpicture}
,\quad
\pqjw[v{-}1]=
\begin{cases*}
\begin{tikzpicture}[anchorbase,scale=0.25,tinynodes]
\draw[pQJW] (-1.5,0) rectangle (1.5,2);
\node at (0,0.9) {$\pqjwm[v{-}1]$};
\end{tikzpicture}
\\
\begin{tikzpicture}[anchorbase,scale=0.25,tinynodes]
\draw[JW] (-1.5,0) rectangle (1.5,2);
\node at (0,0.9) {$\pqjwm[v{-}1]$};
\end{tikzpicture}\text{ for }v\in\eve,
\end{cases*}
\quad
\pjw[v{-}1]=
\begin{cases*}
\begin{tikzpicture}[anchorbase,scale=0.25,tinynodes]
\draw[pJW] (-1.5,0) rectangle (1.5,2);
\node at (0,0.9) {$\pqjwm[v{-}1]$};
\end{tikzpicture}
\\
\begin{tikzpicture}[anchorbase,scale=0.25,tinynodes]
\draw[JW] (-1.5,0) rectangle (1.5,2);
\node at (0,0.9) {$\pqjwm[v{-}1]$};
\end{tikzpicture}\text{ for }v\in\eve.
\end{cases*}
\end{gather}
The middle and the rightmost projector have the same character, but
$\pqjw[v{-}1]$ corresponds to a direct sum of simple tilting modules in the semisimple
setting, {\cf} \eqref{eq:tilting-semisimple}, and $\pjw[v{-}1]$ corresponds to the indecomposable $\tmod(v-1)$. We
will use the middle projectors to deduce properties of the right projectors.
Moreover, as illustrated in \eqref{eq:danger}, we also use white boxes for eves
to indicate that these satisfy the same diagrammatic properties as the simple
JW projectors.

\begin{remark}
We warn the reader that the projectors $\qjw[v{-}1]$ descend to well-defined
morphisms in $\TL[{\kk,\qpar}]$ if and only if $v\in\eve$. However, scalar
multiplies of $\qjw[v{-}1]$ may descend to $\TL[{\kk,\qpar}]$ even if
$v\notin\eve$. For example, in characteristic $\ppar=2$ the projector $\qjw[2]$
is not well-defined but
\begin{gather*}
2\cdot
\begin{tikzpicture}[anchorbase,scale=0.25,tinynodes]
\draw[JW] (-1.5,0) rectangle (1.5,2);
\node at (0,0.9) {$\qjwm[2]$};
\end{tikzpicture}
=
2\cdot\left(
\begin{tikzpicture}[anchorbase,scale=0.25,tinynodes]
\draw[usual] (0,0) to (0,2);
\draw[usual] (2,0) to (2,2);
\end{tikzpicture}
+\tfrac{1}{2}\cdot
\begin{tikzpicture}[anchorbase,scale=0.25,tinynodes]
\draw[usual] (0,0) to[out=90,in=180] (1,0.75) to[out=0,in=90] (2,0);
\draw[usual] (0,2) to[out=270,in=180] (1,1.25) to[out=0,in=270] (2,2);
\end{tikzpicture}
\right)
=
2\cdot
\begin{tikzpicture}[anchorbase,scale=0.25,tinynodes]
\draw[usual] (0,0) to (0,2);
\draw[usual] (2,0) to (2,2);
\end{tikzpicture}
+
\begin{tikzpicture}[anchorbase,scale=0.25,tinynodes]
\draw[usual] (0,0) to[out=90,in=180] (1,0.75) to[out=0,in=90] (2,0);
\draw[usual] (0,2) to[out=270,in=180] (1,1.25) to[out=0,in=270] (2,2);
\end{tikzpicture}
\end{gather*}
can be seen as a morphism in $\TL[{\Zv,\vpar}]$ 
and specializes to a well-defined 
and non-zero morphism in $\TL[{\kk,\qpar}]$ (indeed, a nilpotent
endomorphism of $\pjw[2]$). So the white boxes need to be treated with care
whenever one works in $(\kk,\qpar)$.
\end{remark}

We stress again that the non-semisimple projectors do not have a
left-right-symmetry, and their properties do not have such a symmetry either.
For the remainder of the paper, each cup and cap in the illustrations is a parallel bundle 
of cups and caps, depending on $S$ respectively $S^{\prime}$, or a plain 
number. (We also omit to illustrate these if no confusion can arise.)

We have the following generalizations of 
\fullref{proposition:generic-jw-properties}, called 
\emph{classical absorption}, \emph{non-classical absorption}, 
\emph{shortening} and \emph{partial trace}.

\begin{proposition}\label{proposition:jw-properties}
\leavevmode
\begin{enumerate}

\item The projectors $\pqjw[v{-}1]$ form a left-aligned family. That is, for $w\leq v$ we have
\begin{gather*}
\begin{tikzpicture}[anchorbase,scale=0.25,tinynodes]
\draw[pQJW] (0.5,0) rectangle (-2.5,2);
\draw[usual] (1,0) to (1,2);
\node at (-1,0.9) {$\pjwm[w{-}1]$};
\draw[pQJW] (1.5,-2) rectangle (-2.5,0);
\node at (-0.5,-1.1) {$\pjwm[v{-}1]$};
\node at (-1,2.5) {$\phantom{a}$};
\node at (-1,-2.5) {$\phantom{a}$};
\end{tikzpicture}
=
\begin{tikzpicture}[anchorbase,scale=0.25,tinynodes]
\draw[pQJW] (1.5,-1) rectangle (-2.5,1);
\node at (-0.5,-.1) {$\pjwm[v{-}1]$};
\node at (-1,2.5) {$\phantom{a}$};
\node at (-1,-2.5) {$\phantom{a}$};
\end{tikzpicture}
.
\end{gather*}

\item Let $S$ be a
down-admissible stretch for $v$. Then we have
\begin{gather*}
\begin{tikzpicture}[anchorbase,scale=0.25,tinynodes]
\draw[pQJW] (2,0.5) rectangle (-2,-0.5);
\draw[pQJW] (2,2.5) rectangle (-2,3.5);
\node at (0,-0.2) {$\pjwm[v{-}1]$};
\draw[usual] (-0.5,0.5) to[out=90,in=180] (0,1)node[above,yshift=-0.1cm]{$S$} to[out=0,in=90] (0.5,0.5);
\draw[usual] (-1,0.5) to (-1,2.5);
\draw[usual] (1,0.5) to (1,2.5);
\node at (1,3.5) {$\phantom{a}$};
\node at (1,-0.5) {$\phantom{a}$};
\end{tikzpicture}
=
\begin{tikzpicture}[anchorbase,scale=0.25,tinynodes]
\draw[pQJW] (2,0.5) rectangle (-2,-0.5);
\node at (0,-0.2) {$\pjwm[v{-}1]$};
\draw[usual] (-0.5,0.5) to[out=90,in=180] (0,1)node[above,yshift=-0.1cm]{$S$} to[out=0,in=90] (0.5,0.5);
\draw[usual] (-1,0.5) to (-1,3.5);
\draw[usual] (1,0.5) to (1,3.5);
\node at (1,3.5) {$\phantom{a}$};
\node at (1,-0.5) {$\phantom{a}$};
\end{tikzpicture}
,\quad
\begin{tikzpicture}[anchorbase,scale=0.25,tinynodes]
\draw[pQJW] (-2,0.5) rectangle (0,-0.5);
\draw[pQJW] (-2,2.5) rectangle (2,3.5);
\draw[usual] (0.5,-0.5)to (0.5,0.5) to[out=90,in=0] (0,1)node[above,yshift=-0.1cm]{$S$} to[out=180,in=90] (-0.5,0.5);
\draw[usual] (1,-0.5) to (1,2.5);
\draw[usual] (-1,0.5) to (-1,2.5);
\node at (-1,3.5) {$\phantom{a}$};
\node at (-1,-0.5) {$\phantom{a}$};
\end{tikzpicture}
=
\begin{tikzpicture}[anchorbase,scale=0.25,tinynodes]
\draw[pQJW] (2,0.5) rectangle (-2,-0.5);
\node at (0,-0.2) {$\pjwm[v{-}1]$};
\draw[usual] (-0.5,0.5) to[out=90,in=180] (0,1)node[above,yshift=-0.1cm]{$S$} to[out=0,in=90] (0.5,0.5);
\draw[usual] (-1,0.5) to (-1,3.5);
\draw[usual] (1,0.5) to (1,3.5);
\node at (1,3.5) {$\phantom{a}$};
\node at (1,-0.5) {$\phantom{a}$};
\end{tikzpicture}
,
\end{gather*}
where the small box is labeled by $\fancest{v}{S}{-}1$ for $\fancest{v}{S}$
being the youngest ancestor of $v$ for which all digits indexed by elements of $S$ are zero. 

\item For $v=\plbase{a_{j},\dots,a_{0}}\notin\eve$ let
$w=\plbase{a_{k},\dots,a_{0}}$ for some $k<j$.
Then we have
\begin{gather*}
\begin{tikzpicture}[anchorbase,scale=0.25,tinynodes]
\draw[pQJW] (1.5,-1) rectangle (-2.5,1);
\node at (-0.5,-0.1) {$\pjwm[v{-}1]$};
\draw[usual] (1,1) to[out=90,in=180] (1.5,1.5) to[out=0,in=90] 
(2,1) to (2,0) node[right,xshift=-2pt]{$w$} to (2,-1) 
to[out=270,in=0] (1.5,-1.5) to[out=180,in=270] (1,-1);
\node at (0,2.5) {$\phantom{a}$};
\node at (0,-2.5) {$\phantom{a}$};
\end{tikzpicture}
=
(-1)^{w}
\prod_{a_{i}\neq 0}
\qnum{2}{\vpar^{a_{i}\ppar^{(i)}}}\cdot
\begin{tikzpicture}[anchorbase,scale=0.25,tinynodes]
\draw[pQJW] (-1.5,-1) rectangle (2.5,1);
\node at (0.5,-0.1) {$\pjwm[{\motherr{v}{x}{-}1}]$};
\node at (0,2.5) {$\phantom{a}$};
\node at (0,-2.5) {$\phantom{a}$};
\end{tikzpicture}
,
\end{gather*}
where the product runs over all non-zero digits of $w$, and $\motherr{v}{x}=v-w$
is the corresponding ancestor. (Note that for $\qpar^{\lpar}=\pm 1$ and $i>0$ we
have $(-1)^{a_{i}\ppar^{(i)}}\qnum{2}{\qpar^{a_{i}\ppar^{(i)}}}=(-q)^{a_{i}\ppar^{(i)}} 2$.) For
$v=\plbase{a_{j},0,\dots,0}\in\eve$, $v\geq\lpar$ and $k\leq v$ such that
$v-k=\plbase{b_{i},0,\dots,0}\in\eve$ with $i<j$ we additionally have
\begin{gather*}
\begin{tikzpicture}[anchorbase,scale=0.25,tinynodes]
\draw[JW] (1.5,-1) rectangle (-2.5,1);
\node at (-0.5,-0.1) {$\pjwm[v{-}1]$};
\draw[usual] (1,1) to[out=90,in=180] (1.5,1.5) to[out=0,in=90] 
(2,1) to (2,0) node[right,xshift=-2pt]{$k$} to (2,-1) 
to[out=270,in=0] (1.5,-1.5) to[out=180,in=270] (1,-1);
\node at (0,2.5) {$\phantom{a}$};
\node at (0,-2.5) {$\phantom{a}$};
\end{tikzpicture}
=
0
=
\begin{tikzpicture}[anchorbase,scale=0.25,tinynodes]
\draw[JW] (-1.5,-1) rectangle (2.5,1);
\node at (0.5,-0.1) {$\pjwm[v{-}1]$};
\draw[usual] (-1,1) to[out=90,in=0] (-1.5,1.5) to[out=180,in=90] 
(-2,1) to (-2,0) node[left,xshift=2pt]{$k$} to (-2,-1) 
to[out=270,in=180] (-1.5,-1.5) to[out=0,in=270] (-1,-1);
\node at (0,2.5) {$\phantom{a}$};
\node at (0,-2.5) {$\phantom{a}$};
\end{tikzpicture}
\in\End_{\TL[{\kk,\qpar}]}(v-k).
\end{gather*}
(A special case of this is the trace down to the empty diagram.)

\end{enumerate}
\end{proposition}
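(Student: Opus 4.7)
The plan is to establish each statement by direct computation using the explicit expansion of $\pqjw[v-1]$ as a weighted sum of loops in \eqref{eq:pljwdef}, combined with the compositional rules for simple trapezes in \fullref{lemma:trapezes} and the ancestor-absorption of \fullref{lemma:pl-jw-q}.

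For part \emph{(a)}, I proceed by induction on the generation of $v$. The base case $v\in\eve$ reduces to classical left-absorption \eqref{eq:0absorb}, since $\pqjw[v-1]=\qjw[v-1]$ there. For the inductive step, \fullref{lemma:pl-jw-q} already handles every $w$ that is an ancestor of $v$. For a general $w\leq v$ with $w\leq\mother$, let $w^\prime$ denote the smallest ancestor of $v$ satisfying $w^\prime\geq w$; then $w^\prime$ is a proper ancestor of $v$ and the induction hypothesis applied to $w^\prime$ (which has strictly smaller generation) gives $(\pqjw[w-1]\hcirc\idtl[{w^\prime-w}])\pqjw[w^\prime-1]=\pqjw[w^\prime-1]$. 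Tensoring by $\idtl[{v-w^\prime}]$ on the right and composing with \fullref{lemma:pl-jw-q} at $w^\prime$ closes this subcase. The remaining case $\mother<w\leq v$, in which no proper ancestor of $v$ lies between $w$ and $v$, is the main obstacle: here I would expand $\pqjw[w-1]$ via \eqref{eq:pljwdef} and use that the simple JW projectors $\qjw[{v[S]-1}]$ at the centers of the loops of $\pqjw[v-1]$ absorb all narrower simple JWs by \eqref{eq:0absorb}, so that the extra loop terms present in $\pqjw[w-1]$ beyond those of $\pqjw[\mother-1]$ collapse term by term. The second defining equation of left-alignment then follows from the first by applying the antiinvolution $\fliph[{(\placeholder)}]$ together with $\fliph[{(\pqjw[v-1])}]=\pqjw[v-1]$.

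Part \emph{(b)} I would treat by direct calculation. Inserting the cup of shape $S$ on top of \eqref{eq:pljwdef} and applying \fullref{lemma:trapezes}, the cup either merges $S$ with the trapez shape $S^\prime$ of a loop summand, removes it to produce a rescaled trapez of smaller type, or annihilates the term. Collecting the surviving contributions and matching the coefficients $\lambda_{v,S^\prime}$ against those defining $\pqjw[{\fancest{v}{S}-1}]$ exhibits the shortened projector sitting directly above the cap. The second identity in \emph{(b)} follows by applying $\fliph[{(\placeholder)}]$ to the first.

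For part \emph{(c)}, I compute the right partial trace of $\pqjw[v-1]$ loop by loop. When $v\notin\eve$, each loop contribution reduces via \fullref{lemma:trapezes} and the simple JW trace formula \eqref{eq:0trace} to a scalar multiple of a loop on $\motherr{v}{x}-1$ strands; summing with the coefficients $\lambda_{v,S}$ reassembles $\pqjw[{\motherr{v}{x}-1}]$ times the stated factorized scalar. The main technical step is verifying that the accumulated product of quantum-number ratios collapses to $(-1)^w\prod_{a_i\neq 0}\qnum{2}{\vpar^{a_i\ppar^{(i)}}}$, which I would handle using the change-of-variables identity $\qpar^{\ppar^{(k)}}=\qpar^{\lpar}=\pm 1$ displayed earlier in the paper. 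For the eve case $v\in\eve$ with $v\geq\lpar$ and $v-k\in\eve$ of strictly smaller generation, the identity $\pjw[v-1]=\qjw[v-1]$ reduces both the right and the left partial traces to \eqref{eq:0trace}, and a change-of-variables calculation shows that the scalar $(-1)^k\qnum{v}{\qpar}/\qnum{v-k}{\qpar}$ specializes to zero under the stated digit conditions on $v$ and $v-k$.
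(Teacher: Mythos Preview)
Your overall strategy of expanding via \eqref{eq:pljwdef} and reducing to the trapez calculus of \fullref{lemma:trapezes} matches the approach of \cite{TuWe-quiver} that the paper cites, and your sketches for (b), (c), and the final eve statement are sound; for the eve partial trace you are even more explicit than the paper in identifying why the scalar $(-1)^{k}\qnum{v}{\qpar}/\qnum{v-k}{\qpar}$ vanishes.

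There is, however, a genuine gap in your handling of the range $\mother<w<v$ in part (a). You propose to expand $\pqjw[w-1]$ and let the simple JW projectors $\qjw[v[S]-1]$ sitting at the \emph{centres} of the loops $\loopdowngen{S}{v-1}=\upo{S}\,\qjw[v[S]-1]\,\downo{S}$ absorb its constituents. But $\pqjw[w-1]\hcirc\idtl[v-w]$ is placed at the \emph{top} of each loop, separated from the central $\qjw[v[S]-1]$ by the trapez $\upo{S}$; no absorption takes place without first commuting $\pqjw[w-1]$ past $\upo{S}$, and that is exactly the non-trivial step. The only simple JW visible at the top of $\upo{S}$ is the outermost box $\qjw[v''-1]$, where $v''$ is the ancestor of $v$ with all digits at positions $\leq\max(S)$ zeroed; when $S$ reaches high digits this $v''$ is far smaller than $w$, so \eqref{eq:0absorb} does not apply. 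Your phrase ``extra loop terms present in $\pqjw[w-1]$ beyond those of $\pqjw[\mother-1]$'' further conflates $\supp[w]$ with $\supp[\mother]$, which are unrelated since $\mother$ is the mother of $v$, not of $w$. The argument in \cite{TuWe-quiver} closes this case by a finer term-by-term analysis of how the recursion \eqref{eq:recursion-formula} interacts with left-placed projectors, not by a one-line absorption.
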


\begin{proof}
All except the final statement can be shown as in \cite[Propositions 3.11, 3.13 and 3.14]{TuWe-quiver}. The final statement follows 
by using \eqref{eq:0trace} and observing that the projector
after taking partial trace satisfies $\qjw[v{-}k]=\pqjw[v{-}k]$ 
and the zero obtained by \eqref{eq:0trace} annihilates it.
\end{proof}

The projectors $\pqjw[v{-}1]$ typically do not form a right-aligned family. \fullref{example:left-right} gives
a counterexample to right-aligned absorption of $\idtl[1]\hcirc\pqjw[2]$ into $\pqjw[3]$.

Recall the definition of the \emph{categorical dimension} 
$\dim_{\catstuff{C}}$ of objects in a pivotal 
category $\catstuff{C}$.

\begin{proposition}\label{proposition:qdim}
For $v=\plbase{a_{j},\dots,a_{0}}$ we have  
\begin{gather*}
\dim_{\tilt[{\kk,\qpar}]}\big(\tmod(v-1)\big)
=
(-1)^{v-1} {\textstyle \sum_{S\in\supp[v]}} \qnum{v[S]}{\qpar}
=(-1)^{v-1}\qnum{\motherr{v}{\infty}}{\qpar}{\textstyle\prod_{a_{i}\neq 0}}\,\qnum{2}{\qpar^{a_{i}\ppar^{(i)}}},
\end{gather*}
where the product runs over all non-zero and non-leading digits of $v$.
\end{proposition}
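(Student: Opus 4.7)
The plan is to compute the categorical dimension of $\tmod(v-1)$ diagrammatically by taking the closure of the identity endomorphism. Under the equivalence of \fullref{proposition:TLtilt} (extended to $\kk$ via \fullref{rem:abuse}), and the identification $\idmor_{\tmod(v-1)} = \pjw[v-1]$ from \fullref{theorem:well-defined}, this reduces to computing the diagrammatic trace (Markov closure) of the idempotent $\pjw[v-1] \in \End_{\TL[{\kk,\qpar}]}(v-1)$.

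For the first equality, I would invoke the Weyl filtration of $\tmod(v-1)$, whose factors are precisely the $\wmod(w-1)$ for $w \in \supp[v]$, each with multiplicity one, by \fullref{proposition:multiplicities}. Categorical dimension is additive on short exact sequences, and for an eve $w$ the standard Temperley--Lieb computation gives $\dim_{\tilt[{\kk,\qpar}]}(\wmod(w-1)) = (-1)^{w-1}\qnum{w}{\qpar}$ (the sign comes from circles evaluating to $-\qnum{2}{\qpar}$, tallied across $w-1$ strands). Since $v[S] - v = -2\sum_{i \in S} a_i\ppar^{(i)}$ is even, all $w \in \supp[v]$ share the same parity as $v$, so $(-1)^{w-1} = (-1)^{v-1}$ throughout, and summing over the Weyl factors yields the first equality.

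For the second equality, I would apply the partial trace identity of \fullref{proposition:jw-properties}(c) in a single stroke, with traced bundle $w := v - \motherr{v}{\infty}$. This corresponds to closing off precisely the strands indexed by the non-leading digits of $v$, and produces the scalar $(-1)^{w} \prod_{a_i \neq 0}\qnum{2}{\qpar^{a_i\ppar^{(i)}}}$ (product over non-zero non-leading digits) multiplying the closure of $\pjw[\motherr{v}{\infty}-1]$. Since $\motherr{v}{\infty} \in \eve$, this remaining projector coincides with the classical Jones--Wenzl projector $\qjw[\motherr{v}{\infty}-1]$, whose closure is the known quantity $(-1)^{\motherr{v}{\infty}-1}\qnum{\motherr{v}{\infty}}{\qpar}$. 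Multiplying the two contributions and using $(-1)^{w} \cdot (-1)^{\motherr{v}{\infty}-1} = (-1)^{v-1}$ gives the claimed expression.

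The only genuine bookkeeping is sign tracking, and the two key inputs (additivity of the categorical trace and the partial trace formula) are already available; I do not expect a substantial obstacle. As a bonus, comparing the two equalities yields an independent, diagrammatic verification of the (apparently new) character formula \eqref{eq:partial-trace-result} in \fullref{proposition:donkin-etc}(c), as promised in its proof.
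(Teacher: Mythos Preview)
Your treatment of the second equality via \fullref{proposition:jw-properties}(c)---trace down to the eve ancestor $\motherr{v}{\infty}$, then close the remaining simple JW projector classically---is exactly what the paper does.

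For the first equality, the paper takes a different, purely diagrammatic route inside $\TL[\kkv,\vpar]$: it closes the semisimple projector $\pqjw[v{-}1]$, expands it by its defining formula $\sum_S \lambda_{v,S}\,\loopdowngen{S}{v{-}1}$, and then uses cyclicity of the closure together with \fullref{lemma:trapezes}(a) to collapse each closed loop to $\lambda_{v,S}^{-1}$ times the closure of $\qjw[{v[S]{-}1}]$. The scalars cancel, the classical value $(-1)^{v[S]-1}\qnum{v[S]}{\vpar}$ remains for each $S$, and specialization finishes. Your Weyl-filtration route is sound in principle but, as written, has a gap: you justify $\dim\big(\wmod(w-1)\big)=(-1)^{w-1}\qnum{w}{\qpar}$ only ``for an eve $w$'' (where $\wmod(w-1)\cong\tmod(w-1)$ lives in $\tilt$ and the TL computation applies), yet you invoke it for every $w\in\supp[v]$. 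For non-eve $w$ the Weyl module is not tilting and has no TL incarnation; you must step into $\allmod[{\K,\qpar}]$ and use that the pivotal trace there depends only on the character. This is true and easy, but it is external input that the paper's TL-internal argument bypasses. Both routes ultimately verify the character identity underlying \eqref{eq:partial-trace-result}; the paper's is self-contained in the diagrammatic calculus, while yours is conceptually a one-liner once the $\allmod$-groundwork is in place.
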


\begin{proof}
The categorical dimension in $\TL[{\kk,\qpar}]$ is given by closing 
pictures in the usual way, and for the first 
equality we calculate
\begin{gather*}
\begin{tikzpicture}[anchorbase,scale=0.25,tinynodes]
\draw[pQJW] (0,0) rectangle (4,1);
\node at (2,0.3) {$\qjwm[v{-}1]$};
\draw[usual] (2,1) to[out=90,in=180] (4,2) to[out=0,in=90] (6,1) to (6,0) to[out=270,in=0] (4,-1) to[out=180,in=270] (2,0);
\end{tikzpicture}
=
\sum_{S\in\supp[v]}
\lambda_{v,S}
\cdot
\begin{tikzpicture}[anchorbase,scale=0.25,tinynodes]
\draw[JW] (0,0) rectangle (4,1.5);
\node at (2,0.55) {$\qjwm[{v[S]{-}1}]$};
\trd{5}{1.5}{$\qjwm[S]$}{-5}{0}
\tru{5}{1.5}{$\qjwm[S]$}{-5}{1.5}
\draw[usual] (2,3) to[out=90,in=180] (4,4) to[out=0,in=90] (6,3) to (6,-1.5) to[out=270,in=0] (4,-2.5) to[out=180,in=270] (2,-1.5);
\end{tikzpicture}
=
\sum_{S\in\supp[v]}
\lambda_{v,S}
\cdot
\begin{tikzpicture}[anchorbase,scale=0.25,tinynodes]
\draw[JW] (0,-1.5) rectangle (4,0);
\node at (2,-0.95) {$\qjwm[{v[S]{-}1}]$};
\trd{5}{1.5}{$\qjwm[S]$}{-5}{3}
\tru{5}{1.5}{$\qjwm[S]$}{-5}{0}
\draw[usual] (2,3) to[out=90,in=180] (4,4) to[out=0,in=90] (6,3) to (6,-1.5) to[out=270,in=0] (4,-2.5) to[out=180,in=270] (2,-1.5);
\end{tikzpicture}
=
\sum_{S\in\supp[v]}
\begin{tikzpicture}[anchorbase,scale=0.25,tinynodes]
\draw[JW] (0,-1.5) rectangle (4,0);
\node at (2,-0.95) {$\qjwm[{v[S]{-}1}]$};
\draw[usual] (2,0) to[out=90,in=180] (4,1) to[out=0,in=90] (6,0) to (6,-1.5) to[out=270,in=0] (4,-2.5) to[out=180,in=270] (2,-1.5);
\end{tikzpicture}.
\end{gather*}
Observing that $(-1)^{v[S]-1}=(-1)^{v-1}$ 
for all $S\in\supp[v]$, the first equality follows by classical theory, see {\eg} \cite[Section 9.5]{KaLi-TL-recoupling}. 
The second equation follows then from \fullref{proposition:jw-properties}.
\end{proof}

Note that the categorical dimension of $\tmod(v-1)$
is an element of the underlying field, but interpreted 
in $\Nv$ we obtain the character $\ch{\tmod(v-1)}$.

\subsection{Tilting modules as an additive category}
\label{subsection:quiver}

Let us define a (locally unital) $\kk$-algebra via
\begin{gather*}
\zigzag=
{\textstyle\bigoplus_{v,w\in\N}}\Hom_{\tilt[{\kk,\qpar}]}\big(\tmod(v-1),\tmod(w-1)\big).
\end{gather*}
Let $\zigzagmod$ denote the category of finitely generated, projective (right) 
$\zigzag$-modules. By construction we obtain, as instance 
of \emph{Ringel duality} (semi-infinite in the sense of \cite{BrSt-semi-infinite}), that
\begin{gather*}
\mainfunctor\colon
\tilt[{\kk,\qpar}]\to\zigzagmod,
\tmod\mapsto
{\textstyle\bigoplus_{v\in\N}}\Hom_{\tilt[{\kk,\qpar}]}\big(\tmod(v-1),\tmod\big)
\end{gather*}
is an equivalence of additive, $\kk$-linear categories, sending 
indecomposable tiltings to indecomposable projectives. Let us describe 
$\zigzag$ explicitly.

By construction, morphisms in 
$\Hom_{\tilt[{\kk,\qpar}]}\big(\tmod(v-1),\tmod(w-1)\big)$ are given by flanking 
TL morphisms with $\pjw[v{-}1]$ from the bottom and with $\pjw[w{-}1]$ 
from the top, and the primitive idempotents (which are local units) are
the $\pjw[v{-}1]$ for $v\in\N$. Other morphisms, called 
\emph{mixed trapezes and loops}, are diagrammatically 
given by the analog of \fullref{definition:jw-cupscaps2}: if
$S$ and $S^{\prime}$ be down- and up-admissible for $v$, respectively, 
and assume that $S$ and $S^{\prime}$ are minimal admissible stretches of 
consecutive integers then we define
\begin{gather}\label{eq:trapz}
\begin{gathered}
\pTRD{$\pjw[S]$}=
\pjw[{v[S]{-}1}]\Down{S}\pjw[v{-}1]=
\begin{tikzpicture}[anchorbase,scale=0.25,tinynodes]
\draw[pJW] (2,0.5) rectangle (-2,-0.5);
\draw[pJW] (2,2.5) rectangle (-2,3.5);
\node at (0,-0.2) {$\pjwm[v{-}1]$};
\draw[usual] (-0.5,0.5) to[out=90,in=180] (0,1)node[above,yshift=-0.1cm]{$S$} to[out=0,in=90] (0.5,0.5);
\draw[usual] (-1,0.5) to (-1,2.5);
\draw[usual] (1,0.5) to (1,2.5);
\node at (1,3.5) {$\phantom{a}$};
\node at (1,-0.5) {$\phantom{a}$};
\end{tikzpicture}
,\quad
\pTRU{$\pjw[S^{\prime}]$}=
\pjw[v(S){-}1]\Up{S^{\prime}}\pjw[v{-}1]=
\begin{tikzpicture}[anchorbase,scale=0.25,tinynodes]
\draw[pJW] (2,-0.5) rectangle (-2,-1.5);
\draw[pJW] (2,1.5) rectangle (-2,2.5);
\node at (0,-1.2) {$\pjwm[v{-}1]$};
\draw[usual] (-0.5,1.5) to[out=270,in=180] (0,1)node[below,yshift=-0.05cm]{$S^{\prime}$} to[out=0,in=270] (0.5,1.5);
\draw[usual] (-1,-0.5) to (-1,1.5);
\draw[usual] (1,-0.5) to (1,1.5);
\node at (1,2.5) {$\phantom{a}$};
\node at (1,-1.5) {$\phantom{a}$};
\end{tikzpicture},
\\
\pTR{$\pjw[S]$}=
\pjw[v{-}1]\loopdown{S}{v{-}1}\pjw[v{-}1]=
\begin{tikzpicture}[anchorbase,scale=0.25,tinynodes]
\draw[pJW] (2,-1.5) rectangle (-2,-0.5);
\draw[pJW] (2,2.5) rectangle (-2,3.5);
\node at (0,-1.2) {$\pjwm[v{-}1]$};
\node at (0,2.8) {$\pjwm[v{-}1]$};
\draw[usual] (-0.5,-0.5) to[out=90,in=180] (0,0)node[above,yshift=-0.1cm]{$S$} to[out=0,in=90] (0.5,-0.5);
\draw[usual] (-0.5,2.5) to[out=270,in=180] (0,2)node[below,yshift=-0.05cm]{$S$} to[out=0,in=270] (0.5,2.5);
\draw[usual] (-1,-0.5) to (-1,2.5);
\draw[usual] (1,-0.5) to (1,2.5);
\node at (1,4.5) {$\phantom{a}$};
\node at (1,-2.5) {$\phantom{a}$};
\end{tikzpicture}
,
\end{gathered}
\end{gather}
where the last equality uses \fullref{proposition:jw-properties}.(b). These are
the generators of $\zigzag$, and (up to {\losp}) the respective minimal
stretches are singleton sets $S=\{i\}$, reflecting along
the $i$th digit. The corresponding $S$-labeled cups and caps in \eqref{eq:trapz}
consist of $a_{i}\ppar^{(i)}$ parallel strands. Finally, note that these
morphisms can be defined more generally for any down- and up-admissible
stretches, but then their diagrammatic incarnation can involve multiple
stretch-labeled cups and caps. 

To describe the relations between expressions in the generating morphisms, we
will use the same scalars (depending on the digits) as in \cite[Section
3A]{TuWe-quiver}, namely
\begin{gather}\label{eq:f-g}
\begin{gathered}
\funcf(a)=
\begin{cases} 
(-\qpar)^{(a+1)\lpar}{\cdot}\tfrac{-2}{a}
&\text{if } 1\leq a\leq\ppar-2,
\\
0 
&\text{if }a=0\text{ or }a=\ppar-1,
\end{cases}
\funcg(a) =
\begin{cases}
(-\qpar)^{\lpar}\big(\tfrac{a+1}{a}\big)
&\text{if } 1\leq a\leq\ppar-1,
\\
(-\qpar)^{\lpar}2 &\text{if }a=0,
\end{cases}
\\
\funcF\pjw[v{-}1] 
=
\funcf(a_{\max(S)+1})\pjw[v{-}1], 
\quad
\funcG\pjw[v{-}1] 
= 
\funcg(a_{\max(S)+1})\pjw[v{-}1],
\\
\funcH\pjw[v{-}1] 
=
\funcg(a_{\max(S)+1}-1)\pjw[v{-}1].
\end{gathered}
\end{gather}
In fact, as we will see later, these scalars can be seen 
as (inverses of higher order) 
\emph{local intersection forms} in the language of 
\cite{El-ladders-clasps}.

\begin{remark}
The quantum version of $\funcf$ and $\funcg$ given in \eqref{eq:quantum-fg}
will be relevant in \fullref{subsection:fusion-mor}, but we do not need quantum numbers to describe
$\funcf$ and $\funcg$ for \fullref{theorem:main-tl-section}. 
By \fullref{proposition:qlucas} this is expected:
these functions will never be evaluated
on the zeroth digit and the quantum Lucas' theorem
implies that, up to a sign, the only relevant digit for quantum numbers 
is the zeroth digit.
\end{remark}

We obtain the mixed characteristic version of \cite[Theorem 3.2]{TuWe-quiver}:

\begin{theorem}\label{theorem:main-tl-section}
The algebra $\zigzag$ is generated by $\pjw[v{-}1]$ for $v\in\N$, 
and elements $\Down{S}\pjw[v{-}1]$ and $\Up{S^{\prime}}\pjw[v{-}1]$, 
where $S$ and $S^{\prime}$ denote minimal down- and up-admissible stretches for $v$, respectively.
These generators are subject to the following complete set of relations. (As before, we omit idempotents from the notation if they can be recovered from
the given data.)
\begin{enumerate}[label=(\arabic*)]

\setlength\itemsep{0.15cm}

\item \label{theorem:main-tl-section-1} \emph{Idempotents.}
\begin{gather*}
\begin{gathered}
\pjw[v{-}1]\pjw[w{-}1]=\delta_{v,w}\pjw[v{-}1],
\\ 
\pjw[{v[S]{-}1}]\Down{S}\pjw[v{-}1]
=
\pjw[{v[S]{-}1}]\Down{S}
=
\Down{S}\pjw[v{-}1],
\quad
\pjw[v(S^{\prime}){-}1]\Up{S^{\prime}}\pjw[v{-}1]
=
\pjw[v(S^{\prime}){-}1]\Up{S^{\prime}}
=
\Up{S^{\prime}}\pjw[v{-}1].
\end{gathered}
\end{gather*}

\item \label{theorem:main-tl-section-2} \emph{Containment.}
If $S^{\prime}\subset S$, then we have
\begin{gather*}
\Down{S^{\prime}}\Down{S}\pjw[v{-}1]=0,
\quad
\Up{S}\Up{S^{\prime}}\pjw[v{-}1]=0.
\end{gather*}

\item \label{theorem:main-tl-section-3} \emph{Far-commutativity.}
If $\dist(S,S^{\prime})>1$, then 
\begin{gather*}
\Down{S}\Down{S^{\prime}}\pjw[v{-}1]=\Down{S^{\prime}}\Down{S}\pjw[v{-}1],
\quad
\Down{S}\Up{S^{\prime}}\pjw[v{-}1]=\Up{S^{\prime}}\Down{S}\pjw[v{-}1],
\quad
\Up{S}\Up{S^{\prime}}\pjw[v{-}1]=\Up{S^{\prime}}\Up{S}\pjw[v{-}1].
\end{gather*}

\item \label{theorem:main-tl-section-4} \emph{Adjacency relations.}
If $\dist(S,S^{\prime})=1$ and $S^{\prime}>S$, then
\begin{gather*}
\Down{S^{\prime}}\Up{S}\pjw[v{-}1]=\Down{S{\cup}S^{\prime}}\pjw[v{-}1],
\quad
\Down{S}\Up{S^{\prime}}\pjw[v{-}1]=\Up{S^{\prime}{\cup}S}\pjw[v{-}1],
\\
\Down{S^{\prime}}\Down{S}\pjw[v{-}1]=\Up{S}\Down{S^{\prime}}\funcH[S]\pjw[v{-}1],
\quad
\Up{S}\Up{S^{\prime}}\pjw[v{-}1]=\funcH[S]\Up{S^{\prime}}\Down{S}\pjw[v{-}1].
\end{gather*}

\item \label{theorem:main-tl-section-5} \emph{Overlap relations.}
If $S^{\prime}\geq S$ with $S^{\prime}\cap S=\{s\}$ and $S^{\prime}\not\subset S$, then we have
\begin{gather*}
\Down{S^{\prime}}\Down{S}\pjw[v{-}1]=\Up{\{s\}}\Down{S}\Down{S^{\prime}{\setminus}\{s\}}\pjw[v{-}1],
\quad
\Up{S}\Up{S^{\prime}}\pjw[v{-}1]=\Up{S^{\prime}{\setminus}\{s\}}\Up{S}\Down{\{s\}}\pjw[v{-}1].
\end{gather*}

\item \label{theorem:main-tl-section-6} \emph{Zigzag.}
\begin{gather*}
\Down{S}\Up{S}\pjw[v{-}1]=\Up{\hull[S]}\Down{\hull[S]}\funcG[S]\pjw[v{-}1] 
+\Up{T}\Up{\hull[S]}\Down{\hull[S]}\Down{T}\funcF[S]\pjw[v{-}1].
\end{gather*}
Here, if the down-admissible hull $\hull[S]$, or 
the smallest minimal down-admissible stretch $T$ with $T>\hull[S]$ 
does not exist, then the involved symbols are 
zero by definition.

\noindent The elements of the form
\end{enumerate}
\begin{gather*}
\tag{Basis}\qquad\pjw[w{-}1]\Up{S_{i_{l}}^{\prime}}
\cdots\Up{S_{i_{0}}^{\prime}}\Down{S_{i_{0}}}\cdots\Down{S_{i_{k}}}\pjw[v{-}1],
\end{gather*}
\hspace{1cm} with $S_{i_{l}}^{\prime}>\cdots>S_{i_{0}}^{\prime}$, and $S_{i_{0}}<\cdots<S_{i_{k}}$, 
form a basis for $\pjw[w{-}1]\zigzag\pjw[v{-}1]$. 
\begin{gather*}
\begin{gathered}
\tag{Complete}
\text{Any word $\pjw[w{-}1]\morstuff{X}\pjw[v{-}1]$ in the generators 
of $\zigzag$ can be rewritten as a linear}\\
\text{combination of basis elements from (Basis) using only the above relations.}
\end{gathered}
\end{gather*}
\end{theorem}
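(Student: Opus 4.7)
The plan is to reduce this theorem to its characteristic $\ppar$ counterpart \cite[Theorem 3.2]{TuWe-quiver} by working whenever possible with the semisimple $\ppar\lpar$JW projectors $\pqjw[v{-}1]$ and then specializing via \fullref{theorem:well-defined}. The point is that relations in $\zigzag$ can be established by lifting them to the generic category $\End_{\TL[{\kkv,\vpar}]}$ (or rather $\End_{\TL[\kklocal,\qpar]}$), where the TL morphism spaces have a clean description via the Artin--Wedderburn basis $\awbas$, and then checking that every coefficient specializes well. Since the $\pqjw[v{-}1]$'s are left-aligned idempotents that absorb onto ancestors (\fullref{proposition:jw-properties}\emph{(a)}), the loops and trapezes in \eqref{eq:trapz} can be manipulated using the trapeze calculus of \fullref{lemma:trapezes} and the expansion in \fullref{lemma:pl-jw-q}.

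First I would verify the relations \ref{theorem:main-tl-section-1}--\ref{theorem:main-tl-section-6}. Relations \ref{theorem:main-tl-section-1}--\ref{theorem:main-tl-section-3} are essentially formal from \fullref{proposition:jw-properties} together with isotopy of diagrams (far-commutativity is spatial separation, containment follows from \eqref{eq:0kill} absorbed into the inner box). The adjacency relations \ref{theorem:main-tl-section-4} and the overlap relations \ref{theorem:main-tl-section-5} can be proved verbatim as in \cite[Section 4]{TuWe-quiver}: one expands the relevant $\pqjw$-projector via \fullref{lemma:trapezes}\emph{(b),(c)} and uses that trapeze compositions collapse according to the combinatorics of nested admissible stretches. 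The only thing that changes relative to \loccit{} is that $\ppar^{k}$ is replaced by $\ppar^{(k)}$ and the zeroth digit plays a slightly different role, but since $\funcf,\funcg,\funch$ depend only on non-zeroth digits (by the remark after \eqref{eq:f-g}), the identical scalar identities go through.

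The main obstacle is the zigzag relation \ref{theorem:main-tl-section-6}. This requires computing the composite $\Down{S}\Up{S}\pjw[v{-}1]$ and expressing it as a linear combination of the two basis elements on the right-hand side, with precisely the coefficients $\funcG[S]$ and $\funcF[S]$. My plan is to do this in two stages. First, working with $\pqjw[v-1]$ over $\kkv$, expand everything via \fullref{lemma:pl-jw-q} and \fullref{lemma:trapezes} into a linear combination of simple loops indexed by down-admissible sets; this is governed by quotients of the $\lambda_{v,S}$ scalars and amounts to a recursive identity among quantum numbers that, after invoking \fullref{proposition:qlucas} and the change-of-variables calculation preceding \fullref{definition:ancestry}, collapses to the integer-valued scalars $\funcf,\funcg,\funch$ evaluated on the appropriate non-zeroth digit $a_{\max(S)+1}$. (The case that $\hull[S]$ or the next stretch $T$ does not exist corresponds to a leading-digit boundary and produces genuine zeros, matching the convention in the statement.) Second, specialize to $(\kk,\qpar)$ via \fullref{theorem:well-defined}; since $\funcf,\funcg,\funch\in\kk$ and the $\pqjw$-expansion has non-negative $\ppar\lpar$-adic valuation, the relation descends to $\zigzag$.

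Finally, for the basis claim, I would argue in two steps. For spanning (the \emph{Complete} statement), the relations \ref{theorem:main-tl-section-2}--\ref{theorem:main-tl-section-6} give a rewriting system: \ref{theorem:main-tl-section-6} rewrites every internal $\Down{}\Up{}$ into terms in which ups precede downs, \ref{theorem:main-tl-section-4}--\ref{theorem:main-tl-section-5} reorder adjacent and overlapping stretches into the normal form ($S_{i_{l}}^{\prime}>\cdots>S_{i_{0}}^{\prime}$ followed by $S_{i_{0}}<\cdots<S_{i_{k}}$), and \ref{theorem:main-tl-section-2} kills the non-reduced words; termination is controlled by the same length function on admissible-stretch sequences used in \cite[Section 4]{TuWe-quiver}. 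For linear independence, I would compare dimensions: the number of listed basis vectors in $\pjw[w{-}1]\zigzag\pjw[v{-}1]$ equals the combinatorial count of pairs $(S,S^{\prime})$ with $v[S]=w[S^{\prime}]$, while the Ringel-dual equivalence $\mainfunctor$ identifies $\dim_{\kk}\pjw[w{-}1]\zigzag\pjw[v{-}1]$ with $\sum_{u}\wmult{v}{u}\wdmult{w}{u}$, which by \fullref{proposition:multiplicities} and \fullref{remark:admissibility} equals $|\supp[v]\cap\supp[w]|$, matching the count of basis vectors. This dimension match upgrades spanning to a basis.
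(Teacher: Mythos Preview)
Your proposal is correct and follows essentially the same route as the paper: reduce everything to the characteristic-$\ppar$ proof in \cite{TuWe-quiver}, work generically with the $\pqjw$ projectors and the trapeze calculus of \fullref{lemma:trapezes}, and specialize via \fullref{theorem:well-defined}. The paper's proof is itself just a record of the modifications needed in \cite{TuWe-quiver}, and your outline matches that structure, including the identification of the zigzag relation as the crux and the rewriting-plus-dimension-count argument for (Basis) and (Complete).

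One point you gloss over and which the paper singles out: your assertion that ``the identical scalar identities go through'' for the adjacency, overlap and zigzag relations is slightly too optimistic. The passage from $\ppar^{k}$ to $\ppar^{(k)}$ is not purely cosmetic; it changes the parity of the relevant exponents, so that signs which in \cite{TuWe-quiver} were $(-1)^{\ppar^{i}}=-1$ now become $(-1)^{\ppar^{(i)}}=(-1)^{\lpar}$, and more generally become the $(-\qpar)^{\lpar}$ factors visible in the definition \eqref{eq:f-g} of $\funcf$ and $\funcg$. These sign corrections propagate through several of the numerical lemmas in \cite[Section~4]{TuWe-quiver} (the paper lists them explicitly), and they are precisely what makes $\funcf$ and $\funcg$ carry the extra $(-\qpar)^{\lpar}$ twists here that are absent in the pure characteristic-$\ppar$ case. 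Your framework (quantum Lucas, change of variables) will produce these signs when you actually carry out the computation, but you should not expect the scalar identities to be literally identical.
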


\begin{proof}
This is analogous to the ten page proof of the characteristic $\ppar$
case in \cite{TuWe-quiver}, but 
the proofs given therein need some adjustment due to {\eg} 
the appearance of signs in $\funcf$ and $\funcg$ 
from \eqref{eq:f-g}. We record the necessary modification to the
numerical arguments used in \cite{TuWe-quiver}. 

First of all, the scalars $\lambda_{v,S}$ and
partial trace formulas for the various 
JW projectors now involve fractions of quantum
numbers. Moreover, a few signs that have started their lives as
$-1=(-1)^{\ppar^{i}}$ now 
have to replaced by $(-1)^{\ppar^{(i)}}=(-1)^{\lpar}$ when
$i>0$. This concerns the sign of the fraction of quantum numbers in
\cite[(4-2)]{TuWe-quiver} (this replacement leads to the desired interpretation
in terms of $\funcg$), the sign in $q$ from \cite[(4-8)]{TuWe-quiver} should be
$(-1)^{\ppar^{(s)}}$, which balances against the sign of $\lambda_{w,R}$ in
the following display. Further, in the \textit{Proof, which caveat} for
\cite[Lemma 4.9]{TuWe-quiver}, the signs $(-1)^{w-u}$ and $(-1)^{w+1-u}$ are to
be replaced with $(-q)^{w-u}$ and $(-q)^{w+\ppar^{(i)}-u}$, which is again
compatible with $\funcf$ and $\funcg$ as desired. The vanishing of $q^{\prime}$ from
\cite[(4-4)]{TuWe-quiver} follows using a similar argument using quantum
numbers. Finally, the zigzag relation are established by an inductive argument
based on the case of generation $2$, which is proved exactly as outlined in
\cite[Lemma 4.8]{TuWe-quiver}.
\end{proof}

Note that the non-idempotent generators of $\zigzag$ are 
given by down and up morphisms for minimal stretches, {\aka} 
singleton sets if we ignore {\losp} (we will write {\eg} 
$\Down{i}$ instead of $\Down{\{i\}}$ for these to simplify notation). 
By using the relations, 
{\eg} \fullref{theorem:main-tl-section}.\ref{theorem:main-tl-section-4}, one 
obtains down and up morphisms for more general stretches.

\begin{example}
For the complex quantum group case the only possible 
stretch is $S=\{0\}$, which is 
down-admissible unless $a_{0}=0$, where we note that 
$\hull[\{0\}]$ does not exist in this case 
(and so $T$ does not exist either).
The only relevant relations are 
the ones in 
\fullref{theorem:main-tl-section}.\ref{theorem:main-tl-section-1} and
\begin{gather*}
\Down{0}\Down{0}\pjw[v{-}1]=0,
\quad
\Up{0}\Up{0}\pjw[v{-}1]=0,
\quad
\Down{0}\Up{0}\pjw[v{-}1]=
\begin{cases}
\funcg(a_{1})\Up{0}\Down{0}\pjw[v{-}1]&\text{if }a_{0}\neq 0,
\\
0&\text{if }a_{0}=0,
\end{cases}
\end{gather*}
and $\zigzag$ has connected components corresponding to
(scaled) zigzag algebras for each $v<\lpar$, and single 
vertices for $v=\pbase{a_{1},0}{\infty,\lpar}$. Thus, we 
recover \cite[Theorem 3.12]{AnTu-tilting}.
\end{example}

\begin{example}
Let $v=\pbase{2,5,3,0,1}{7,3}$ and $S=\{0\}$, then 
the zigzag relation is
\begin{gather*}
\Down{0}\Up{0}\pjw[v{-}1]=
\funcg(0)\Up{\{1,0\}}\Down{\{1,0\}}\pjw[v{-}1] 
+\funcf(0)
\Up{2}\Up{\{1,0\}}\Down{\{1,0\}}\Down{2}\pjw[v{-}1]
=
5\Up{\{1,0\}}\Down{\{1,0\}}\pjw[v{-}1].
\end{gather*}
If $S=\{2\}$, then the zigzag relation reads
\begin{gather*}
\Down{2}\Up{2}\pjw[v{-}1]=
\funcg(5)\Up{2}\Down{2}\pjw[v{-}1] 
+\funcf(5)
\Up{3}\Up{2}\Down{2}\Down{3}\pjw[v{-}1]
=
3\Up{2}\Down{2}\pjw[v{-}1]
+\Up{3}\Up{2}\Down{2}\Down{3}\pjw[v{-}1].
\end{gather*}
\end{example}

\begin{example}\label{example:con-adj}
Using the containment and adjacency relations we calculate 
\begin{gather*}
\Up{0}\Down{1}\Down{0}\pjw[v{-}1]=0,
\quad
\pjw[v{-}1]\Up{0}\Up{1}\Down{0}=0,
\end{gather*}
which is a relation for the corresponding $G_{2}T$-modules, 
see \cite[Section 6.3]{An-tilting-cellular} or \cite[Section 5B]{TuWe-center} 
for the connection.
\end{example}

\begin{example}
One can show the useful relation that
\begin{gather}\label{eq:dud}
\Down{S}\Up{S}\Down{S}\pjw[v{-}1]=0,
\quad
\pjw[v{-}1]\Up{S}\Down{S}\Up{S}=0,
\end{gather}
for any down-admissible stretch $S$, {\cf} \cite[(3-13)]{TuWe-quiver}. 
In particular, loops square to zero.
\end{example}

All relations in \fullref{theorem:main-tl-section} can be interpreted
diagrammatically in terms of morphisms in the Temperley--Lieb category built
from mixed JW projectors. Using
\fullref{proposition:jw-properties} to simplify the diagrams, we have the examples
\begin{gather*}
\begin{tikzpicture}[anchorbase,scale=0.25,tinynodes]
\draw[pJW] (2,0.5) rectangle (-2.5,-0.5);
\draw[pJW] (2,1.5) rectangle (-2.5,2.5);
\node at (-0.25,-0.2) {$\pjwm[v{-}1]$};
\draw[usual] (-1.5,0.5) to[out=90,in=180] (-1,1) to[out=0,in=90] (-0.5,0.5);
\draw[usual] (0,0.5) to[out=90,in=180] (0.5,1) to[out=0,in=90] (1,0.5);
\draw[usual] (-2,0.5) to (-2,1.5);
\draw[usual] (1.5,0.5) to (1.5,1.5);
\node at (1,2.5) {$\phantom{a}$};
\node at (1,-0.5) {$\phantom{a}$};
\end{tikzpicture}
=0
,\quad
\begin{tikzpicture}[anchorbase,scale=0.25,tinynodes]
\draw[pJW] (2.5,0.5) rectangle (-2.5,-0.5);
\draw[pJW] (2.5,2.5) rectangle (-2.5,3.5);
\node at (0,-0.2) {$\pjwm[v{-}1]$};
\draw[usual] (-1.5,0.5) to (-1.5,1) to[out=90,in=180] (-1,1.5) to[out=0,in=90] (-0.5,1) to (-0.5,0.5);
\draw[usual] (0.5,0.5) to[out=90,in=180] (1,1) to[out=0,in=90] (1.5,0.5);
\draw[usual] (-2,0.5) to (-2,2.5);
\draw[usual] (0,0.5) to (0,2.5);
\draw[usual] (2,0.5) to (2,2.5);
\node at (1,3.5) {$\phantom{a}$};
\node at (1,-0.5) {$\phantom{a}$};
\end{tikzpicture}
=
\begin{tikzpicture}[anchorbase,scale=0.25,tinynodes]
\draw[pJW] (2.5,0.5) rectangle (-2.5,-0.5);
\draw[pJW] (2.5,2.5) rectangle (-2.5,3.5);
\node at (0,-0.2) {$\pjwm[v{-}1]$};
\draw[usual] (-1.5,0.5) to[out=90,in=180] (-1,1) to[out=0,in=90] (-0.5,0.5);
\draw[usual] (0.5,0.5) to (0.5,1) to[out=90,in=180] (1,1.5) to[out=0,in=90] (1.5,1) to (1.5,0.5);
\draw[usual] (-2,0.5) to (-2,2.5);
\draw[usual] (0,0.5) to (0,2.5);
\draw[usual] (2,0.5) to (2,2.5);
\node at (1,3.5) {$\phantom{a}$};
\node at (1,-0.5) {$\phantom{a}$};
\end{tikzpicture}
,\quad
\begin{tikzpicture}[anchorbase,scale=0.25,tinynodes]
\draw[pJW] (2,0.5) rectangle (-1,-0.5);
\draw[pJW] (2,1.5) rectangle (-1,2.5);
\draw[pJW] (2,3.5) rectangle (-1,4.5);
\node at (0.5,-0.2) {$\pjwm[v{-}1]$};
\node at (0.5,3.8) {$\pjwm[v{-}1]$};
\draw[usual] (0,1.5) to[out=270,in=180] (0.5,1) to[out=0,in=270] (1,1.5);
\draw[usual] (0,2.5) to[out=90,in=180] (0.5,3) to[out=0,in=90] (1,2.5);
\draw[usual] (-0.5,0.5) to (-0.5,1.5);
\draw[usual] (-0.5,2.5) to (-0.5,3.5);
\node at (1,3.5) {$\phantom{a}$};
\node at (1,-0.5) {$\phantom{a}$};
\end{tikzpicture}
=
\funcG[*]\cdot
\begin{tikzpicture}[anchorbase,scale=0.25,tinynodes]
\draw[pJW] (2,0.5) rectangle (-1,-0.5);
\draw[pJW] (2,3.5) rectangle (-1,4.5);
\node at (0.5,-0.2) {$\pjwm[v{-}1]$};
\node at (0.5,3.8) {$\pjwm[v{-}1]$};
\draw[usual] (0,0.5) to[out=90,in=180] (0.5,1) to[out=0,in=90] (1,0.5);
\draw[usual] (0,3.5) to[out=270,in=180] (0.5,3) to[out=0,in=270] (1,3.5);
\draw[usual] (-0.5,0.5) to (-0.5,3.5);
\node at (1,3.5) {$\phantom{a}$};
\node at (1,-0.5) {$\phantom{a}$};
\end{tikzpicture}
+
\funcF[*]\cdot
\begin{tikzpicture}[anchorbase,scale=0.25,tinynodes]
\draw[pJW] (2,0.5) rectangle (-1,-0.5);
\draw[pJW] (2,3.5) rectangle (-1,4.5);
\node at (0.5,-0.2) {$\pjwm[v{-}1]$};
\node at (0.5,3.8) {$\pjwm[v{-}1]$};
\draw[usual] (0,0.5) to[out=90,in=180] (0.5,1) to[out=0,in=90] (1,0.5);
\draw[usual] (-0.25,0.5) to[out=90,in=180] (0.5,1.25) to[out=0,in=90] (1.25,0.5);
\draw[usual] (0,3.5) to[out=270,in=180] (0.5,3) to[out=0,in=270] (1,3.5);
\draw[usual] (-0.25,3.5) to[out=270,in=180] (0.5,2.75) to[out=0,in=270] (1.25,3.5);
\draw[usual] (-0.5,0.5) to (-0.5,3.5);
\node at (1,3.5) {$\phantom{a}$};
\node at (1,-0.5) {$\phantom{a}$};
\end{tikzpicture},
\end{gather*}
which illustrate containment, far-commutativity and zigzag relations. Indeed,
the former two types of relations admit diagrammatic proofs.

The following proposition gives explicit versions of the cellular bases
constructed in \cite{An-tilting-cellular} and \cite{AnStTu-cellular-tilting}. To state it recall from
\cite{GrLe-cellular} that the crossingless matching basis of $\TL[{\Zv,\vpar}]$
together with $(\N[0],<)$ and $\fliph[(\placeholder)]$ endows $\TL[{\Zv,\vpar}]$
with the structure of a (strictly object adapted) cellular category. (We refer
to \cite[Definition 2.1]{We-tensors-cellular-categories} and \cite[Definition
2.4]{ElLa-trace-hecke} for the terminology.)

\begin{proposition}\label{proposition:cellular} Let $\setstuff{B}$ denote the
set given by the elements in \fullref{theorem:main-tl-section}.(Basis).
Moreover, let $\tilde{\setstuff{b}}$ respectively $\overline{\setstuff{b}}$
denote the sets obtained from the analogous expressions based on the projectors
$\qjw[v{-}1]$ and $\pqjw[q{-}1]$ respectively.

\begin{enumerate}

\item The sets $\tilde{\setstuff{b}}$ and 
$\overline{\setstuff{b}}$ give bases for the hom-spaces in 
$\tilt[{\kkv,\vpar}]$ while the set $\setstuff{B}$ gives 
a bases for the hom-spaces in $\tilt[{\kk,\qpar}]$.

\item All of these bases are unitriangularly equivalent to the crossingless
matching bases (with respect to $(\N[0],<)$).

\item All of these bases
together with $(\N[0],<)$ and $\fliph[(\placeholder)]$ endow 
$\tilt[{\kkv,\vpar}]$ respectively $\tilt[{\kk,\qpar}]$ with the 
structure of a (strictly object adapted) cellular category.

\end{enumerate} 
\end{proposition}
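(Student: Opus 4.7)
For part (a), the basis property of $\setstuff{B}$ in $\tilt[{\kk,\qpar}]$ is exactly the (Basis) and (Complete) statements of \fullref{theorem:main-tl-section}, so nothing further is needed there. For $\tilde{\setstuff{b}}$, observe that $\mchar[{\kkv,\vpar}]=(\ppar,\infty)$, so every $v\in\N$ is an eve, the mixed projectors collapse to the simple ones, and the only relevant generators are the $\tilde{\morstuff{d}}(\pi)$ and $\tilde{\morstuff{u}}(\pi)$ of \fullref{example:ladder-basis}. The claim that these compose to a basis is the classical Artin--Wedderburn decomposition of $\End_{\TL[{\kkv,\vpar}]}(v-1)$. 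For $\overline{\setstuff{b}}$, one repeats the proof of \fullref{theorem:main-tl-section} over $\kkv$ with $\pqjw[v{-}1]$ in place of $\pjw[v{-}1]$: all numerical arguments are valid because $\ord$ is trivial over $\kkv$, and the resulting basis theorem is formally identical.

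For part (b), the plan is to argue by induction on $v$ and on the length of the defining word, using the through-strand number as the filtering statistic. The crucial input is that each projector has the shape $\qjw[v{-}1]=\idtl[v{-}1]+(\text{crossingless matchings with fewer than $v-1$ through-strands})$, and the same holds for $\pqjw[v{-}1]$ and $\pjw[v{-}1]$ because the $S=\emptyset$ summand in \eqref{eq:pljwdef} carries coefficient $\lambda_{v,\emptyset}=1$ and is the only loop contributing an identity on $v-1$ strands. Substituting these expansions into the inductive definitions of $\tilde{\morstuff{d}}(\pi),\tilde{\morstuff{u}}(\pi),\Down{}(\pi),\Up{}(\pi)$ and comparing with $\morstuff{d}(\pi),\morstuff{u}(\pi)$ yields, for each basis element, an expression of the form (new element) $=$ (corresponding crossingless matching) $+$ (matchings with strictly fewer through-strands). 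Since the crossingless matching basis is itself graded by through-strand count, this is upper unitriangular in the claimed sense.

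For part (c), I would appeal to the standard transport-of-cellularity principle, {\eg} \cite[Proposition 2.3]{ElLa-trace-hecke}: given an existing (strictly object adapted) cellular basis and a second basis related to it by a unitriangular change of basis that preserves the through-strand filtration and commutes with the involution, the second basis is again cellular with the same poset $(\N[0],<)$ and the same involution. The input cellular datum is the crossingless matching basis on $\TL[{\Zv,\vpar}]$ due to \cite{GrLe-cellular}, transported to $\kk$ or $\kkv$ by base change. The unitriangularity was set up in (b); compatibility with $\fliph[(\placeholder)]$ is immediate from the $\fliph$-invariance of $\qjw[v{-}1]$, $\pqjw[v{-}1]$, and $\pjw[v{-}1]$, together with the symmetric way each basis element is built as $\upsilon(\pi^{\prime})\delta(\pi)$, so that $\fliph$ swaps $\delta$ and $\upsilon$ and hence permutes the basis. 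The main obstacle I anticipate is bookkeeping in (b): one must verify that the composition $\Up{S_{i_{l}}^{\prime}}\cdots\Down{S_{i_{k}}}$, when expanded over crossingless matchings, still has as its leading term (in the through-strand order) precisely the matching associated to the concatenation of the underlying paths, with coefficient one. Once this leading-term lemma is settled the rest is formal.
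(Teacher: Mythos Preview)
Your proposal is correct and takes essentially the same approach as the paper: invoke \fullref{theorem:main-tl-section} for the basis property, obtain unitriangularity from the expansion ``projector $=$ identity $+$ lower-through-strand terms'', and transport cellularity along the unitriangular change of basis. One small correction: the down-admissible stretches indexing $\tilde{\setstuff{b}}$ and $\overline{\setstuff{b}}$ are inherited from the target pair $(\kk,\qpar)$ (so that all three sets are indexed by the \emph{same} combinatorics and the paper can speak of the unitriangular change $\tilde{\setstuff{b}}\leadsto\overline{\setstuff{b}}$ ``by construction''), not from the native mixed characteristic $(\ppar,\infty)$ of $\kkv$; your remark that ``every $v$ is an eve'' is therefore misplaced, though harmless for the actual argument.
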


\begin{proof}
\fullref{theorem:main-tl-section} shows that 
$\overline{\setstuff{b}}$ and $\setstuff{B}$ give 
bases of the respective hom-spaces, and the former is 
unitriangularly equivalent to $\tilde{\setstuff{b}}$, 
by construction. Moreover, $\tilde{\setstuff{b}}$ is 
unitriangularly equivalent to the crossingless matching 
basis, and a basis change that is unitriangular 
with respect to the cell order preserves 
all structures defining a (strictly object adapted) cellular category.
\end{proof}

\begin{example}
A crucial ingredient in the proof of \fullref{theorem:main-tl-section} is the
unitriangular basis change between $\tilde{\setstuff{b}}$ and
$\overline{\setstuff{b}}$ for endomorphism spaces of $\overline{\tmod}(v-1)$ for
$\generation\leq 2$. In this case the basis elements are given by loops
$\loopdowngen{S^{\prime}}{v{-}1}$ respectively
$\loopdownqgen{S^{\prime}}{v{-}1}$ for down-admissible sets $S^{\prime}$,
including the empty set. For $\generation=0$ only the empty set is
down-admissible and we have $\loopdowngen{\emptyset}{v{-}1}
=\loopdownqgen{\emptyset}{v{-}1}$. For $\generation=1$ there is a unique non-empty down-admissible set $S$ and we have
\begin{gather*}
\begin{cases}
\loopdownqgen{\emptyset}{v{-}1}
&=\loopdowngen{\emptyset}{v{-}1}
+(-1)^{v-\mother[v]}\tfrac{\qnum{v[S]}{\vpar}}{\qnum{\mother[v]}{\vpar}}\cdot\loopdowngen{S}{v{-}1},
\\[1pt]
\loopdownqgen{S}{v{-}1}&=\loopdowngen{S}{v{-}1}.
\end{cases}
\quad
\begin{cases}
\loopdowngen{\emptyset}{v{-}1} 
&=\loopdownqgen{\emptyset}{v{-}1}
-(-1)^{v-\mother[v]}\tfrac{\qnum{v[S]}{\vpar}}{\qnum{\mother[v]}{\vpar}}\cdot\loopdownqgen{S}{v{-}1}
\\[1pt]
\loopdowngen{S}{v{-}1}&=\loopdownqgen{S}{v{-}1}.
\end{cases}
\end{gather*}
The basis changes for $\generation=2$
with minimal down-admissible stretches $S<T$ are:
\begin{align*}
&\begin{cases}
\loopdownqgen{\emptyset}{v{-}1}
=&\loopdowngen{\emptyset}{v{-}1}
+(-1)^{v-\mother[v]}\tfrac{\qnum{v[S]}{\vpar}}{\qnum{\mother[v]}{\vpar}}\cdot\loopdowngen{S}{v{-}1}
+(-1)^{\mother[v]-\motherr{v}{2}}\tfrac{\qnum{\mother[v][T]}{\vpar}}{\qnum{\motherr{v}{2}}{\vpar}}\cdot\loopdowngen{T}{v{-}1}
+(-1)^{v-\motherr{v}{2}}\tfrac{\qnum{v[ST]}{\vpar}}{\qnum{\motherr{v}{2}}{\vpar}}\cdot\loopdowngen{ST}{v{-}1},
\\[1pt]
\loopdownqgen{S}{v{-}1}
=&\loopdowngen{S}{v{-}1}
+(-1)^{v-\motherr{v}{2}}\tfrac{(\qnum{\mother[v][T]}{\vpar})^{2}}{\qnum{v[T]}{\vpar}\qnum{\motherr{v}{2}}{\vpar}}\cdot\loopdowngen{T}{v{-}1}, 
\\[1pt]
\loopdownqgen{T}{v{-}1}=&\loopdowngen{T}{v{-}1}
+(-1)^{v-\mother[v]}\tfrac{\qnum{v[ST]}{\vpar}}{\qnum{\mother[v][T]}{\vpar}}\cdot\loopdowngen{ST}{v{-}1},
\\[1pt]
\loopdownqgen{ST}{v{-}1}=&\loopdowngen{ST}{v{-}1}.
\end{cases}
\\[1pt]
&\begin{cases}
\loopdowngen{\emptyset}{v{-}1} 
=&\loopdownqgen{\emptyset}{v{-}1}
-(-1)^{v-\mother[v]}\tfrac{\qnum{v[S]}{\vpar}}{\qnum{\mother[v]}{\vpar}}\cdot\loopdownqgen{S}{v{-}1}
+(-1)^{\mother[v]-\motherr{v}{2}}\tfrac{\qnum{\mother[v][T]}{\vpar}}{\qnum{\motherr{v}{2}}{\vpar}}
(\tfrac{\qnum{v[S]}{\vpar}\qnum{\mother[v][T]}{\vpar}}{\qnum{\mother[v]}{\vpar}\qnum{v[T]}{\vpar}}  -1)
\cdot\loopdownqgen{T}{v{-}1}
\\[1pt]
&-(-1)^{v-\motherr{v}{2}}\tfrac{\qnum{v[ST]}{\vpar}\qnum{v[S]}{\vpar}\qnum{\mother[v][T]}{\vpar}}{\qnum{\motherr{v}{2}}{\vpar}\qnum{\mother[v]}{\vpar}\qnum{v[T]}{\vpar}}\cdot\loopdownqgen{ST}{v{-}1},
\\[1pt]
\loopdowngen{S}{v{-}1} 
=&\loopdownqgen{S}{v{-}1}
-(-1)^{v-\motherr{v}{2}}\tfrac{(\qnum{\mother[v][T]}{\vpar})^{2}}{\qnum{v[T]}{\vpar}\qnum{\motherr{v}{2}}{\vpar}}\cdot\loopdownqgen{T}{v{-}1}
+(-1)^{\mother[v]-\motherr{v}{2}} \tfrac{\qnum{v[ST]}{\vpar} \qnum{\mother[v][T]}{\vpar}}{\qnum{v[T]}{\vpar}\qnum{\motherr{v}{2}}{\vpar}}\cdot\loopdownqgen{ST}{v{-}1}
, 
\\[1pt]
\loopdowngen{T}{v{-}1} 
=&\loopdownqgen{T}{v{-}1}
-(-1)^{v-\mother[v]}\tfrac{\qnum{v[ST]}{\vpar}}{\qnum{\mother[v][T]}{\vpar}}\cdot\loopdownqgen{ST}{v{-}1},
\\[1pt]
\loopdowngen{ST}{v{-}1}
=&\loopdownqgen{ST}{v{-}1}.
\end{cases}
\end{align*}
Here we abbreviated $ST:=S\cup T$.
\end{example}

Finally, we record a useful consequence of \fullref{theorem:main-tl-section}.Basis.

\begin{lemma}\label{lemma:no-delta-overlap}
Suppose that $v,w\in\N$ are such that $\supp[v]\cap\supp[w]=\emptyset$.
We have
\begin{gather*}
\Hom_{\tilt[{\kk,\qpar}]}\big(\tmod(v-1),\tmod(w-1)\big)
\cong
\pjw[w{-}1]\zigzag\pjw[v{-}1]
=\{0\}.
\end{gather*}
In particular, this holds true if the zeroth digit $b_{0}$ of $w$ 
satisfies $b_{0}\neq a_{0}$ and $b_{0}\neq\lpar-a_{0}$, or 
$b_{0}=a_{0}\neq\tfrac{\lpar}{2}$ 
but $\ppar>2$ and the parity of the sum of the remaining digits 
of $v$ and $w$ is different.
\end{lemma}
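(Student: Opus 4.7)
The plan is to reduce both assertions to combinatorial statements about supports of $\ppar\lpar$-adic expansions via the basis of $\pjw[w{-}1]\zigzag\pjw[v{-}1]$ furnished by \fullref{theorem:main-tl-section}. First I would note that any basis element $\pjw[w{-}1]\Up{S_{i_l}^{\prime}}\cdots\Up{S_{i_0}^{\prime}}\Down{S_{i_0}}\cdots\Down{S_{i_k}}\pjw[v{-}1]$ factors through an intermediate idempotent $\pjw[u{-}1]$, where $u=v[S]=w(S^{\prime})$ for $S=\bigcup_{p}S_{i_{p}}$ and $S^{\prime}=\bigcup_{p}S_{i_{p}}^{\prime}$. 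Since $S$ is down-admissible for $v$ we have $u\in\supp[v]$; since $S^{\prime}$ is up-admissible for $u$ with $u(S^{\prime})=w$, the reversibility $v[S](S)=v$ recorded after \fullref{definition:adm} (together with the bijection in \fullref{remark:admissibility}) forces $u=w[S^{\prime}]\in\supp[w]$. Hence if $\supp[v]\cap\supp[w]=\emptyset$, no basis element exists, and the hom-space must vanish.

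It remains to show that each of the two explicit conditions on zeroth digits implies $\supp[v]\cap\supp[w]=\emptyset$. I would argue directly: every $u\in\supp[v]$ equals $\sum_{i}\epsilon_{i}a_{i}\ppar^{(i)}$ with $\epsilon_{i}\in\{\pm 1\}$ and $\epsilon_{j}=+1$, so modulo $\lpar$ we obtain $u\equiv\epsilon_{0}a_{0}\pmod{\lpar}$, and hence the zeroth standard digit of $u$ lies in $\{a_{0},\lpar-a_{0}\}$. Analogously for $\supp[w]$. So $b_{0}\notin\{a_{0},\lpar-a_{0}\}$ already rules out any common $u$, settling the first sufficient condition.

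For the parity condition, the hypothesis $a_{0}=b_{0}\neq\tfrac{\lpar}{2}$ ensures $a_{0}\neq\lpar-a_{0}$, so any $u\in\supp[v]\cap\supp[w]$ must have matching sign $\epsilon_{0}^{v}=\epsilon_{0}^{w}$ at position zero in its two expansions. Cancelling this zeroth contribution and dividing by $\lpar$ gives the integer identity $\sum_{i\geq 1}\epsilon_{i}^{v}a_{i}\ppar^{i-1}=\sum_{i\geq 1}\epsilon_{i}^{w}b_{i}\ppar^{i-1}$. Reducing modulo $2$ and using $\ppar>2$ makes each $\ppar^{i-1}$ odd, while the signs $\epsilon_{i}^{v},\epsilon_{i}^{w}$ become irrelevant modulo $2$, so the equation collapses to $\sum_{i\geq 1}a_{i}\equiv\sum_{i\geq 1}b_{i}\pmod{2}$, contradicting the parity hypothesis.

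The main obstacle I anticipate is making the first step fully precise: one must verify that the composability conditions built into the basis words of \fullref{theorem:main-tl-section} are genuinely equivalent to the existence of an intermediate object $u$ with $u=v[S]=w(S^{\prime})$, which requires unwrapping the definition of composites like $\Down{S_{i_0}}\cdots\Down{S_{i_k}}\pjw[v{-}1]$ in terms of their minimal-stretch decompositions. Once this bookkeeping step is in place, the remaining modular-arithmetic verification is routine.
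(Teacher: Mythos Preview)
Your proposal is correct and follows the same approach as the paper's (very terse) proof: invoke the basis from \fullref{theorem:main-tl-section} to see that every basis element factors through some $\pjw[u{-}1]$ with $u\in\supp[v]\cap\supp[w]$, then use the same zeroth-digit and parity-of-higher-digits arguments for the two sufficient conditions. The bookkeeping you flag as an obstacle is precisely what the paper elides by saying ``clear by Basis''; one small slip is that your equation $u=w(S')$ should read $u=w[S']$ (equivalently $u(S')=w$), which is in fact how you use it two sentences later.
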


\begin{proof}
The first part of the statement is clear by
\fullref{theorem:main-tl-section}.Basis. The first condition for when
$\supp[v]\cap\supp[w]=\emptyset$ is immediate from the definitions as the
corresponding $\ppar\lpar$-adic expansions of the elements of $\supp[v]$ and
$\supp[w]$ have to agree on the zeroth digit. For the second condition note that
$b_{0}=a_{0}\neq\tfrac{\lpar}{2}$ ensures that every element of $\supp[v]$ is
distinguished from the elements of $\supp[w]$ by its zeroth digit or by the
parity of the sum of higher digits (here we use that $\ppar$ is odd).
\end{proof}

Note that the conditions given at the end of \fullref{lemma:no-delta-overlap}
are in general only sufficient to ensure $\supp[v]\cap\supp[w]=\emptyset$.

\begin{remark}
The statement in \fullref{lemma:no-delta-overlap} is 
known as a \emph{Weyl factor overlap 
criterion} in the theory of tilting modules 
and follows from \emph{Ext-vanishing}, 
see see {\eg}
\cite[Section 2B]{AnStTu-cellular-tilting}, using the integrality of these
statements which follows from \cite{RyHa-integral-kempf}.
One can see \fullref{lemma:no-delta-overlap} as an explicit 
incarnation of these (general) facts about tilting modules.
\end{remark}

\subsection{More partial trace formulas}\label{subsection:ptrace}

The next lemma deals with partial traces that do not reach an ancestor. 
Hence, they are complementary to part (c) of \fullref{proposition:jw-properties}.

\begin{lemma}\label{lemma:othertrace}
Let $v=\plbase{a_{j},\dots,a_{k},0\dots,0}$ with 
$k>0$, and $a_{k}\neq 0$. Suppose
that $w=(\ppar-a_{i})\ppar^{(i)}$ 
for some $1\leq i<k$. 
Then we have
\begin{gather*}
\begin{tikzpicture}[anchorbase,scale=0.25,tinynodes]
\draw[pQJW] (1.5,-1) rectangle (-2.5,1);
\node at (-0.5,-0.1) {$\pjwm[v{-}1]$};
\draw[usual] (1,1) to[out=90,in=180] (1.5,1.5) to[out=0,in=90] 
(2,1) to (2,0) node[right,xshift=-2pt]{$w$} to (2,-1) 
to[out=270,in=0] (1.5,-1.5) to[out=180,in=270] (1,-1);
\node at (0,2.5) {$\phantom{a}$};
\node at (0,-2.5) {$\phantom{a}$};
\end{tikzpicture}
=
\begin{tikzpicture}[anchorbase,xscale=-0.25,yscale=0.25,tinynodes]
\draw[pQJW] (-2.5,-0.5) rectangle (2.5,0.5);
\node at (0,-0.15) {$\pjwm[v{+}w{-}1]$};
\draw[usual] (-1,0.5) to[out=90,in=0] (-1.5,1) to[out=180,in=90] 
(-2,0.5)  (-2,-0.5) 
to[out=270,in=180] (-1.5,-1) to[out=0,in=270] (-1,-0.5);
\node at (0,2.5) {$\phantom{a}$};
\node at (0,-2.5) {$\phantom{a}$};
\draw[pQJW] (-0.5,-1.5) rectangle (2.5,-2.5);
\node at (0.875,-2.15) {$\pjwm[x{-}1]$};
\draw[pQJW] (-0.5,1.5) rectangle (2.5,2.5);
\node at (0.875,1.85) {$\pjwm[x{-}1]$};
\draw[usual] 
(0,-1.5) to (0,-0.5) 
(2,-1.5) to (2,-0.5)
(0,0.5) to (0,1.5)
(2,0.5) to (2,1.5);
\end{tikzpicture}
=
\Down{S}\Up{S}\pjw[x-1],
\end{gather*}
where $x=(v+w)[S] 
=\plbase{a_{j},\dots,a_{k}-1,\ppar-1,\dots,\ppar-1,a_{i},0,\dots,0}$ 
with $S=\{i,\dots,k-1\}$. 
The same holds for $0<w^{\prime}=\lpar-a_{0}<\lpar$, 
where $x^{\prime}=(v+w^{\prime})[S^{\prime}] 
=\plbase{a_{j},\dots,a_{k}-1,\ppar-1\dots,\ppar-1,a_{0}}$
and $S=\{0,\dots,k-1\}$.
\end{lemma}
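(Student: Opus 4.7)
First I would verify that $S=\{i,\dots,k-1\}$ is a single minimal down-admissible stretch for $v+w$ with $(v+w)[S]=x$. Indeed, the digit of $v+w$ at position $i$ is $\ppar-a_{i}\neq 0$; at positions $i+1,\dots,k-1$ it is $0$, since $w<\ppar^{(i+1)}$ guarantees no carry reaches that range; and at position $k$ it remains $a_{k}\neq 0$. Down-admissibility then follows directly from \fullref{definition:adm}, and negating the digits in $S$ followed by the standard borrow reduction yields the claimed form $x=v-w$. Consequently $S$ is up-admissible for $x$ with $x(S)=v+w$, so $\Down{S}\Up{S}\pjw[x-1]$ is a well-defined loop endomorphism of $\tmod(x-1)$ in the sense of \eqref{eq:trapz}.

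For the main identity I would expand $\pqjw[v-1]$ via \eqref{eq:pljwdef} as $\sum_{T}\lambda_{v,T}\cdot\loopdowngen{T}{v-1}$, and apply the partial trace over the rightmost $w$ strands to each summand. Each loop $\loopdowngen{T}{v-1}=\upo{T}\qjw[v[T]-1]\downo{T}$ can be simplified using the classical trace formula \eqref{eq:0trace} inside $\qjw[v[T]-1]$, while the outer trapezes contribute cup/cap configurations. The crucial constraint $w<\ppar^{(i+1)}$ forces the traced strands to lie strictly below position $i+1$, so a case analysis shows that most summands vanish via \eqref{eq:0kill}; the surviving terms correspond precisely to those stretches $T$ compatible with the partial-trace geometry, namely those not reflecting positions strictly below $i$ and whose reflection pattern above $i$ is matched by the cup/cap data exposed by the trace.

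Finally I would reorganize the surviving sum and identify it with $\Down{S}\Up{S}\pjw[x-1]$ by comparing against the diagrammatic realization \eqref{eq:trapz} of the right-hand side, in which the loop sandwiches a central $\pjw[v+w-1]$. Translating between the two presentations uses the recursion of \fullref{lemma:pl-jw-q} applied to $\pqjw[v+w-1]$ together with the absorption property in \fullref{proposition:jw-properties}. The hardest step is the scalar bookkeeping: the quantum-number ratios arising from \eqref{eq:0trace} combined with the coefficients $\lambda_{v,T}$ must telescope into the scalars $\funcg(\ast)$ and $\funcf(\ast)$ that govern the zigzag relation in \fullref{theorem:main-tl-section}. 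I expect \fullref{proposition:qlucas} to be essential, as it reduces the relevant ratios to products of ordinary binomials at the non-zeroth digits times a single quantum binomial at the zeroth digit, so that the signs $\lucas=\pm 1$ collect consistently. The companion case $w'=\lpar-a_{0}$ is handled by the identical argument with $\plpar=\lpar$ playing the role of $\ppar$ at the zeroth position.
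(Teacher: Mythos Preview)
Your approach is far more complicated than necessary, and the paper's proof avoids all of the scalar bookkeeping you anticipate. The paper's argument is a three-line diagrammatic manipulation using only the structural properties of the projectors from \fullref{proposition:jw-properties}: first insert $\pqjw[x-1]$ above and below the partial trace by left-aligned absorption (this is legitimate because $x=v-w<v$); then use shortening to replace $\pqjw[v-1]$ together with its trace loop by $\pqjw[v+w-1]$ with a cup and cap of thickness $w$. The crucial observation enabling the second step is that $v=\fancest{(v+w)}{S}$ is precisely the relevant ancestor of $v+w$ for the stretch $S$, so non-classical absorption applies directly. The resulting diagram is then \emph{by definition} $\Down{S}\Up{S}\pjw[x-1]$, since $S$ is the minimal down-admissible stretch of $v+w$. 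No expansion of $\pqjw[v-1]$, no case analysis, and no scalar telescoping are needed.

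Your proposed expand-and-match route has a genuine gap. The claim that ``most summands vanish via \eqref{eq:0kill}'' is not justified: for any nonempty down-admissible $T$ for $v$ one has $\min T\geq k>i$, so the rightmost $w<\ppar^{(k)}$ traced strands lie entirely within the outer cap/cup of $\loopdowngen{T}{v-1}$ and never touch the central $\qjw[v[T]-1]$. Tracing them therefore produces a nontrivial isotopy of the trapezes rather than a cap on the simple JW projector, and \eqref{eq:0kill} does not apply. Your remark about the constraint ``not reflecting positions strictly below $i$'' is vacuous for the same reason (all admissible $T$ for $v$ already satisfy it). Finally, the scalars $\funcf,\funcg$ you expect to see are not part of this lemma at all; they enter only in the subsequent \fullref{proposition:more-ptrace}, where the zigzag relation is applied to the output $\Down{S}\Up{S}\pjw[x-1]$ of the present lemma.
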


\begin{proof}
We compute using
projector absorption and
shortening:
\begin{gather*}
\begin{tikzpicture}[anchorbase,scale=0.25,tinynodes]
\draw[pQJW] (1.5,-1) rectangle (-2.5,1);
\node at (-0.5,-0.1) {$\pjwm[v{-}1]$};
\draw[usual] (1,1) to[out=90,in=180] (1.5,1.5) to[out=0,in=90] 
(2,1) to (2,0) node[right,xshift=-2pt]{$w$} to (2,-1) 
to[out=270,in=0] (1.5,-1.5) to[out=180,in=270] (1,-1);
\node at (0,2.5) {$\phantom{a}$};
\node at (0,-2.5) {$\phantom{a}$};
\end{tikzpicture}   
=
\begin{tikzpicture}[anchorbase,xscale=-0.25,yscale=0.25,tinynodes]
\draw[pQJW] (-1.5,-0.5) rectangle (2.5,0.5);
\node at (0.5,-0.15) {$\pjwm[v{-}1]$};
\draw[usual] (-1,0.5) to[out=90,in=0] (-1.5,1) to[out=180,in=90] 
(-2,0.5) to (-2,0) node[right,xshift=-1pt]{$w$} to (-2,-0.5) 
to[out=270,in=180] (-1.5,-1) to[out=0,in=270] (-1,-0.5);
\node at (0,2.5) {$\phantom{a}$};
\node at (0,-2.5) {$\phantom{a}$};
\draw[pQJW] (-0.5,-1.5) rectangle (2.5,-2.5);
\node at (0.875,-2.15) {$\pjwm[x{-}1]$};
\draw[pQJW] (-0.5,1.5) rectangle (2.5,2.5);
\node at (0.875,1.85) {$\pjwm[x{-}1]$};
\draw[usual] (0,-1.5) to (0,-0.5);
\draw[usual] (2,-1.5) to (2,-0.5);
\draw[usual] (0,0.5) to (0,1.5);
\draw[usual] (2,0.5) to (2,1.5);
\end{tikzpicture}  
=
\begin{tikzpicture}[anchorbase,xscale=-0.25,yscale=0.25,tinynodes]
\draw[pQJW] (-2.5,-0.5) rectangle (2.5,0.5);
\node at (0,-0.15) {$\pjwm[v{+}w{-}1]$};
\draw[usual] (-1,0.5) to[out=90,in=0] (-1.5,1) to[out=180,in=90]
(-2,0.5);
\draw[usual] (-2,-0.5) to[out=270,in=180] 
(-1.5,-1) to[out=0,in=270] (-1,-0.5);
\node at (0,2.5) {$\phantom{a}$};
\node at (0,-2.5) {$\phantom{a}$};
\draw[pQJW] (-0.5,-1.5) rectangle (2.5,-2.5);
\node at (0.875,-2.15) {$\pjwm[x{-}1]$};
\draw[pQJW] (-0.5,1.5) rectangle (2.5,2.5);
\node at (0.875,1.85) {$\pjwm[x{-}1]$};
\draw[usual] (0,-1.5) to (0,-0.5);
\draw[usual] (2,-1.5) to (2,-0.5);
\draw[usual] (0,0.5) to (0,1.5);
\draw[usual] (2,0.5) to (2,1.5);
\end{tikzpicture}  
=
\Down{S}\Up{S}\pjw[x-1],          
\end{gather*} 
where $S=\{i,\dots, k-1\}$ and we have used that $S$ is the minimal
down-admissible stretch of $v+w$. 
The formula for $w^{\prime}$ and $x^{\prime}$ can be proven {\ver}.
\end{proof}

We can now use the zigzag relation from \fullref{theorem:main-tl-section}.(6) to
simplify $\Down{S}\Up{S}\pjw[x-1]$ further. To use this relation, recall that if
the down-admissible hull $\hull[S]$, or the smallest minimal down-admissible
stretch $T$ with $T>\hull[S]$ does not exist, then the involved symbols are zero
by definition.

\begin{proposition}\label{proposition:more-ptrace}
Retain notation as in \fullref{lemma:othertrace}. We have
\begin{gather*}
\begin{aligned}
\begin{tikzpicture}[anchorbase,scale=0.25,tinynodes]
\draw[pJW] (1.5,-1) rectangle (-2.5,1);
\node at (-0.5,-0.1) {$\pjwm[v{-}1]$};
\draw[usual] (1,1) to[out=90,in=180] (1.5,1.5) to[out=0,in=90] 
(2,1) to (2,0) node[right,xshift=-2pt]{$w$} to (2,-1) 
to[out=270,in=0] (1.5,-1.5) to[out=180,in=270] (1,-1);
\node at (0,2.5) {$\phantom{a}$};
\node at (0,-2.5) {$\phantom{a}$};
\end{tikzpicture}   
&=
\funcg(a_{k}-1)\cdot\Up{\hull[S]}\Down{\hull[S]}\pjw[x{-}1] 
+\funcf(a_{k}-1)\cdot\Up{T}\Up{\hull[S]}\Down{\hull[S]}\Down{T}\pjw[x{-}1]
\\
&=
\funcg(a_{k}-1)\cdot
\begin{tikzpicture}[anchorbase,scale=0.25,tinynodes]
\draw[pJW] (2,0.5) rectangle (-1,-0.5);
\draw[pJW] (2,3.5) rectangle (-1,4.5);
\node at (0.5,-0.2) {$\pjwm[x{-}1]$};
\node at (0.5,3.8) {$\pjwm[x{-}1]$};
\draw[usual] (0,0.5) to[out=90,in=180] (0.5,1) to[out=0,in=90] (1,0.5);
\draw[usual] (0,3.5) to[out=270,in=180] (0.5,3) to[out=0,in=270] (1,3.5);
\draw[usual] (-0.5,0.5) to (-0.5,3.5);
\node at (1,3.5) {$\phantom{a}$};
\node at (1,-0.5) {$\phantom{a}$};
\end{tikzpicture}
+
\funcf(a_{k}-1)\cdot
\begin{tikzpicture}[anchorbase,scale=0.25,tinynodes]
\draw[pJW] (2,0.5) rectangle (-1,-0.5);
\draw[pJW] (2,3.5) rectangle (-1,4.5);
\node at (0.5,-0.2) {$\pjwm[x{-}1]$};
\node at (0.5,3.8) {$\pjwm[x{-}1]$};
\draw[usual] (0,0.5) to[out=90,in=180] (0.5,1) to[out=0,in=90] (1,0.5);
\draw[usual] (-0.25,0.5) to[out=90,in=180] (0.5,1.25) to[out=0,in=90] (1.25,0.5);
\draw[usual] (0,3.5) to[out=270,in=180] (0.5,3) to[out=0,in=270] (1,3.5);
\draw[usual] (-0.25,3.5) to[out=270,in=180] (0.5,2.75) to[out=0,in=270] (1.25,3.5);
\draw[usual] (-0.5,0.5) to (-0.5,3.5);
\node at (1,3.5) {$\phantom{a}$};
\node at (1,-0.5) {$\phantom{a}$};
\end{tikzpicture}.
\end{aligned}
\end{gather*}
Moreover, the formula holds for $w^{\prime}$ and 
$x^{\prime}$.
\end{proposition}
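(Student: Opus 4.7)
The plan is to reduce everything to an application of the zigzag relation from \fullref{theorem:main-tl-section}.\ref{theorem:main-tl-section-6}, which is tailor-made for simplifying expressions of the form $\Down{S}\Up{S}\pjw[x{-}1]$. The first step is to invoke \fullref{lemma:othertrace} verbatim, which identifies the partial trace on the left-hand side with $\Down{S}\Up{S}\pjw[x{-}1]$ for $S=\{i,\dots,k-1\}$ and $x=\plbase{a_{j},\dots,a_{k}{-}1,\ppar{-}1,\dots,\ppar{-}1,a_{i},0,\dots,0}$. (A small check is needed at this step: one must upgrade the semisimple identification from \fullref{lemma:othertrace} to an identity between the mixed projectors $\pjw$, but this is immediate from $\pjw = \mathrm{sp}_{\ppar,\lpar}(\pqjw)$ and the fact that all morphisms involved are defined over $\kklocal$ and have non-negative $\ppar\lpar$-adic valuation.)

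Next, I apply the zigzag relation to $\Down{S}\Up{S}\pjw[x{-}1]$. For this I need to identify the down-admissible hull $\hull[S]$ of $S$ with respect to $x$ and the smallest down-admissible stretch $T>\hull[S]$, if they exist. Since the digits of $x$ at positions $i,i+1,\dots,k-1,k$ read $a_{i},\ppar{-}1,\dots,\ppar{-}1,a_{k}{-}1$, the set $S$ is up-admissible for $x$; its down-admissible hull (if it exists) is again $S$ if $a_{k}-1\neq 0$ and is $\{i,\dots,k-1,k,\dots\}$ extended further to the left if $a_{k}-1 = 0$, with $T$ then being the smallest such stretch strictly above. In either case the zigzag relation yields
\begin{gather*}
\Down{S}\Up{S}\pjw[x{-}1]
=
\funcG[S]\,\Up{\hull[S]}\Down{\hull[S]}\pjw[x{-}1]
+\funcF[S]\,\Up{T}\Up{\hull[S]}\Down{\hull[S]}\Down{T}\pjw[x{-}1].
\end{gather*}

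The third step is to identify the scalars. By the definitions in \eqref{eq:f-g} we have $\funcG[S]\pjw[x{-}1]=\funcg(a^{x}_{\max(S)+1})\pjw[x{-}1]$ and analogously for $\funcF[S]$, where $a^{x}_{l}$ denotes the $l$th digit of $x$. Since $\max(S)+1=k$ and the $k$th digit of $x$ is $a_{k}-1$, the scalars simplify to $\funcg(a_{k}-1)$ and $\funcf(a_{k}-1)$ respectively, matching the claim. Finally, the second displayed equation of the proposition is just the diagrammatic rendering of these generator words obtained from \eqref{eq:trapz} together with the absorption relations from \fullref{proposition:jw-properties}. The argument for $w^{\prime}=\lpar-a_{0}$ and $x^{\prime}$ is identical, with $S^{\prime}=\{0,\dots,k-1\}$, using the zeroth-digit analog of the reasoning above and noting that the scalar still depends on the digit at position $k$, which equals $a_{k}-1$ in $x^{\prime}$ as well.

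The only non-routine part of the argument is the bookkeeping surrounding $\hull[S]$ and $T$: verifying that the description of the hull, and the (possible) non-existence of $T$ when the leading digit of $x$ is reached, is consistent with the convention that the corresponding symbols vanish. This is not a serious obstacle because these cases correspond exactly to $\funcf(a_{k}-1)=0$ or $\funcg(a_{k}-1)=0$, so the two sides of the equation automatically agree on the vanishing summands.
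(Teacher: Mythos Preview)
Your proof is correct and follows essentially the same approach as the paper: apply the zigzag relation from \fullref{theorem:main-tl-section}.\ref{theorem:main-tl-section-6} to the expression $\Down{S}\Up{S}\pjw[x{-}1]$ supplied by \fullref{lemma:othertrace}, and then read off the scalars $\funcg(a_{k}-1)$ and $\funcf(a_{k}-1)$ from the digit of $x$ at position $\max(S)+1=k$. One small inaccuracy in your final paragraph: when $\hull[S]$ does not exist, the $\funcg$-term vanishes by the convention built into the zigzag relation, not because $\funcg(a_{k}-1)=0$ (indeed $\funcg$ is never zero); this is harmless since the proposition inherits the same convention.
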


\begin{proof}
By the zigzag relation from \fullref{theorem:main-tl-section}.
\end{proof}

Note that the $\funcf$-terms in \fullref{proposition:more-ptrace} vanish  if
$a_{k}=1$. On the other hand, the $\funcg$-terms can be zero only if no
admissible hull $\hull[S]$ exists. Hence, the whole partial trace vanishes if
and only if $a_{k}=a_{j}=1$, {\ie} if and only if $v=\ppar^{(k)}$ is a prime
eve.

We will state more partial trace formulas 
in \fullref{theorem:timest1} later on.

\section{Monoidal structure}\label{section:fusion}

In this section, we study $\tilt[{\kk,\qpar}]$ as a 
monoidal category.
In the semisimple and complex quantum group cases, the results in this section appear throughout the literature.

\subsection{Fusion rules}

We start by recalling the well-known fusion rules for tilting modules {lying} in the
fundamental alcove,  {in which tilting modules are simple}.  {Note that in the semisimple case, {\ie} when $\lpar=\infty$, the fundamental alcove is the whole of $\N[0]$.}

\begin{lemma}\label{lemma:clebsch-gordan}
For $1\leq v,w\leq\lpar$ and $v+w-2<\lpar$ we have
\begin{gather}\label{eq:clebsch-gordan}
\tmod(v-1)\hcirc\tmod(w-1)\cong
{\textstyle\bigoplus_{i=1}^{\min(v,w)}}\,
\tmod(v+w-2i).
\end{gather} 
For $1\leq v,w\leq\lpar$ and $v+w-2\geq\lpar$ we
have
\begin{gather}\label{eq:qCG}
\begin{aligned}
\tmod(v-1)\hcirc\tmod(w-1)\cong&
{\textstyle\bigoplus_{i=1}^{\lpar-\max(v,w)}}\,
\tmod\big(\plbase{|v-w|-1+2i}-1\big)
\\
&\oplus
{\textstyle\bigoplus_{i=0}^{((v+w-\lpar)^{\prime}-1)/2}}\,
\tmod\big(\plbase{1,v+w-1-\lpar-2i}-1\big),
\end{aligned}
\end{gather}
where, for later use, we indicate the $\ppar\lpar$-adic expansions of the
occurring terms. (The first direct sum is empty if $\max(v,w)=\lpar$.)
\end{lemma}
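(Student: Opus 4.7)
The plan is to compute the Weyl-filtration multiplicities of $\tmod(v-1)\hcirc\tmod(w-1)$ using the classical Clebsch--Gordan identity for quantum numbers, and then match these multiplicities against the Weyl supports of indecomposable tiltings predicted by \fullref{proposition:multiplicities}.

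First I would note that $\tmod(v-1)\hcirc\tmod(w-1)$ is tilting (see \fullref{remark:filtrations} and the cited references), hence decomposes uniquely as a direct sum of indecomposable tiltings by the Krull--Schmidt property of \fullref{theorem:well-defined}. Its character is $\qnum{v}{\vpar}\qnum{w}{\vpar}$, and the classical Clebsch--Gordan identity
\begin{gather*}
\qnum{v}{\vpar}\qnum{w}{\vpar}=\sum_{i=1}^{\min(v,w)}\qnum{v+w-2i+1}{\vpar}
\end{gather*}
(valid already in $\Zv$, hence after any specialization) determines the Weyl multiplicities: each $\wmod(\lambda-1)$ with $\lambda\in M:=\{|v-w|+1,|v-w|+3,\dots,v+w-1\}$ appears exactly once, and no other Weyl modules appear.

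For case \eqref{eq:clebsch-gordan}, where $v+w-2<\lpar$, every element of $M$ is at most $\lpar$ and is therefore an eve. By \fullref{proposition:multiplicities} the corresponding tilting coincides with the Weyl module, so the Weyl decomposition is already the desired tilting decomposition. For case \eqref{eq:qCG}, I would partition $M=M_{\leq}\sqcup M_{>}$ into low weights ($\lambda\leq\lpar$, eves, giving simple tiltings) and high weights ($\lpar<\lambda\leq v+w-1$, necessarily of the form $\lambda=\lpar+r$ with $0<r<\lpar$). For such $\lambda$, \fullref{proposition:multiplicities} shows that $\tmod(\plbase{1,r}-1)$ has Weyl support exactly $\{\lpar+r,\lpar-r\}$. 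The key verification is that the reflected weight $\lpar-r=2\lpar-\lambda$ also lies in $M$, with matching parity; this follows from the hypothesis $\max(v,w)\leq\lpar$, which forces $\lpar-r\geq|v-w|+1$. Pairing the Weyl factors $\wmod(\lpar+r-1)$ and $\wmod(\lpar-r-1)$ then assembles one copy of $\tmod(\plbase{1,r}-1)$ for each admissible $r=v+w-1-\lpar-2i$, producing the second sum of \eqref{eq:qCG}. The unpaired elements of $M_{\leq}$ are precisely $|v-w|-1+2i$ for $i=1,\dots,\lpar-\max(v,w)$, and these give the simple tiltings in the first sum.

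The main obstacle is the careful index bookkeeping in case \eqref{eq:qCG}: one must verify that the pairing $\lambda\leftrightarrow 2\lpar-\lambda$ is well-defined, consumes each high-weight factor exactly once alongside its matching low-weight partner, and that the residual simple tiltings correspond bijectively to the indices $i=1,\dots,\lpar-\max(v,w)$ of the first sum. The boundary case $\max(v,w)=\lpar$, where the first sum is empty and every Weyl factor is paired, requires particular care but is handled by the same argument.
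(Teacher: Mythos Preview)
Your approach is correct and is exactly what the paper has in mind: compute Weyl multiplicities via the classical Clebsch--Gordan identity for characters, then group Weyl factors into indecomposable tiltings using \fullref{proposition:multiplicities}. The paper's proof is nothing more than a pointer to this argument.

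One small bookkeeping slip in your case \eqref{eq:qCG}: when $v+w-\lpar$ is odd, the weight $\lpar$ itself lies in $M$, and it is \emph{not} among your claimed unpaired indices $|v-w|-1+2i$ for $i=1,\dots,\lpar-\max(v,w)$ (whose maximum is $2\lpar-v-w-1<\lpar$), nor is it produced by the pairing $\lambda\leftrightarrow 2\lpar-\lambda$. Rather, $\tmod(\lpar-1)=\tmod(\plbase{1,0}-1)$ is the eve appearing as the $r=0$ term of the second sum, contributing the single Weyl factor $\wmod(\lpar-1)$. Once you account for this boundary term your index verification is complete.
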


\begin{proof}
The first part is classical; the second part is easy using \fullref{proposition:multiplicities}.
\end{proof} 

\begin{example}\label{exa:CG}
We consider $\tmod(17)\hcirc\tmod(15)$. Assuming $\lpar>17$, we
have $\tmod(17)\hcirc\tmod(15)\cong\wmod(17)\hcirc\wmod(15)$ and the
Clebsch--Gordan rule tells us to expect a filtration by Weyl modules
$\wmod(2),\dots,\wmod(32)$ of even highest weight, each appearing once. To
illustrate \eqref{eq:qCG}, it is convenient to display this
string of Weyl modules as folded {by a fold through} the simple $\wmod(\lpar-1)$, for example
with $\lpar=21$:
\begin{gather*}
\begin{tikzcd}[ampersand replacement=\&,column sep=0.5em,row sep=0.5em]
\& 	\& 	\& {\color{orchid}\wmod(32)}\ar[thick,r,densely dotted,-]\ar[thick,orchid,dd,-]
\& {\color{orchid}\wmod(30)}\ar[thick,r,densely dotted,-]\ar[thick,orchid,dd,-]
\& {\color{orchid}\wmod(28)}\ar[thick,r,densely dotted,-]\ar[thick,orchid,dd,-]
\& {\color{orchid}\wmod(26)}\ar[thick,r,densely dotted,-]\ar[thick,orchid,dd,-]
\& {\color{orchid}\wmod(24)}\ar[thick,r,densely dotted,-]\ar[thick,orchid,dd,-]
\& {\color{orchid}\wmod(22)}\ar[thick,dr,densely dotted,-]\ar[thick,orchid,dd,-] \& 
\\
\& 	\& 	\&  \&  \&  \&  \&  \&  \& \wmod(20)\ar[thick,dd,->]
\\
\wmod(2)\ar[thick,r,densely dotted,-]\ar[thick,d,->]
\& \wmod(4)\ar[thick,r,densely dotted,-]\ar[thick,d,->]
\& \wmod(6)\ar[thick,r,densely dotted,-]\ar[thick,d,->]
\& {\color{orchid}\wmod(8)}\ar[thick,r,densely dotted,-]\ar[thick,orchid,d,->]
\& {\color{orchid}\wmod(10)}\ar[thick,r,densely dotted,-]\ar[thick,orchid,d,->]
\& {\color{orchid}\wmod(12)}\ar[thick,r,densely dotted,-]\ar[thick,orchid,d,->]
\& {\color{orchid}\wmod(14)}\ar[thick,r,densely dotted,-]\ar[thick,orchid,d,->]
\& {\color{orchid}\wmod(16)}\ar[thick,r,densely dotted,-]\ar[thick,orchid,d,->]
\& {\color{orchid}\wmod(18)}\ar[thick,ru,densely dotted,-]\ar[thick,orchid,d,->] \&  
\\
\tmod(2)\ar[thick,r,"\oplus",-]
\& \tmod(4)\ar[thick,r,"\oplus",-]
\& \tmod(6)\ar[thick,r,"\oplus",-]
\& {\color{orchid}\tmod(32)}\ar[thick,r,"\oplus",-]
\& {\color{orchid}\tmod(30)}\ar[thick,r,"\oplus",-] 
\& {\color{orchid}\tmod(28)}\ar[thick,r,"\oplus",-]
\& {\color{orchid}\tmod(26)}\ar[thick,r,"\oplus",-]
\& {\color{orchid}\tmod(24)}\ar[thick,r,"\oplus",-]
\& {\color{orchid}\tmod(22)}\ar[thick,r,"\oplus",-]
\& \tmod(20) 
\end{tikzcd}
\end{gather*}
This folding vertically groups the Weyl modules that make up generation one
indecomposable tilting modules, namely $\tmod(22),\dots,\tmod(32)$, 
or which are simple, namely $\tmod(2),\tmod(4),\tmod(6),\tmod(20)$.
\end{example}

The summands with highest weights in the
fundamental alcove appear in the first direct sum in \eqref{eq:qCG}. 
The second direct sum collects all remaining summands. If
$v+w$ and $\lpar$ have the same parity, then each of the summands are of generation one,
otherwise there exists a simple summand $\tmod(\lpar-1)$.
Equation \eqref{eq:clebsch-gordan} is the \emph{Clebsch--Gordan rule}.

\begin{remark}
The second direct sum displayed in \eqref{eq:qCG} does not appear 
for the Verlinde category (as {\eg} in \cite[Example
4.10.6]{EtGeNiOs-tensor-categories}) because these terms are factored out in
that category. For example, in \fullref{exa:CG} all tilting modules of weight
greater or equal $8$ would be zero.
\end{remark}

\begin{remark}
Below we will use the following \emph{character argument}.
Over $(\Zv,\vpar)$, the $\SLtwo$-module $\wmod(v-1)\hcirc\wmod(w-1)$ 
{always has a Weyl filtration whose
Weyl factors are given by \eqref{eq:clebsch-gordan} when $\lpar=\infty$ 
(in which case $\tmod(v-1)\cong\wmod(w-1)$).}
Thus, we can %determining 
{determine the} Weyl factors of tensor products of 
tilting modules by using \eqref{eq:clebsch-gordan} 
 {together with} the tilting characters in \fullref{proposition:multiplicities}.
We can {further} collect the computed Weyl factors into 
indecomposable tilting modules since we know that 
tensor products of tilting modules decompose into 
indecomposable tilting modules.  {We can perform this process recursively}
by highest weight,  {using} the characters in \fullref{proposition:multiplicities}.
\end{remark}

We note the following consequence, used in the proof of 
\fullref{theorem:well-defined}:

\begin{lemma}\label{lemma:translation} 
For all $v,w\in\N$ such that
$v\notin\eve$, suppose that $\wmult{v}{w}=1$, 
then 
\begin{gather*}
\big(\tmod(\mother-1)\hcirc
\tmod(1)^{\hcirc(v{-}\mother)}:\wmod(w-1)\big)=1.
\end{gather*}
\end{lemma}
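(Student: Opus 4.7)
The plan is to reduce the claim to a lattice path count via a character argument, then carry out the count using the $\ppar\lpar$-adic structure. Write $v=\plbase{a_j,\dots,a_0}$ and let $k$ be the position of the rightmost nonzero digit, so that $v-\mother=a_k\ppar^{(k)}=:N$. By \fullref{proposition:multiplicities} the tilting $\tmod(\mother-1)$ has a Weyl filtration with factors $\wmod(u-1)$ indexed by $u\in\supp[\mother]$, each appearing once. Since $\tmod(1)\cong\wmod(1)$ and Clebsch-Gordan $\wmod(u-1)\hcirc\wmod(1)\cong\wmod(u)\oplus\wmod(u-2)$ (with $\wmod(-1):=0$) holds at the level of Weyl filtrations, iterated application identifies $\big(\wmod(u-1)\hcirc\wmod(1)^{\hcirc N}:\wmod(w-1)\big)$ with the number $p_{u,w,N}$ of $\pm 1$-step lattice paths from $u$ to $w$ of length $N$ that stay $\geq 1$. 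The claim thus reduces to
\[
\sum_{u\in\supp[\mother]}p_{u,w,N}=1\quad\text{for every }w\in\supp[v].
\]

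To pin down the sole contributing $u$, I list the nonzero digit positions of $v$ as $k=i_0<i_1<\dots<i_m=j$. Then each $w\in\supp[v]$ is parameterized by signs $(\epsilon_0,\dots,\epsilon_{m-1})\in\{\pm 1\}^m$ via $w=a_j\ppar^{(j)}+\sum_{l=0}^{m-1}\epsilon_l a_{i_l}\ppar^{(i_l)}$, and likewise $u\in\supp[\mother]$ by $(\epsilon_1',\dots,\epsilon_{m-1}')$. A direct calculation yields $w-u=\epsilon_0 N+2D$ with $D=\sum_{l\in L^\Delta}\epsilon_l a_{i_l}\ppar^{(i_l)}$ and $L^\Delta=\{l\geq 1:\epsilon_l'\neq\epsilon_l\}$. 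The unique $u_0$ with $L^\Delta=\emptyset$ satisfies $|w-u_0|=N$, and the sole straight monotone path from $u_0$ to $w$ (which stays $\geq\min(u_0,w)\geq 1$) contributes $p_{u_0,w,N}=1$.

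The main (though mild) obstacle is to show $|w-u|>N$ (hence $p_{u,w,N}=0$) for every other $u$. Setting $l^*=\max L^\Delta$ and invoking the elementary identity $\sum_{i=i_1}^{i_{l^*}-1}(\ppar-1)\ppar^{(i)}=\ppar^{(i_{l^*})}-\ppar^{(i_1)}$ (valid because $i_1\geq 1$, so all relevant digits are bounded by $\ppar-1$), I will apply the triangle inequality to obtain
\[
|2D|\geq 2a_{i_{l^*}}\ppar^{(i_{l^*})}-2\sum_{l=1}^{l^*-1}a_{i_l}\ppar^{(i_l)}\geq 2\ppar^{(i_{l^*})}-2\big(\ppar^{(i_{l^*})}-\ppar^{(i_1)}\big)=2\ppar^{(i_1)}.
\]
Since $i_1>k$, I have $\ppar^{(i_1)}\geq\ppar^{(k+1)}>a_k\ppar^{(k)}=N$, so $|2D|>2N$ and therefore $|w-u|\geq|2D|-N>N$. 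Combined with $p_{u_0,w,N}=1$, this yields the desired multiplicity equal to $1$.
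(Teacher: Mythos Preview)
Your proof is correct and follows essentially the same approach as the paper: reduce to a character/lattice-path count via $\wmod(n)\hcirc\wmod(1)\cong\wmod(n+1)\oplus\wmod(n-1)$, then argue that each $w\in\supp[v]$ is reached from exactly one $u\in\supp[\mother]$ in exactly one way. The paper states this last step tersely (``uniquely obtained from the summand \dots'') and appeals to the fractal picture of outermost points; you make it fully explicit by showing $|w-u_0|=N$ for the sign-matched $u_0$ and $|w-u|>N$ otherwise via the $\ppar\lpar$-adic estimate $|2D|\geq 2\ppar^{(i_1)}>2N$, which is exactly the rigorous content behind the paper's sketch.
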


\begin{proof}
This follows a character argument: we compute the Weyl factors of
	$\tmod(\mother-1)\hcirc \tmod(1)^{\hcirc(v{-}\mother)}$ by repeatedly
	raising or lowering the corresponding highest weights of these Weyl factors
	by $\pm 1$ since
\begin{gather}\label{eq:wmod-rule}
\wmod(0)\hcirc\wmod(1)\cong\wmod(1),\quad
\wmod(n)\hcirc\wmod(1) 
\cong\wmod(n+1)\oplus\wmod(n-1)\text{ if }n\neq 0. 
\end{gather}
In other words, 
we multiply the character of $\tmod(\mother-1)$ as it appears in part (a) of \fullref{proposition:multiplicities}
by $\qnum{2}{\vpar}^{v{-}\mother}$.
Now observe that any Weyl factor of $\tmod(v-1)$ is uniquely obtained from the
summand $\qnum{w}{\vpar}$ that appears in the character of $\tmod(\mother-1)$,
and so the statement follows.
\end{proof}

To illustrate the argument in this proof, consider \fullref{figure:tiltcartan}
which displays $\wmult{v}{w}$. The Weyl factors of the tilting module
$\tmod(\mother-1)$ of the mother $\mother$ of $v$ can be read off from the
fractal at the next order breaking point before $v$, which is a distance of
$v{-}\mother$ from $v$. The Weyl factors of $\tmod(\mother-1)\hcirc
\tmod(1)^{\hcirc(v{-}\mother)}$ that are also factors of $\tmod(v-1)$ are the
outer most points in the horizontal slice for $v$, and thus, can only be reached
in one possible way.

\begin{definition}	
We define the \emph{tail-length} $\taill{v}$ 
of $v=\plbase{a_{j},\dots,a_{0}}$ to be
the minimal $k\in\N[0]$ such that $a_{i}$ is 
maximal for all $i<k$. If $a_{0}$ is not 
maximal, then $\taill{v}=0$.
\end{definition}

The first fusion rule, beyond the fundamental alcove that we address,   {involves} 
tensoring with the monoidal generator $\tmod(1)$.

\begin{proposition}\label{proposition:times-t1}
Let $v=\plbase{a_{j},\dots,a_{0}}$. We have
\begin{gather*}
\tmod(v-1)\hcirc\tmod(1)
\cong
\tmod(v)\oplus
{%\textstyle
	\bigoplus_{i=0}^{\taill{v}}}\,\tmod(v-2\ppar^{(i)})^{\oplus x_{i}}
% \oplus
% \tmod(v-2\ppar^{(k)})^{\oplus x_{k}}
,\quad
x_{i}=
\begin{cases}
0&\text{if }a_{i}=0 \text{ or } i=j \text{ and } a_{j}=1,
\\
2&\text{if }a_{i}=1,
\\
1&\text{if }a_{i}>1.
\end{cases}
\end{gather*}
\end{proposition}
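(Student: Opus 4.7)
The plan is to prove this by a character argument. Since tensor products of tiltings are tilting by Remark 2.12 and $\tilt[{\kk,\qpar}]$ is Krull--Schmidt by Theorem 3.8, there is a unique decomposition $\tmod(v-1) \hcirc \tmod(1) \cong \bigoplus_m \tmod(m-1)^{\oplus n_m}$, and the multiplicities $n_m$ are determined by character comparison thanks to the upper-unitriangularity of the tilting-Cartan matrix coming from Proposition 2.4(a). So it suffices to match characters.

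First I would compute the left-hand character. Applying the Weyl rule $\qnum{w}{\vpar}\qnum{2}{\vpar} = \qnum{w+1}{\vpar} + \qnum{w-1}{\vpar}$ (with the convention $\qnum{0}{\vpar}=0$) to Proposition 2.4(a) yields
\begin{gather*}
\ch{\tmod(v-1) \hcirc \tmod(1)}
=
\sum_{w \in \supp[v]} \bigl(\qnum{w+1}{\vpar} + \qnum{w-1}{\vpar}\bigr).
\end{gather*}
The maximum entry of the Weyl multiset $M = \{w \pm 1 : w \in \supp[v]\}$ is $v+1$, occurring uniquely (from $w=v$, sign $+$), so $\tmod(v)$ appears with multiplicity one and we subtract $\ch{\tmod(v)} = \sum_{u \in \supp[v+1]} \qnum{u}{\vpar}$. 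Iterating this top-down identification on the remaining Weyl multiset will produce the remaining summands. The combinatorial heart is therefore to show that the multiset difference
\begin{gather*}
M \setminus \supp[v+1]
=
\bigsqcup_{i=0}^{\taill{v}} x_i \cdot \supp[v - 2\ppar^{(i)}+1]
\end{gather*}
as multisets, which would establish the claimed decomposition because the left side contains no elements larger than $v-1$ (so no further $\tmod(m-1)$ with $m > v - 2\ppar^{(\taill{v})}$ can appear).

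To analyse the right-hand side I would split into cases on the $\ppar\lpar$-adic expansion of $v$. When $a_0 < \lpar-1$ (so $\taill{v}=0$), the expansion of $v+1$ differs from that of $v$ only at the zeroth digit, and the down-admissible sets for $v$ and $v+1$ coincide; a direct check using the formula $v[S] = v - 2\sum_{i\in S} a_i \ppar^{(i)}$ shows that $\supp[v+1]$ matches exactly those pairs $\{v[S]+1\}$ with $0 \notin S$ together with $\{v[S]-1\}$ with $0 \in S$, while the remaining $\{v[S]-1 : 0 \notin S\} \cup \{v[S]+1 : 0 \in S\}$ reassembles as $x_0 \cdot \supp[v-2]$. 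When $a_0 = \lpar-1$, or more generally a tail of maximal digits occurs, carrying shifts the expansion of $v+1$ past the tail, which is precisely what truncates the index range to $i \leq \taill{v}$ and couples the leftover Weyl multiset to the supports $\supp[v-2\ppar^{(i)}+1]$.

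The main obstacle will be verifying the multiplicity formula $x_i$ in each case. The value $x_i = 2$ for a non-leading digit $a_i = 1$ arises because both the ``$+1$'' contribution (from a set $S$ containing $i$) and the ``$-1$'' contribution (from a set $S'$ containing $i$ plus the propagating tail below) feed into the same support $\supp[v-2\ppar^{(i)}+1]$; for $a_i > 1$ the two families of sets produce disjoint supports, yielding only a single copy; for $a_i = 0$ the position $i$ carries no admissible reflection at all, so no summand appears. The exception $i=j$, $a_j = 1$ corresponds to the absence of a higher ancestor to reflect into, so the extra copy is not produced. Handling the interaction with the down-admissible hull structure from Definition 2.10 (particularly when a minimal stretch extends through several maximal digits to reach a non-maximal one) is the delicate bookkeeping step, but once these case computations are complete, the matching of the two multisets is immediate, and the proposition follows.
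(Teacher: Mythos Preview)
Your strategy is correct—both you and the paper argue by characters—but the computational route differs. You work with the \emph{sum} expression $\ch{\tmod(v-1)}=\sum_{w\in\supp[v]}\qnum{w}{\vpar}$ and then match the Weyl multiset $M=\{w\pm 1:w\in\supp[v]\}$ against the supports of the candidate summands, which forces exactly the down-admissible-set bookkeeping you flag as delicate. The paper instead uses the \emph{product} expression $\ch{\tmod(v-1)}=\qnum{\motherr{v}{\infty}}{\vpar}\prod_{a_i\neq 0}\qnum{2}{\vpar^{a_i\ppar^{(i)}}}$ from \eqref{eq:partial-trace-result}, so tensoring with $\tmod(1)$ merely appends a factor $\qnum{2}{\vpar}$, and the entire proof reduces to iterating the identity $\qnum{2}{\vpar^{a\ppar^{(i)}}}\qnum{2}{\vpar^{\ppar^{(i)}}}=\qnum{2}{\vpar^{(a+1)\ppar^{(i)}}}+\qnum{2}{\vpar^{(a-1)\ppar^{(i)}}}$ (with the obvious degenerations at $a=0,1$). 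This handles the generation-drop case uniformly in a few lines and never mentions down-admissible sets. Your approach buys a more hands-on picture of which Weyl factors go where; the paper's buys brevity. One correction to your sketch: when $a_0=0$ the down-admissible sets for $v$ and $v+1$ do \emph{not} coincide (the latter gains all sets containing $0$), though your conclusion $M=\supp[v+1]$ remains valid; and you should account for the vanishing contribution $\qnum{0}{\vpar}=0$ when $1\in\supp[v]$.
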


The proof is {given by} a character argument and appears below. To warm up, we  {first} comment on
the qualitative differences between the cases {found} in
\fullref{proposition:times-t1}. For the sake of exposition, we consider $\lpar\geq 4$. Focusing on the
zeroth digit, we see:
\begin{gather*}
\tmod(v-1)\hcirc\tmod(1)
\cong
\begin{cases}
\tmod(v)&\text{if }a_{0}=0,
\\
\tmod(v)\oplus\tmod(v-2)^{\oplus 2}&\text{if }a_{0}=1,
\\
\tmod(v)\oplus\tmod(v-2)&\text{if }a_{0}\in\{2,\dots,\lpar-2\},
\\
\text{generation drop case}&\text{if }a_{0}=\lpar-1.
\end{cases}
\end{gather*}

{The \emph{generation drop case} occurs when the zeroth digit is maximal, in which case additional direct summands may appear.}
Recall that $\tmod(v-1)$ has
$2^{\generation[v]}$ Weyl factors. Under tensoring with $\tmod(1)\cong
\wmod(1)$, most of them produce two new Weyl factors by \eqref{eq:wmod-rule}. In
total, $\tmod(v-1)\hcirc\tmod(1)$ will have $2^{\generation[v]+1}$ or
$2^{\generation[v]+1}-1$ Weyl factors. Observe that we are guaranteed to find a
direct summand $\tmod(v)$ in $\tmod(v-1)\hcirc\tmod(1)$. Now we have three cases
depending on whether the generation increases, stays constant or drops, which
are precisely the {respective} cases $a_{0}=0$, $a_{0}\in\{1,\dots,\lpar-2\}$ {and}
$a_{0}=\lpar-1$,  as above. In the first case, $\tmod(v)$ exhausts all newly generated
Weyl factors, so it is the only summand that appears.  In the second case, it
exhausts roughly {half of all} Weyl factors, so one expects {a further summand to appear}.  In
the generation drop case, we have $\generation[v{+}1]=\generation[v]-\taill{v}$.  Hence $\tmod(v)$ only
accounts for a small proportion of the Weyl modules, and we expect several other
tilting summands {to appear.}

\begin{proof}[Proof of \fullref{proposition:times-t1}.]
This is a neat and direct application of \eqref{eq:partial-trace-result},
as we now explain.
First we observe that a character argument
is enough to verify the formulas.
We will use 
\begin{gather}\label{eq:character-use}
\ch{\tmod(w-1)}=\qnum{\motherr{v}{\infty}}{\vpar}{%\textstyle
	\prod_{b_{i}\neq 0}}\,\qnum{2}{\vpar^{b_{i}\ppar^{(i)}}}
,\quad\ch{\tmod(v-1)\hcirc\tmod(1)}
=\qnum{\motherr{v}{\infty}}{\vpar}{%\textstyle
	\prod_{a_{i}\neq 0}}\,\qnum{2}{\vpar^{a_{i}\ppar^{(i)}}}\cdot\qnum{2}{\vpar}
\end{gather}
for these character computations, where $w=\plbase{b_{j+1},\dots,b_{0}}$ 
are the relevant $\ppar\lpar$-adic expansions for the summands that appear. 
Hereby, $b_i=a_i$ whenever $\taill{v}<i\geqslant j$ and $b_{j+1}\neq 0$ if and only if $\taill{v}=j+1$.

The cases where we do not have generation drops are almost immediate (and have
already been discussed above): The first case where $a_{0}=0$ follows by
observing that the corresponding slot for $\qnum{2}{\vpar^{a_{0}\ppar^{(0)}}}=\qnum{2}{\vpar}$
is not occupied in the expression for $\ch{\tmod(v-1)}$ in
\eqref{eq:character-use}, so we end up with an indecomposable tilting module. In
the second case, the slot is occupied by a $\qnum{2}{\vpar^{\ppar^{(0)}}}$, and
we calculate that $\qnum{2}{\vpar^{\ppar^{(0)}}}\qnum{2}{\vpar}
=\qnum{2}{\vpar^{2\ppar^{(0)}}}+2$, which splits the character into two, one
appearing twice. 
The final case without generation drops, where the slot is now
given by $\qnum{2}{a_{0}\vpar^{\ppar^{(0)}}}$ for $a_{0}>1$, can then be proven
by calculating $\qnum{2}{\vpar^{a_{0}\ppar^{(0)}}}\qnum{2}{\vpar}
=\qnum{2}{\vpar^{(a_{0}+1)\ppar^{(0)}}}
+\qnum{2}{\vpar^{(a_{0}-1)\ppar^{(0)}}}$.

Let $\taill{v}=k$.
The generation drop case follows using the relations
\begin{align*}
\qnum{2}{\vpar^{(\plpar-1)\ppar^{(i)}}}\qnum{2}{\vpar^{\ppar^{(i)}}}
&=\qnum{2}{\vpar^{\ppar^{(i+1)}}}+\qnum{2}{\vpar^{(\plpar-2)\ppar^{(i)}}},
\\
\qnum{2}{\vpar^{0\ppar^{(k)}}}\qnum{2}{\vpar^{\ppar^{(k)}}}
&=
2\qnum{2}{\vpar^{\ppar^{(k)}}},\\
\qnum{2}{\vpar^{1\ppar^{(k)}}}\qnum{2}{\vpar^{\ppar^{(k)}}}
&=
\qnum{2}{\vpar^{2\ppar^{(k)}}}+2,\\
\qnum{2}{\vpar^{a_{k}\ppar^{(k)}}}\qnum{2}{\vpar^{\ppar^{(k)}}}
&=
\qnum{2}{\vpar^{(a_{k}+1)\ppar^{(k)}}}+\qnum{2}{\vpar^{(a_{k}-1)\ppar^{(k)}}} 
\text{ if }a_{k}\notin\{0,1\}.
\end{align*}
These equations imply that
\begin{gather*}
\qnum{2}{\vpar^{a_{k}\ppar^{(k)}}}
{%\textstyle
	\prod_{i=0}^{k-1}}\,
\qnum{2}{\vpar^{(\plpar-1)\ppar^{(i)}}}
\cdot\qnum{2}{\vpar}
=
\\
\qnum{2}{\vpar^{(a_{k}+1)\ppar^{(k)}}}
+
\qnum{2}{\vpar^{a_{k}\ppar^{(k)}}}
{%\textstyle
	\sum_{i=0}^{k-1}}\,
\big({%\textstyle
	\prod_{j\neq i, 0\leq j\leq k-1}}\,
\qnum{2}{\vpar^{(\plpar-1)\ppar^{(j)}}}\big)
\qnum{2}{\vpar^{(\plpar-2)\ppar^{(i)}}}
+
\qnum{2}{\vpar^{(a_{k}-1)\ppar^{(k)}}},
\end{gather*}
for $a_{k}\notin\{0,1\}$. Now we collect terms, which gives the claimed 
summands in order using 
\eqref{eq:character-use}. The cases $a_{k}\in\{0,1\}$ follow 
by replacing the term $\qnum{2}{\vpar^{(a_{k}-1)\ppar^{(k)}}}$ 
by either $0$ or $2$. 
Observe that in the case $\plpar=2$ we obtain the claimed doubling of summands since $\qnum{2}{\vpar^{(\plpar-2)\ppar^{(i)}}}=2$.
\end{proof}

\begin{example}\label{example:fusion-t1}
Let us give two examples, one is {\losp}.
\begin{enumerate}

\item For $v=\pbase{4,1,6,6,6,10}{7,11}$ and $\mchar=(7,11)$, we have
\begin{align*}
\tmod(v-1)\hcirc\tmod(1)
\cong&\
\tmod\big(\pbase{4,1,6,6,6,10}{7,11}\big)
\oplus
\tmod\big(\pbase{4,1,6,6,6,8}{7,11}\big)
\oplus
\tmod\big(\pbase{4,1,6,6,4,10}{7,11}\big)
\\
&\oplus
\tmod\big(\pbase{4,1,6,4,6,10}{7,11}\big)
\oplus
\tmod\big(\pbase{4,1,4,6,6,10}{7,11}\big)
\oplus
\tmod\big(\pbase{4,-1,6,6,10}{7,11}\big)^{\oplus 2}.
\end{align*}

\item In the case $\mchar=(2,2)$ and $v=\pbase{1,1,1,1}{2,2}$, one gets
\begin{gather*}
\scalebox{0.97}{$\tmod(v-1)\hcirc\tmod(1)
\cong
\tmod\big(\pbase{1,1,1,1}{2,2}\big)
\oplus
\tmod\big(\pbase{1,1,1,-1}{2,2}\big)^{\oplus 2}
\oplus
\tmod\big(\pbase{1,1,-1,1}{2,2}\big)^{\oplus 2}
\oplus
\tmod\big(\pbase{1,-1,1,1}{2,2}\big)^{\oplus 2}.$}
\end{gather*}

\end{enumerate}
\end{example}

\fullref{proposition:times-t1} immediately implies the following 
appearance of {\losp}.

\begin{proposition}
Let $d\in\N$.
If $\mchar=(2,2)$, then $\munit$ is never a direct summand of 
$\tmod(1)^{\hcirc 2d}$, whereas if $\mchar=(3,3)$, then $\munit$ 
appears exactly once.
\end{proposition}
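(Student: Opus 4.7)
The plan is to convert the question about the multiplicity of $\munit=\tmod(0)$ as a direct summand of $\tmod(1)^{\hcirc 2d}$ into a rank computation for a diagrammatic bilinear pairing, and then to exploit the observation that $-\qnum{2}{1}$ reduces to $0$ in $\F[2]$ and to $1$ in $\F[3]$.

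First, I would establish the general principle that in a Krull--Schmidt category in which $\End(\munit)=\kk$ is a field, the multiplicity of $\munit$ as a direct summand of an object $M$ is precisely the rank of the composition pairing
\[
\Hom(M,\munit)\otimes_{\kk}\Hom(\munit,M)\to\End(\munit)=\kk.
\]
The key input is that whenever $M=\munit^{\oplus m}\oplus N$ with $N$ containing no $\munit$-summand, the induced map on $\Hom(N,\munit)\otimes\Hom(\munit,N)$ is zero: any composition $\munit\to N\to\munit$ is a scalar multiple of $\idmor_{\munit}$, and a non-zero such scalar would split $\munit$ off of $N$, contradicting the assumption. The $\munit^{\oplus m}$-contribution to the pairing is the standard rank-$m$ form, so the total rank equals $m$.

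Next, I would apply this to $M=\tmod(1)^{\hcirc 2d}$. By \fullref{proposition:TLtilt}, identifying $\munit$ with the TL object $0$, the space $\Hom(\munit,M)$ has basis the $C_{d}$ cup diagrams (non-crossing pair partitions of $2d$ points), while symmetrically $\Hom(M,\munit)$ is spanned by the $C_{d}$ cap diagrams. For any such cup $C$ and cap $D$, the composite $D\circ C$ is a closed TL diagram evaluating to $(-\qnum{2}{\qpar})^{k}$, where $k\geq 1$ is the number of closed loops formed; since $d\geq 1$ the combined diagram is non-empty and closed, so $k\geq 1$.

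Finally, I would specialize. In both mixed characteristics we have $\qpar=1$, so $-\qnum{2}{\qpar}=-2$. In $\F[2]$ this equals $0$, so every entry of the pairing's Gram matrix vanishes and the rank, hence the multiplicity, is $0$. In $\F[3]$ this equals $1$, so each entry equals $1$, making the $C_{d}\times C_{d}$ Gram matrix the all-ones matrix, which has rank exactly $1$. The only genuinely delicate step is the first one---setting up the multiplicity-rank equivalence cleanly and ruling out spurious contributions from morphisms through indecomposables other than $\munit$---after which the result follows from an essentially trivial closed-diagram evaluation modulo $\ppar$.
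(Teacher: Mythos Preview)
Your argument is correct and takes a genuinely different route from the paper. The paper proves the case $\mchar=(3,3)$ by induction on $d$ using the fusion rule of \fullref{proposition:times-t1}: once one knows $\tmod(1)^{\hcirc 2}\cong\munit\oplus\tmod(2)$, the fusion rule for $\tmod(v-1)\hcirc\tmod(1)$ with $v>2$ never produces a summand of highest weight below $2$, so no further copies of $\munit$ can arise (and the $(2,2)$ case is analogous). Your approach bypasses the fusion machinery entirely, reducing the multiplicity to the rank of the cap--cup Gram matrix and reading that rank off from the reduction of the circle value modulo $\ppar$. This is more elementary and self-contained for this specific statement; the paper's argument, by contrast, exhibits the result as an immediate byproduct of structural information already developed for broader purposes.

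One point to tighten: the assertion that ``in both mixed characteristics we have $\qpar=1$'' is not literally correct, since $\mchar=(3,3)$ also allows $\qpar=-1$, in which case the circle value is $-\qnum{2}{-1}=2\equiv-1$ in $\F[3]$ and the Gram matrix is no longer all-ones. The quickest fix is to note that the multiplicity of $\munit$ as a direct summand of $\tmod(1)^{\hcirc 2d}$ is determined by characters and hence depends only on $\mchar$, so one may take $\qpar=1$ without loss of generality; alternatively, one can argue directly that the sign matrix $((-1)^{k(D,C)})_{D,C}$ still has rank one.
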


Note the contrast to the semisimple situation, where the multiplicities of
$\munit$ in tensor products $\tmod(1)^{\hcirc 2d}$ are given by the Catalan
numbers, which grow exponentially.

\begin{proof}
Let us prove the (harder) case $\mchar=(3,3)$
by induction on $d$. The case $d=1$ is 
just $\tmod(1)^{\hcirc 2}\cong\munit\oplus\tmod(2)$.
For $d>1$, we observe that $\tmod(v)\hcirc\tmod(1)$ 
for $v>1$ will never contain a summand below $2$ by \fullref{proposition:times-t1}. 
Hence, we are done 
since the summand with the second lowest highest weight in 
$\tmod(1)^{\hcirc(d-1)}$ is $\tmod(2)$, by induction.
\end{proof}

\begin{remark}
For any tilting module $\tmod$ the class $[\tmod]$ acts on the Grothendieck ring
of $\tilt[{\kk,\qpar}]$ by multiplication and thus gives rise to an
$\N[0]\times\N[0]$-matrix with entries in $\N[0]$ when
evaluated on the basis of classes of indecomposable tilting modules. The fusion
graph is the (multi)graph with this adjacency matrix.

The fusion graph for $\tmod(1)$ has periodic and fractal structures. 
First, the number of edges going to lower highest weight summands repeat $l$-periodically in the order $\{0,2,1,\dots,1\}$.
This is the first generation of edges.
Secondly, the edges also become longer and repeat with bigger 
periodicity, depending on $\ppar$.
One can illustrate this fusion graph 
in many ways in order to highlight its fractal structure.
In \fullref{figure:fusion-graph}, we illustrate the fusion graph 
and a Bratteli diagram 
(which is strictly speaking 
not a fusion graph as vertices appear multiple times). 
In both of these cases, the generation drop becomes a \emph{long edge}.
\end{remark}

\begin{figure}[htp]
\begin{gather*}
\resizebox*{!}{\dimexpr\textheight-6.75\baselineskip\relax}{
\begin{tikzpicture}[anchorbase,tinynodes,scale=1]
\pgfmathsetmacro{\fixedprime}{3}
\pgfmathsetmacro{\fixednumber}{25}
%%long edge added stupidly by hand
\draw[thick] (-25/2,-25) to[out=315,in=180] (-25/2+1.5,-25-0.5)
to[out=0,in=180] (-25/2+17/2-1.5,-25-0.5) to[out=0,in=135](-25/2+17/2,-25-1);
\foreach \n in {0,...,\fixednumber} {
\pgfmathsetmacro{\upperbound}{floor(\n/2)}
\foreach \kk in {0,...,\upperbound} {
\pgfmathparse{\n-\kk-\kk-1 > -1 ? 1 : 0}
\ifnum\pgfmathresult=1
\pgfmathparse{Mod(\n-\kk-\kk+1,\fixedprime) == 0}
\ifnum\pgfmathresult=1
\draw[mygray,dotted,thick] (\kk-\n/2,-\n) to (\kk-\n/2+1/2,-\n-1);
\else
\pgfmathparse{Mod(\n-\kk-\kk+1,\fixedprime) == 1}
\ifnum\pgfmathresult=1
\draw[spinach,double] (\kk-\n/2,-\n) to (\kk-\n/2+1/2,-\n-1);
\else
\draw[thick] (\kk-\n/2,-\n) to (\kk-\n/2+1/2,-\n-1);
\fi	
\fi
\fi
\pgfmathsetmacro\jump{(\fixedprime+\fixedprime-1)/2}
\pgfmathparse{\n-\kk-\kk-(\fixedprime+\fixedprime)+1 > -1 ? 1 : 0}
\ifnum\pgfmathresult=1
\pgfmathparse{Mod(\n-\kk-\kk+1,\fixedprime) == (\fixedprime-1)}
\ifnum\pgfmathresult=1
\pgfmathparse{Mod((\n-\kk-\kk+1-(\fixedprime-1))/\fixedprime,\fixedprime) == 0}
\ifnum\pgfmathresult=1
\draw[mygray,dotted,thick] (\kk-\n/2,-\n) to[out=315,in=135] (\kk-\n/2+\jump,-\n-1);	
\fi
\pgfmathparse{Mod((\n-\kk-\kk+1-(\fixedprime-1))/\fixedprime,\fixedprime) == 1}
\ifnum\pgfmathresult=1
\draw[spinach,double] (\kk-\n/2,-\n) to[out=315,in=135] (\kk-\n/2+\jump,-\n-1);	
\fi
\pgfmathparse{Mod((\n-\kk-\kk+1-(\fixedprime-1))/\fixedprime,\fixedprime) > 1 ? 1 : 0}
\ifnum\pgfmathresult=1
\draw[thick] (\kk-\n/2,-\n) to[out=315,in=135] (\kk-\n/2+\jump,-\n-1);	
\fi
\fi
\fi
\draw[thick] (\kk-\n/2,-\n) to (\kk-\n/2-1/2,-\n-1);
\draw[white,fill=white] (\kk-\n/2,-\n) circle (0.26cm);
\pgfmathsetmacro\result{\n-\kk-\kk}
\node[] at (\kk-\n/2,-\n) {$\tmod(\pgfmathprintnumber{\result})$};
}}
\node at (-10,-10) {\includegraphics[width=0.49\textwidth]{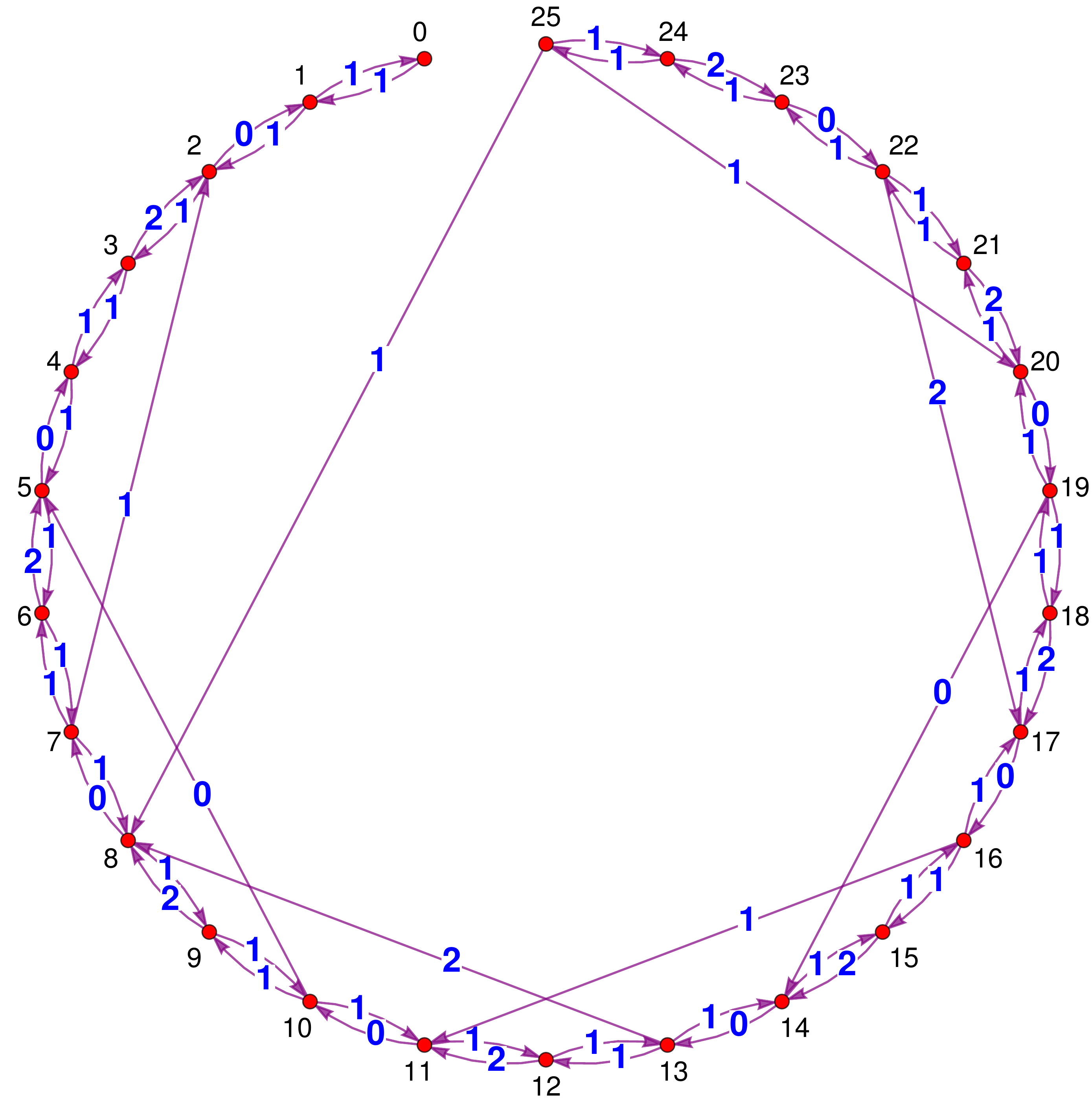}};
\end{tikzpicture}
}
\end{gather*}
\caption{	
\emph{Top left picture:} The full subgraph of the fusion graph for $\tmod(1)$
in characteristic $3$ containing the first $26$ vertices. The vertex labels are
$\tmod(v)$, the edge labels indicate multiplicities. 
\emph{Bottom right picture:} The first $26$
rows of the Bratteli diagram in characteristic $3$. The solid and double edges
indicate multiplicities $1$ and $2$, respectively; the dotted edges indicate
multiplicity $0$. (In both cases, the edges labeled $0$ are only to highlight the
periodicity.) Note that the last $2.5$ rows in the Bratteli diagram are another
illustration of the graph on the top left.
}
\label{figure:fusion-graph}
\end{figure}

The fusion rules for eves are as follows. These, together with \fullref{lemma:clebsch-gordan}, give a complete collection of fusion rules for simple tilting modules.

\begin{proposition}\label{proposition:eve-times-eve} 
Let $a,b\in\N$,
$k,t\in\N[0]$, $1\leq a,b<\plpar$, and $a\ppar^{(k)}\geq b\ppar^{(t)}$. 
Let us write $x^{\prime}$ 
to denote $x$, if $x$ is odd, and $x-1$, if $x$ is even.
\begin{enumerate}

\item For $k>t$ we have
\begin{gather}\label{eq:fusion-eve-eve}
\begin{gathered}
\tmod(a\ppar^{(k)}-1)\hcirc\tmod(b\ppar^{(t)}-1)\cong
\\
{%\textstyle
\bigoplus_{i_{t}=0}^{\frac{b^{\prime}-1}{2}}
\bigoplus_{i_{t{-}1}=0}^{\frac{\ppar^{\prime}-1}{2}}
\dots
\bigoplus_{i_{0}=0}^{\frac{\lpar^{\prime}-1}{2}}}\,
\tmod\big(\plbase{a,0,\dots,0,b-1-2i_{t}
,\ppar-1-2i_{t-1},\dots,\lpar-1-2i_{0}} -1\big),
\end{gathered}
\end{gather}
where the potentially non-zero digits 
appear at positions $k$ and between $t$
and $0$.

\item For $k=t$ we have
\begin{gather*}
\tmod(a\ppar^{(k)}-1)\hcirc\tmod(b\ppar^{(k)}-1)\cong
\begin{cases}
\text{\eqref{eq:eve-fusion-second}}&\text{if }a+b-2<\plpar,
\\
\text{\eqref{eq:eve-fusion-third}}&\text{if }a+b-2\geq\plpar.
\end{cases}
\end{gather*}
\begin{gather}\label{eq:eve-fusion-second}
{%\textstyle
\bigoplus_{x}
\bigoplus_{i_{t{-}1}=0}^{\frac{\ppar^{\prime}-1}{2}}
\dots
\bigoplus_{i_{0}=0}^{\frac{\lpar^{\prime}-1}{2}}}\,
\tmod\big(\plbase{x-1,\ppar-1-2i_{k-1},\dots,\lpar-1-2i_{0}}-1\big),
\end{gather}
where the leftmost sum is running over all $x=a+b-2i\in\big\{|a-b|+2,\dots,a+b\big\}$.
\begin{gather}\label{eq:eve-fusion-third}
\begin{aligned}
&{%\textstyle
\bigoplus_{x}
\bigoplus_{i_{t{-}1}=0}^{\frac{\ppar^{\prime}-1}{2}}
\dots
\bigoplus_{i_{0}=0}^{\frac{\lpar^{\prime}-1}{2}}}\,
\tmod\big(\plbase{x-1,\ppar-1-2i_{k-1},\dots,\lpar-1-2i_{0}}-1\big)
\\
\oplus
&{%\textstyle
\bigoplus_{y}
\bigoplus_{i_{t{-}1}=0}^{\frac{\ppar^{\prime}-1}{2}}
\dots
\bigoplus_{i_{0}=0}^{\frac{\lpar^{\prime}-1}{2}}}\,
\tmod\big(\plbase{1,y-1,\ppar-1-2i_{k-1},\dots,\lpar-1-2i_{0}}-1\big),
\end{aligned}
\end{gather}
where the two leftmost sums are running over
$x=|a-b|+2i\in\big\{|a-b|+2,\dots,2\plpar-a-b\big\}$, respectively 
$y=a+b-\plpar-2i\in\big\{(a+b-\plpar)-(a+b-\plpar)^{\prime}+1
,\dots,a+b-\plpar\big\}$.
\end{enumerate}
\end{proposition}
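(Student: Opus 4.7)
The plan is to reduce everything to character arithmetic by combining Steinberg's tensor product formula \eqref{eq:steinberg} with the classical and non-semisimple Clebsch--Gordan rules from \fullref{lemma:clebsch-gordan}. Since $a\ppar^{(k)}$ and $b\ppar^{(t)}$ are eves, both tilting modules involved are simple, so we may use their Steinberg decompositions
\begin{gather*}
\tmod(a\ppar^{(k)}{-}1) \cong \lmod(a{-}1)^{\ppar^{(k)}} \hcirc \lmod(\ppar{-}1)^{\ppar^{(k-1)}} \hcirc \cdots \hcirc \lmod(\ppar{-}1)^{\ppar^{(1)}} \hcirc \lmod(\lpar{-}1),
\end{gather*}
read off from the $\ppar\lpar$-adic expansion $a\ppar^{(k)} - 1 = \plbase{a{-}1, \ppar{-}1, \ldots, \ppar{-}1, \lpar{-}1}$, and analogously for $\tmod(b\ppar^{(t)}{-}1)$.

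For case (a) with $k > t$, I would reorganize the product $\tmod(a\ppar^{(k)}{-}1) \hcirc \tmod(b\ppar^{(t)}{-}1)$ by Frobenius--Lusztig scale. Only the first factor contributes at the top scale $\ppar^{(k)}$ (yielding $\lmod(a{-}1)^{\ppar^{(k)}}$, which protects the leading digit $a$ in the output) and at the intermediate scales $\ppar^{(i)}$ with $t < i < k$ (yielding the zeros between positions $k$ and $t{+}1$). At scale $\ppar^{(t)}$ the factors combine to $(\lmod(\ppar{-}1) \hcirc \lmod(b{-}1))^{\ppar^{(t)}}$; at scales $0 < i < t$ they combine to $(\lmod(\ppar{-}1) \hcirc \lmod(\ppar{-}1))^{\ppar^{(i)}}$; and at scale $\ppar^{(0)}$ they combine to $\lmod(\lpar{-}1) \hcirc \lmod(\lpar{-}1)$. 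Applying \eqref{eq:qCG} within each overlapping scale --- with effective quantum characteristic $\ppar$ for $i \geq 1$ and $\lpar$ for $i = 0$ --- then produces precisely the ranges parameterized by $i_t, i_{t-1}, \ldots, i_0$ appearing in \eqref{eq:fusion-eve-eve}. That each resulting tensor product of simples and Steinbergs is an indecomposable tilting module of the claimed $\ppar\lpar$-adic highest weight will follow from \fullref{proposition:donkin-etc}(d), since by construction all non-leading non-zero digits lie in $\{1, \ldots, \plpar{-}1\}$.

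Cases (b) and (c) with $k = t$ differ from (a) only at the top scale, where one must additionally decompose $(\lmod(a{-}1) \hcirc \lmod(b{-}1))^{\ppar^{(k)}}$. In case (b), the condition $a + b - 2 < \plpar$ keeps this tensor product inside the fundamental alcove, so the Clebsch--Gordan rule \eqref{eq:clebsch-gordan} yields $\bigoplus_x \lmod(x{-}1)^{\ppar^{(k)}}$ with $x \in \{|a-b|+2, \ldots, a+b\}$ in steps of two, matching the leading digit data in \eqref{eq:eve-fusion-second}. In case (c), $a + b - 2 \geq \plpar$ forces the use of \eqref{eq:qCG} at the top scale, which produces two families: the first summand of \eqref{eq:qCG} contributes the first direct sum of \eqref{eq:eve-fusion-third}, while the second summand --- whose terms have $\ppar\lpar$-adic shape $\plbase{1, y{-}1}$ before Frobenius twisting --- introduces a new leading digit $1$ at position $k{+}1$ and contributes the second direct sum.

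The main obstacle will be verifying that the putative summands so produced are genuinely pairwise non-isomorphic indecomposable tilting modules and that no multiplicities beyond those stated appear. For this I would invoke the Weyl factor overlap criterion \fullref{lemma:no-delta-overlap}: distinct digit tuples in the statement yield distinct zeroth digits or otherwise disjoint $\supp$-sets, so $\Hom$-vanishing forces the direct sum decomposition on the nose. As a final sanity check, one can directly compare characters using the compact formula of \fullref{proposition:donkin-etc}(c), reducing the identity to a product of one-variable quantum-number manipulations at each Frobenius scale via the change-of-variables trick, and confirming that the total character matches $\qnum{a\ppar^{(k)}}{\vpar} \cdot \qnum{b\ppar^{(t)}}{\vpar}$.
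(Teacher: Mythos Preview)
Your approach --- Steinberg factorization, scale-wise Clebsch--Gordan, reassembly --- is a legitimate route and runs parallel to the paper's proof, which stays entirely at the character level: there one writes $\qnum{b\ppar^{(t)}}{\vpar}$ as the telescoping product $\qnum{b}{\vpar^{\ppar^{(t)}}}\qnum{\ppar}{\vpar^{\ppar^{(t-1)}}}\cdots\qnum{\lpar}{\vpar}$, expands each factor as $\sum_{j}\qnum{2}{\vpar^{(\ast-1-2j)\ppar^{(i)}}}$, and matches against the tilting character formula of \fullref{proposition:donkin-etc}(c). Your Steinberg step is the module-level analog of the telescoping product, and your scale-wise qCG plays the role of the $\qnum{2}{}$-expansion.

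There is, however, a real gap in your reassembly. You invoke \fullref{proposition:donkin-etc}(d), but that statement concerns the formula \eqref{eq:partial-trace-result} and requires all non-zero digits to equal $1$ --- a hypothesis that fails here, since the target digits range over $\{0,1,\ldots,\plpar-1\}$. The correct tool is Donkin's tensor product formula \fullref{proposition:donkin-etc}(a): the summands produced by \eqref{eq:qCG} at scale $i$ are precisely $\tmod(c_i+\plpar-1)^{\ppar^{(i)}}$ with $c_i$ the relevant digit, and together with the leading factor and the untouched Steinberg modules $\tmod(\plpar-1)^{\ppar^{(i)}}$ at intermediate scales these are \emph{exactly} the tensor factors appearing in \eqref{eq:donkin}. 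Once you cite (a), indecomposability and the highest weight follow at once, the appeal to \fullref{lemma:no-delta-overlap} becomes unnecessary (the direct sum structure is inherited from the scale-wise decompositions), and your ``final sanity check'' via (c) --- which is essentially the paper's own proof --- becomes redundant.
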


Note that the case $k=t$ 
is a variation of \eqref{eq:fusion-eve-eve} 
where the
$k$th and $k+1$th digits for the tilting summands that appear are given by the
first two digits in \eqref{eq:clebsch-gordan} or \eqref{eq:qCG} 
for $\tmod(a-1)\hcirc\tmod(b-1)$ and $\plpar$,
while the other digits are given as in \eqref{eq:fusion-eve-eve}.

\begin{example}\label{example:eve-times-eve}
Here are a few examples of \eqref{eq:fusion-eve-eve}.
\begin{enumerate}

\item The formula is particularly nice if $b=1$, {\ie} one of the tensor factors
is a prime eve, since then only terms with $i_l=0$ appear. Explicitly, for
$\mchar=(7,3)$, the tilting summands are precisely
$\tmod\big(\plbase{a,0,\dots,0,0,y_{t{-}1},\dots,y_{1},y_{0}}-1\big)$ with
$y_{i}\in\{0,2,4,6\}$ for $i>0$ and $y_{0}\in\{0,2\}$. So, {\eg} for $t=3$ and
$k\geq 3$, there are $32$ such summands.

\item For $\ppar=\lpar=2$, we get
$\tmod(2^{k}-1)\hcirc\tmod(2^{t}-1)\cong\tmod(2^{k}+2^{t}-2)$.

\item The fusion rule for
$\tmod\big(\pbase{5,0,0,0}{7,3}-1\big)
\hcirc\tmod\big(\pbase{2,0,0}{7,3}-1\big)$ 
is illustrated in \fullref{figure:fusion-graph2}.

\end{enumerate}

\end{example}

\begin{figure}[ht]
\includegraphics[width=0.49\textwidth]{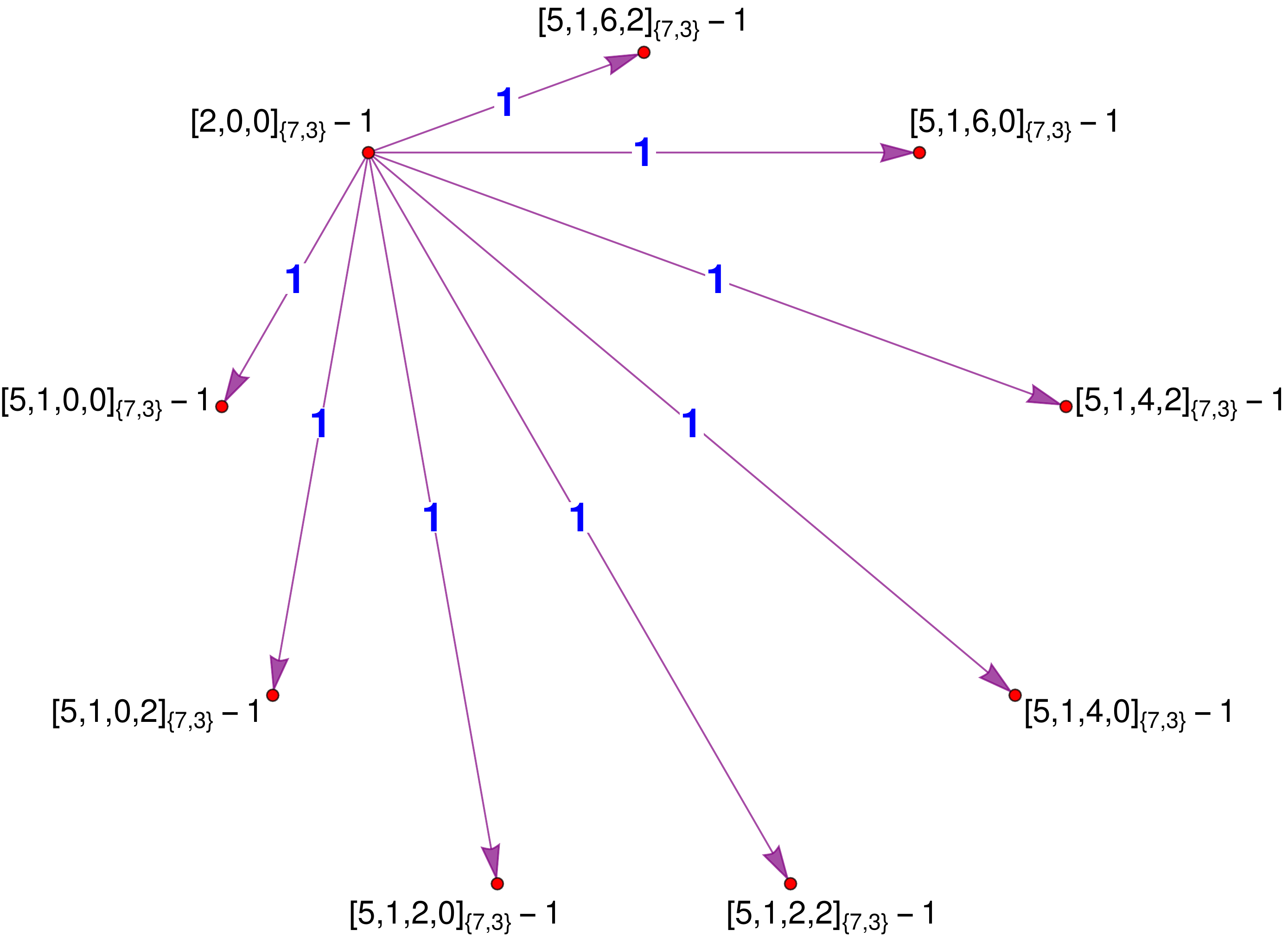}
\caption{An illustration of the fusion rule for
$\tmod\big(\pbase{5,0,0,0}{7,3}-1\big)
\hcirc\tmod\big(\pbase{2,0,0}{7,3}-1\big)$ (a subgraph of the fusion graph as
given in \fullref{figure:fusion-graph} for
$\tmod\big(\pbase{5,0,0,0}{7,3}-1\big)$).}
\label{figure:fusion-graph2}
\end{figure}

\begin{proof}[Proof of \fullref{proposition:eve-times-eve}.]
As before, we use
character computations. We will first 
focus on the case $k>t$, and
then on the case $k=t$.

We decompose
$\qnum{a\ppar^{(k)}}{\vpar}\qnum{b\ppar^{(t)}}{\vpar}$ into a sum of
indecomposable tilting characters, presented as in \eqref{eq:character-use}. 
Note that $\qnum{a\ppar^{(k)}}{\vpar}$ is already one of the factors expected
in the character of every summand, hence we focus on rewriting
$\qnum{b\ppar^{(t)}}{\vpar}$. We claim:
\begin{align*}
%\begin{gathered}
\qnum{b\ppar^{(t)}}{\vpar}
&=
\vpar^{-b\ppar^{(t)}+1}+\vpar^{-b\ppar^{(t)}+3}
+\dots+\vpar^{b\ppar^{(t)}-3}+\vpar^{b\ppar^{(t)}-1}
\\
&=
{%\textstyle
\sum_{i_{t}=0}^{\frac{b^{\prime}-1}{2}}
\sum_{i_{t{-}1}=0}^{\frac{\ppar^{\prime}-1}{2}}
\dots
\sum_{i_{0}=0}^{\frac{\lpar^{\prime}-1}{2}}}\,
\big(
\qnum{2}{\vpar^{(b-1-2i_{t})\ppar^{(t)}}}
\qnum{2}{\vpar^{(\ppar-1-2i_{t-1})\ppar^{(t-1)}}}
\dots
\qnum{2}{\vpar^{(\lpar-1-2i_{0})\ppar^{(0)}}}
\big),
%\end{gathered}
\end{align*}
where we interpret $\qnum{2}{\vpar^{0}}$ as $1$ and not $2$.
(This is due to the same reason why we need to take the product over the 
non-zero digits in \eqref{eq:character-use}.)
In order to verify this, we rewrite the second line as
\begin{align*}
&{%\textstyle
\sum_{i_{t}=0}^{\frac{b^{\prime}-1}{2}}
\sum_{i_{t{-}1}=0}^{\frac{\ppar^{\prime}-1}{2}}
\dots
\sum_{i_{0}=0}^{\frac{\lpar^{\prime}-1}{2}}}\,
\big(
\qnum{2}{\vpar^{(b-1-2i_{t})\ppar^{(t)}}}
\qnum{2}{\vpar^{(\ppar-1-2i_{t-1})\ppar^{(t-1)}}}
\dots
\qnum{2}{\vpar^{(\lpar-1-2i_{0})\ppar^{(0)}}}
\big)
\\
=&
{%\textstyle
\sum_{i_{t}=0}^{\frac{b^{\prime}-1}{2}}
\qnum{2}{\vpar^{(b-1-2i_{t})\ppar^{(t)}}}
\sum_{i_{t{-}1}=0}^{\frac{\ppar^{\prime}-1}{2}}
\qnum{2}{\vpar^{(\ppar-1-2i_{t-1})\ppar^{(t-1)}}}
\dots
\sum_{i_{0}=0}^{\frac{\lpar^{\prime}-1}{2}}
\qnum{2}{\vpar^{(\lpar-1-2i_{0})\ppar^{(0)}}}}\,
\\
=&
\qnum{b}{\vpar^{\ppar^{(t)}}}
\qnum{\ppar}{\vpar^{\ppar^{(t-1)}}}
\dots
\qnum{\lpar}{\vpar^{\ppar^{(0)}}}\\
=&
\qnum{b\ppar^{(t)}}{\vpar}
.
\end{align*}
The last equation is a telescoping product when writing
$\qnum{n}{\xpar}=\tfrac{\xpar^{n}-\xpar^{-n}}{\xpar-\xpar^{-1}}$. 

For $k=t$, the same type of argument leaves us with rewriting
$\qnum{a\ppar^{(k)}}{\vpar}\qnum{b}{\vpar^{\ppar^{(k)}}}$. 
To this end, we will use the quantum number calculations underlying 
\fullref{lemma:clebsch-gordan}.
Let us assume that we are in the case of \eqref{eq:eve-fusion-second}, where we calculate
\begin{gather*}
\qnum{a\ppar^{(k)}}{\vpar}\qnum{b}{\vpar^{\ppar^{(k)}}}
=
\qnum{a}{\vpar^{\ppar^{(k)}}}\qnum{b}{\vpar^{\ppar^{(k)}}}
\qnum{\ppar}{\vpar^{\ppar^{(k-1)}}}
\dots
\qnum{\lpar}{\vpar^{\ppar^{(0)}}}
={%\textstyle
	\sum_{x}}\,\qnum{x}{\vpar^{\ppar^{(k)}}}
\qnum{\ppar}{\vpar^{\ppar^{(k-1)}}}
\dots
\qnum{\lpar}{\vpar^{\ppar^{(0)}}}
={%\textstyle
	\sum_{x}}\,\qnum{x\ppar^{(k)}}{\vpar},
\end{gather*}
where $x$ runs over the same set as in \eqref{eq:eve-fusion-second}, 
and the second equality is \eqref{eq:clebsch-gordan} for 
$\vpar$ substituted by $\vpar^{\ppar^{(k)}}$. Note that %the 
$\qnum{x\ppar^{(k)}}{\vpar}$ are %the 
terms of the form
$\qnum{\motherr{v}{\infty}}{\vpar}$ in \eqref{eq:character-use}, 
so we are done in this case. The case 
of \eqref{eq:eve-fusion-second} uses the same argument with 
\eqref{eq:qCG}
instead of \eqref{eq:clebsch-gordan}.
\end{proof}

\begin{remark}
Questions about the structure constants of the representation ring have 
been studied for the
finite group $\SLtwo(\mathbb{F}_{\ppar^{k}})$ for a long time. 
(The connection to our setup is
to embed $\SLtwo(\mathbb{F}_{\ppar^{k}})$ into
$\SLtwo(\overline{\mathbb{F}}_{\ppar^{k}})$ via 
fixed points under the Frobenius
twist.) For example, \fullref{lemma:clebsch-gordan} and \cite[Lemma
5]{ErHe-ringel-schur} is used in \cite[Section 3]{Cr-tensor-simple-modules} to
find the finite group analog of fusion rules.

Actually, a bit more is true and worthwhile to point out: Recall that for a
finite group a module $\tmod$ is \emph{algebraic} if there exists $f\in\N[0][X]$
such that $f\big([\tmod]\big)=0$ holds in the representation ring; otherwise
modules are called \emph{transcendental}, {\cf}
\cite{Al-modules-fractional-groups}. As shown in \cite[Section
3]{Cr-tensor-simple-modules} the tilting module $\tmod(1)$ considered as a
$\SLtwo(\mathbb{F}_{\ppar^{k}})$-module is algebraic, and so are eves. Being
algebraic is a measure of how complex fusion rules get. 
This was a motivation to stick to the
fusion rules presented above. In fact, this is a rare
property for modules of groups of Lie type as \cite[Theorem
A]{Cr-tensor-simple-modules} shows and most of the time even the
vector representation is transcendental, {\eg} 
for all $\mathrm{SL}_{a}$ for $a>2$.
\end{remark}

\subsection{Categorified fusion rules for tensoring with the vector
representation}\label{subsection:fusion-mor} 

The fusion rule from \fullref{proposition:times-t1} describes the multiplicities
of indecomposable tilting modules in the tensor product
$\tmod(v-1)\hcirc\tmod(1)$. In this section, we consider the refined problem of
describing the morphisms that project onto such summands using the
Temperley--Lieb calculus. Specifically, in \fullref{theorem:timest1} we will
decompose the idempotent $\pjw[v{-}1]\hcirc\idtl[1]$ into a sum of orthogonal,
primitive idempotents factoring through $\pjw[v]$ as well as the other $\pjw[v{-}2\ppar^{(i)}]$ predicted by \fullref{proposition:times-t1}.
Conversely, such a decomposition can also be read as a recursive description of
the mixed JW projector $\pjw[v]$ in terms of mixed JW projectors of lower order.

For the following definition, we use scalars determined by evaluating the functions
$\funcg_{\qpar}$ and $\funcf_{\qpar}$ on digits. On all digits, except for the zeroth
one, we use \eqref{eq:f-g}. For the zeroth digit, we instead use:
\begin{gather}\label{eq:quantum-fg}
\funcf_{\xpar}(a)=
\begin{cases} 
(-1)^{a+1}\cdot \tfrac{-2}{\qnum{a}{\xpar}}
&\text{if } 1\leq a\leq\lpar-2,
\\
0 
&\text{if }a=0\text{ or }a=\lpar-1,
\end{cases}
\quad
\funcg_{\xpar}(a)=
\begin{cases}
-\tfrac{\qnum{a+1}{\xpar}}{\qnum{a}{\xpar}}
&\text{if }1\leq a\leq\lpar-1,
\\
-\qnum{2}{\xpar} &\text{if }a=0.
\end{cases}
\end{gather}

Armed with this notation, we now define the morphisms that will feature in the
decomposition of $\pjw[v{-}1]\hcirc\idtl[1]$ into orthogonal, primitive
idempotents. 

\begin{definition}\label{definition:fusionidempotents} Let
$v=\plbase{a_{j},\dots,a_{0}}$. 

\begin{enumerate}

\item If $a_{0}=1$ and $j\neq 0$, then we consider the morphisms
\begin{gather*}
\fusidema{v}{0}=
\begin{tikzpicture}[anchorbase,scale=0.4,tinynodes]
\draw[pJW] (-0.5,-1.5) rectangle (3.2,-2.5);
\node at (1.35,-2.2) {$\pjwm[v{-}1]$};
\draw[pJW] (-0.5,-3.5) rectangle (2.5,-4.5);
\node at (1,-4.2) {$\pjwm[v{-}2]$};
\draw[pJW] (-0.5,-5.5) rectangle (3.2,-6.5);
\node at (1.35,-6.2) {$\pjwm[v{-}1]$};
\draw[usual] (2,-2.5) to[out=270,in=180] (2.5,-2.85) 
to[out=0,in=270] (3,-2.5);
\draw[usual] (2,-3.5) to[out=90,in=180] (2.5,-3.2) 
to (3,-3.2) to [out=0,in=270] (3.5,-1.5);
\draw[usual] (2,-4.5) to (2,-5.5);
\draw[usual] (2.5,-5.5) to[out=90,in=180] (3,-5.15) 
to[out=0,in=90] (3.5,-5.5) to (3.5,-6.5);
\draw[usual] (0.5,-3.5) to (0.5,-2.5);
\draw[usual] (0.5,-4.5) to (0.5,-5.5);
\end{tikzpicture}
+\,
\begin{tikzpicture}[anchorbase,scale=0.4,tinynodes]
\draw[pJW] (-0.5,-1.5) rectangle (3.2,-2.5);
\node at (1.35,-2.2) {$\pjwm[v{-}1]$};
\draw[pJW] (-0.5,-3.5) rectangle (2.5,-4.5);
\node at (1,-4.2) {$\pjwm[v{-}2]$};
\draw[pJW] (-0.5,-5.5) rectangle (3.2,-6.5);
\node at (1.35,-6.2) {$\pjwm[v{-}1]$};
\draw[usual] (2,-2.5) to[out=270,in=180] (2.5,-2.85) 
to[out=0,in=270] (3,-2.5);
\draw[usual] (2,-3.5) to[out=90,in=180] (2.5,-3.2) 
to (3,-3.2) to [out=0,in=270] (3.5,-1.5);
\draw[usual] (2,-4.5) to[out=270,in=180] (2.5,-4.8) 
to (3,-4.8) to [out=0,in=90] (3.5,-6.5);
\draw[usual] (2,-5.5) to[out=90,in=180] (2.5,-5.15) 
to[out=0,in=90] (3,-5.5);
\draw[usual] (0.5,-3.5) to (0.5,-2.5);
\draw[usual] (0.5,-4.5) to (0.5,-5.5);
\end{tikzpicture}
.
\end{gather*}
Here the caps and cups are of thickness $1$ and are thus admissible. If $j=0$, then
we declare $\fusidema{v}{0}=0$ (the diagram makes no sense in this case since
$v=1<2$). We will also consider the morphism $\fusidemb{v}{0}$
reflected along a horizontal line.

\item For $a_{0}>1$ we define
\begin{gather*}
\fusidem{v}{0}=
\tfrac{1}{\funcg_{\qpar}(a_{0}-1)}
\cdot
\begin{tikzpicture}[anchorbase,scale=0.4,tinynodes]
\draw[pJW] (-0.5,-1.5) rectangle (2.5,-2.5);
\node at (1,-2.2) {$\pjwm[v{-}1]$};
\draw[pJW] (-0.5,-4.5) rectangle (2.5,-5.5);
\node at (1,-5.2) {$\pjwm[v{-}1]$};
\draw[pJW] (-0.5,-3.25) rectangle (0.5,-3.75);
\draw[usual] (2,-2.5) to[out=270,in=180] (2.5,-3) to[out=0,in=270] 
(3,-2.5) to (3,-1.5);
\draw[usual] (2,-4.5) to[out=90,in=180] (2.5,-4) to[out=0,in=90] 
(3,-4.5) to (3,-5.5);
\draw[usual] (0,-2.5) to (0,-3.25);
\draw[usual] (0,-3.75) to (0,-4.5);
\end{tikzpicture}  
-
\tfrac{\funcf_{\qpar}(a_{0})}{\funcg_{\qpar}(a_{0}-1)}\cdot
\begin{tikzpicture}[anchorbase,scale=0.4,tinynodes]
\draw[pJW] (-0.5,-1.5) rectangle (2.5,-2.5);
\node at (1,-2.2) {$\pjwm[v{-}1]$};
\draw[pJW] (-0.5,-4.5) rectangle (2.5,-5.5);
\node at (1,-5.2) {$\pjwm[v{-}1]$};
\draw[pJW] (-0.5,-3.25) rectangle (0.5,-3.75);
\draw[usual] (2,-2.5) to[out=270,in=180] (2.5,-3) to[out=0,in=270] 
(3,-2.5) to (3,-1.5);
\draw[usual] (2,-4.5) to[out=90,in=180] (2.5,-4) to[out=0,in=90] 
(3,-4.5) to (3,-5.5);
\draw[usual] (0,-2.5) to (0,-3.25);
\draw[usual] (0,-3.75) to (0,-4.5);
\draw[usual] (0.5,-4.5) to[out=90,in=180] (1,-4) to[out=0,in=90] 
(1.5,-4.5);
\draw[usual] (0.5,-2.5) to[out=270,in=180] (1,-3) to[out=0,in=270] 
(1.5,-2.5);
\node at (1,-3.65) {$S_{0}$};
\end{tikzpicture}\,
,\quad S_{0}=\{0\}.
\end{gather*}
Here the caps and cups are of thickness $a_{0}-1$ and are thus admissible. If
$j=0$, then we use the same formula to define $\fusidem{v}{0}$, except we omit the second summand.
\end{enumerate}

We now look at the generation drop case. 
Suppose that $v$ has a tail of maximal digits, that is %{\ie}
\begin{gather*}
v=\plbase{a_{j},\dots,a_{i},\ppar-1,\dots,\ppar-1,\lpar-1}
\quad\text{with }i\neq 0.
\end{gather*}

\begin{enumerate}[resume]	

\item If $a_{i}=1$ and $i\neq j$, then we define

\begin{gather*}
\fusidema{v}{i}=
\begin{tikzpicture}[anchorbase,scale=0.4,tinynodes]
\draw[pJW] (-0.5,-1.5) rectangle (3.2,-2.5);
\node at (1.35,-2.2) {$\pjwm[v{-}w]$};
\draw[pJW] (-0.5,-3.5) rectangle (2.5,-4.5);
\node at (1,-4.2) {$\pjwm[v{-}2w]$};
\draw[pJW] (-0.5,-5.5) rectangle (3.2,-6.5);
\node at (1.35,-6.2) {$\pjwm[v{-}w]$};
\draw[usual] (2,-2.5) to[out=270,in=180] (2.5,-2.85)node[left,xshift=-1pt,yshift=-2.5pt]{$w$} 
to[out=0,in=270] (3,-2.5);
\draw[usual] (2,-3.5) to[out=90,in=180] (2.5,-3.2) 
to (3,-3.2) to [out=0,in=270] (3.5,-1.5)
node[right,xshift=-2pt,yshift=-2pt]{$w$};
\draw[usual] (2,-4.5) to (2,-5.5);
\draw[usual] (2.5,-5.5) to[out=90,in=180] (3,-5.15) 
to[out=0,in=90] (3.5,-5.5) to (3.5,-6.5)node[right,xshift=-2pt,yshift=1pt]{$w$};
\draw[usual] (0.5,-3.5) to (0.5,-2.5);
\draw[usual] (0.5,-4.5) to (0.5,-5.5);
\end{tikzpicture}
+\,
\begin{tikzpicture}[anchorbase,scale=0.4,tinynodes]
\draw[pJW] (-0.5,-1.5) rectangle (3.2,-2.5);
\node at (1.35,-2.2) {$\pjwm[v{-}w]$};
\draw[pJW] (-0.5,-3.5) rectangle (2.5,-4.5);
\node at (1,-4.2) {$\pjwm[v{-}2w]$};
\draw[pJW] (-0.5,-5.5) rectangle (3.2,-6.5);
\node at (1.35,-6.2) {$\pjwm[v{-}w]$};
\draw[usual] (2,-2.5) to[out=270,in=180] (2.5,-2.85)node[left,xshift=-1pt,yshift=-2.5pt]{$w$} 
to[out=0,in=270] (3,-2.5);
\draw[usual] (2,-3.5) to[out=90,in=180] (2.5,-3.2) 
to (3,-3.2) to [out=0,in=270] (3.5,-1.5)
node[right,xshift=-2pt,yshift=-2pt]{$w$};
\draw[usual] (2,-4.5) to[out=270,in=180] (2.5,-4.8) 
to (3,-4.8) to [out=0,in=90] (3.5,-6.5)
node[right,xshift=-2pt,yshift=1pt]{$w$};
\draw[usual] (2,-5.5) to[out=90,in=180] (2.5,-5.15)node[left,xshift=-1pt]{$w$}
to[out=0,in=90] (3,-5.5);
\draw[usual] (0.5,-3.5) to (0.5,-2.5);
\draw[usual] (0.5,-4.5) to (0.5,-5.5);
\end{tikzpicture}
,\quad 
w=\ppar^{(i)}
.
\end{gather*}
Here the caps and cups have thickness $w=\ppar^{(i)}$, and are thus admissible.
If $a_{i}=1$ and $i=j$, then we declare $\fusidema{v}{i}=0$ (the diagram makes no sense in this case since $v<2w$). We will also consider the
reflected morphism $\fusidemb{v}{i}$ along a horizontal line.

\item For $a_{i}>1$ and $i \neq j$ we consider
\begin{gather*}
\fusidem{v}{i}=
\tfrac{1}{\funcg_{\qpar}(a_{i}-1)}\cdot
\begin{tikzpicture}[anchorbase,scale=0.4,tinynodes]
\draw[pJW] (-0.5,-1.5) rectangle (2.5,-2.5);
\node at (1,-2.2) {$\pjwm[v{-}w]$};
\draw[pJW] (-0.5,-4.5) rectangle (2.5,-5.5);
\node at (1,-5.2) {$\pjwm[v{-}w]$};
\draw[pJW] (-0.5,-3.25) rectangle (0.5,-3.75);
\draw[usual] (2,-2.5) to[out=270,in=180] (2.5,-3) to[out=0,in=270] 
(3,-2.5) to (3,-1.5) node[right,xshift=-2pt,yshift=-2pt]{$w$};
\draw[usual] (2,-4.5) to[out=90,in=180] (2.5,-4) to[out=0,in=90] 
(3,-4.5) to (3,-5.5) node[right,xshift=-2pt,yshift=1pt]{$w$};
\draw[usual] (0,-2.5) to (0,-3.25);
\draw[usual] (0,-3.75) to (0,-4.5);
\end{tikzpicture}
-
\tfrac{\funcf_{\qpar}(a_{i})}{\funcg_{\qpar}(a_{i}-1)}\cdot
\begin{tikzpicture}[anchorbase,scale=0.4,tinynodes]
\draw[pJW] (-0.5,-1.5) rectangle (2.5,-2.5);
\node at (1,-2.2) {$\pjwm[v{-}w]$};
\draw[pJW] (-0.5,-4.5) rectangle (2.5,-5.5);
\node at (1,-5.2) {$\pjwm[v{-}w]$};
\draw[pJW] (-0.5,-3.25) rectangle (0.5,-3.75);
\draw[usual] (2,-2.5) to[out=270,in=180] (2.5,-3) to[out=0,in=270] 
(3,-2.5) to (3,-1.5) node[right,xshift=-2pt,yshift=-2pt]{$w$};
\draw[usual] (2,-4.5) to[out=90,in=180] (2.5,-4) to[out=0,in=90] 
(3,-4.5) to (3,-5.5) node[right,xshift=-2pt,yshift=1pt]{$w$};
\draw[usual] (0,-2.5) to (0,-3.25);
\draw[usual] (0,-3.75) to (0,-4.5);
\draw[usual] (0.5,-4.5) to[out=90,in=180] (1,-4) to[out=0,in=90] 
(1.5,-4.5);
\draw[usual] (0.5,-2.5) to[out=270,in=180] (1,-3) to[out=0,in=270] 
(1.5,-2.5);
\node at (1.3,-3.65) {$S_{(i)}$};
\end{tikzpicture}
,\quad 
w=\ppar^{(i)}.
\end{gather*}
Here the caps and cups are of thickness $a_{i}\ppar^{(i)}$ and are thus admissible.
If $i=j$, then we use the same formula to define $\fusidem{v}{i}$, except we omit the second summand. 

\end{enumerate}
\end{definition}

The \emph{categorified fusion rule} for $\tmod(v-1)\hcirc\tmod(1)$ is now given by the following theorem.

\begin{theorem}\label{theorem:timest1} Let $v=\plbase{a_{j},\dots,a_{0}}$.
\leavevmode

\begin{enumerate}

\item We have the following 
decomposition of $\pjw[v{-}1]\hcirc\idtl[1]$ into a sum of
orthogonal, primitive idempotents.
\begin{gather}\label{eq:timest1-first}
\begin{tikzpicture}[anchorbase,scale=0.25,tinynodes]
\draw[pJW] (-2,-1) rectangle (2,1);
\node at (0,-0.2) {$\pjwm[v{-}1]$};
\draw[usual] (2.5,-1) to (2.5,1);
\end{tikzpicture}
=
\begin{tikzpicture}[anchorbase,scale=0.25,tinynodes]
\draw[pJW] (-2,-1) rectangle (2,1);
\node at (0,-0.2) {$\pjwm[v]$};
\end{tikzpicture}
+
{\textstyle\sum_{i=0}^{\taill{v}}}\,
\fuscaseidem{v}{i}
\quad\text{where } 
\fuscaseidem{v}{i}=
\begin{cases}
0 & \text{if }a_{i}=0,
\\
\fusidema{v}{i}+\fusidemb{v}{i} & \text{if }a_{i}=1,
\\
\fusidem{v}{i} & \text{if }a_{i}>1.
\end{cases}
\end{gather}
For $a_{i}=1$, both the summands $\fusidema{v}{0}$ and $\fusidemb{v}{0}$ are orthogonal, primitive idempotents.

\item Further, we have the following \emph{partial trace rules}. Let $0\leqslant
i\leqslant\taill{v}$, $a_{i}\neq 0$, and $w=\ppar^{(i)}$. For the case $i=j$, we
additionally assume $a_j>1$. Then we have
\begin{gather}\label{eq:ptrstatement}
\begin{tikzpicture}[anchorbase,scale=0.25,tinynodes]
\draw[pJW] (1.5,-1) rectangle (-2.5,1);
\node at (-0.5,-0.1) {$\pjwm[v{-}w]$};
\draw[usual] (1,1) to[out=90,in=180] (1.5,1.5) to[out=0,in=90] 
(2,1) to (2,0) node[right,xshift=-2pt]{$w$} to (2,-1) 
to[out=270,in=0] (1.5,-1.5) to[out=180,in=270] (1,-1);
\node at (0,2.5) {$\phantom{a}$};
\node at (0,-2.5) {$\phantom{a}$};
\end{tikzpicture} 
=
\funcg_{\qpar}(a_{i}-1)\cdot
\begin{tikzpicture}[anchorbase,scale=0.25,tinynodes]
\draw[pJW] (1.5,-1) rectangle (-2.5,1);
\node at (-0.5,-0.1) {$\pjwm[v{-}2w]$};
\node at (0,2.5) {$\phantom{a}$};
\node at (0,-2.5) {$\phantom{a}$};
\end{tikzpicture}
+\funcf_{\qpar}(a_{i}-1)\cdot
\loopdown{i}{v{-}2w}.
\end{gather}
(If $a_{i}=1$ or $i=j$, then the second summand is zero. Even though
$\loopdown{0}{v{-}2w}$ is not defined on its own, this is meaningful in both
cases because $\funcf_{\qpar}(0)=0$ or $v-2w+1$ is an eve.)
\end{enumerate}
\end{theorem}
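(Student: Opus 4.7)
My plan is to establish part~(b) first and then use it as the principal tool in part~(a); the reason is that the idempotents $\fuscaseidem{v}{i}$ of \fullref{definition:fusionidempotents} are built by stacking mixed JW projectors through thin layers of caps and cups, so squaring them or multiplying two different ones inevitably creates closed bundles of $w=\ppar^{(i)}$ strands passing through a mixed JW projector---exactly the partial traces computed in~(b).

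For part~(b), left-alignment (\fullref{proposition:jw-properties}) together with self-duality gives $(\pjw[v-2w]\hcirc\idtl[w])\pjw[v-w]=\pjw[v-w]=\pjw[v-w](\pjw[v-2w]\hcirc\idtl[w])$, so $\pTr_w(\pjw[v-w])$ lies in $\pjw[v-2w]\End(v-2w)\pjw[v-2w]$. By \fullref{theorem:main-tl-section} this endomorphism space is spanned by zigzag basis words, and a case analysis of down-admissibility for $v-2w$ (using the range $0\leq i\leq\taill{v}$) shows that among these only $\pjw[v-2w]$ and the single loop $\loopdown{i}{v-2w}$ can appear with nonzero coefficient. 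To determine the two coefficients I pass to the semisimple lift $\pqjw[v-w]$ over $(\kkv,\vpar)$, expand it via the loop basis of \fullref{definition:pl-jw-q} with coefficients $\lambda_{v-w,S}$, and apply the classical partial trace formula \eqref{eq:0trace} to each term. The ratios of quantum numbers arising here specialize via \fullref{proposition:qlucas} to $\funcg_{\qpar}(a_i-1)$ and $\funcf_{\qpar}(a_i-1)$; when $a_i=1$ the loop term vanishes because $\funcf_{\qpar}(0)=0$, and the hypothesis $a_j>1$ in the case $i=j$ ensures that $v-2w\geq 0$ with the expected digit structure.

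For part~(a), I verify that $\pjw[v]$ and the $\fuscaseidem{v}{i}$ are mutually orthogonal idempotents summing to $\pjw[v-1]\hcirc\idtl[1]$. Idempotency and orthogonality reduce to diagrammatic computations: multiplying two such morphisms creates caps or cups facing a mixed JW projector, which are simplified first by the shortening relation (\fullref{proposition:jw-properties}) and then collapsed to scalars via the partial trace rule~(b). The specific scalars $\funcf_{\qpar}(a_i)/\funcg_{\qpar}(a_i-1)$ appearing in \fullref{definition:fusionidempotents} are engineered precisely so that $(\fuscaseidem{v}{i})^2=\fuscaseidem{v}{i}$ and $\fuscaseidem{v}{i}\cdot\fuscaseidem{v}{i^{\prime}}=0$ for $i\neq i^{\prime}$; when $a_i=1$ the two summands $\fusidema{v}{i}$ and $\fusidemb{v}{i}$ are orthogonal thanks to the asymmetric placement of caps and cups, which forces the relevant cross-compositions to vanish by the containment relation of \fullref{theorem:main-tl-section}. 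That the sum $\pjw[v]+\sum_i\fuscaseidem{v}{i}$ equals $\pjw[v-1]\hcirc\idtl[1]$ then follows from Krull--Schmidt together with the fusion rule \fullref{proposition:times-t1}: the multiplicities of indecomposable tilting summands of $\tmod(v-1)\hcirc\tmod(1)$ are uniquely determined, so any collection of orthogonal primitive idempotents whose images realize exactly these summands must be a complete decomposition of the identity $\pjw[v-1]\hcirc\idtl[1]$.

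The main obstacle is the bookkeeping in the generation drop case $i<\taill{v}$, where the tail digits of $v$ are maximal, the caps and cups in $\fuscaseidem{v}{i}$ carry thick bundles of $\ppar^{(i)}$ parallel strands that must be admissible for $\pjw[v-\ppar^{(i)}]$ and $\pjw[v-2\ppar^{(i)}]$, and several idempotents must be verified orthogonal simultaneously. Here the iterated zigzag relations from \fullref{theorem:main-tl-section} interact delicately with the quantum scalars $\funcf_{\qpar},\funcg_{\qpar}$ and their non-quantum counterparts $\funcf,\funcg$ from \eqref{eq:f-g} (arising through \fullref{proposition:qlucas} when one translates statements about non-zeroth digits into ratios of integers), and the law of small primes ($\ppar=2,3$) introduces several edge cases that need to be checked separately.
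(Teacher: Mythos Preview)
Your plan to establish (b) outright and then deduce (a) differs structurally from the paper, which proves the two parts by \emph{mutual induction} on $v$: the fusion rule $\fus(<v)$ for smaller values is used to derive the partial trace rule $\trp(v)$ (\fullref{lemma:fusiontotrace}), and then $\trp(v)$ yields $\fus(v)$ (\fullref{lemma:tracetofusion}). Your argument for (a) from (b) is essentially the content of \fullref{lemma:tracetofusion} and is sound. The gap is in your direct, non-inductive attack on (b).

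For $i>0$ the partial trace does admit a proof independent of the fusion rule, via \fullref{proposition:more-ptrace} and the zigzag relation, so your outline works there. But for $i=0$ you assert that ``a case analysis of down-admissibility for $v-2$'' forces only $\pjw[v-2]$ and $\loopdown{0}{v-2}$ to appear. That is a claim about which of the $2^{\generation[v-2]}$ basis loops carry nonzero coefficient in a specific endomorphism, and down-admissibility alone cannot decide it. Expanding the lift $\pqjw[v-1]$ in the $\loopdowngen{S}{v-1}$ basis and tracing term-by-term with \eqref{eq:0trace} lands you in the $\loopdowngen{}{}$ basis for $v-2$; to read off the coefficients of $\pqjw[v-2]$ and $\loopdownqgen{0}{v-2}$ (the elements that actually specialize to $\kk$) you need the change-of-basis between the two loop bases, which the paper explicitly says is not known in general beyond generation two (see the proof of \fullref{lemma:circle}). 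The paper sidesteps this entirely by expanding $\pjw[v-1]$ via the fusion rule for $v-1$---and, when $a_0\geq 3$, again via the fusion rule for $v-2a_0+2$ on an interior projector---which is precisely why the induction hypothesis $\fus(<v)$ is indispensable.
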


\begin{example}
Let us consider the two examples 
from \fullref{example:fusion-t1}. For
$v=\pbase{4,1,6,6,6,10}{7,11}$, we have $\taill{v}=4$ and
\begin{gather*}
\pjw[v{-}1]\hcirc\idtl[1]=
\pjw[v]+\fusidem{v}{0}+\fusidem{v}{1}+
\fusidem{v}{2}+\fusidem{v}{3}+\big(\fusidema{v}{4}+\fusidemb{v}{4}\big).
\end{gather*}
For $v=\pbase{1,1,1,1}{2,2}$, we have $\taill{v}=4$ and
\begin{gather*}
\pjw[v{-}1]\hcirc\idtl[1]=\pjw[v]+\big(\fusidema{v}{0}
+\fusidemb{v}{0}\big)+
\big(\fusidema{v}{1}+\fusidemb{v}{1}\big)+\big(\fusidema{v}{2}
+\fusidemb{v}{2}\big),
\end{gather*}
where we note $\fusidema{v}{3}=\fusidemb{v}{3}=0$ since the leading
digit is $a_{3}=1$, and $\fuscaseidem{v}{4}=0$ since $a_{4}=0$. The occurrence of
multiple pairs $\fusidema{v}{i}+\fusidemb{v}{i}$ is an instance of {\losp}. For
$\lpar\neq 2$ and $\ppar\neq 2$ we encounter at most one pair of the form
$\fusidema{v}{i}+ \fusidemb{v}{i}$ since $a_{i}=1$ implies that $\taill{v}\leq i$. 
\end{example}

\begin{remark}
The fusion rule \eqref{eq:jw-recursion} can be used to express classical JW
projectors in terms of JW projectors of lower order. Analogously,
\fullref{theorem:timest1} gives a recursion of $\ppar\lpar$JW projectors in
terms of $\ppar\lpar$JW projectors of lower order. This is in contrast to
the defining description in \eqref{eq:recursion-formula}, which uses
classical JW projectors.
\end{remark}

\begin{remark}
In the complex quantum group case and for $v\leq 2\lpar-2$, the fusion rule
\eqref{eq:timest1-first} can be deduced from \cite[Lemma
3.2]{BlDeReMu-dia-small-qgroup}. The three cases of their rule reflect the
trichotomy of $a_0=0$, $a_0=1$, and $a_0>1$.
\end{remark}

\begin{remark}
We do not know a good partial trace formula of type
\eqref{eq:ptrstatement} in the case $a_{i}=1$, not even for $i=0$ and $w=1$.
(One can write down a formula 
using \eqref{eq:timest1-first}, of course.)
We expect this formula to be more complicated, because it deals with a
generation increase (on comparison with the increased complexity of the fusion
rule when the generation drops).
\end{remark}

\begin{proof}[Proof of \fullref{theorem:timest1}]
The proof proceeds by induction on $v$. To do so, we
split the statement of the theorem into the following two types of assertions.
\begin{enumerate}

\item[$\fus(v)$] denotes: {\it The categorified fusion rules are given by
\eqref{eq:timest1-first} for $v$.}

\item[$\trp(v)$] denotes: {\it The partial trace rules
\eqref{eq:ptrstatement} hold for $v$.}
\end{enumerate}
The former makes sense for all $v\geq 1$ and the latter for $v\geq 2$. 
We will also write $\fus(<v)$ to express the assertion that $\fus(w)$ holds for all $1\leq w <v$, and similarly for $\trp(<v)$.

The base cases for the inductive argument is
$\fus(1)$, which is immediate.

The induction step will be accomplished by two arguments that we separate into two distinct statements below. 
\fullref{lemma:fusiontotrace} shows the implication
$\fus(<v)\implies\trp(v)$ for all $v\geq 2$.
\fullref{lemma:tracetofusion} shows the 
implication $\trp(v)\implies\fus(v)$ for all $v\geq 2$. 
Induction then shows that both assertions 
hold for all relevant values of $v$. 
\end{proof}

We now turn to the two lemmas
that form the heart of the proof of \fullref{theorem:timest1}.

\begin{lemma}\label{lemma:fusiontotrace}
We have $\fus(<v)\implies\trp(v)$ for all $v\geq 2$.
\end{lemma}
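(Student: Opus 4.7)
The plan is to expand $\pjw[v-w]$ using the inductively assumed fusion decomposition $\fus(v-w)$ and then evaluate the partial trace term by term. Since $v-w<v$, the hypothesis gives
\begin{gather*}
\pjw[v-w] = \pjw[v-w-1]\hcirc\idtl[1] - \sum_{j}\fuscaseidem{v-w}{j}.
\end{gather*}
Applying the partial trace $\pTr^{w}$ on the rightmost $w$ strands, the first summand contributes $-\qnum{2}{\qpar}\cdot\pTr^{w-1}(\pjw[v-w-1])$; iterating this descent, using $\fus(u)$ for $u = v-w-1, v-w-2, \ldots$, or more efficiently, using the shortening formula of \fullref{proposition:jw-properties}(c) on the appropriate ancestor of $v-w$, reduces the main contribution to a scalar multiple of $\pjw[v-2w]$ whose coefficient is a product of quantum numbers. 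This product simplifies, via the quantum Lucas' theorem \fullref{proposition:qlucas} and the change-of-variables method illustrated after \fullref{example:support}, precisely to $\funcg_{\qpar}(a_{i}-1)$ as defined in \eqref{eq:quantum-fg} and \eqref{eq:f-g}.

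The partial traces of the correction summands $\fuscaseidem{v-w}{j}$ are computed directly from their diagrammatic presentations in \fullref{definition:fusionidempotents}. The left-alignment of the mixed JW family and the shortening property, both from \fullref{proposition:jw-properties}(a)--(b), let the inner caps and cups be absorbed into the surrounding projectors, reducing each term under $\pTr^{w}$ to scalars times loops on $\pjw[v-2w]$. These loops then expand, via the zigzag relation \fullref{theorem:main-tl-section}\ref{theorem:main-tl-section-6}, in the basis elements $\pjw[v-2w]$ and $\loopdown{i}{v-2w}$ of $\pjw[v-2w]\zigzag\pjw[v-2w]$ spanned by the admissible stretch $\{i\}$, and the $\loopdown{i}{v-2w}$ coefficient consolidates to $\funcf_{\qpar}(a_{i}-1)$.

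The main technical obstacle is the bookkeeping in the generation-drop case $i>0$: here the digits $a_{i-1},\dots,a_{0}$ are maximal, and the correction terms $\fuscaseidem{v-w}{j}$ for $0\leqslant j<i$ split as $\fusidema{v-w}{j}+\fusidemb{v-w}{j}$, contributing asymmetrically to the partial trace via caps and cups of thickness $\ppar^{(j)}$. Tracking the signs from the $(-\qpar)^{\lpar}$ prefactors in \eqref{eq:f-g} and the quantum number identities from \fullref{proposition:qlucas} is what finally produces a clean answer in which only the two basis summands $\pjw[v-2w]$ and $\loopdown{i}{v-2w}$ survive with the claimed coefficients. The side condition $a_{j}>1$ for $i=j$ is precisely what ensures the non-vanishing of the relevant denominators $\funcg_{\qpar}(a_{i}-1)$ appearing in the intermediate steps.
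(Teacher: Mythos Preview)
Your proposal is a plan rather than a proof, and the key technical content is missing in both regimes.

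For $i=0$ (so $w=1$) your opening move matches the paper's: rewrite $\pjw[v{-}1]$ via $\fus(v-1)$ and close off one strand. But the real work is in evaluating the partial trace of $\fusidem{v-1}{0}$ when $a_{0}\geq 3$. There the inner projector is $\pjw[v{-}2a_{0}{+}1]$, and one must expand \emph{that} projector again via $\fus(v-2a_{0}+2)$ and show that all but one of the resulting terms die when sandwiched between the outer $\pjw[v{-}2]$'s. This vanishing is not automatic: it uses \fullref{lemma:no-delta-overlap} (no common Weyl factors between $\tmod(v-2)$ and $\tmod(v-2a_{0}+2)$) and the containment relations from \fullref{theorem:main-tl-section}. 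Your proposal neither identifies this second expansion nor the Weyl-factor overlap arguments; the claim that everything ``simplifies via the quantum Lucas' theorem to $\funcg_{\qpar}(a_{0}-1)$'' is exactly the non-obvious step.

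For $i>0$ your approach diverges from the paper's and runs into trouble. You propose to iterate $\fus(u)$ for $u=v-w,v-w-1,\ldots$, tracing one strand at a time; but $w=\ppar^{(i)}$ can be arbitrarily large and this produces an unbounded cascade of correction terms with no obvious telescoping. Your alternative, the shortening formula \fullref{proposition:jw-properties}(c), does not apply: it requires the target $v-2w$ to be an ancestor of $v-w$, which it is not (both have maximal tails below position $i$). The paper instead observes that $v-w+1=\plbase{a_{j},\dots,a_{i},0,\dots,0}$ and applies \fullref{proposition:more-ptrace}, which was already established from the zigzag relation \emph{before} the induction, to strip $w-1$ strands in one step; the last strand is then removed by shortening. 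So the $i>0$ case is in fact easier and does not use the inductive hypothesis $\fus(<v)$ in any essential way, contrary to what your outline suggests.
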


\begin{proof}
We first consider $i=0$, where we have $w=1$ and 
assume $a_{0}\neq 0$, {\ie} we aim to prove
\begin{gather}\label{eq:ptrclaim}
\begin{tikzpicture}[anchorbase,scale=0.25,tinynodes]
\draw[pJW] (1.5,-1) rectangle (-2.5,1);
\node at (-0.5,-0.1) {$\pjwm[v{-}1]$};
\draw[usual] (1,1) to[out=90,in=180] (1.5,1.5) to[out=0,in=90] 
(2,1) to (2,0) to (2,-1) 
to[out=270,in=0] (1.5,-1.5) to[out=180,in=270] (1,-1);
\node at (0,2.5) {$\phantom{a}$};
\node at (0,-2.5) {$\phantom{a}$};
\end{tikzpicture} 
=
\funcg_{\qpar}(a_{0}-1)\cdot
\begin{tikzpicture}[anchorbase,scale=0.25,tinynodes]
\draw[pJW] (1.5,-1) rectangle (-2.5,1);
\node at (-0.5,-0.1) {$\pjwm[v{-}2]$};
\node at (0,2.5) {$\phantom{a}$};
\node at (0,-2.5) {$\phantom{a}$};
\end{tikzpicture}
+\funcf_{\qpar}(a_{0}-1)\cdot
\loopdown{0}{v{-}2}.
\end{gather}
To verify this, we will use the fusion rule for $v-1$ in reverse to expand the
projector $\pjw[v{-}1]$. If $a_{0}=1$, then we have
$\pjw[v{-}1]=\pjw[v{-}2]\hcirc\idtl[1]$. The claimed statement follows since
the circle value is $-\qnum{2}{\qpar}=\funcg_{\qpar}(0)$ and the second term is zero by definition. If $\lpar=2$, then we are done. Thus, from now on suppose
that $\lpar>2$. 

We consider the case
$a_{0}=2$, where the fusion 
rule involves $\fusidema{v{-}1}{0}+\fusidemb{v{-}1}{0}$:
\begin{gather*}
\begin{tikzpicture}[anchorbase,scale=0.25,tinynodes]
\draw[pJW] (1.5,-1) rectangle (-2.5,1);
\node at (-0.5,-0.1) {$\pjwm[v{-}1]$};
\draw[usual] (1,1) to[out=90,in=180] (1.5,1.5) to[out=0,in=90] 
(2,1) to (2,0) to (2,-1) 
to[out=270,in=0] (1.5,-1.5) to[out=180,in=270] (1,-1);
\node at (0,2.5) {$\phantom{a}$};
\node at (0,-2.5) {$\phantom{a}$};
\end{tikzpicture}
=
\begin{tikzpicture}[anchorbase,scale=0.25,tinynodes]
\draw[pJW] (0.5,-1) rectangle (-2.5,1);
\node at (-1,-0.1) {$\pjwm[v{-}2]$};
\draw[usual] (1,1) to[out=90,in=180] (1.5,1.5) to[out=0,in=90] 
(2,1) to (2,0) to (2,-1) 
to[out=270,in=0] (1.5,-1.5) to[out=180,in=270] (1,-1) to (1,1);
\node at (0,2.5) {$\phantom{a}$};
\node at (0,-2.5) {$\phantom{a}$};
\end{tikzpicture}
-
\left(
\begin{tikzpicture}[anchorbase,scale=0.4,tinynodes]
\draw[pJW] (-0.5,-1.5) rectangle (3.2,-2.5);
\node at (1.35,-2.2) {$\pjwm[v{-}2]$};
\draw[pJW] (-0.5,-3.5) rectangle (2.5,-4.5);
\node at (1,-4.2) {$\pjwm[v{-}3]$};
\draw[pJW] (-0.5,-5.5) rectangle (3.2,-6.5);
\node at (1.35,-6.2) {$\pjwm[v{-}2]$};
\draw[usual] (2,-2.5) to[out=270,in=180] (2.5,-2.85) 
to[out=0,in=270] (3,-2.5);
\draw[usual] (2,-3.5) to[out=90,in=180] (2.5,-3.2) 
to (3,-3.2) to [out=0,in=270] (3.5,-1.5);
\draw[usual] (2,-4.5) to (2,-5.5);
\draw[usual] (2.5,-5.5) to[out=90,in=180] (3,-5.15) 
to[out=0,in=90] (3.5,-5.5) to (3.5,-6.5);
\draw[usual] (0.5,-3.5) to (0.5,-2.5);
\draw[usual] (0.5,-4.5) to (0.5,-5.5);
\draw[usual] (3.5,-6.5) to[out=270,in=180] (4,-7) to[out=0,in=270]
(4.5,-6.5) to (4.5,-1.5) to[out=90,in=0] (4,-1) to[out=180,in=90]
(3.5,-1.5);
\end{tikzpicture}
+\,
\begin{tikzpicture}[anchorbase,scale=0.4,tinynodes]
\draw[pJW] (-0.5,-1.5) rectangle (3.2,-2.5);
\node at (1.35,-2.2) {$\pjwm[v{-}2]$};
\draw[pJW] (-0.5,-3.5) rectangle (2.5,-4.5);
\node at (1,-4.2) {$\pjwm[v{-}3]$};
\draw[pJW] (-0.5,-5.5) rectangle (3.2,-6.5);
\node at (1.35,-6.2) {$\pjwm[v{-}2]$};
\draw[usual] (2,-2.5) to[out=270,in=180] (2.5,-2.85) 
to[out=0,in=270] (3,-2.5);
\draw[usual] (2,-3.5) to[out=90,in=180] (2.5,-3.2) 
to (3,-3.2) to [out=0,in=270] (3.5,-1.5);
\draw[usual] (2,-4.5) to[out=270,in=180] (2.5,-4.8) 
to (3,-4.8) to [out=0,in=90] (3.5,-6.5);
\draw[usual] (2,-5.5) to[out=90,in=180] (2.5,-5.15) 
to[out=0,in=90] (3,-5.5);
\draw[usual] (0.5,-3.5) to (0.5,-2.5);
\draw[usual] (0.5,-4.5) to (0.5,-5.5);
\draw[usual] (3.5,-6.5) to[out=270,in=180] (4,-7) to[out=0,in=270]
(4.5,-6.5) to (4.5,-1.5) to[out=90,in=0] (4,-1) to[out=180,in=90]
(3.5,-1.5);
\end{tikzpicture}
\right)
-\left(
\begin{tikzpicture}[anchorbase,scale=0.4,tinynodes]
\draw[pJW] (-0.5,-1.5) rectangle (3.2,-2.5);
\node at (1.35,-2.2) {$\pjwm[v{-}2]$};
\draw[pJW] (-0.5,-3.5) rectangle (2.5,-4.5);
\node at (1,-4.2) {$\pjwm[v{-}3]$};
\draw[pJW] (-0.5,-5.5) rectangle (3.2,-6.5);
\node at (1.35,-6.2) {$\pjwm[v{-}2]$};
\draw[usual] (2.5,-2.5) to[out=270,in=180] (3,-2.85) 
to[out=0,in=270] (3.5,-2.5) to (3.5,-1.5);
\draw[usual] (2,-3.5) to (2,-2.5);
\draw[usual] (2,-4.5) to[out=270,in=180] (2.5,-4.8) 
to (3,-4.8) to [out=0,in=90] (3.5,-6.5);
\draw[usual] (2,-5.5) to[out=90,in=180] (2.5,-5.15) 
to[out=0,in=90] (3,-5.5);
\draw[usual] (0.5,-3.5) to (0.5,-2.5);
\draw[usual] (0.5,-4.5) to (0.5,-5.5);
\draw[usual] (3.5,-6.5) to[out=270,in=180] (4,-7) to[out=0,in=270]
(4.5,-6.5) to (4.5,-1.5) to[out=90,in=0] (4,-1) to[out=180,in=90]
(3.5,-1.5);
\end{tikzpicture}
+\,
\begin{tikzpicture}[anchorbase,scale=0.4,tinynodes]
\draw[pJW] (-0.5,-1.5) rectangle (3.2,-2.5);
\node at (1.35,-2.2) {$\pjwm[v{-}2]$};
\draw[pJW] (-0.5,-3.5) rectangle (2.5,-4.5);
\node at (1,-4.2) {$\pjwm[v{-}3]$};
\draw[pJW] (-0.5,-5.5) rectangle (3.2,-6.5);
\node at (1.35,-6.2) {$\pjwm[v{-}2]$};
\draw[usual] (2,-2.5) to[out=270,in=180] (2.5,-2.85) 
to[out=0,in=270] (3,-2.5);
\draw[usual] (2,-3.5) to[out=90,in=180] (2.5,-3.2) 
to (3,-3.2) to [out=0,in=270] (3.5,-1.5);
\draw[usual] (2,-4.5) to[out=270,in=180] (2.5,-4.8) 
to (3,-4.8) to [out=0,in=90] (3.5,-6.5);
\draw[usual] (2,-5.5) to[out=90,in=180] (2.5,-5.15) 
to[out=0,in=90] (3,-5.5);
\draw[usual] (0.5,-3.5) to (0.5,-2.5);
\draw[usual] (0.5,-4.5) to (0.5,-5.5);
\draw[usual] (3.5,-6.5) to[out=270,in=180] (4,-7) to[out=0,in=270]
(4.5,-6.5) to (4.5,-1.5) to[out=90,in=0] (4,-1) to[out=180,in=90]
(3.5,-1.5);
\end{tikzpicture}
\right)
.
\end{gather*}
The first term in each of the brackets 
is $\loopdown{0}{v{-}2}$; the second is
$(\loopdown{0}{v{-}2})^{2}$, which is 
zero by \eqref{eq:dud}. Since 
$\funcg_{\qpar}(1)=-\qnum{2}{\qpar}$ and
$\funcf_{\qpar}(1)=-2$ the claim 
follows for $a_{0}=2$.

Next we consider $a_{0}\in\{3,\dots,\lpar-1\}$.  
Here we use $-\qnum{2}{\qpar}+\tfrac{\qnum{a_{0}-2}{\qpar}}
{\qnum{a_{0}-1}{\qpar}}=-
\tfrac{\qnum{a_{0}}{\qpar}}{\qnum{a_{0}-1}{\qpar}}$ 
to get
\begin{gather}\label{eq:ptrfusion}
\begin{aligned}
\begin{tikzpicture}[anchorbase,scale=0.25,tinynodes]
\draw[pJW] (1.5,-1) rectangle (-2.5,1);
\node at (-0.5,-0.1) {$\pjwm[v{-}1]$};
\draw[usual] (1,1) to[out=90,in=180] (1.5,1.5) to[out=0,in=90] 
(2,1) to (2,0) to (2,-1) 
to[out=270,in=0] (1.5,-1.5) to[out=180,in=270] (1,-1);
\node at (0,2.5) {$\phantom{a}$};
\node at (0,-2.5) {$\phantom{a}$};
\end{tikzpicture}
&=
\begin{tikzpicture}[anchorbase,scale=0.25,tinynodes]
\draw[pJW] (0.5,-1) rectangle (-2.5,1);
\node at (-1,-0.1) {$\pjwm[v{-}2]$};
\draw[usual] (1,1) to[out=90,in=180] (1.5,1.5) to[out=0,in=90] 
(2,1) to (2,0) to (2,-1) 
to[out=270,in=0] (1.5,-1.5) to[out=180,in=270] (1,-1) to (1,1);
\node at (0,2.5) {$\phantom{a}$};
\node at (0,-2.5) {$\phantom{a}$};
\end{tikzpicture}
-
\left(
\tfrac{1}{\funcg_{\qpar}(a_{0}-2)}\cdot
\begin{tikzpicture}[anchorbase,scale=0.4,tinynodes]
\draw[pJW] (-0.5,-1.5) rectangle (2.5,-2.5);
\node at (1,-2.2) {$\pjwm[v{-}2]$};
\draw[pJW] (-0.5,-4.5) rectangle (2.5,-5.5);
\node at (1,-5.2) {$\pjwm[v{-}2]$};
\draw[pJW] (-0.5,-3.25) rectangle (0.5,-3.75);
\draw[usual] (2,-2.5) to[out=270,in=180] (2.5,-3) to[out=0,in=270] 
(3,-2.5) to (3,-1.5);
\draw[usual] (2,-4.5) to[out=90,in=180] (2.5,-4) to[out=0,in=90] 
(3,-4.5) to (3,-5.5);
\draw[usual] (0,-2.5) to (0,-3.25);
\draw[usual] (0,-3.75) to (0,-4.5);
\draw[usual] (3,-5.5) to[out=270,in=180] (3.5,-6) to[out=0,in=270]
(4,-5.5) to (4,-1.5) to[out=90,in=0] (3.5,-1) to[out=180,in=90]
(3,-1.5);
\end{tikzpicture}  
-\tfrac{\funcf_{\qpar}(a_{0}-1)}{\funcg_{\qpar}(a_{0}-2)}\cdot
\begin{tikzpicture}[anchorbase,scale=0.4,tinynodes]
\draw[pJW] (-0.5,-1.5) rectangle (2.5,-2.5);
\node at (1,-2.2) {$\pjwm[v{-}2]$};
\draw[pJW] (-0.5,-4.5) rectangle (2.5,-5.5);
\node at (1,-5.2) {$\pjwm[v{-}2]$};
\draw[pJW] (-0.5,-3.25) rectangle (0.5,-3.75);
\draw[usual] (2,-2.5) to[out=270,in=180] (2.5,-3) to[out=0,in=270] 
(3,-2.5) to (3,-1.5);
\draw[usual] (2,-4.5) to[out=90,in=180] (2.5,-4) to[out=0,in=90] 
(3,-4.5) to (3,-5.5);
\draw[usual] (0,-2.5) to (0,-3.25);
\draw[usual] (0,-3.75) to (0,-4.5);
\draw[usual] (0.5,-4.5) to[out=90,in=180] (1,-4) to[out=0,in=90] 
(1.5,-4.5);
\draw[usual] (0.5,-2.5) to[out=270,in=180] (1,-3) to[out=0,in=270] 
(1.5,-2.5);
\node at (1,-3.65) {$S_{0}$};
\draw[usual] (3,-5.5) to[out=270,in=180] (3.5,-6) to[out=0,in=270]
(4,-5.5) to (4,-1.5) to[out=90,in=0] (3.5,-1) to[out=180,in=90]
(3,-1.5);
\end{tikzpicture}
\right)
\\
&=
\funcg_{\qpar}(a_{0}-1)\cdot
\begin{tikzpicture}[anchorbase,scale=0.25,tinynodes]
\draw[pJW] (1.5,-1) rectangle (-2.5,1);
\node at (-0.5,-0.1) {$\pjwm[v{-}2]$};
\node at (0,2.5) {$\phantom{a}$};
\node at (0,-2.5) {$\phantom{a}$};
\end{tikzpicture}
+
\tfrac{\funcf_{\qpar}(a_{0}-1)}{\funcg_{\qpar}(a_{0}-2)}\cdot
\begin{tikzpicture}[anchorbase,scale=0.4,tinynodes]
\draw[pJW] (-0.5,-1.5) rectangle (2.5,-2.5);
\node at (1,-2.2) {$\pjwm[v{-}2]$};
\draw[pJW] (-0.5,-4.5) rectangle (2.5,-5.5);
\node at (1,-5.2) {$\pjwm[v{-}2]$};
\draw[pJW] (-0.5,-3.25) rectangle (0.5,-3.75);
\draw[usual] (2,-4.5) to (2,-2.5);
\draw[usual] (0,-2.5) to (0,-3.25);
\draw[usual] (0,-3.75) to (0,-4.5);
\draw[usual] (0.5,-4.5) to[out=90,in=180] (1,-4) to[out=0,in=90] 
(1.5,-4.5);
\draw[usual] (0.5,-2.5) to[out=270,in=180] (1,-3) to[out=0,in=270] 
(1.5,-2.5);
\node at (1,-3.65) {$S_{0}$};
\end{tikzpicture}
,
\end{aligned}
\end{gather}
and it remains to compute the final term. To this end, 
we will use the fusion rule for $v-2a_{0}+2$ 
on the mini box (using induction), which corresponds to
$\pjw[v{-}2a_{0}{+}1]$. The last digit of its
relevant $\ppar\lpar$-adic expansion 
is an element of $\{3,\dots,\lpar-1\}$, namely 
$\lpar-a_{0}+2$. We claim that fusion results in:
\begin{gather}\label{eq:middleminifusion}
\begin{tikzpicture}[anchorbase,scale=0.4,tinynodes]
\draw[pJW] (-0.5,-1.5) rectangle (2.5,-2.5);
\node at (1,-2.2) {$\pjwm[v{-}2]$};
\draw[pJW] (-0.5,-4.5) rectangle (2.5,-5.5);
\node at (1,-5.2) {$\pjwm[v{-}2]$};
\draw[pJW] (-0.5,-3.25) rectangle (0.5,-3.75);
\draw[usual] (2,-4.5) to (2,-2.5);
\draw[usual] (0,-2.5) to (0,-3.25);
\draw[usual] (0,-3.75) to (0,-4.5);
\draw[usual] (0.5,-4.5) to[out=90,in=180] (1,-4) to[out=0,in=90] 
(1.5,-4.5);
\draw[usual] (0.5,-2.5) to[out=270,in=180] (1,-3) to[out=0,in=270] 
(1.5,-2.5);
\node at (1,-3.65) {$S_{0}$};
\end{tikzpicture}
=
\begin{tikzpicture}[anchorbase,scale=0.4,tinynodes]
\draw[pJW] (-0.5,-1.5) rectangle (2.5,-2.5);
\node at (1,-2.2) {$\pjwm[v{-}2]$};
\draw[pJW] (-0.5,-4.5) rectangle (2.5,-5.5);
\node at (1,-5.2) {$\pjwm[v{-}2]$};
\draw[pJW] (-0.5,-3.25) rectangle (2.5,-3.75);
\draw[usual] (2,-3.25) to (2,-2.5);
\draw[usual] (2,-4.5) to (2,-3.75);
\draw[usual] (0,-2.5) to (0,-3.25);
\draw[usual] (0,-3.75) to (0,-4.5);
\draw[usual] (0.5,-4.5) to[out=90,in=180] (1,-4) to[out=0,in=90] 
(1.5,-4.5);
\draw[usual] (0.5,-2.5) to[out=270,in=180] (1,-3) to[out=0,in=270] 
(1.5,-2.5);
\end{tikzpicture}
+
\tfrac{1}{\funcg_{\qpar}(\lpar-a_{0}+1)}
\loopdown{0}{v{-}2}
+\text{lower order terms}
=
\tfrac{1}{\funcg_{\qpar}(\lpar-a_{0}+1)}
\loopdown{0}{v{-}2}
.
\end{gather}
Here, we have three things to check. To start, the first term in
the middle is zero because $\tmod(v-2)$ and $\tmod(v-2a_{0}+2)$ do not share
any common Weyl factors. To see this, first observe that
the relevant zeroth digits are $a_{0}-1$ and respectively $\lpar-a_{0}+3$.
Now the claim follows from the condition 
in \fullref{lemma:no-delta-overlap} for any odd
$\lpar$ since one has $a_{0}-1\neq\lpar-a_{0}+3$ and $a_{0}-1\neq a_{0}-3$. For 
even $\lpar$ it can happen that $a_{0}-1=\lpar-a_{0}+3$, namely 
for $a_{0}=\tfrac{\lpar+4}{2}$. However, in this 
case the next digit of $v-1$ and 
$v-2a_{0}+3$ differ by one, so 
\fullref{lemma:no-delta-overlap} also applies.
Second, the fusion rule
includes a term of the form $\fusidem{v{-}2a_{0}{+}2}{0}$, which is typically a sum of two
diagrams (although the second may not appear in some cases). The first diagram
combines with the present caps and cups to form
$\tfrac{1}{\funcg_{\qpar}(\lpar-a_{0}+1)}\loopdown{0}{v{-}2}$. The second
diagram (if it is present at all) vanishes when it is sandwiched because of the containment
relation $\Up{0}\Up{0}=0$. Third, all possible terms of even lower order arising
from the fusion rule become zero when sandwiched. Such terms only arise if
$\taill{v-2a_{0}+2}>0$, {\ie} if $a_{0}=3$.

Sandwiching a term $\fusidem{v{-}2a_{0}{+}2}{i}$ for $i>0$ produces a linear
combination of (at most) two diagrams:
\begin{align*}
\begin{tikzpicture}[anchorbase,scale=0.4,tinynodes]
\draw[pJW] (-1.5,-1.5) rectangle (8,-2.5);
\node at (3.25,-2.2) {$\pjwm[v{-}2]$};
\draw[pJW] (-1.5,-7.75) rectangle (7,-8.5);
\node at (2.75,-8.25) {$\pjwm[v{-}3]$};
\draw[pJW] (-1.5,-8.5) rectangle (8,-9.5);
\node at (3.25,-9) {$\pjwm[v{-}2]$};
\draw[usual] (-1,-2.5) to (-1,-7.75);
\draw[usual] (1,-2.5) to[out=270,in=180] (4.25,-4.25) to[out=0,in=270] 
(7.5,-2.5);
\draw[usual] (1.5,-2.5) to[out=270,in=180] (2.5,-3.25) to[out=0,in=270] 
(3.5,-2.5);
\node at (3,-3.75) {$w-1$};
\node at (6,-3.75) {$1$};
\node at (3,-6.75) {$w-1$};
\node at (6,-6.75) {$1$};
\node at (8,-3) {$1$};
\node at (8,-7.5) {$1$};
\draw[usual] (4.5,-2.5) to[out=270,in=180] (5.5,-3.25) to[out=0,in=270] 
(6.5,-2.5);
\draw[usual] (1,-7.75) to[out=90,in=180] (4.25,-5.5) to[out=0,in=90] 
(7.5,-8.5);
\draw[usual] (1.5,-7.75) to[out=90,in=180] (2.5,-7) to[out=0,in=90] 
(3.5,-7.75);
\draw[usual] (4.5,-7.75) to[out=90,in=180] (5.5,-7) to[out=0,in=90] 
(6.5,-7.75);
\draw[usual] (-0.5,-7.75) to (-0.5,-2.5) (0.5,-2.5) to
(0.5,-7.75);
\draw[thick,tomato,dashed] (-1.5,-5.25) rectangle (7,-8.5);
\end{tikzpicture}
,\qquad
\begin{tikzpicture}[anchorbase,scale=0.4,tinynodes]
\draw[pJW] (-1.5,-1.5) rectangle (8,-2.5);
\node at (3.25,-2.2) {$\pjwm[v{-}2]$};
\draw[pJW] (-1.5,-7.75) rectangle (7,-8.5);
\node at (2.75,-8.25) {$\pjwm[v{-}3]$};
\draw[pJW] (-1.5,-8.5) rectangle (8,-9.5);
\node at (3.25,-9) {$\pjwm[v{-}2]$};
\draw[usual] (-1,-2.5) to (-1,-7.75);
\draw[usual] (1,-2.5) to[out=270,in=180] (4.25,-4.25) to[out=0,in=270] 
(7.5,-2.5);
\draw[usual] (1.5,-2.5) to[out=270,in=180] (2.5,-3.25) to[out=0,in=270] 
(3.5,-2.5);
\node at (3,-3.75) {$w-1$};
\node at (6,-3.75) {$1$};
\node at (3,-6.75) {$w-1$};
\node at (6,-6.75) {$1$};
\node at (8,-3) {$1$};
\node at (8,-7.5) {$1$};
\draw[usual] (4.5,-2.5) to[out=270,in=180] (5.5,-3.25) to[out=0,in=270] 
(6.5,-2.5);
\draw[usual] (1,-7.75) to[out=90,in=180] (4.25,-5.5) to[out=0,in=90] 
(7.5,-8.5);
\draw[usual] (1.5,-7.75) to[out=90,in=180] (2.5,-7) to[out=0,in=90] 
(3.5,-7.75);
\draw[usual] (4.5,-7.75) to[out=90,in=180] (5.5,-7) to[out=0,in=90] 
(6.5,-7.75);
\draw[usual] (-0.5,-7.75) to (-.5,-5.25) to [out=90,in=180] (0,-4.75) to [out=0,in=90] (0.5,-5.25) to  
(0.5,-7.75);
\draw[usual] (-0.5,-2.5) to (-.5,-3.75) to [out=270,in=180] (0,-4.25) to [out=0,in=270] (0.5,-3.75) to  
(0.5,-2.5);
\draw[thick,tomato,dashed] (-1.5,-5.25) rectangle (7,-8.5);
\end{tikzpicture}
,
\end{align*}
where $w=\ppar^{(i)}$. 
We focus on the common outlined portion. 
Once we bend up the
free strand, this is in fact the morphism
$\Down{S}\pjw[v{-}5]\Down{0}\pjw[v{-}3]$ 
for the stretch $S=\{0,1,\dots,i-1\}$,
which vanishes by the containment relation.

The terms $\fusidema{v{-}2a_{0}{+}2}{i}$ 
or $\fusidemb{v{-}2a_{0}{+}2}{i}$ for
$i>0$ only occur when $i=\taill{v-2a_{0}+2}$ 
and $a_{i}=2$. Sandwiching such a term results
in a diagram that factors through a 
morphism from $\pjw[v{-}2]$ to
$\pjw[v{-}2a_{0}{+}2{-}2\ppar^{(i)}]$, and we 
claim that such morphisms are zero
because the corresponding tilting modules 
have no common Weyl factors. Indeed,
the relevant $\ppar\lpar$-adic expansions are
\[
v-1=\plbase{\dots,a_{i},0,0,\dots,2}\quad\text{and}\quad
v{-}2a_0+3-2\ppar^{(i)}=
\plbase{\dots,a_{i}-2,0,\dots,0},
\] 
and thus the claim follows from
\fullref{lemma:no-delta-overlap}. 

We have finished showing that 
\eqref{eq:middleminifusion} holds, which we use to
rewrite \eqref{eq:ptrfusion}. 
The coefficient of $\loopdown{0}{v{-}2}$ is
\begin{gather*}
\tfrac{\funcf_{\qpar}(a_{0}-1)}{\funcg_{\qpar}(a_{0}-2)}\cdot
\tfrac{1}{\funcg_{\qpar}(\lpar-a_{0}+1)}
=
\tfrac{\funcf_{\qpar}(a_{0}-1)}{\funcg_{\qpar}(a_{0}-2)}\cdot
\funcg_{\qpar}(a_{0}-2)
=
\funcf_{\qpar}(a_{0}-1),
\end{gather*}
where we have simplified $\funcg_{\qpar}(\lpar-a_{0}+1)^{-1}=
\funcg_{\qpar}(a_{0}-2)$ using $\qnum{\lpar}{\qpar}=0$ (this does not hold in
the semisimple case). Thus we have verified the partial trace claim
\eqref{eq:ptrclaim}. 

Finally we consider the case $0<i\leq\taill{v}$, which, surprisingly, is much
easier to prove. Recall that we assume that $a_{i}\neq 0$ and aim to prove: 
\begin{gather*}
\begin{tikzpicture}[anchorbase,scale=0.25,tinynodes]
\draw[pJW] (1.5,-1) rectangle (-2.5,1);
\node at (-0.5,-0.1) {$\pjwm[v{-}w]$};
\draw[usual] (1,1) to[out=90,in=180] (1.5,1.5) to[out=0,in=90] 
(2,1) to (2,0) node[right,xshift=-2pt]{$w$} to (2,-1) 
to[out=270,in=0] (1.5,-1.5) to[out=180,in=270] (1,-1);
\node at (0,2.5) {$\phantom{a}$};
\node at (0,-2.5) {$\phantom{a}$};
\end{tikzpicture} 
=
\funcg_{\qpar}(a_{i}-1)\cdot
\begin{tikzpicture}[anchorbase,scale=0.25,tinynodes]
\draw[pJW] (1.5,-1) rectangle (-2.5,1);
\node at (-0.5,-0.1) {$\pjwm[v{-}2w]$};
\node at (0,2.5) {$\phantom{a}$};
\node at (0,-2.5) {$\phantom{a}$};
\end{tikzpicture}
+\funcf_{\qpar}(a_{i}-1)\cdot
\loopdown{i}{v{-}2w}.
\end{gather*}
To verify this claim, we calculate that 
$v-w+1=\plbase{a_{j},\dots,a_{i},0,\dots,0}$.
In particular, we can use (a slight generalization of) \fullref{proposition:more-ptrace} 
to trace off $w-1=\ppar^{(i)}-1$ strands
and we get
\begin{gather*}
\begin{tikzpicture}[anchorbase,scale=0.25,tinynodes]
\draw[pJW] (1.5,-1) rectangle (-2.5,1);
\node at (-0.5,-0.1) {$\pjwm[v{-}w]$};
\draw[usual] (1,1) to[out=90,in=180] (1.5,1.5) to[out=0,in=90] 
(2,1) to (2,0) node[right,xshift=-2pt]{$w-1$} to (2,-1) 
to[out=270,in=0] (1.5,-1.5) to[out=180,in=270] (1,-1);
\end{tikzpicture}   
=
\funcg_{\qpar}(a_{i}-1)\cdot
\begin{tikzpicture}[anchorbase,scale=0.25,tinynodes]
\draw[pJW] (2,0.5) rectangle (-1,-0.5);
\draw[pJW] (2,3.5) rectangle (-1,4.5);
\node at (0.5,-0.2) {$\pjwm[x{-}1]$};
\node at (0.5,3.8) {$\pjwm[x{-}1]$};
\draw[usual] (0,0.5) to[out=90,in=180] (0.5,1) to[out=0,in=90] (1,0.5);
\draw[usual] (0,3.5) to[out=270,in=180] (0.5,3) to[out=0,in=270] (1,3.5);
\draw[usual] (-0.5,0.5) to (-0.5,3.5);
\end{tikzpicture}
+
\funcf_{\qpar}(a_{i}-1)\cdot
\begin{tikzpicture}[anchorbase,scale=0.25,tinynodes]
\draw[pJW] (2,0.5) rectangle (-1,-0.5);
\draw[pJW] (2,3.5) rectangle (-1,4.5);
\node at (0.5,-0.2) {$\pjwm[x{-}1]$};
\node at (0.5,3.8) {$\pjwm[x{-}1]$};
\draw[usual] (0,0.5) to[out=90,in=180] (0.5,1) to[out=0,in=90] (1,0.5);
\draw[usual] (-0.25,0.5) to[out=90,in=180] (0.5,1.25) to[out=0,in=90] (1.25,0.5);
\draw[usual] (0,3.5) to[out=270,in=180] (0.5,3) to[out=0,in=270] (1,3.5);
\draw[usual] (-0.25,3.5) to[out=270,in=180] (0.5,2.75) to[out=0,in=270] (1.25,3.5);
\draw[usual] (-0.5,0.5) to (-0.5,3.5);
\node at (1,3.5) {$\phantom{a}$};
\node at (1,-0.5) {$\phantom{a}$};
\end{tikzpicture},
\end{gather*}
where $x=\plbase{a_{j},\dots,a_{i}-1,0,\dots,1}$ (if
$i=j$, the $\funcf$-term vanishes but the $\funcg$-term does not because we
assume $a_j>1$). Now we use shortening to compute the full partial trace as:
\begin{gather*}
\begin{tikzpicture}[anchorbase,scale=0.25,tinynodes]
\draw[pJW] (1.5,-1) rectangle (-2.5,1);
\node at (-0.5,-0.1) {$\pjwm[v{-}w]$};
\draw[usual] (1,1) to[out=90,in=180] (1.5,1.5) to[out=0,in=90] 
(2,1) to (2,0)node[right]{$w$} to (2,-1) 
to[out=270,in=0] (1.5,-1.5) to[out=180,in=270] (1,-1);
\end{tikzpicture} 
=
\funcg_{\qpar}(a_{i}-1)\cdot
\begin{tikzpicture}[anchorbase,scale=0.25,tinynodes]
\draw[pJW] (2,0.5) rectangle (-1,-0.5);
\draw[pJW] (2,3.5) rectangle (-1,4.5);
\node at (0.5,-0.2) {$\pjwm[x{-}1]$};
\node at (0.5,3.8) {$\pjwm[x{-}1]$};
\draw[usual] (0,0.5) to[out=90,in=180] (0.5,1) to[out=0,in=90] (1,0.5);
\draw[usual] (0,3.5) to[out=270,in=180] (0.5,3) to[out=0,in=270] (1,3.5);
\draw[usual] (-0.5,0.5) to (-0.5,3.5);
\draw[usual] (1.5,4.5) to[out=90,in=180] (2,5) to[out=0,in=90] 
(2.5,4.5) to (2.5,-0.5) 
to[out=270,in=0] (2,-1) to[out=180,in=270] (1.5,-0.5);
\end{tikzpicture}
+
\funcf_{\qpar}(a_{i}-1)\cdot
\begin{tikzpicture}[anchorbase,scale=0.25,tinynodes]
\draw[pJW] (2,0.5) rectangle (-1,-0.5);
\draw[pJW] (2,3.5) rectangle (-1,4.5);
\node at (0.5,-0.2) {$\pjwm[x{-}1]$};
\node at (0.5,3.8) {$\pjwm[x{-}1]$};
\draw[usual] (0,0.5) to[out=90,in=180] (0.5,1) to[out=0,in=90] (1,0.5);
\draw[usual] (-0.25,0.5) to[out=90,in=180] (0.5,1.25) to[out=0,in=90] (1.25,0.5);
\draw[usual] (0,3.5) to[out=270,in=180] (0.5,3) to[out=0,in=270] (1,3.5);
\draw[usual] (-0.25,3.5) to[out=270,in=180] (0.5,2.75) to[out=0,in=270] (1.25,3.5);
\draw[usual] (-0.5,0.5) to (-0.5,3.5);
\node at (1,3.5) {$\phantom{a}$};
\node at (1,-0.5) {$\phantom{a}$};
\draw[usual] (1.5,4.5) to[out=90,in=180] (2,5) to[out=0,in=90] 
(2.5,4.5) to (2.5,-0.5) 
to[out=270,in=0] (2,-1) to[out=180,in=270] (1.5,-0.5);
\end{tikzpicture}
=
\funcg_{\qpar}(a_{i}-1)\cdot
\begin{tikzpicture}[anchorbase,scale=0.25,tinynodes]
\draw[pJW] (0.75,0.5) rectangle (-1,-0.5);
\draw[pJW] (0.75,3.5) rectangle (-1,4.5);
\draw[usual] (0,0.5) to[out=90,in=180] (0.5,1) to[out=0,in=90] (1,0.5);
\draw[usual] (0,3.5) to[out=270,in=180] (0.5,3) to[out=0,in=270] (1,3.5);
\draw[usual] (-0.5,0.5) to (-0.5,3.5);
\draw[usual] (1,3.5) to (1,4.5) to[out=90,in=180] (1.5,5) to[out=0,in=90] 
(2,4.5) to (2,-0.5) 
to[out=270,in=0] (1.5,-1) to[out=180,in=270] (1,-0.5) to (1,0.5);
\end{tikzpicture}
+
\funcf_{\qpar}(a_{i}-1)\cdot
\begin{tikzpicture}[anchorbase,scale=0.25,tinynodes]
\draw[pJW] (1.025,0.5) rectangle (-1,-0.5);
\draw[pJW] (1.025,3.5) rectangle (-1,4.5);
\draw[usual] (0,0.5) to[out=90,in=180] (0.5,1) to[out=0,in=90] (1,0.5);
\draw[usual] (-0.25,0.5) to[out=90,in=180] (0.5,1.25) to[out=0,in=90] (1.25,0.5);
\draw[usual] (0,3.5) to[out=270,in=180] (0.5,3) to[out=0,in=270] (1,3.5);
\draw[usual] (-0.25,3.5) to[out=270,in=180] (0.5,2.75) to[out=0,in=270] (1.25,3.5);
\draw[usual] (-0.5,0.5) to (-0.5,3.5);
\node at (1,3.5) {$\phantom{a}$};
\node at (1,-0.5) {$\phantom{a}$};
\draw[usual] (1.25,3.5) to (1.25,4.5) to[out=90,in=180] (1.75,5) to[out=0,in=90] 
(2.25,4.5) to (2.25,-0.5) 
to[out=270,in=0] (1.75,-1) to[out=180,in=270] (1.25,-0.5) to (1.25,0.5);
\end{tikzpicture}
\; ,
\end{gather*}
which we pull straight to get the claimed partial trace formula.
\end{proof}

\begin{lemma}\label{lemma:tracetofusion}
We have $\trp(v)\implies\fus(v)$ for all $v\geq 2$. 
\end{lemma}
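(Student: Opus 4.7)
The plan is to fix $v \geq 2$ and verify the decomposition \eqref{eq:timest1-first} using the assumed partial trace formula $\trp(v)$ as the principal tool. The overall strategy is to establish that $\pjw[v]$ together with the summands $\fuscaseidem{v}{i}$ form a system of mutually orthogonal idempotents in $\End_{\TL[{\kk,\qpar}]}(v)$ whose image exhausts that of $\pjw[v-1]\hcirc\idtl[1]$, from which the equation \eqref{eq:timest1-first} follows by Krull--Schmidt.

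First I would verify that each $\fuscaseidem{v}{i}$ is an idempotent. For $a_i > 1$, squaring $\fusidem{v}{i}$ produces four diagrammatic terms, in each of which the two stacked sandwiches create a copy of $\pjw[v-w]$ in the middle flanked by cups and caps of thickness $w = \ppar^{(i)}$. Applying $\trp(v)$ to this partial trace replaces it with $\funcg_{\qpar}(a_i-1) \cdot \pjw[v-2w]$ plus $\funcf_{\qpar}(a_i-1) \cdot \loopdown{i}{v-2w}$. The coefficient $1/\funcg_{\qpar}(a_i-1)$ and the correction $-\funcf_{\qpar}(a_i)/\funcg_{\qpar}(a_i-1)$ appearing in the definition of $\fusidem{v}{i}$ are calibrated precisely so that the $\funcg$-contributions reassemble into $\fusidem{v}{i}$, while the $\loopdown{i}{v-2w}$ pieces are absorbed or cancelled using $\Down{i}\Up{i}\Down{i} = 0$ (cf.~\eqref{eq:dud}) together with the zigzag relation of Theorem~\ref{theorem:main-tl-section}. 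For $a_i = 1$, the identity $\funcf_{\qpar}(0) = 0$ forces the loop contribution to vanish, and an analogous computation shows that $\fusidema{v}{i}$ and $\fusidemb{v}{i}$ are each individually idempotent.

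Second, I would check orthogonality among the summands. For $i \neq j$, the composition $\fuscaseidem{v}{i}\cdot\fuscaseidem{v}{j}$ reduces via $\trp(v)$ to a diagram involving cups and caps of incompatible stretches; the containment, far-commutativity, and overlap relations of Theorem~\ref{theorem:main-tl-section} force the result to vanish. The orthogonality $\fusidema{v}{i}\cdot\fusidemb{v}{i} = 0$ in the $a_i = 1$ case follows similarly, since the two summands absorb incompatible cup/cap patterns. Finally, $\pjw[v]\cdot\fuscaseidem{v}{i} = 0 = \fuscaseidem{v}{i}\cdot\pjw[v]$ because $\fuscaseidem{v}{i}$ factors through $\pjw[v-2\ppar^{(i)}]$, and the $\ppar\lpar$-adic expansions of $v$ and $v-2\ppar^{(i)}$ differ in a way that triggers Lemma~\ref{lemma:no-delta-overlap}, so that $\Hom_{\tilt[{\kk,\qpar}]}\big(\tmod(v-2\ppar^{(i)}-1), \tmod(v-1)\big) = 0$.

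Finally, the decomposition is assembled: the right-hand side of \eqref{eq:timest1-first} is an idempotent in $\End_{\TL[{\kk,\qpar}]}(v)$ whose image, by primitivity and Theorem~\ref{theorem:well-defined}, is isomorphic to $\tmod(v-1) \oplus \bigoplus_{i} \tmod(v-2\ppar^{(i)}-1)^{\oplus x_i}$ with multiplicities $x_i$ as in Proposition~\ref{proposition:times-t1}. This matches the image of $\pjw[v-1]\hcirc\idtl[1]$, so Krull--Schmidt yields the desired equality. The main technical obstacle will be the bookkeeping in the generation-drop case $0 < i \leq \taill{v}$, where the partial trace cups and caps have thickness $\ppar^{(i)}$ and their compositions must be reduced using the combined force of $\trp(v)$, the projector absorption and shortening from Proposition~\ref{proposition:jw-properties}, and the zigzag relations, with signs from the quantum Lucas' theorem (Proposition~\ref{proposition:qlucas}) tracked carefully against the scalar definitions in \eqref{eq:f-g} and \eqref{eq:quantum-fg}.
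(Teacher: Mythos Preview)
Your overall plan---verify idempotency, verify orthogonality, then conclude by Krull--Schmidt---is close to the paper's, but it has a genuine gap: you never check that the candidate idempotents $\fuscaseidem{v}{i}$ \emph{absorb} $\pjw[v-1]\hcirc\idtl[1]$, i.e.\ that $(\pjw[v-1]\hcirc\idtl[1])\,\fuscaseidem{v}{i}=\fuscaseidem{v}{i}=\fuscaseidem{v}{i}\,(\pjw[v-1]\hcirc\idtl[1])$. Without this, your final step fails. The idempotents live a priori only in $\End_{\TL[{\kk,\qpar}]}(v)\cong\End(\tmod(1)^{\hcirc v})$, and an idempotent there whose image is abstractly isomorphic to $\tmod(v-1)\hcirc\tmod(1)$ need not equal $\pjw[v-1]\hcirc\idtl[1]$, since $\tmod(1)^{\hcirc v}$ contains many isomorphic summands. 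For $i=0$ absorption is immediate from the definition (the outer boxes are already $\pjw[v-1]$), but for $i>0$ the outer boxes are $\pjw[v-\ppar^{(i)}]$ and the paper has to invoke the shortening property of Proposition~\ref{proposition:jw-properties}(b) to verify absorption explicitly.

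Once absorption is in place, the paper's route is also lighter than what you sketch for orthogonality. The paper observes that the distinct isotypic components of $\tmod(v-1)\hcirc\tmod(1)$ share no Weyl factors (a character computation from Proposition~\ref{proposition:multiplicities} and Lemma~\ref{lemma:clebsch-gordan}), so by Lemma~\ref{lemma:no-delta-overlap} there are no nonzero maps between them; orthogonality of the $\fuscaseidem{v}{i}$ for distinct $i$ and with $\pjw[v]$ is then automatic. Moreover, the isotypic projectors are \emph{uniquely} determined by the absorption property (same argument as in Theorem~\ref{theorem:well-defined}), so absorption plus idempotency already forces the candidates to coincide with them and hence to sum to $\pjw[v-1]\hcirc\idtl[1]$. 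Your proposed direct diagrammatic verification of cross-orthogonality via the quiver relations would be considerably more work and is unnecessary. The only orthogonality the paper checks by hand is that of $\fusidema{v}{i}$ and $\fusidemb{v}{i}$ inside a multiplicity-two isotypic component, and this uses only $\pjw[v-w]\Up{i}\Down{i}\Up{i}=0$ from~\eqref{eq:dud}, not the zigzag relation.
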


\begin{proof}
We start with a few observations. First,
\fullref{proposition:times-t1} ensures that we know how many orthogonal,
primitive idempotents to expect in the categorified fusion rule. Second, by the
same arguments as in the proof of 
\fullref{theorem:well-defined}
(however, it is easier in this case since we only need to tensor with $\tmod(1)$) the idempotents
projecting onto isotypic components are uniquely determined by the
property of absorbing $\pjw[v{-}1]\hcirc\idtl[1]$. These isotypic idempotents
are automatically orthogonal because a straightforward computation, using
\fullref{proposition:multiplicities} and \fullref{lemma:clebsch-gordan}, shows
that the isotypic components share no Weyl factors, which implies that there are
no non-zero morphisms between them by \fullref{lemma:no-delta-overlap}.

Combining these observations, it remains to show that the morphisms
$\fusidem{v}{i}$, $\fusidema{v}{i}$, $\fusidemb{v}{i}$ from
\fullref{definition:fusionidempotents} satisfy the absorption property and are indeed
idempotents whenever they appear in \eqref{eq:timest1-first}. Finally, we also
check that $\fusidema{v}{i}$ and $\fusidemb{v}{i}$ are orthogonal.

\textit{Absorption.} Let us first check that all of the candidate
idempotents appearing in \fullref{theorem:timest1} absorb
$\pjw[v{-}1]\hcirc\idtl[1]$. For $\pjw[v]$, $\fusidem{v}{0}$, $\fusidema{v}{0}$,
and $\fusidemb{v}{0}$, this follows immediately from the absorption properties of
the $\ppar\lpar$JW projectors -- see \fullref{proposition:jw-properties}.(a). For
the cases with $i>0$ we use the shortening property from
\fullref{proposition:jw-properties}.(b), {\eg} for 
terms of the form $\fusidem{v}{i}$ it suffices to observe
\begin{gather*}
\begin{tikzpicture}[anchorbase,scale=0.4,tinynodes]
\draw[pJW] (-0.5,-1.5) rectangle (3.25,-0.5);
\node at (1.375,-1.2) {$\pjwm[v{-}1]$};
\draw[pJW] (-0.5,-1.5) rectangle (2.5,-2.5);
\node at (1,-2.2) {$\pjwm[v{-}w]$};
\node at (2.7,-3.4){ \scalebox{0.75}{$w-1$}};
\draw[pJW] (-0.5,-2.5) rectangle (1.75,-3.5);
\node at (0.675,-3.2) {\scalebox{0.75}{$\pjwm[v{-}2w{+}1]$}};
\draw[usual] (2,-2.5) to[out=270,in=180] (2.5,-3) to[out=0,in=270] 
(3,-2.5) to (3,-1.5);
\draw[usual] (3.5,-0.5) to (3.5,-3.5) to[out=270,in=0] (2.5,-4) to[out=180,in=270] (1.5,-3.5);
\node at (0,-0.5){$\phantom{a}$};
\node at (0,-4){$\phantom{a}$};
\end{tikzpicture}
=
\begin{tikzpicture}[anchorbase,scale=0.4,tinynodes]
\draw[pJW] (-0.5,-1.5) rectangle (3.25,-0.5);
\node at (1.375,-1.2) {$\pjwm[v{-}1]$};
\draw[pJW] (-0.5,-2.5) rectangle (1.75,-1.5);
\node at (2.7,-2.4){ \scalebox{0.75}{$w-1$}};
\node at (0.675,-2.2) {\scalebox{0.75}{$\pjwm[v{-}2w{+}1]$}};
\draw[usual] (2,-1.5) to[out=270,in=180] (2.5,-2) to[out=0,in=270] 
(3,-1.5);
\draw[usual] (3.5,-0.5) to (3.5,-2.5) to[out=270,in=0] (2.5,-3) to[out=180,in=270] (1.5,-2.5);
\node at (0,-0.5){$\phantom{a}$};
\node at (0,-4){$\phantom{a}$};
\end{tikzpicture}
=
\begin{tikzpicture}[anchorbase,scale=0.4,tinynodes]
\draw[pJW] (-0.5,-1.5) rectangle (2.75,-0.5);
\node at (1.125,-1.2) {$\pjwm[v{-}w]$};
\draw[pJW] (-0.5,-2.5) rectangle (1.75,-1.5);
\node at (2.7,-2.4){ \scalebox{0.75}{$w-1$}};
\node at (0.675,-2.2) {\scalebox{0.75}{$\pjwm[v{-}2w{+}1]$}};
\draw[usual] (2,-1.5) to[out=270,in=180] (2.5,-2) to[out=0,in=270] 
(3,-1.5) to (3,-0.5);
\draw[usual] (3.5,-0.5) to (3.5,-2.5) to[out=270,in=0] (2.5,-3) to[out=180,in=270] (1.5,-2.5);
\node at (0,-0.5){$\phantom{a}$};
\node at (0,-4){$\phantom{a}$};
\end{tikzpicture}
=
\begin{tikzpicture}[anchorbase,scale=0.4,tinynodes]
\draw[pJW] (-0.5,-0.5) rectangle (2.5,-1.5);
\node at (1,-1.2) {$\pjwm[v{-}w]$};
\node at (3.4,-1.4){ \scalebox{0.75}{$w$}};
\draw[usual] (2,-1.5) to[out=270,in=180] (2.5,-2) to[out=0,in=270] 
(3,-1.5) to (3,-0.5);
\node at (0,-0.5){$\phantom{a}$};
\node at (0,-4){$\phantom{a}$};
\end{tikzpicture}
.
\end{gather*}
Absorption for the remaining cases, namely $\fusidema{v}{i}$ and $\fusidemb{v}{i}$
with $i>0$, can be similarly shown using shortening.

We will now verify the idempotency of the candidate expressions on a case-by-case basis.

\textit{Idempotency (and orthogonality) of $\fusidema{v}{i}$ and $\fusidemb{v}{i}$.}
We start with the term $\fusidema{v}{i}$ (which also covers the
symmetric case $\fusidemb{v}{i}$), which is defined as the sum of the two diagrams
\begin{gather*}
\fusidema{v}{i}=\morstuff{X}+\morstuff{Y},\quad\text{where}\quad
\morstuff{X}=
\begin{tikzpicture}[anchorbase,scale=0.4,tinynodes]
\draw[pJW] (-0.5,-1.5) rectangle (3.2,-2.5);
\node at (1.35,-2.2) {$\pjwm[v{-}w]$};
\draw[pJW] (-0.5,-3.5) rectangle (2.5,-4.5);
\node at (1,-4.2) {$\pjwm[v{-}2w]$};
\draw[pJW] (-0.5,-5.5) rectangle (3.2,-6.5);
\node at (1.35,-6.2) {$\pjwm[v{-}w]$};
\draw[usual] (2,-2.5) to[out=270,in=180] (2.5,-2.85)node[left,xshift=-1pt,yshift=-2.5pt]{$w$} 
to[out=0,in=270] (3,-2.5);
\draw[usual] (2,-3.5) to[out=90,in=180] (2.5,-3.2) 
to (3,-3.2) to [out=0,in=270] (3.5,-1.5)
node[right,xshift=-2pt,yshift=-2pt]{$w$};
\draw[usual] (2,-4.5) to (2,-5.5);
\draw[usual] (2.5,-5.5) to[out=90,in=180] (3,-5.15) 
to[out=0,in=90] (3.5,-5.5) to (3.5,-6.5)node[right,xshift=-2pt,yshift=1pt]{$w$};
\draw[usual] (0.5,-3.5) to (0.5,-2.5);
\draw[usual] (0.5,-4.5) to (0.5,-5.5);
\end{tikzpicture}
\quad\text{and}\quad
\morstuff{Y}=
\begin{tikzpicture}[anchorbase,scale=0.4,tinynodes]
\draw[pJW] (-0.5,-1.5) rectangle (3.2,-2.5);
\node at (1.35,-2.2) {$\pjwm[v{-}w]$};
\draw[pJW] (-0.5,-3.5) rectangle (2.5,-4.5);
\node at (1,-4.2) {$\pjwm[v{-}2w]$};
\draw[pJW] (-0.5,-5.5) rectangle (3.2,-6.5);
\node at (1.35,-6.2) {$\pjwm[v{-}w]$};
\draw[usual] (2,-2.5) to[out=270,in=180] (2.5,-2.85)node[left,xshift=-1pt,yshift=-2.5pt]{$w$} 
to[out=0,in=270] (3,-2.5);
\draw[usual] (2,-3.5) to[out=90,in=180] (2.5,-3.2) 
to (3,-3.2) to [out=0,in=270] (3.5,-1.5)
node[right,xshift=-2pt,yshift=-2pt]{$w$};
\draw[usual] (2,-4.5) to[out=270,in=180] (2.5,-4.8) 
to (3,-4.8) to [out=0,in=90] (3.5,-6.5)
node[right,xshift=-2pt,yshift=1pt]{$w$};
\draw[usual] (2,-5.5) to[out=90,in=180] (2.5,-5.15)node[left,xshift=-1pt]{$w$}
to[out=0,in=90] (3,-5.5);
\draw[usual] (0.5,-3.5) to (0.5,-2.5);
\draw[usual] (0.5,-4.5) to (0.5,-5.5);
\end{tikzpicture}
,
\end{gather*}
in which $w=\ppar^{(i)}$. Next we compute the pairwise products of
$\morstuff{X}$, $\fliph[\morstuff{X}]$, 
and $\morstuff{Y}$. First we use shortening
and absorption of mixed JW projectors to compute
\begin{gather*}
\morstuff{X}^{2}=
\begin{tikzpicture}[anchorbase,scale=0.4,tinynodes]
\draw[pJW] (-0.5,-1.5) rectangle (3.2,-2.5);
\node at (1.35,-2.2) {$\pjwm[v{-}w]$};
\draw[pJW] (-0.5,-3.5) rectangle (2.5,-4.5);
\node at (1,-4.2) {$\pjwm[v{-}2w]$};
\draw[pJW] (-0.5,-5.5) rectangle (3.2,-6.5);
\node at (1.35,-6.2) {$\pjwm[v{-}w]$};
\draw[usual] (2,-2.5) to[out=270,in=180] (2.5,-2.85)node[left,xshift=-1pt,yshift=-2.5pt]{$w$} 
to[out=0,in=270] (3,-2.5);
\draw[usual] (2,-3.5) to[out=90,in=180] (2.5,-3.2) 
to (3,-3.2) to [out=0,in=270] (3.5,-1.5)node[right,xshift=-2pt,yshift=-2pt]{$w$};
\draw[usual] (2,-4.5) to (2,-5.5);
\draw[usual]  (3,-5.5) to[out=90,in=180] (3.25,-5.15) node[above,yshift=-2pt]{$w$}
to[out=0,in=90] (3.5,-5.5) to (3.5,-6.5);
\draw[usual] (0.5,-3.5) to (0.5,-2.5);
\draw[usual] (0.5,-4.5) to (0.5,-5.5);
\draw[pJW] (-0.5,-6.5) rectangle (3.2,-7.5);
\node at (1.35,-7.2) {$\pjwm[v{-}w]$};
\draw[pJW] (-0.5,-8.5) rectangle (2.5,-9.5);
\node at (1,-9.2) {$\pjwm[v{-}2w]$};
\draw[pJW] (-0.5,-10.5) rectangle (3.2,-11.5);
\node at (1.35,-11.2) {$\pjwm[v{-}w]$};
\draw[usual] (2,-7.5) to[out=270,in=180] (2.5,-7.85) 
to[out=0,in=270] (3,-7.5);
\draw[usual] (2,-8.5) to[out=90,in=180] (2.5,-8.2) 
to (3,-8.2) to [out=0,in=270] (3.5,-6.5);
\draw[usual] (2,-9.5) to (2,-10.5);
\draw[usual] (2.5,-10.5) to[out=90,in=180] (3,-10.15) 
to[out=0,in=90] (3.5,-10.5) to (3.5,-11.5)node[right,xshift=-2pt,yshift=1pt]{$w$};
\draw[usual] (0.5,-8.5) to (0.5,-7.5);
\draw[usual] (0.5,-9.5) to (0.5,-10.5);
\end{tikzpicture}
=
\begin{tikzpicture}[anchorbase,scale=0.4,tinynodes]
\draw[pJW] (-0.5,-1.5) rectangle (3.2,-2.5);
\node at (1.35,-2.2) {$\pjwm[v{-}w]$};
\draw[pJW] (-0.5,-3.5) rectangle (2.5,-4.5);
\node at (1,-4.2) {$\pjwm[v{-}2w]$};
\draw[usual] (2,-2.5) to[out=270,in=180] (2.5,-2.85)node[left,xshift=-1pt,yshift=-2.5pt]{$w$} 
to[out=0,in=270] (3,-2.5);
\draw[usual] (2,-3.5) to[out=90,in=180] (2.5,-3.2) 
to (3,-3.2) to [out=0,in=270] (3.5,-1.5)node[right,xshift=-2pt,yshift=-2pt]{$w$};
\draw[usual] (2,-4.5) to (2,-6.5);
\draw[usual]  (3,-5.5) to[out=90,in=180] (3.25,-5.15) node[above,yshift=-2pt]{$w$}
to[out=0,in=90] (3.5,-5.5) to (3.5,-6.5);
\draw[usual] (0.5,-3.5) to (0.5,-2.5);
\draw[usual] (0.5,-4.5) to (0.5,-6.5);
\draw[pJW] (-0.5,-6.5) rectangle (2.5,-7.5);
\node at (1,-7.2) {$\pjwm[v{-}2w]$};
\draw[pJW] (-0.5,-8.5) rectangle (2.5,-9.5);
\node at (1,-9.2) {$\pjwm[v{-}2w]$};
\draw[pJW] (-0.5,-10.5) rectangle (3.2,-11.5);
\node at (1.35,-11.2) {$\pjwm[v{-}w]$};
\draw[usual] (2,-7.5) to[out=270,in=180] (2.5,-7.85) 
to[out=0,in=270] (3,-7.5) to (3,-5.5);
\draw[usual] (2,-8.5) to[out=90,in=180] (2.5,-8.2) 
to (3,-8.2) to [out=0,in=270] (3.5,-6.5);
\draw[usual] (2,-9.5) to (2,-10.5);
\draw[usual] (2.5,-10.5) to[out=90,in=180] (3,-10.15) 
to[out=0,in=90] (3.5,-10.5) to (3.5,-11.5)node[right,xshift=-2pt,yshift=1pt]{$w$};
\draw[usual] (0.5,-8.5) to (0.5,-7.5);
\draw[usual] (0.5,-9.5) to (0.5,-10.5);
\end{tikzpicture}
=\morstuff{X}
\quad\text{and}\quad
\morstuff{X}\morstuff{Y} =
\begin{tikzpicture}[anchorbase,scale=0.4,tinynodes]
\draw[pJW] (-0.5,-1.5) rectangle (3.2,-2.5);
\node at (1.35,-2.2) {$\pjwm[v{-}w]$};
\draw[pJW] (-0.5,-3.5) rectangle (2.5,-4.5);
\node at (1,-4.2) {$\pjwm[v{-}2w]$};
\draw[pJW] (-0.5,-5.5) rectangle (3.2,-6.5);
\node at (1.35,-6.2) {$\pjwm[v{-}w]$};
\draw[usual] (2,-2.5) to[out=270,in=180] (2.5,-2.85)node[left,xshift=-1pt,yshift=-2.5pt]{$w$} 
to[out=0,in=270] (3,-2.5);
\draw[usual] (2,-3.5) to[out=90,in=180] (2.5,-3.2) 
to (3,-3.2) to [out=0,in=270] (3.5,-1.5)node[right,xshift=-2pt,yshift=-2pt]{$w$};
\draw[usual] (2,-4.5) to (2,-5.5);
\draw[usual]  (3,-5.5) to[out=90,in=180] (3.25,-5.15) node[above,yshift=-2pt]{$w$}
to[out=0,in=90] (3.5,-5.5) to (3.5,-6.5);
\draw[usual] (0.5,-3.5) to (0.5,-2.5);
\draw[usual] (0.5,-4.5) to (0.5,-5.5);
\draw[pJW] (-0.5,-6.5) rectangle (3.2,-7.5);
\node at (1.35,-7.2) {$\pjwm[v{-}w]$};
\draw[pJW] (-0.5,-8.5) rectangle (2.5,-9.5);
\node at (1,-9.2) {$\pjwm[v{-}2w]$};
\draw[pJW] (-0.5,-10.5) rectangle (3.2,-11.5);
\node at (1.35,-11.2) {$\pjwm[v{-}w]$};
\draw[usual] (2,-7.5) to[out=270,in=180] (2.5,-7.85) 
to[out=0,in=270] (3,-7.5);
\draw[usual] (2,-8.5) to[out=90,in=180] (2.5,-8.2) 
to (3,-8.2) to [out=0,in=270] (3.5,-6.5);
\draw[usual] (0.5,-8.5) to (0.5,-7.5);
\draw[usual] (2,-9.5) to[out=270,in=180] (2.5,-9.8) 
to (3,-9.8) to [out=0,in=90] (3.5,-11.5)
node[right,xshift=-2pt,yshift=1pt]{$w$};
\draw[usual] (2,-10.5) to[out=90,in=180] 
(2.5,-10.15)node[left,xshift=-1pt]{$w$}
to[out=0,in=90] (3,-10.5);
\end{tikzpicture}
=
\begin{tikzpicture}[anchorbase,scale=0.4,tinynodes]
\draw[pJW] (-0.5,-1.5) rectangle (3.2,-2.5);
\node at (1.35,-2.2) {$\pjwm[v{-}w]$};
\draw[pJW] (-0.5,-3.5) rectangle (2.5,-4.5);
\node at (1,-4.2) {$\pjwm[v{-}2w]$};
\draw[usual] (2,-2.5) to[out=270,in=180] (2.5,-2.85)node[left,xshift=-1pt,yshift=-2.5pt]{$w$} 
to[out=0,in=270] (3,-2.5);
\draw[usual] (2,-3.5) to[out=90,in=180] (2.5,-3.2) 
to (3,-3.2) to [out=0,in=270] (3.5,-1.5)node[right,xshift=-2pt,yshift=-2pt]{$w$};
\draw[usual] (2,-4.5) to (2,-6.5);
\draw[usual]  (3,-5.5) to[out=90,in=180] (3.25,-5.15) node[above,yshift=-2pt]{$w$}
to[out=0,in=90] (3.5,-5.5) to (3.5,-6.5);
\draw[usual] (0.5,-3.5) to (0.5,-2.5);
\draw[usual] (0.5,-4.5) to (0.5,-6.5);
\draw[pJW] (-0.5,-6.5) rectangle (2.5,-7.5);
\node at (1,-7.2) {$\pjwm[v{-}2w]$};
\draw[pJW] (-0.5,-8.5) rectangle (2.5,-9.5);
\node at (1,-9.2) {$\pjwm[v{-}2w]$};
\draw[pJW] (-0.5,-10.5) rectangle (3.2,-11.5);
\node at (1.35,-11.2) {$\pjwm[v{-}w]$};
\draw[usual] (2,-7.5) to[out=270,in=180] (2.5,-7.85) 
to[out=0,in=270] (3,-7.5) to (3,-5.5);
\draw[usual] (2,-8.5) to[out=90,in=180] (2.5,-8.2) 
to (3,-8.2) to [out=0,in=270] (3.5,-6.5);
\draw[usual] (0.5,-8.5) to (0.5,-7.5);
\draw[usual] (2,-9.5) to[out=270,in=180] (2.5,-9.8) 
to (3,-9.8) to [out=0,in=90] (3.5,-11.5)
node[right,xshift=-2pt,yshift=1pt]{$w$};
\draw[usual] (2,-10.5) to[out=90,in=180] 
(2.5,-10.15)node[left,xshift=-1pt]{$w$}
to[out=0,in=90] (3,-10.5);
\end{tikzpicture}
=\morstuff{Y}.
\end{gather*}
Symmetrically, we also have 
$(\fliph[\morstuff{X}])^{2}=\fliph[\morstuff{X}]$ and
$\morstuff{Y}(\fliph[\morstuff{X}])=\morstuff{Y}$. 
Now we claim that all other
products are zero, namely
$(\fliph[\morstuff{X}])\morstuff{X}=\morstuff{X}
(\fliph[\morstuff{X}])=\morstuff{Y}\morstuff{X}=
(\fliph[\morstuff{X}])\morstuff{Y}=\morstuff{Y}^{2}=0$.
This can be seen as follows. Up to symmetry, 
these statements all follow from
\begin{gather*}
\begin{tikzpicture}[anchorbase,scale=0.4,tinynodes]
\draw[pJW] (-0.5,-3.5) rectangle (2.5,-4.5);
\node at (1,-4.2) {$\pjwm[v{-}2w]$};
\draw[pJW] (-0.5,-5.5) rectangle (3.5,-6.5);
\node at (1.5,-6.2) {$\pjwm[v{-}w]$};
\draw[usual] (2,-4.5) to[out=270,in=180] (2.5,-4.8) 
to (3,-4.8) to [out=0,in=90] (4,-6.5) to (4,-7.5)
node[right,xshift=-1pt,yshift=1pt]{$w$};
\draw[usual] (2,-5.5) to[out=90,in=180] (2.5,-5.2)
to[out=0,in=90] (3,-5.5);
\draw[usual] (3,-6.5) to[out=270,in=0] (2.5,-6.8)node[below]{$w$} 
to[out=180,in=270] (2,-6.5);
\draw[usual] (0.5,-4.5) to (0.5,-5.5);
\draw[usual] (0.5,-6.5) to (0.5,-7.5);
\end{tikzpicture}
=0
\quad \Leftrightarrow\quad 
\begin{tikzpicture}[anchorbase,scale=0.4,tinynodes]
\draw[pJW] (-0.5,-3.5) rectangle (2.5,-4.5);
\node at (1,-4.2) {$\pjwm[v{-}2w]$};
\draw[pJW] (-0.5,-5.5) rectangle (3.5,-6.5);
\node at (1.5,-6.2) {$\pjwm[v{-}w]$};
\draw[usual] (2,-4.5) to[out=270,in=180] (2.5,-4.8) 
to (3,-4.8) to [out=0,in=90] (4,-6.5) to[out=270,in=180] (4.5,-6.8) 
to[out=0,in=270] (5,-6.5) to (5,-3.5)node[right,xshift=-1pt,yshift=-5pt]{$w$};
\draw[usual] (2,-5.5) to[out=90,in=180] (2.5,-5.2)
to[out=0,in=90] (3,-5.5);
\draw[usual] (3,-6.5) to[out=270,in=0] (2.5,-6.8)node[below]{$w$} 
to[out=180,in=270] (2,-6.5);
\draw[usual] (0.5,-4.5) to (0.5,-5.5);
\draw[usual] (0.5,-6.5) to (0.5,-7.5);
\end{tikzpicture}
=
\begin{tikzpicture}[anchorbase,scale=0.4,tinynodes]
\draw[pJW] (-0.5,-3.5) rectangle (3.5,-4.5);
\node at (1.5,-4.2) {$\pjwm[v{-}w]$};
\draw[pJW] (-0.5,-5.5) rectangle (3.5,-6.5);
\node at (1.5,-6.2) {$\pjwm[v{-}w]$};
\draw[usual] (2,-4.5) to[out=270,in=180] (2.5,-4.8) 
to[out=0,in=270] (3,-4.5);
\draw[usual] (2,-5.5) to[out=90,in=180] (2.5,-5.2)node[right,yshift=0.5pt]{$w$}  
to[out=0,in=90] (3,-5.5);
\draw[usual] (3,-6.5) to[out=270,in=0] (2.5,-6.8)node[below]{$w$}  
to[out=180,in=270] (2,-6.5);
\draw[usual] (0.5,-4.5) to (0.5,-5.5);
\draw[usual] (0.5,-6.5) to (0.5,-7.5);
\end{tikzpicture}
=\pjw[v{-}w]\Up{i}\Down{i}\Up{i} = 0.
\end{gather*}
The equivalence is given by bending, as illustrated. 
On the right-hand side we undid
shortening and translated the caps and cups into morphisms $\Up{i}$ and
$\Down{i}$ respectively (this is possible since $a_{i}=1$), and then applied \eqref{eq:dud}. 
Taking all of these together shows that
\begin{gather*}
(\morstuff{X}+\morstuff{Y})^{2}
=
\morstuff{X}+\morstuff{Y}
,\quad
(\fliph[\morstuff{X}]+\morstuff{Y})^{2}
=
\fliph[\morstuff{X}]+\morstuff{Y}
,\quad
(\morstuff{X}+\morstuff{Y})
(\fliph[\morstuff{X}]+\morstuff{Y})
=
(\fliph[\morstuff{X}]+\morstuff{Y})
(\morstuff{X}+\morstuff{Y})
=0,
\end{gather*}
which expresses $\fusidema{v}{i}$ and $\fusidemb{v}{i}$ as orthogonal idempotents.

\textit{Idempotency of $\fusidem{v}{i}$.} Next we check that the terms
$\fusidem{v}{i}$ are idempotents. Recall that $\fusidem{v}{i}$ for $i\neq j$ is
defined as a linear combination of the following two morphisms
\begin{align*}
\morstuff{X}=
\begin{tikzpicture}[anchorbase,scale=0.4,tinynodes]
\draw[pJW] (-0.5,-1.5) rectangle (2.5,-2.5);
\node at (1,-2.2) {$\pjwm[v{-}w]$};
\draw[pJW] (-0.5,-4.5) rectangle (2.5,-5.5);
\node at (1,-5.2) {$\pjwm[v{-}w]$};
\draw[pJW] (-0.5,-3.25) rectangle (0.5,-3.75);
\draw[usual] (2,-2.5) to[out=270,in=180] (2.5,-3) to[out=0,in=270] 
(3,-2.5) to (3,-1.5) node[above]{$w$};
\draw[usual] (2,-4.5) to[out=90,in=180] (2.5,-4) to[out=0,in=90] 
(3,-4.5) to (3,-5.5) node[below]{$w$};
\draw[usual] (0,-2.5) to (0,-3.25);
\draw[usual] (0,-3.75) to (0,-4.5);
\end{tikzpicture}
=
\begin{tikzpicture}[anchorbase,scale=0.4,tinynodes]
\draw[pJW] (-0.5,-1.5) rectangle (2.5,-2.5);
\node at (1,-2.2) {$\pjwm[v{-}w]$};
\draw[pJW] (-0.5,-4.5) rectangle (2.5,-5.5);
\node at (1,-5.2) {$\pjwm[v{-}w]$};
\draw[usual] (2,-2.5) to[out=270,in=180] (2.5,-3) to[out=0,in=270] 
(3,-2.5) to (3,-1.5) node[above]{$w$};
\draw[usual] (2,-4.5) to[out=90,in=180] (2.5,-4) to[out=0,in=90] 
(3,-4.5) to (3,-5.5) node[below]{$w$};
\draw[usual] (0,-2.5) to (0,-4.5);
\end{tikzpicture}
\quad\text{and}\quad
\morstuff{Y}=
\begin{tikzpicture}[anchorbase,scale=0.4,tinynodes]
\draw[pJW] (-0.5,-1.5) rectangle (2.5,-2.5);
\node at (1,-2.2) {$\pjwm[v{-}w]$};
\draw[pJW] (-0.5,-4.5) rectangle (2.5,-5.5);
\node at (1,-5.2) {$\pjwm[v{-}w]$};
\draw[pJW] (-0.5,-3.25) rectangle (0.5,-3.75);
\draw[usual] (2,-2.5) to[out=270,in=180] (2.5,-3) to[out=0,in=270] 
(3,-2.5) to (3,-1.5) node[above]{$w$};
\draw[usual] (2,-4.5) to[out=90,in=180] (2.5,-4) to[out=0,in=90] 
(3,-4.5) to (3,-5.5) node[below]{$w$};
\draw[usual] (0,-2.5) to (0,-3.25);
\draw[usual] (0,-3.75) to (0,-4.5);
\draw[usual] (0.5,-4.5) to[out=90,in=180] (1,-4) to[out=0,in=90] 
(1.5,-4.5);
\draw[usual] (0.5,-2.5) to[out=270,in=180] (1,-3) to[out=0,in=270] 
(1.5,-2.5);
\node at (1,-3.65) {$S_{i}$};
\end{tikzpicture}
=
\begin{tikzpicture}[anchorbase,scale=0.4,tinynodes]
\draw[pJW] (-0.5,-1.5) rectangle (2.5,-2.5);
\node at (1,-2.2) {$\pjwm[v{-}w]$};
\draw[pJW] (-0.5,-4.5) rectangle (2.5,-5.5);
\node at (1,-5.2) {$\pjwm[v{-}w]$};
\draw[usual] (2,-2.5) to[out=270,in=180] (2.5,-3) to[out=0,in=270] 
(3,-2.5) to (3,-1.5) node[above]{$w$};
\draw[usual] (2,-4.5) to[out=90,in=180] (2.5,-4) to[out=0,in=90] 
(3,-4.5) to (3,-5.5) node[below]{$w$};
\draw[usual] (0,-2.5) to (0,-4.5);
\draw[usual] (0.5,-4.5) to[out=90,in=180] (1,-4) to[out=0,in=90] 
(1.5,-4.5);
\draw[usual] (0.5,-2.5) to[out=270,in=180] (1,-3) to[out=0,in=270] 
(1.5,-2.5);
\node at (1,-3.65) {$S_{i}$};
\end{tikzpicture}
, 
\end{align*}
where we also write $w=\ppar^{(i)}$. For $i=j$, we use the same definition for
$\morstuff{X}$, but set $\morstuff{Y}=0$. To compute the various products of
these elements, we will use the following partial trace rule for $\pjw[v{-}w]$ from
\eqref{eq:ptrstatement}, which holds by assumption $\trp(v)$:
\begin{gather*}
\begin{tikzpicture}[anchorbase,scale=0.25,tinynodes]
\draw[pJW] (1.5,-1) rectangle (-2.5,1);
\node at (-0.5,-0.1) {$\pjwm[v{-}w]$};
\draw[usual] (1,1) to[out=90,in=180] (1.5,1.5) to[out=0,in=90] 
(2,1) to (2,0) node[right,xshift=-2pt]{$w$} to (2,-1) 
to[out=270,in=0] (1.5,-1.5) to[out=180,in=270] (1,-1);
\node at (0,2.5) {$\phantom{a}$};
\node at (0,-2.5) {$\phantom{a}$};
\end{tikzpicture} 
=
\funcg_{\qpar}(a_{i}-1)\cdot
\begin{tikzpicture}[anchorbase,scale=0.25,tinynodes]
\draw[pJW] (1.5,-1) rectangle (-2.5,1);
\node at (-0.5,-0.1) {$\pjwm[v{-}2w]$};
\node at (0,2.5) {$\phantom{a}$};
\node at (0,-2.5) {$\phantom{a}$};
\end{tikzpicture}
+\funcf_{\qpar}(a_{i}-1)\cdot
\loopdown{i}{v{-}2w}.
\end{gather*}
Since we know from \eqref{eq:dud} that the loop $\loopdown{i}{v{-}2w}$ is annihilated 
by postcomposing with
another down morphism $\Down{i}\pjw[v{-}2w]$, we 
also obtain from \eqref{eq:ptrclaim} that
\begin{gather*}
\begin{tikzpicture}[anchorbase,scale=0.25,tinynodes]
\draw[pJW] (1.5,-1) rectangle (-2.5,1);
\node at (-0.5,-0.1) {$\pjwm[v{-}w]$};
\draw[usual] (1,1) to[out=90,in=180] (1.5,1.5) to[out=0,in=90] 
(2,1) to (2,0)node[right,xshift=-2pt]{$w$} to (2,-1) 
to[out=270,in=0] (1.5,-1.5) to[out=180,in=270] (1,-1);
\draw[usual] (0.5,1) to[out=90,in=0] (0,1.5) to[out=180,in=90] (-0.5,1);
\node at (0,2.5) {$\phantom{a}$};
\node at (0,-2.5) {$\phantom{a}$};
\end{tikzpicture}=
\funcg_{\qpar}(a_{i}-1)\cdot
\begin{tikzpicture}[anchorbase,scale=0.25,tinynodes]
\draw[pJW] (1.5,-1) rectangle (-2.5,1);
\node at (-0.5,-0.1) {$\pjwm[v{-}2w]$};
\draw[usual] (0.5,1) to[out=90,in=0] (0,1.5) to[out=180,in=90] (-0.5,1);
\node at (0,2.5) {$\phantom{a}$};
\node at (0,-2.5) {$\phantom{a}$};
\end{tikzpicture}
\;\quad\text{and}\quad
\begin{tikzpicture}[anchorbase,scale=0.25,tinynodes]
\draw[pJW] (1.5,-1) rectangle (-2.5,1);
\node at (-0.5,-0.1) {$\pjwm[v{-}w]$};
\draw[usual] (1,1) to[out=90,in=180] (1.5,1.5) to[out=0,in=90] 
(2,1) to (2,0)node[right,xshift=-2pt]{$w$} to (2,-1) 
to[out=270,in=0] (1.5,-1.5) to[out=180,in=270] (1,-1);
\draw[usual] (0.5,1) to[out=90,in=0] (0,1.5) to[out=180,in=90] (-0.5,1);
\draw[usual] (0.5,-1) to[out=270,in=0] (0,-1.5) to[out=180,in=270] (-0.5,-1);
\node at (0,2.5) {$\phantom{a}$};
\node at (0,-2.5) {$\phantom{a}$};
\end{tikzpicture}
=
\funcg_{\qpar}(a_{i}-1)\cdot
\begin{tikzpicture}[anchorbase,scale=0.25,tinynodes]
\draw[pJW] (1.5,-1) rectangle (-2.5,1);
\node at (-0.5,-0.1) {$\pjwm[v{-}2w]$};
\draw[usual] (0.5,1) to[out=90,in=0] (0,1.5) to[out=180,in=90] (-0.5,1);
\draw[usual] (0.5,-1) to[out=270,in=0] (0,-1.5) to[out=180,in=270] (-0.5,-1);
\node at (0,2.5) {$\phantom{a}$};
\node at (0,-2.5) {$\phantom{a}$};
\end{tikzpicture}.
\end{gather*}
Thus we calculate
\begin{align*}
\morstuff{X}^{2}&=
\funcg_{\qpar}(a_{i}-1)\cdot
\begin{tikzpicture}[anchorbase,scale=0.4,tinynodes]
\draw[pJW] (-0.5,-1.5) rectangle (2.5,-2.5);
\node at (1,-2.2) {$\pjwm[v{-}w]$};
\draw[pJW] (-0.5,-4.5) rectangle (2.5,-5.5);
\node at (1,-5.2) {$\pjwm[v{-}w]$};
\draw[usual] (2,-2.5) to[out=270,in=180] (2.5,-3) to[out=0,in=270] 
(3,-2.5) to (3,-1.5) node[above]{$w$};
\draw[usual] (2,-4.5) to[out=90,in=180] (2.5,-4) to[out=0,in=90] 
(3,-4.5) to (3,-5.5) node[below]{$w$};
\draw[usual] (0,-2.5) to (0,-4.5);
\end{tikzpicture}
+\funcf_{\qpar}(a_{0}-1)\cdot
\begin{tikzpicture}[anchorbase,scale=0.4,tinynodes]
\draw[pJW] (-0.5,-1.5) rectangle (2.5,-2.5);
\node at (1,-2.2) {$\pjwm[v{-}w]$};
\draw[pJW] (-0.5,-4.5) rectangle (2.5,-5.5);
\node at (1,-5.2) {$\pjwm[v{-}w]$};
\draw[usual] (2,-2.5) to[out=270,in=180] (2.5,-3) to[out=0,in=270] 
(3,-2.5) to (3,-1.5) node[above]{$w$};
\draw[usual] (2,-4.5) to[out=90,in=180] (2.5,-4) to[out=0,in=90] 
(3,-4.5) to (3,-5.5) node[below]{$w$};
\draw[usual] (0,-2.5) to (0,-4.5);
\draw[usual] (0.5,-4.5) to[out=90,in=180] (1,-4) to[out=0,in=90] 
(1.5,-4.5);
\draw[usual] (0.5,-2.5) to[out=270,in=180] (1,-3) to[out=0,in=270] 
(1.5,-2.5);
\node at (1,-3.65) {$S_{i}$};
\end{tikzpicture}
=
\funcg_{\qpar}(a_{i}-1)\morstuff{X}
+\funcf_{\qpar}(a_{i}-1)\morstuff{Y}
,
\\
\morstuff{X}\morstuff{Y}&=
\funcg_{\qpar}(a_{i}-1)\cdot
\begin{tikzpicture}[anchorbase,scale=0.4,tinynodes]
\draw[pJW] (-0.5,-1.5) rectangle (2.5,-2.5);
\node at (1,-2.2) {$\pjwm[v{-}w]$};
\draw[pJW] (-0.5,-4.5) rectangle (2.5,-5.5);
\node at (1,-5.2) {$\pjwm[v{-}w]$};
\draw[usual] (2,-2.5) to[out=270,in=180] (2.5,-3) to[out=0,in=270] 
(3,-2.5) to (3,-1.5) node[above]{$w$};
\draw[usual] (2,-4.5) to[out=90,in=180] (2.5,-4) to[out=0,in=90] 
(3,-4.5) to (3,-5.5) node[below]{$w$};
\draw[usual] (0,-2.5) to (0,-4.5);
\draw[usual] (0.5,-4.5) to[out=90,in=180] (1,-4) to[out=0,in=90] 
(1.5,-4.5);
\draw[usual] (0.5,-2.5) to[out=270,in=180] (1,-3) to[out=0,in=270] 
(1.5,-2.5);
\node at (1,-3.65) {$S_{i}$};
\end{tikzpicture}
=\funcg_{\qpar}(a_{i}-1)\morstuff{Y}
\quad\text{and}\quad
\morstuff{Y}^{2}
=0.
\end{align*}
The latter holds since $(\loopdown{i}{v{-}2w})^{2}=0$, which follows from \eqref{eq:dud}. (Observe that the above relations also hold in the special case
$i=j$ where $\morstuff{Y}=0$.) Now we verify that $\fusidem{v}{i}$ is an
idempotent:
\begin{align*}
(\fusidem{v}{i})^{2}=\big(\tfrac{1}{\funcg_{\qpar}(a_{i}-1)}\morstuff{X}
-\tfrac{\funcf_{\qpar}(a_{i})}{\funcg_{\qpar}(a_{i}-1)}\morstuff{Y}\big)^{2}
&=
\tfrac{1}{\funcg_{\qpar}(a_{i}-1)^{2}}\morstuff{X}^{2}
-
\tfrac{\funcf_{\qpar}(a_{i})}{\funcg_{\qpar}(a_{i}-1)^{2}}
(\morstuff{X}\morstuff{Y}+\morstuff{Y}\morstuff{X})
+0
\\
&=
\tfrac{1}{\funcg_{\qpar}(a_{i}-1)}\morstuff{X} 
+
\tfrac{\funcf_{\qpar}(a_{i}-1)}{\funcg_{\qpar}(a_{i}-1)^{2}} \morstuff{Y} 
-2\tfrac{\funcf_{\qpar}(a_{i})}{\funcg_{\qpar}(a_{i}-1)}\morstuff{Y}
\\
&=\tfrac{1}{\funcg_{\qpar}(a_{i}-1)}\morstuff{X}
-\tfrac{\funcf_{\qpar}(a_{i})}{\funcg_{\qpar}(a_{i}-1)}\morstuff{Y} = \fusidem{v}{i},
\end{align*}
where we have used 
$\morstuff{X}\morstuff{Y}=\morstuff{Y}\morstuff{X}$ and
$\tfrac{\funcf_{\qpar}(a_{i}-1)}{\funcg_{\qpar}(a_{i}-1)}
=\funcf_{\qpar}(a_{i})$.
\end{proof}

\subsection{Categorified fusion rules for eves}\label{subsection:theta} 

For this section, we fix $a,b\in\N$ with $1\leq a,b<\plpar$ and $k,t\in\N[0]$. We have seen in \fullref{proposition:eve-times-eve} that the decomposition of a
tensor product of simple tilting modules
$\tmod(a\ppar^{(k)}-1)\hcirc\tmod(b\ppar^{(t)}-1)$ into indecomposable tilting
modules is \emph{multiplicity-free}. In fact, its decomposition is \emph{Weyl-multiplicity-free},
meaning that no Weyl factor appears twice. This implies that the problem of
computing categorified fusion rules for eves is well-posed.

\begin{lemma}\label{lemma:catfusion-problem} 
There exists a unique
decomposition of $\pjw[a\ppar^{(k)}-1]\hcirc\pjw[b\ppar^{(t)}-1]$ into
orthogonal, primitive idempotents $\pjw[{a\ppar^{(k)}-1,b\ppar^{(t)}-1}]^{v-1}$ 
factoring through $\pjw[v-1]$ 
\begin{gather*}
\begin{tikzpicture}[anchorbase,scale=0.25,tinynodes]
\draw[JW] (-2.5,3) rectangle (2.5,5);
\node at (0,3.8) {$\pjwm[a\ppar^{(k)}{-}1]$};
\draw[JW] (3.5,3) rectangle (8.5,5);
\node at (6,3.8) {$\pjwm[b\ppar^{(t)}{-}1]$};
\end{tikzpicture}
={\sum_{v}}\,
\pjw[{a\ppar^{(k)}-1,b\ppar^{(t)}-1}]^{v-1}
= 
{\sum_{v}}\,
\begin{tikzpicture}[anchorbase,scale=0.25,tinynodes]
\draw[JW] (-2.5,3) rectangle (2.5,5);
\node at (0,3.8) {$\pjwm[a\ppar^{(k)}{-}1]$};
\draw[JW] (3.5,3) rectangle (8.5,5);
\node at (6,3.8) {$\pjwm[b\ppar^{(t)}{-}1]$};
\draw[pJW] (0.5,-1) rectangle (5.5,1);
\node at (3,-0.2) {$\pjwm[v{-}1]$};
\draw[JW] (-2.5,-3) rectangle (2.5,-5);
\node at (0,-4.2) {$\pjwm[a\ppar^{(k)}{-}1]$};
\draw[JW] (3.5,-3) rectangle (8.5,-5);
\node at (6,-4.2) {$\pjwm[b\ppar^{(t)}{-}1]$};
\draw[mor] (-1.5,-3) to (7.5,-3) to (4.5,-1) to (1.5,-1) to (-1.5,-3);
\node at (3,-2) {$\morstuff{f}$};
\draw[mor] (-1.5,3) to (7.5,3) to (4.5,1) to (1.5,1) to (-1.5,3);
\node at (3,1.9) {$\morstuff{f}^{\prime}$};
\end{tikzpicture}	
,
\end{gather*}
where $v$ ranges over the set specified by \eqref{eq:fusion-eve-eve}. 
\end{lemma}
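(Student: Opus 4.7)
The plan is to deduce this lemma from three ingredients already established: the explicit list of indecomposable summands from \fullref{proposition:eve-times-eve}, the Weyl factor overlap criterion of \fullref{lemma:no-delta-overlap}, and the Krull--Schmidt property of $\tilt[{\kk,\qpar}]$ from \fullref{theorem:well-defined}. The strategy splits naturally into an \emph{existence} part (immediate from Krull--Schmidt once we know the isotypic components) and a \emph{uniqueness} part, which will require a verification of Weyl-multiplicity-freeness.

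First, I would invoke \fullref{proposition:eve-times-eve} to list the indecomposable tilting summands of $\tmod(a\ppar^{(k)}-1)\hcirc\tmod(b\ppar^{(t)}-1)$ (each with multiplicity one, by direct inspection of the formulas), indexed by the $v$ appearing in \eqref{eq:fusion-eve-eve}--\eqref{eq:eve-fusion-third}. Since $\tilt[{\kk,\qpar}]$ is Krull--Schmidt, this yields \emph{some} decomposition of $\pjw[a\ppar^{(k)}-1]\hcirc\pjw[b\ppar^{(t)}-1]$ into primitive orthogonal idempotents, each factoring through the corresponding $\pjw[v-1]$; this is existence.

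Next, I would verify that the decomposition is Weyl-multiplicity-free, i.e.\ that $\supp[v]\cap\supp[v']=\emptyset$ for any two distinct summand labels $v,v'$. In the case $k>t$, a summand has $\ppar\lpar$-adic expansion $\plbase{a,0,\dots,0,b-1-2i_{t},\ppar-1-2i_{t-1},\dots,\lpar-1-2i_{0}}$, and its support (by \fullref{proposition:multiplicities}) is obtained by changing signs of the non-leading non-zero digits; distinct tuples $(i_{t},\dots,i_{0})$ then give disjoint supports since the unordered multisets of absolute values of these digits differ. In the case $k=t$, the same bookkeeping applies to digits in positions $<k$, while positions $k$ and $k+1$ are controlled by the Clebsch--Gordan/quantum Clebsch--Gordan rules \eqref{eq:clebsch-gordan}--\eqref{eq:qCG}, which are themselves Weyl-multiplicity-free.

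With Weyl-multiplicity-freeness in hand, \fullref{lemma:no-delta-overlap} gives $\Hom_{\tilt[{\kk,\qpar}]}\big(\tmod(v-1),\tmod(v'-1)\big)=0$ for $v\neq v'$ in our index set. Consequently the endomorphism algebra of $\tmod(a\ppar^{(k)}-1)\hcirc\tmod(b\ppar^{(t)}-1)$ is block-diagonal, equal to the direct product of the local rings $\End\big(\tmod(v-1)\big)$. The identity decomposes uniquely as the sum of the block units, which are primitive by locality of each factor; this is uniqueness. The main obstacle is the combinatorial bookkeeping for the support disjointness in paragraph two, especially handling the boundary cases in \eqref{eq:eve-fusion-third} where an extra digit appears and one has to combine Clebsch--Gordan data with the eve-times-eve structure; the remaining steps are formal consequences of already-established results.
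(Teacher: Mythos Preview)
Your overall architecture matches the paper's: existence from Krull--Schmidt, and uniqueness/orthogonality from Weyl-multiplicity-freeness together with \fullref{lemma:no-delta-overlap}. The difference lies in how Weyl-multiplicity-freeness is established. You decompose $\tmod(a\ppar^{(k)}-1)\hcirc\tmod(b\ppar^{(t)}-1)$ into indecomposable tiltings via \fullref{proposition:eve-times-eve} and then argue combinatorially that the supports $\supp[v]$ of the distinct summand labels are pairwise disjoint. The paper instead bypasses this entirely: since $a\ppar^{(k)}$ and $b\ppar^{(t)}$ are eves, $\tmod(a\ppar^{(k)}-1)\cong\wmod(a\ppar^{(k)}-1)$ and $\tmod(b\ppar^{(t)}-1)\cong\wmod(b\ppar^{(t)}-1)$, so the tensor product is a tensor product of Weyl modules whose Weyl filtration is given directly by the classical Clebsch--Gordan rule \eqref{eq:clebsch-gordan}, which is manifestly multiplicity-free. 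This one-line observation eliminates precisely the ``main obstacle'' you flag in your last paragraph (the bookkeeping for support disjointness, especially in the boundary cases of \eqref{eq:eve-fusion-third}).

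Your combinatorial route is not wrong, but the justification you sketch (``unordered multisets of absolute values of these digits differ'') is imprecise: the digits sit at fixed positions, and what you actually need is that for distinct tuples $(i_{t},\dots,i_{0})$ the resulting integers $v[S]$ and $v'[S']$ never coincide, which requires tracking carries. This can be done, but it is exactly the kind of case analysis the paper's argument avoids.
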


\begin{proof} 
The existence of an expansion into idempotents as illustrated follows from the
fact that the tilting modules $\tmod(v-1)$ form a complete collection of indecomposable objects
(up to isomorphism) in $\tilt[{\kk,\qpar}]$ and the associated primitive idempotents 
are the mixed JW projectors $\pjw[v{-}1]$. The summands that occur are
determined by \eqref{eq:fusion-eve-eve} since $\pjw[v{-}1]$ represents
$\tmod(v-1)$. The orthogonality and uniqueness follow from the fact that
$\tmod(a\ppar^{(k)}-1)\hcirc\tmod(b\ppar^{(t)}-1)$ is 
Weyl-multiplicity-free since 
$\tmod(a\ppar^{(k)}-1)\cong\wmod(a\ppar^{(k)}-1)$, and 
$\tmod(b\ppar^{(t)}-1)\cong\wmod(b\ppar^{(t)}-1)$. 
\end{proof}

\begin{remark}
In general, $\pjw[v_{1}{-}1]\hcirc\pjw[v_{2}{-}1]$ still has a decomposition into
idempotents factoring through various $\pjw[v{-}1]$. However, if
$\tmod(v_{1}-1)\hcirc\tmod(v_{2}-1)$ is not Weyl-multiplicity-free or not even
multiplicity-free, then such 
decompositions are no longer unique. In
\fullref{subsection:fusion-mor}, we will encounter this issue when giving a
complete description of the case $\tmod(v-1)\hcirc\tmod(1)$.
\end{remark}

The problem of explicitly computing the idempotents in the decompositions from
\fullref{lemma:catfusion-problem} in full generality is beyond the scope of this
paper. Instead, we will compute the idempotents projecting onto summands
$\tmod(v-1)$ of generation at most one. In particular, we completely determine the decomposition in the complex quantum group case.

We will consider morphisms of the form 
\begin{gather}\label{eq:XandD}
\xmor{a\ppar^{(k)}-1,b\ppar^{(t)}-1}{w-1}
=
\begin{tikzpicture}[anchorbase,scale=0.25,tinynodes]
\draw[JW] (-2.5,3) rectangle (2.5,5);
\node at (0,3.8) {$\pjwm[a\ppar^{(k)}{-}1]$};
\draw[JW] (3.5,3) rectangle (8.5,5);
\node at (6,3.8) {$\pjwm[b\ppar^{(t)}{-}1]$};
\draw[pJW] (0.5,-1) rectangle (5.5,1);
\node at (3,-0.2) {$\pjwm[w{-}1]$};
\draw[JW] (-2.5,-3) rectangle (2.5,-5);
\node at (0,-4.2) {$\pjwm[a\ppar^{(k)}{-}1]$};
\draw[JW] (3.5,-3) rectangle (8.5,-5);
\node at (6,-4.2) {$\pjwm[b\ppar^{(t)}{-}1]$};
\draw[usual] (2,-3) to[out=90,in=180] (3,-2)node[above,xshift=0.15cm,yshift=-0.1cm]{$x$} to[out=0,in=90] (4,-3);
\draw[usual] (-1.5,-3)node[left,yshift=0.3cm,xshift=0.3cm]{$m$} to[out=90,in=270] (1.5,-1);
\draw[usual] (7.5,-3)node[left,yshift=0.3cm,xshift=0.1cm]{$n$}to[out=90,in=270] (4.5,-1);
\draw[usual] (2,3) to[out=270,in=180] (3,2)node[below,xshift=0.15cm,yshift=0.05cm]{$x$} to[out=0,in=270] (4,3);
\draw[usual] (-1.5,3) node[left,yshift=-0.4cm,xshift=0.3cm]{$m$}to[out=270,in=90] (1.5,1);
\draw[usual] (7.5,3)node[left,yshift=-0.4cm,xshift=0.1cm]{$n$} to[out=270,in=90] (4.5,1);
\end{tikzpicture}	
\quad\text{and}\quad
\dmor{a\ppar^{(k)}-1,b\ppar^{(t)}-1}{w-1}{w^{\prime}-1}
=
\begin{tikzpicture}[anchorbase,scale=0.25,tinynodes]
\draw[JW] (-2.5,3) rectangle (2.5,5);
\node at (0,3.8) {$\pjwm[a\ppar^{(k)}{-}1]$};
\draw[JW] (3.5,3) rectangle (8.5,5);
\node at (6,3.8) {$\pjwm[b\ppar^{(t)}{-}1]$};
\draw[pJW] (0.5,-1) rectangle (5.5,1);
\node at (3,-0.2) {$\pjwm[w{-}1]$};
\draw[pJW] (0.5,7) rectangle (5.5,9);
\node at (3,7.8) {$\pjwm[w^{\prime}{-}1]$};
\draw[usual] (2,5) to[out=90,in=180] (3,6)node[above,xshift=0.15cm,yshift=-0.1cm]{$x^{\prime}$} to[out=0,in=90] (4,5);
\draw[usual] (-1.5,5) to[out=90,in=270] (1.5,7);
\draw[usual] (7.5,5) to[out=90,in=270] (4.5,7);
\draw[usual] (2,3) to[out=270,in=180] (3,2)node[below,xshift=0.15cm,yshift=0.05cm]{$x$} to[out=0,in=270] (4,3);
\draw[usual] (-1.5,3) to[out=270,in=90] (1.5,1);
\draw[usual] (7.5,3) to[out=270,in=90] (4.5,1);
\end{tikzpicture}.
\end{gather} 
To describe the idempotent of $\pjw[a\ppar^{(k)}-1]\hcirc\pjw[b\ppar^{(t)}-1]$ 
that factors through $\pjw[v{-}1]$ we will consider the ansatz
\begin{gather}\label{eq:ansatz-idempotent}
\pjw[{a\ppar^{(k)}-1,b\ppar^{(t)}-1}]^{v-1} 
\overset{?}{=}{%\textstyle
	\sum_{S}}\,
\fusscalar{(v,S)}{a\ppar^{(k)},b\ppar^{(t)}}
\cdot\xmor{a\ppar^{(k)}-1,b\ppar^{(t)}-1}{v[S]-1}
\end{gather} 
with scalars $\fusscalar{(v,S)}{a\ppar^{(k)},b\ppar^{(t)}}\in\kk$ , and where the sum runs over all $S$ that are down-admissible for $v$. 
To check
whether such expressions give the desired idempotent, we have to compute
composites of the morphisms $\xmor{\placeholder,\placeholder}{\placeholder}$. This, in
turn, requires us to expand the \emph{digon morphisms}  
$\dmor{\placeholder,\placeholder}{\placeholder}{\placeholder}$ in a fixed basis of the
relevant morphism space.

\begin{lemma}
Let $\tmod(v-1)$ and $\tmod(v^{\prime}-1)$ be summands of
$\tmod(a\ppar^{(k)}-1)\hcirc\tmod(b\ppar^{(t)}-1)$ as specified by
\eqref{eq:fusion-eve-eve}. For $v=v^{\prime}$ we have
\begin{gather}\label{eq:ansatz-digon}
\dmor{a\ppar^{(k)}-1,b\ppar^{(t)}-1}{v-1}{v^{\prime}-1}
=
{%\textstyle
	\sum_{S}}\,
\digscalar{(v,S)}{a\ppar^{(k)},b\ppar^{(t)}}
\cdot\loopdown{S}{v{-}1}
\end{gather} 
with uniquely determined scalars
$\digscalar{(v,S)}{a\ppar^{(k)},b\ppar^{(t)}}\in\kk$, and the 
sum running over all down-admissible $S$. If
$v\neq v^{\prime}$, then
$\dmor{a\ppar^{(k)}-1,b\ppar^{(t)}-1}{v-1}{v^{\prime}-1}=0$.
\end{lemma}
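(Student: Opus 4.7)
The plan is to interpret the digon morphism $\dmor{a\ppar^{(k)}-1,b\ppar^{(t)}-1}{v-1}{v^{\prime}-1}$ as an element of the hom-space $\Hom_{\tilt[{\kk,\qpar}]}(\tmod(v-1),\tmod(v^{\prime}-1))\cong\pjw[v^{\prime}{-}1]\zigzag\pjw[v{-}1]$, and then split the argument according to whether $v=v^{\prime}$ or $v\neq v^{\prime}$.

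For $v\neq v^{\prime}$, I would use the Weyl-multiplicity-freeness of $\tmod(a\ppar^{(k)}-1)\hcirc\tmod(b\ppar^{(t)}-1)$, which is recorded in the paragraph preceding \fullref{lemma:catfusion-problem} as a consequence of \fullref{proposition:eve-times-eve}. Since every Weyl factor of the tensor product appears with multiplicity one, the Weyl supports of any two distinct indecomposable tilting summands must be disjoint, and in particular $\supp[v]\cap\supp[v^{\prime}]=\emptyset$. \fullref{lemma:no-delta-overlap} then yields $\Hom_{\tilt[{\kk,\qpar}]}(\tmod(v-1),\tmod(v^{\prime}-1))=\{0\}$, so the digon morphism vanishes.

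For $v=v^{\prime}$, I would invoke \fullref{theorem:main-tl-section}.(Basis) applied to the endomorphism algebra $\pjw[v{-}1]\zigzag\pjw[v{-}1]$. A typical basis element there has the form $\pjw[v{-}1]\Up{S^{\prime}_{i_{l}}}\cdots\Up{S^{\prime}_{i_{0}}}\Down{S_{i_{0}}}\cdots\Down{S_{i_{k}}}\pjw[v{-}1]$, and in order for the composite to land back at weight $v-1$ the down-admissible set $\bigcup_{j}S_{i_{j}}$ and the up-admissible set $\bigcup_{j}S^{\prime}_{i_{j}}$ must coincide, since a down-admissible set together with its reflection determine each other. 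Hence the basis is indexed by a single down-admissible subset $S\subset\N[0]$ for $v$, with basis element equal to the loop $\loopdown{S}{v-1}$ (and $S=\emptyset$ recovering the identity endomorphism). Expanding the digon endomorphism in this basis then produces the uniquely determined scalars $\digscalar{(v,S)}{a\ppar^{(k)},b\ppar^{(t)}}$ of \eqref{eq:ansatz-digon}.

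The main subtlety, beyond the formal bookkeeping, is to check that distinct minimal-stretch decompositions of the same underlying set $S$, viewed either as down-admissible for $v$ or as up-admissible for $v[S]$, do not produce additional basis elements; this follows from the far-commutativity relations in \fullref{theorem:main-tl-section}.\ref{theorem:main-tl-section-3} combined with the ordering conventions built into the statement of (Basis).
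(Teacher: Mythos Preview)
Your proof is correct and follows essentially the same approach as the paper's own proof, which reads in full: ``This is a direct consequence of the fact that loops form a basis for the endomorphism algebra of $\pjw[v{-}1]$ and that $\tmod(a\ppar^{(k)}-1)\hcirc\tmod(b\ppar^{(t)}-1)$ is Weyl-multiplicity-free.'' You have simply unpacked both ingredients: for $v\neq v'$ you spell out that Weyl-multiplicity-freeness forces $\supp[v]\cap\supp[v']=\emptyset$ and then invoke \fullref{lemma:no-delta-overlap}, and for $v=v'$ you derive the loop basis from \fullref{theorem:main-tl-section}.(Basis) via the bijection between down-admissible sets and basis endomorphisms.
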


\begin{proof}
This is a direct consequence of the fact that loops form a basis for the
endomorphism algebra of $\pjw[v{-}1]$ and that
$\tmod(a\ppar^{(k)}-1)\hcirc\tmod(b\ppar^{(t)}-1)$ is
Weyl-multiplicity-free.
\end{proof}

To warm up, we will compute the digon expansion \eqref{eq:ansatz-digon} in the
case when $v$ is of generation zero, {\ie} when the direct summand $\tmod(v-1)$
of $\tmod(a\ppar^{(k)}-1)\hcirc\tmod(b\ppar^{(t)}-1)$ is simple. In this case,
the endomorphisms of $\pjw[v{-}1]$ are just its scalar multiples, and the single
relevant scalar $\twebs$ can be determined using the values of \emph{theta webs}
from \cite[Chapter 6]{KaLi-TL-recoupling}. 

For every triple $\alpha,\beta,\gamma\in\N[0]$ such that $\alpha+\beta+\gamma$
is even and the triangle inequalities $\alpha\leq\beta+\gamma$, 
$\beta\leq\alpha+\gamma$ and $\gamma\leq\alpha+\beta$ are satisfied, one can consider the
associated theta value in $\TL[\kkv,\vpar]$:
\newpage
\begin{gather*}
\Theta(\alpha,\beta,\gamma)
:=
\begin{tikzpicture}[anchorbase,scale=0.25,tinynodes]
\draw[JW] (0,3) rectangle (2,5);
\node at (1,3.8) {$\pjwm[\alpha]$};
\draw[JW] (3,3) rectangle (5,5);
\node at (4,3.8) {$\pjwm[\beta]$};
\draw[JW] (6,3) rectangle (8,5);
\node at (7,3.8) {$\pjwm[\gamma]$};
\draw[usual] (1.5,5) to[out=90,in=180] (2.5,6)node[left]{$\mu$} to[out=0,in=90] (3.5,5);
\draw[usual] (1.5,3) to[out=270,in=180] (2.5,2) to[out=0,in=270] (3.5,3);
\draw[usual] (4.5,5) to[out=90,in=180] (5.5,6)node[right]{$\nu$} to[out=0,in=90] (6.5,5);
\draw[usual] (4.5,3) to[out=270,in=180] (5.5,2) to[out=0,in=270] (6.5,3);
\draw[usual] (0.5,5) to[out=90,in=180] (4,7.5)node[below]{$\chi$} to[out=0,in=90] (7.5,5);
\draw[usual] (0.5,3) to[out=270,in=180] (4,.5) to[out=0,in=270] (7.5,3);
\end{tikzpicture}
=(-1)^{\mu+\nu+\chi}
\tfrac{\qfac{\mu+\mu+\chi+1}{\vpar}\qfac{\mu}{\vpar}\qfac{\nu}{\vpar}\qfac{\chi}{\vpar}}
{\qfac{\mu+\nu}{\vpar}\qfac{\mu+\chi}{\vpar}\qfac{\nu+\chi}{\vpar}}
\in\kkv.
\end{gather*}
Let $x\in\N[0]$ be such that $(a\ppar^{(k)}-1)+(b\ppar^{(t)}-1)=2x+(v-1)$ 
and set $m=a\ppar^{(k)}-1-x$ and $n=b\ppar^{(t)}-1-x$. (We refer to
\eqref{eq:XandD} with $w=v$ for a diagrammatic interpretation of these
quantities.) We now define a scalar in $\kkv$ by
\begin{gather}\label{eq:theta-scalars1}
\twebstild
:=
(-1)^{(v-1)}\tfrac{\Theta(a\ppar^{(k)}-1,b\ppar^{(t)}-1,v-1)}{\qnum{v}{\vpar}}
= 
(-1)^{x}
\tfrac{\qbin{m+n+x+1}{x}{\vpar}}
{\qbin{m+x}{x}{\vpar}\qbin{n+x}{x}{\vpar}}.
\end{gather}
The last equality follows from a straightforward computation with quantum
binomials.

\begin{lemma}\label{lemma:ssdigon} Let $\tmod(v-1)$ be a summand of
$\tmod(a\ppar^{(k)}-1)\hcirc\tmod(b\ppar^{(t)}-1)$ as specified by
\eqref{eq:fusion-eve-eve}, which is simple, {\ie} of generation zero.  Then
we have 
\begin{gather*}
\dmor{a\ppar^{(k)}-1,b\ppar^{(t)}-1}{v-1}{v-1}
=\twebs\cdot\pjw[v{-}1],
\end{gather*}
where $\twebs$ is obtained from $\twebstild$ by specializing $\vpar$ to $\qpar$.
\end{lemma}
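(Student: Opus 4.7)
The plan is to reduce the problem to the classical theta value computation of semisimple Temperley--Lieb theory and then specialize back to $(\kk,\qpar)$.

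\textbf{Step 1 (Endomorphism algebra is one-dimensional).} Since $v \in \eve$, \fullref{proposition:multiplicities} gives $\supp[v]=\{v\}$ and $\tmod(v-1)$ is simple. Consequently $\pqjw[v-1]=\qjw[v-1]$ by \eqref{eq:pljwdef}, and $\pjw[v-1]$ enjoys the classical JW absorption properties indicated by a white box in \eqref{eq:danger}. In particular, $\End_{\tilt[{\kk,\qpar}]}(\pjw[v-1])=\kk\cdot\pjw[v-1]$, so there exists a unique scalar $c\in\kk$ with $\dmor{a\ppar^{(k)}-1,b\ppar^{(t)}-1}{v-1}{v-1}=c\cdot\pjw[v-1]$.

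\textbf{Step 2 (Generic computation via the theta value).} To determine $c$, lift the diagram to the semisimple setting $\TL[\kkv,\vpar]$, where $\mchar=(\ppar,\infty)$ and every projector is an honest JW projector; here $\pjw[a\ppar^{(k)}-1]$, $\pjw[b\ppar^{(t)}-1]$ and $\pjw[v-1]$ all coincide with their classical counterparts. Closing the digon morphism by a cap on top and a cup at the bottom computes its categorical trace: on the left this produces $\Theta(a\ppar^{(k)}-1,b\ppar^{(t)}-1,v-1)$ by \cite[Chapter 6]{KaLi-TL-recoupling}, while on the right it yields $\tilde c\cdot\dim_{\tilt[{\kkv,\vpar}]}\bigl(\tmod(v-1)\bigr)=\tilde c\cdot(-1)^{v-1}\qnum{v}{\vpar}$ via \fullref{proposition:generic-jw-properties}. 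Solving gives $\tilde c=\twebstild$.

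\textbf{Step 3 (Specialization).} By \fullref{theorem:well-defined}, the projectors $\pjw[a\ppar^{(k)}-1]$, $\pjw[b\ppar^{(t)}-1]$ and $\pjw[v-1]$ lift to the localization $\kklocal$ from the proof of that theorem, and hence so does the digon morphism. Applying the specialization functor $\mathcal{F}_{\vpar\mapsto\qpar}$ to the identity $\dmor{}{v-1}{v-1}=\twebstild\cdot\pjw[v-1]$ over $\kklocal$ then gives the claimed formula over $\kk$, provided the scalar $\twebstild$ lies in $\kklocal$.

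\textbf{Main obstacle.} The key subtlety is verifying that $\twebstild$ has no pole at $\vpar=\qpar$. Directly, the denominator $\qbin{m+x}{x}{\vpar}\qbin{n+x}{x}{\vpar}=\qbin{a\ppar^{(k)}-1}{x}{\vpar}\qbin{b\ppar^{(t)}-1}{x}{\vpar}$ of \eqref{eq:theta-scalars1} can certainly vanish at $\vpar=\qpar$ (for instance when $a\ppar^{(k)}-1\geq\lpar$), and one must check that the numerator $\qbin{m+n+x+1}{x}{\vpar}$ vanishes to at least the same order. This can be established by unravelling the $\ppar\lpar$-adic valuations $\ord$ of the three quantum binomials via \fullref{proposition:qlucas}, using the constraint that $v$ is an eve in $\supp[{a\ppar^{(k)}-1}]\cap\supp[{b\ppar^{(t)}-1}]$ appearing in \eqref{eq:fusion-eve-eve}. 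Alternatively, and more conceptually, the regularity is automatic: since $\dmor{}{v-1}{v-1}$ and $\pjw[v-1]$ both admit $\kklocal$-lifts and the proportionality constant is unique in the generic fibre $\kkv$, the constant itself must lie in $\kklocal$.
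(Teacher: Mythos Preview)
Your proof is correct and follows essentially the same route as the paper: lift to the semisimple setting $\TL[\kkv,\vpar]$ where all three projectors are classical JW projectors, identify the proportionality constant by taking the trace to obtain the theta value, and then specialize. The paper's treatment is terser---it simply asserts ``It follows that this scalar specializes to a well-defined scalar $\twebs$ in $\kk$''---whereas you spell out the specialization step carefully via the localization $\kklocal$; your conceptual argument at the end (that the digon and $\pjw[v{-}1]$ both lift to $\kklocal$ and the endomorphism space there is free of rank one, forcing $\twebstild\in\kklocal$) is exactly the justification the paper leaves implicit.
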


\begin{proof} 
We consider the analogous digon morphism in $\kkv$. (This is possible since all of the
involved projectors are simple JW projectors). Abusing notation, we find that
\begin{gather*}
\dmor{a\ppar^{(k)}-1,b\ppar^{(t)}-1}{v-1}{v-1}
=c\cdot\qjw[v{-}1]
\end{gather*} 
for some scalar $c\in\kkv$. Now taking the trace on both sides gives
\begin{gather*}
\Theta(a\ppar^{(k)}-1,b\ppar^{(t)}-1,v-1)=c(-1)^{v-1}\qnum{v}{\vpar},
\end{gather*} 
which implies $c=\twebstild$. It follows that this scalar specializes to a
well-defined scalar $\twebs$ in $\kk$ and the claim follows.
\end{proof}

\begin{proposition}\label{proposition:nonzeroscalar} 
Suppose
$0\leq a\ppar^{(k)}+b\ppar^{(t)}-v-1$ is even. Then
$\ord\big(\twebstild\big)\geq 0$, so $\twebstild$ descends to a well-defined
scalar $\twebs\in\kk$. Moreover, $\ord\big(\twebstild\big)=0$, and thus
$\twebs\neq 0$ if and only if $\tmod(v-1)$ appears as a summand in
$\tmod(a\ppar^{(k)}-1)\hcirc\tmod(b\ppar^{(t)}-1)$.  
\end{proposition}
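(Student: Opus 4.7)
The plan is to prove both assertions via a direct computation of $\ppar\lpar$-adic valuations using quantum Lucas' theorem (\fullref{proposition:qlucas}). First, I would rewrite $\twebstild$ symmetrically by exploiting the identity $a\ppar^{(k)} + b\ppar^{(t)} = v + 2x + 1$, which gives $m+n+x+1 = v+x$, $m+x = a\ppar^{(k)}-1$, and $n+x = b\ppar^{(t)}-1$:
\begin{gather*}
\twebstild = (-1)^{x}\, \frac{\qbin{v+x}{x}{\vpar}}{\qbin{a\ppar^{(k)}-1}{x}{\vpar}\,\qbin{b\ppar^{(t)}-1}{x}{\vpar}}.
\end{gather*}
The numbers $a\ppar^{(k)}-1$ and $b\ppar^{(t)}-1$ have an exceptionally rigid $\ppar\lpar$-adic expansion, with maximal digits ($\ppar-1$, or $\lpar-1$ at the zeroth slot) at positions strictly below $k$ (respectively $t$), leading digits $a-1$ (respectively $b-1$) at position $k$ (respectively $t$), and zero digits above.

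Then, I would apply \fullref{proposition:qlucas} to each of the three binomials, expressing them digit by digit as products of classical binomials together with a single quantum binomial in the zeroth slot, indexed by the $\ppar\lpar$-adic digits of $x$ and $v+x$. In the denominator, the digit-wise factors at positions strictly between $0$ and $k$ (respectively $t$) have the form $\binom{\ppar-1}{x_i}$, which is invertible in $\kk$, and $\qbin{\lpar-1}{x_0}{\qpar}$ at the zeroth digit is also non-zero in $\kk$; only the two leading factors $\binom{a-1}{x_k}$ and $\binom{b-1}{x_t}$ at the eves' leading positions can vanish. For the numerator, Lucas brings in the digits of $v+x$, which are determined from those of $v$ and $x$ together with the carries that propagate in the $\ppar\lpar$-adic addition. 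The resulting digit-wise ratio should show $\ord(\twebstild) \geq 0$ uniformly: each contribution is either an integer binomial with non-negative $\ppar$-adic valuation or a quantum binomial with non-negative $\lpar$-adic valuation at the zeroth digit.

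Finally, I would match the condition $\ord(\twebstild) = 0$ against the summand list in \fullref{proposition:eve-times-eve}. For $k>t$, this forces $x_\ell = 0$ for $\ell > k$, $x_k \leq a-1$, $x_t \leq b-1$, and no carries in $v+x$; a direct check then pins the digits of $v$ to the form $\plbase{a,0,\dots,0,b-1-2i_t,\ppar-1-2i_{t-1},\dots,\lpar-1-2i_0}-1$ from \eqref{eq:fusion-eve-eve}. For $k=t$, an additional Clebsch--Gordan step at position $k$ in the variable $\vpar^{\ppar^{(k)}}$ (invoking \fullref{lemma:clebsch-gordan} exactly as in the proof of \fullref{proposition:eve-times-eve}) recovers both \eqref{eq:eve-fusion-second} and \eqref{eq:eve-fusion-third}. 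The main obstacle is the careful digit-level bookkeeping, especially tracking the carry structure in the $\ppar\lpar$-adic addition $v+x$ and handling the subcase analysis when $k=t$; the sign $\lucas$ from quantum Lucas' theorem only affects the overall sign of $\twebs$ and not whether it vanishes.
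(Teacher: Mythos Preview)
Your proposal is correct and follows essentially the same route as the paper: rewrite $\twebstild$ in terms of $\qbin{v+x}{x}{\vpar}$ over $\qbin{a\ppar^{(k)}-1}{x}{\vpar}\qbin{b\ppar^{(t)}-1}{x}{\vpar}$, apply quantum Lucas' theorem to all three binomials, observe that the denominator has all digit-wise factors of the form $\binom{\plpar-1}{x_i}$ or $\qbin{\lpar-1}{x_0}{\qpar}$ (hence nonzero) together with $\binom{a-1}{x_k}$ and $\binom{b-1}{x_t}$ (automatically nonzero since $x\leq b\ppar^{(t)}-1$ forces $x_t\leq b-1$ and $x_k=0$ when $k>t$), and then analyze the numerator digit by digit against the explicit list from \fullref{proposition:eve-times-eve}.

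One small remark on efficiency: you frame the numerator analysis in terms of tracking carries in the addition $v+x$, which is workable but more laborious than necessary. The paper instead starts from the parametrization of summands in \eqref{eq:fusion-eve-eve}--\eqref{eq:eve-fusion-third}, reads off that $x=\plbase{i_t,\dots,i_0}$ with each $i_j\leq\tfrac{\plpar_j-1}{2}$, and then computes $v+x$ directly as $\plbase{a,0,\dots,0,b-1-i_t,\ppar-1-i_{t-1},\dots,\lpar-1-i_0}$---no carries occur precisely because the $i_j$ are in the indicated range. This makes the Lucas factorization of $\qbin{v+x}{x}{\qpar}$ transparently nonzero. For the converse direction (numerator vanishes when $\tmod(v-1)$ is not a summand), one simply notes that some $i_j$ leaves its range, forcing the corresponding digit-wise binomial to be zero.
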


Note that we have no assumptions on $\generation[v]$ in \fullref{proposition:nonzeroscalar}.

\begin{proof}
Throughout the proof we will use the quantities $x$, $m$, and $n$, defined as
above. 

First, we shall argue that $\twebstild$ descends to $\kk$ since the denominator
$\qbin{m+x}{x}{\vpar}\qbin{n+x}{x}{\vpar}$ in \eqref{eq:theta-scalars1} does not
vanish upon specializing from $\vpar$ to $\qpar$. Let us expand
$x=\plbase{x_{k},\dots,x_{0}}=\plbase{x_{t},\dots,x_{0}}$ (allowing zeros on the
left). As both $a\ppar^{(k)}=(m+x)+1$ and $b\ppar^{(t)}=(n+x)+1$ are eves, the quantum Lucas' theorem 
\fullref{proposition:qlucas} implies
\begin{gather*}
\qbin{m+x}{x}{\qpar}
=\lucas\qbin{a-1}{x_{k}}{\qpar}\qbin{\ppar-1}{x_{k-1}}{\qpar}
\dots\qbin{\lpar-1}{x_{0}}{\qpar}\neq 0\quad\text{and}\quad 
\qbin{n+x}{x}{\qpar}
=\lucas\qbin{b-1}{x_{t}}{\qpar}\qbin{\ppar-1}{x_{t-1}}{\qpar}
\dots\qbin{\lpar-1}{x_{0}}{\qpar}\neq 0,
\end{gather*}
since all digits of $(m+x)$ and $(n+x)$ are greater than or equal to the corresponding
digit of $x$.

Next we assume that $\tmod(v-1)$ is a summand of $\tmod(a\ppar^{(k)}-1)\hcirc\tmod(b\ppar^{(t)}-1)$.  
To show $\twebs\neq 0$, first assume $k>t$ and $\tmod(v-1)$ takes the form as a
typical summand in \eqref{eq:fusion-eve-eve}. The relevant $\ppar\lpar$-adic
expansions in $\twebs$ are given by
\begin{align*}
v+x
&=\plbase{a,0,\dots,0,b-1-x_{t},\ppar-1-x_{t-1},\dots,\lpar-1-x_{0}},
\\
v&=\plbase{a,0,\dots,0,b-1-2x_{t},\ppar-1-2x_{t-1},\dots,\lpar-1-2x_{0}}.
\end{align*}
The digits of $x$ are constrained by $x_{t}\leq\tfrac{b-1}{2}$ and
$x_{j}\leq\tfrac{\plpar-1}{2}$ for $0\leq j\leq t-1$. 
Consequently we also have $x_{t}\leq b-1-x_{t}$, 
$x_{j}\leq\plpar-1-x_{j}$ for $0\leq j\leq t-1$. 
The quantum Lucas' theorem with $m+n+x+1=v+x$ thus implies
\begin{gather*}
\qbin{v+x}{x}{\qpar}=
\qbin{m+n+x+1}{x}{\qpar}=\lucas\qbin{a}{0}{\qpar}\qbin{0}{0}{\qpar}
\dots\qbin{0}{0}{\qpar}\qbin{b-1-x_{t}}{x_{t}}{\qpar}\qbin{\ppar-1-x_{t-1}}{x_{t-1}}{\qpar}\dots
\qbin{\lpar-1-x_{0}}{x_{0}}{\qpar}\neq 0.
\end{gather*}
Now suppose $k=t$, where we again want to use the quantum Lucas' theorem. To
this end, observe that all digits below the $k$th one behave as in the  
$k>t$ case.
So let us focus on the digit of $m+n+x+1$ which involves $x_{k}$. 
For \eqref{eq:eve-fusion-second}, this digit is 
$a+b-x_{k}-1$, which is bigger or equal to $x_{k}$ because $2x_{k}\leq a+b-1$.
For \eqref{eq:eve-fusion-third}, this digit for 
the $x$-sum therein 
is $|a-b|+3x_{k}-1$ for $x_{k}$ at least $1$, 
which is clearly bigger or equal to $x_{k}$. 
For the $y$-sum, this digit is $a+b-\plpar-x_{k}-1$, which is 
bigger or equal to $x_{k}$ by the
allowed range for $x_{k}$ which gives $2x_{k}\leq a+b-\plpar-1$.

For the final part of this proof, we assume that $\tmod(v-1)$ is not a summand of
$\tmod(a\ppar^{(k)}-1)\hcirc\tmod(b\ppar^{(t)}-1)$. To see that the scalar
vanishes in this case, we again use the quantum Lucas' theorem. In particular, the
relevant calculations of $v+x$ and $v$ stay the same, but now at least one of the
digits of $x$ leaves the specified range and the corresponding (quantum)
binomial in the factorization for $\qbin{v+x}{x}{\qpar}$ vanishes.
\end{proof}

We now return to the task of computing digon expansions
\eqref{eq:ansatz-digon}. Suppose that $\generation[v]=1$ with corresponding
minimal down-admissible stretch $S$, which reflects down along $c=d\ppar^{(i)}$.
Then we define another scalar in $\kkv$ by 
\begin{gather}\label{eq:theta-scalars2}
\begin{aligned}
\twebsstild
=&
\scalebox{0.85}{$(-1)^{c+x}\left(-\frac{\qnum{m+n-2c+1}{\vpar}}{\qnum{m+n-c+1}{\vpar}}
\frac{\qbin{m+n+x+1}{x}{\vpar}}
{\qbin{m+x}{x}{\vpar}\qbin{n+x}{x}{\vpar}}
+
\big(\frac{\qnum{m+n-2c+1}{\vpar}!\qnum{m}{\vpar}!}
{\qnum{m+n-c+1}{\vpar}!\qnum{m-c}{\vpar}!}\big)^{2}
\frac{\qbin{m+n+x-c+1}{x+c}{\vpar}}
{\qbin{m+x}{x+c}{\vpar}\qbin{n+x}{x+c}{\vpar}}
\right)$}.
\end{aligned}
\end{gather}

\begin{theorem}\label{theorem:eve-times-eve}
Retaining notation as above and for $\generation[v]=1$, we have
\begin{gather}\label{eq:theta-theorem}
\dmor{a\ppar^{(k)}-1,b\ppar^{(t)}-1}{v-1}{v-1}
=
\twebs
\cdot
\pjw[v{-}1]
+
\twebss
\cdot
\loopdown{S}{v{-}1},
\end{gather}
where $\twebss$ is obtained from $\twebsstild$ by specializing $\vpar$ to $\qpar$.
\end{theorem}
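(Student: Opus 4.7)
The proof proceeds in three steps. First, by \eqref{eq:ansatz-digon} applied to $v$ of generation one, the digon morphism decomposes uniquely as $\digscalar{v,\emptyset}{a\ppar^{(k)},b\ppar^{(t)}}\pjw[v-1] + \digscalar{v,S}{a\ppar^{(k)},b\ppar^{(t)}}\loopdown{S}{v-1}$; the task is to identify these scalars as $\twebs$ and $\twebss$, respectively.

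Second, following the strategy of \fullref{lemma:ssdigon}, I work in $\TL[\kkv,\vpar]$, where $\pqjw[v-1] = \qjw[v-1] + \lambda_{v,S}\loopdowngen{S}{v-1}$ is a sum of orthogonal idempotents corresponding to the simple summands $\wmod(v-1)$ and $\wmod(v[S]-1)$, and the digon commutes with these central idempotents. Writing $X$ for the middle part of the digon (the $\qjw[a\ppar^{(k)}-1]\otimes\qjw[b\ppar^{(t)}-1]$ projector sandwiched with $x$-thick cups and caps), I expand $\pqjw[v-1] X \pqjw[v-1]$ and observe that the cross terms $\qjw[v-1] X \loopdowngen{S}{v-1}$ and $\loopdowngen{S}{v-1} X \qjw[v-1]$ vanish, since in the semisimple category they induce morphisms between non-isomorphic Weyl modules. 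For the diagonal terms, \fullref{lemma:ssdigon} applied at label $v-1$ gives $\qjw[v-1] X \qjw[v-1] = \twebstild\,\qjw[v-1]$; applied at label $v[S]-1$ with cup and cap thickness $x+c$ (where $c$ is determined by $v[S]=v-2c$) it gives $\qjw[v[S]-1]\downo_S X \upo_S \qjw[v[S]-1] = \twebstild[v[S]]\,\qjw[v[S]-1]$, whence $\loopdowngen{S}{v-1} X \loopdowngen{S}{v-1} = \twebstild[v[S]]\,\loopdowngen{S}{v-1}$. Summing yields the semisimple identity
\begin{equation*}
\dmor{a\ppar^{(k)}-1,b\ppar^{(t)}-1}{v-1}{v-1} = \twebstild\,\qjw[v-1] + \lambda_{v,S}^{2}\twebstild[v[S]]\,\loopdowngen{S}{v-1}.
\end{equation*}

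Third, I rewrite in the basis $\{\pqjw[v-1],\loopdownqgen{S}{v-1}\}$ via the unitriangular change of basis $\qjw[v-1] = \pqjw[v-1] - \lambda_{v,S}\loopdownqgen{S}{v-1}$ from the generation-one case of the example following \fullref{proposition:cellular}, obtaining
\begin{equation*}
\dmor{a\ppar^{(k)}-1,b\ppar^{(t)}-1}{v-1}{v-1} = \twebstild\,\pqjw[v-1] + \lambda_{v,S}\big(\lambda_{v,S}\twebstild[v[S]] - \twebstild\big)\,\loopdownqgen{S}{v-1}.
\end{equation*}
Expanding $\lambda_{v,S}$ using the partial-trace normalization implicit in \fullref{lemma:trapezes} and applying the identity $\twebstild[v[S]] = (-1)^{x+c}\,\qbinn{m+n+x-c+1}{x+c}{\vpar}/\big(\qbinn{m+x}{x+c}{\vpar}\,\qbinn{n+x}{x+c}{\vpar}\big)$, obtained by substituting $x\mapsto x+c$, $m\mapsto m-c$, $n\mapsto n-c$ into \eqref{eq:theta-scalars1}, the coefficient of $\loopdownqgen{S}{v-1}$ matches $\twebsstild$ in \eqref{eq:theta-scalars2}. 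Specialization from $\TL[\kkv,\vpar]$ to $\TL[\kk,\qpar]$ is well-defined by \fullref{theorem:well-defined} together with the $\ppar\lpar$-adic valuation estimates underlying \fullref{proposition:nonzeroscalar}, giving \eqref{eq:theta-theorem}.

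The main obstacle is the last simplification: reconciling $\lambda_{v,S}(\lambda_{v,S}\twebstild[v[S]] - \twebstild)$ with the compact form \eqref{eq:theta-scalars2} requires careful bookkeeping of the closed-form expression for $\lambda_{v,S}$ as a product of ratios of quantum factorials (whose square accounts for the factor $\bigl(\tfrac{\qnum{m+n-2c+1}{\vpar}!\,\qnum{m}{\vpar}!}{\qnum{m+n-c+1}{\vpar}!\,\qnum{m-c}{\vpar}!}\bigr)^{2}$) and tracking the sign $(-1)^{c+x}$. This is analogous to the verification of nonvanishing in \fullref{proposition:nonzeroscalar}, but applied to the secondary loop coefficient rather than the leading theta value.
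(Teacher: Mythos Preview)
Your overall architecture is the same as the paper's: expand $\pqjw[v{-}1]$ as a sum of two orthogonal idempotents, observe that the cross terms vanish, evaluate the two diagonal terms, and then change basis. The gap is in your evaluation of the second diagonal term.

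You claim that $\qjw[v[S]{-}1]\,\downo{S}\,X\,\upo{S}\,\qjw[v[S]{-}1]=\twebstild[{v[S]}]\cdot\qjw[v[S]{-}1]$, i.e.\ that sandwiching the middle piece $X$ between $\downo{S}$ and $\upo{S}$ produces precisely the standard digon for $v[S]$ with cup/cap thickness $x+c$. It does not. In the diagram one actually obtains, the additional $c$ strands coming from $\downo{S}$ and $\upo{S}$ attach to the \emph{outside} of the $b\ppar^{(t)}$-eve projector (they wrap around it), whereas the standard digon for $v[S]$ has the full $(x+c)$-thick cup/cap sitting \emph{between} the two eve projectors. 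Converting the former picture into the latter is exactly the content of \fullref{lemma:kl-simplification}, applied once at the top and once at the bottom; each application contributes a factor $\tfrac{\qnum{m+n-2c}{\vpar}!\,\qnum{m}{\vpar}!}{\qnum{m+n-c}{\vpar}!\,\qnum{m-c}{\vpar}!}$, and the resulting square is what appears in \eqref{eq:theta-scalars2}.

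Your final paragraph tries to recover this factor by asserting that $\lambda_{v,S}^{2}$ ``accounts for'' the squared factorial ratio, but this is false: for generation one we have $\lambda_{v,S}=(-1)^{c}\tfrac{\qnum{m+n-2c+1}{\vpar}}{\qnum{m+n-c+1}{\vpar}}$, a ratio of single quantum numbers, so $\lambda_{v,S}^{2}$ alone cannot produce a ratio of quantum factorials. With only $\lambda_{v,S}^{2}\twebstild[{v[S]}]$ as the second diagonal contribution, the loop coefficient you obtain, $\lambda_{v,S}\big(\lambda_{v,S}\twebstild[{v[S]}]-\twebstild\big)$, does not match \eqref{eq:theta-scalars2}. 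The missing ingredient is precisely \fullref{lemma:kl-simplification}; once you insert it, the rest of your argument goes through as in the paper.
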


The proof appears below.

\begin{example}
For characteristic $\ppar=3$, \fullref{proposition:eve-times-eve} gives 
\[
\tmod(8)\hcirc\tmod(8)\cong\tmod(16)\oplus\tmod(14)
\oplus\tmod(10)\oplus\tmod(8)
\]
Let $v=15$, then $m=n=7$, 
$x=1$ and $c=6$, so that
\begin{gather*}
\twebss
=
(-1)^{7}
\left(-\frac{3}{9}
\frac{\binom{16}{1}}
{\binom{8}{1}\binom{8}{1}}
+
\big(\tfrac{3!7!}
{9!1!}\big)^{2}
\frac{\binom{10}{7}}
{\binom{8}{7}\binom{8}{7}}
\right)
=\tfrac{1}{2^{2}3}
-\tfrac{5}{2^{7}3}
=\tfrac{3^{3}}{2^{7}3}
=0.
\end{gather*}
In general, when the scalar $\twebss$ is zero, the 
corresponding digon morphism can be written as a single summand that depends only on the scalar $\twebs$. In our case,
\begin{gather*}
\dmor{8,8}{14}{14}
=
\begin{tikzpicture}[anchorbase,scale=0.25,tinynodes]
\draw[JW] (-2.5,3) rectangle (2.5,5);
\node at (0,3.8) {$\pjwm[8]$};
\draw[JW] (3.5,3) rectangle (8.5,5);
\node at (6,3.8) {$\pjwm[8]$};
\draw[pJW] (0.5,-1) rectangle (5.5,1);
\node at (3,-0.2) {$\pjwm[14]$};
\draw[pJW] (0.5,7) rectangle (5.5,9);
\node at (3,7.8) {$\pjwm[14]$};
\draw[usual] (2,5) to[out=90,in=180] (3,6)node[above,xshift=0.15cm,yshift=-0.15cm]{$1$} to[out=0,in=90] (4,5);
\draw[usual] (-1.5,5) to[out=90,in=270] (1.5,7)node[left,xshift=-0.2cm,yshift=-0.15cm]{$7$};
\draw[usual] (7.5,5) to[out=90,in=270] (4.5,7)node[right,xshift=0.2cm,yshift=-0.15cm]{$7$};
\draw[usual] (2,3) to[out=270,in=180] (3,2)node[below,xshift=0.15cm,yshift=0cm]{$1$} to[out=0,in=270] (4,3);
\draw[usual] (-1.5,3) to[out=270,in=90] (1.5,1)node[left,xshift=-0.2cm,yshift=0.00cm]{$7$};
\draw[usual] (7.5,3) to[out=270,in=90] (4.5,1)node[right,xshift=0.2cm,yshift=0.00cm]{$7$};
\end{tikzpicture}
=
\twebs
\cdot
\pjw[14]
=\tfrac{1}{4}
\cdot
\pjw[14]
=\pjw[14].
\end{gather*}
\end{example}

For the proof of \fullref{theorem:eve-times-eve} we need the following lemma
about simple JW projectors. We were not able to find it in the literature, but
it is probably known.

\begin{lemma}\label{lemma:kl-simplification}
For all $a,b,c,d\in\N[0]$ with $b\geq c$, we have the following relation in $\TL[\kkv,\vpar]$:
\begin{gather*}
\begin{tikzpicture}[anchorbase,scale=0.25,tinynodes]
\draw[JW] (3.5,3) rectangle (7.5,5);
\node at (5.5,3.8) {$\pjwm[a{+}c{+}d]$};
\draw[JW] (0.5,7) rectangle (5.5,9);
\node at (3,7.8) {$\pjwm[a{+}b]$};
\draw[JW] (3,3) rectangle (-1,5);
\node at (1,3.8) {$\pjwm[b{+}d]$};
\draw[usual] (1.5,5)node[left,yshift=0.175cm]{$b$} to[out=90,in=270] (1.5,7);
\draw[usual] (5.2,5)node[right,xshift=-0.1cm,yshift=0.175cm]{$a$} to[out=90,in=270] (4.5,7);
\draw[usual] (4.5,9) to[out=90,in=180] (5.5,9.5) to[out=0,in=90] (6.5,9)node[right,yshift=-0.5cm]{$c$} to (6.5,5);
\draw[usual] (4.5,5) to[out=90,in=0] (3.5,5.5)node[above,yshift=-0.1cm]{$d$} to[out=180,in=90] (2.5,5);
\end{tikzpicture}
=
\tfrac{\qnum{a+b-c}{\vpar}!\qnum{b}{\vpar}!}
{\qnum{a+b}{\vpar}!\qnum{b-c}{\vpar}!}
\cdot
\begin{tikzpicture}[anchorbase,scale=0.25,tinynodes]
\draw[JW] (3.5,3) rectangle (7.5,5);
\node at (5.5,3.8) {$\pjwm[a{+}c{+}d]$};
\draw[JW] (0.5,7) rectangle (5.5,9);
\node at (3,7.8) {$\pjwm[a{+}b{-}c]$};
\draw[JW] (3,3) rectangle (-1,5);
\node at (1,3.8) {$\pjwm[b{+}d]$};
\draw[usual] (1.5,5)node[left,yshift=0.175cm]{$b{-}c$} to[out=90,in=270] (1.5,7);
\draw[usual] (6.5,5)node[right,xshift=-0.1cm,yshift=0.175cm]{$a$} to[out=90,in=270] (4.5,7);
\draw[usual] (4.5,5) to[out=90,in=0] (3.5,5.5)node[above,yshift=-0.1cm]{$c{+}d$} to[out=180,in=90] (2.5,5);
\end{tikzpicture}
.
\end{gather*}
\end{lemma}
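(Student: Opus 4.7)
My plan is to prove this identity by induction on $c \geq 0$. The base case $c = 0$ is immediate: both diagrams coincide and the scalar factor $\qnum{a+b}{\vpar}!\qnum{b}{\vpar}!/(\qnum{a+b}{\vpar}!\qnum{b}{\vpar}!)$ equals $1$.

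For the inductive step $c \geq 1$, I will reduce the $c$-case to the $(c-1)$-case by peeling off a single strand from the $c$-bundle connecting the top projector $\pjw[a+b]$ to the right projector $\pjw[a+c+d]$. To do so, I apply Wenzl's recursion \eqref{eq:jw-recursion} to the top projector $\pjw[a+b]$ along its rightmost strand (which is the leftmost strand of the $c$-bundle). This produces an "identity" term $\pjw[a+b-1]\hcirc\idmor_1$ together with a cap--cup correction weighted by $-1/\funcG[*]$. A careful diagrammatic analysis, using the absorption properties of $\pjw[a+c+d]$ and $\pjw[b+d]$, shows that one of these two terms vanishes: either the identity term creates a configuration that reduces (after planar isotopy) to a cup on adjacent strands of a JW projector and vanishes by \eqref{eq:0kill}, or the cap--cup correction does so. The surviving term matches the $(c-1)$-configuration attached to a small self-loop on $\pjw[a+c+d]$, which collapses via the partial trace formula \eqref{eq:0trace}, producing the explicit factor $\qnum{b-c+1}{\vpar}/\qnum{a+b-c+1}{\vpar}$.

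Telescoping this factor over $c$ steps yields
\[
\prod_{i=1}^{c} \frac{\qnum{b-i+1}{\vpar}}{\qnum{a+b-i+1}{\vpar}} = \frac{\qnum{a+b-c}{\vpar}!\,\qnum{b}{\vpar}!}{\qnum{a+b}{\vpar}!\,\qnum{b-c}{\vpar}!},
\]
which matches the scalar in the statement.

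The main obstacle will be the diagrammatic bookkeeping: determining which recursion term vanishes and which survives, and verifying that the surviving term — after the planar isotopy needed to express the $c$-cap with one fewer strand, plus the application of \eqref{eq:0trace} — produces precisely the claimed quantum-number factor. Similar recoupling manipulations appear throughout \cite{KaLi-TL-recoupling} and should be directly adaptable to our situation; nonetheless, the signs and the exact scalar require careful verification. Alternatively, one could bypass induction by observing that in the semisimple category $\tilt[\kkv,\vpar]$ both sides lie in the one-dimensional hom-space $\Hom(\wmod(b+d-1)\hcirc\wmod(a+c+d-1),\wmod(a+b-c-1))$ guaranteed by Clebsch--Gordan, and then compute the ratio of scalars directly via theta-net evaluations; but the inductive approach keeps the proof elementary.
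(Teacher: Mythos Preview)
Your strategy---apply the Wenzl recursion to $\qjw[a+b]$, observe that one of the two resulting terms is killed by \eqref{eq:0kill}, and iterate---is exactly the paper's approach. The paper also first reduces to $d=0$ by absorption \eqref{eq:0absorb}, which you should do as well; it costs nothing and cleans up the picture.

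However, your description of the inductive step is not correct. After a single application of the recursion to $\qjw[a+b]$, the identity term $\qjw[a+b-1]\otimes\idtl[1]$ does vanish (the freed rightmost strand connects position $a$ to position $a+1$ of $\qjw[a+c+d]$, so \eqref{eq:0kill} applies). But the surviving cap--cup correction term, with coefficient $\tfrac{\qnum{a+b-1}{\vpar}}{\qnum{a+b}{\vpar}}$, is \emph{not} the $(c-1)$-configuration with a mere self-loop on $\qjw[a+c+d]$: it contains \emph{two} copies of $\qjw[a+b-1]$ sandwiching the cup--cap, each connected separately to $\qjw[a+c+d]$. There is no single partial trace that collapses this to the $(c{-}1)$-instance of the original diagram, and the factor $\tfrac{\qnum{b-c+1}{\vpar}}{\qnum{a+b-c+1}{\vpar}}$ you state does not arise from \eqref{eq:0trace} applied to $\qjw[a+c+d]$. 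The paper handles this by expanding \emph{both} $\qjw[a+b-1]$ boxes again (in each, the identity term again dies by \eqref{eq:0kill}), obtaining a diagram of the same overall shape but with smaller projectors, together with a partial trace contribution from \eqref{eq:0trace}; iterating this is what produces the correct scalar. Your telescoping product is the right target, but the mechanism you describe for producing each factor does not work as stated.

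Two minor corrections: the rightmost strand of $\qjw[a+b]$ is the \emph{rightmost} (not leftmost) strand of the $c$-bundle; and your alternative via one-dimensionality of the relevant hom-space plus theta-net evaluation is perfectly valid and arguably cleaner---the paper chose the recursive route.
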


\begin{proof}
By \eqref{eq:0absorb} 
it suffices to prove the result for $d=0$.
The cases with $c=0$ or $a=0$ are trivial, 
and for $c>0$ we apply the JW recursion
and \eqref{eq:0kill} to get
\begin{gather*}
\begin{tikzpicture}[anchorbase,scale=0.25,tinynodes]
\draw[JW] (3.5,3) rectangle (7.5,5);
\node at (5.5,3.8) {$\pjwm[a{+}c]$};
\draw[JW] (0.5,7) rectangle (5.5,9);
\node at (3,7.8) {$\pjwm[a{+}b]$};
\draw[JW] (3,3) rectangle (-1,5);
\node at (1,3.8) {$\pjwm[b]$};
\draw[usual] (1.5,5)node[left,yshift=0.175cm]{$b$} to[out=90,in=270] (1.5,7);
\draw[usual] (4.5,5)node[left,yshift=0.175cm]{$a$} to[out=90,in=270] (4.5,7);
\draw[usual] (4.5,9) to[out=90,in=180] (5.5,9.5) to[out=0,in=90] (6.5,9)node[right,yshift=-0.5cm]{$c$} to (6.5,5);
\end{tikzpicture}
=
\begin{tikzpicture}[anchorbase,scale=0.25,tinynodes]
\draw[JW] (3.5,3) rectangle (7.5,5);
\node at (5.5,3.8) {$\pjwm[a{+}c]$};
\draw[JW] (0.5,7) rectangle (5.5,9);
\node at (3,7.8) {$\pjwm[a{+}b{-}1]$};
\draw[JW] (3,3) rectangle (-1,5);
\node at (1,3.8) {$\pjwm[b]$};
\draw[usual] (1.5,5)node[left,yshift=0.175cm]{$b$} to[out=90,in=270] (1.5,7);
\draw[usual] (4.5,5)node[left,xshift=0.05cm,yshift=0.175cm]{$a{-}1$} to[out=90,in=270] (4.5,7);
\draw[usual] (4.5,9) to[out=90,in=180] (5.5,9.5) to[out=0,in=90] (6.5,9)node[right,yshift=-0.5cm]{$c{-}1$} to (6.5,5);
\draw[usual] (5,5) to[out=90,in=180] (5.5,5.5) to[out=0,in=90] (6,5);
\end{tikzpicture}
+\tfrac{\qnum{a+b-1}{\vpar}}{\qnum{a+b}{\vpar}}\cdot
\begin{tikzpicture}[anchorbase,scale=0.25,tinynodes]
\draw[JW] (3.5,3) rectangle (7.5,5);
\node at (5.5,3.8) {$\pjwm[a{+}c]$};
\draw[JW] (0.5,7) rectangle (5,8);
\node at (2.75,7.3) {$\pjwm[a{+}b{-}1]$};
\draw[JW] (0.5,10) rectangle (5,11);
\node at (2.75,10.3) {$\pjwm[a{+}b{-}1]$};
\draw[JW] (3,3) rectangle (-1,5);
\node at (1,3.8) {$\pjwm[b]$};
\draw[usual] (1.5,5)node[left,yshift=0.175cm]{$b$} to[out=90,in=270] (1.5,7);
\draw[usual] (4.5,5)node[left,xshift=0.05cm,yshift=0.175cm]{$a{-}1$} to[out=90,in=270] (4.5,7);
\draw[usual] (4.5,11) to[out=90,in=180] (5.5,11.5) to[out=0,in=90] (6.5,11)node[right,yshift=-1.0cm]{$c{-}1$} to (6.5,5);
\draw[usual] (1.5,8) to[out=90,in=270] (1.5,10);
\draw[usual] (5,5) to[out=90,in=270] (5.5,8) to[out=90,in=90] (4.5,8);
\draw[usual] (6,5) to[out=90,in=270] (6,8) to[out=90,in=270] (4.5,10);
\end{tikzpicture}
=
\tfrac{\qnum{a+b-1}{\vpar}}{\qnum{a+b}{\vpar}}\cdot
\begin{tikzpicture}[anchorbase,scale=0.25,tinynodes]
\draw[JW] (3.5,3) rectangle (7.5,5);
\node at (5.5,3.8) {$\pjwm[a{+}c]$};
\draw[JW] (0.5,7) rectangle (5,8);
\node at (2.75,7.3) {$\pjwm[a{+}b{-}1]$};
\draw[JW] (0.5,10) rectangle (5,11);
\node at (2.75,10.3) {$\pjwm[a{+}b{-}1]$};
\draw[JW] (3,3) rectangle (-1,5);
\node at (1,3.8) {$\pjwm[b]$};
\draw[usual] (1.5,5)node[left,yshift=0.175cm]{$b$} to[out=90,in=270] (1.5,7);
\draw[usual] (4.5,5)node[left,xshift=0.05cm,yshift=0.175cm]{$a{-}1$} to[out=90,in=270] (4.5,7);
\draw[usual] (4.5,11) to[out=90,in=180] (5.5,11.5) to[out=0,in=90] (6.5,11)node[right,yshift=-1.0cm]{$c{-}1$} to (6.5,5);
\draw[usual] (1.5,8) to[out=90,in=270] (1.5,10);
\draw[usual] (5,5) to[out=90,in=270] (5.5,8) to[out=90,in=90] (4.5,8);
\draw[usual] (6,5) to[out=90,in=270] (6,8) to[out=90,in=270] (4.5,10);
\draw[usual,tomato,densely dotted] (-1,9) to (9,9)
node[above,yshift=-0.05cm]{top}node[below,yshift=-0.05cm]{bottom};
\end{tikzpicture}
.
\end{gather*}
If $c=1$, then the top part is what we 
want, while the bottom can be simplified using induction. 
For $c>1$, we expand both $\qjw[a{+}b{-}1]$ once more.
In doing so, we obtain a diagram of the same type (up to partial trace, which we can remove up to a scalar using \eqref{eq:0trace}), but with smaller projectors.
This process is repeated until we obtain our desired result.
\end{proof}

\begin{lemma}\label{lemma:theta-scalar}
The scalar $\twebsstild$ as in \eqref{eq:theta-scalars2} 
can be written as
\begin{gather*}
\twebsstild
=
-\lambda_{v,S}\twebstild
+\lambda_{v,S}^{2}
\frac{\qnum{m+n-2c}{\vpar}!\qnum{m}{\vpar}!}
{\qnum{m+n-c}{\vpar}!\qnum{m-c}{\vpar}!}
\twebstild[v{-}2c]
.
\end{gather*}
\end{lemma}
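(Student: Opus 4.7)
The plan is to verify the claimed identity by a direct algebraic computation from the definitions of the scalars $\twebstild$, $\twebstild[v{-}2c]$, and $\twebsstild$ in \eqref{eq:theta-scalars1} and \eqref{eq:theta-scalars2}, without any diagrammatic input. The two terms of $\twebsstild$ on the right-hand side of \eqref{eq:theta-scalars2} naturally pair with the two terms of the claimed expression; the work is bookkeeping of signs and of the shift by one in various quantum factorials.

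First I would unpack $\twebsstild=(-1)^{c+x}(T_{1}+T_{2})$, where $T_{1}$ is the summand carrying $\qbin{m+n+x+1}{x}/(\qbin{m+x}{x}\qbin{n+x}{x})$ and $T_{2}$ carries $\qbin{m+n+x-c+1}{x+c}/(\qbin{m+x}{x+c}\qbin{n+x}{x+c})$. By definition of $\twebstild$ and $\twebstild[v{-}2c]$, these binomial ratios are precisely $(-1)^{-x}\twebstild$ and $(-1)^{-(x+c)}\twebstild[v{-}2c]$ (here recall $m'=m-c$, $n'=n-c$, $x'=x+c$ are the parameters attached to $v-2c$, so $m'+n'+x'+1=m+n+x-c+1$). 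After absorbing $(-1)^{c+x}$ into these, the first summand becomes $-(-1)^{c}\tfrac{\qnum{m+n-2c+1}}{\qnum{m+n-c+1}}\twebstild$ and the second summand becomes $\big(\tfrac{\qnum{m+n-2c+1}!\qnum{m}!}{\qnum{m+n-c+1}!\qnum{m-c}!}\big)^{2}\twebstild[v{-}2c]$, with no leftover signs.

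Next I would identify $\lambda_{v,S}$. Since $v$ has generation one and $S$ is the unique minimal down-admissible stretch, unwinding the definition $\lambda_{v,S}=\prod_{s\in S}(-1)^{a_{s}\ppar^{(s-1)}}\qnum{\fancest{v}{s-1}[S]}{\vpar}/\qnum{\fancest{v}{s}[S]}{\vpar}$ (using $m+n+1=v$, $v-c=\mother$, and the convention $\fancest{v}{-1}=v$) gives $\lambda_{v,S}=(-1)^{c}\tfrac{\qnum{v-2c}}{\qnum{v-c}}=(-1)^{c}\tfrac{\qnum{m+n-2c+1}}{\qnum{m+n-c+1}}$, where one uses $\ppar^{(s)}=\ppar\cdot\ppar^{(s-1)}$ and parity of $\ppar$ to reconcile $(-1)^{c}$ with $(-1)^{a_{s}\ppar^{(s-1)}}$. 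This matches the first summand exactly: $(-1)^{c+x}T_{1}=-\lambda_{v,S}\twebstild$.

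For the second summand I would factor the squared factorial ratio using the elementary identity $\qnum{N+1}!/\qnum{M+1}!=\tfrac{\qnum{N+1}}{\qnum{M+1}}\cdot \qnum{N}!/\qnum{M}!$, applied with $N=m+n-2c$, $M=m+n-c$. This splits off a factor of $\big(\tfrac{\qnum{m+n-2c+1}}{\qnum{m+n-c+1}}\big)^{2}=\lambda_{v,S}^{2}$ from $\big(\tfrac{\qnum{m+n-2c+1}!}{\qnum{m+n-c+1}!}\big)^{2}$ and leaves a residual $\big(\tfrac{\qnum{m+n-2c}!\qnum{m}!}{\qnum{m+n-c}!\qnum{m-c}!}\big)^{2}$, which delivers the second term of the claimed identity (modulo whether that residual is to be read with or without the outer square, as suggested by the stated lemma). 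Combining both summands gives the asserted formula.

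The main obstacle is purely notational, namely keeping straight the relationship $m+n+1=v$ (so that $m+n-2c+1=v-2c$ and $m+n-c+1=v-c$), the translation of the $\twebstild$-parameters under $v\mapsto v-2c$ via $(m,n,x)\mapsto(m-c,n-c,x+c)$, and the consolidation of the various signs $(-1)^{x}$, $(-1)^{c+x}$, and $(-1)^{a_{s}\ppar^{(s-1)}}$. No deep structural input (neither diagrammatic, nor cellular, nor recursive) is needed beyond the identification of $\lambda_{v,S}$ above.
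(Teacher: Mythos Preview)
Your approach is exactly the paper's: the paper's entire proof reads ``After recalling that $\lambda_{v,S}=(-1)^{c}\tfrac{\qnum{m+n-2c+1}{\vpar}}{\qnum{m+n-c+1}{\vpar}}$, this follows by using algebra autopilot,'' and you have spelled out that autopilot correctly. Your parenthetical is well taken: your computation yields the factorial ratio $\tfrac{\qnum{m+n-2c}{\vpar}!\,\qnum{m}{\vpar}!}{\qnum{m+n-c}{\vpar}!\,\qnum{m-c}{\vpar}!}$ \emph{squared} in the second term, which is consistent with how the lemma is used in the proof of \fullref{theorem:eve-times-eve} and with the definition \eqref{eq:theta-scalars2}, so the missing square in the lemma's displayed formula appears to be a typo in the paper rather than a gap in your argument.
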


\begin{proof}
After recalling that $\lambda_{v,S}=(-1)^{c}
\tfrac{\qnum{m+n-2c+1}{\vpar}}{\qnum{m+n-c+1}{\vpar}}$, 
this follows by using
algebra autopilot.
\end{proof}

\begin{proof}[Proof of \fullref{theorem:eve-times-eve}]
We work in the generic setting of $\TL[\kkv,\vpar]$ and specialize to
$\TL[\kk,\qpar]$ at the end. Note that we can expand the semisimple $\ppar\lpar$JW projector $\pqjw[v{-}1]$ via \fullref{lemma:pl-jw-q} as follows
\begin{gather*}
\pqjw[v{-}1]=
\begin{tikzpicture}[anchorbase,scale=0.25,tinynodes]
\draw[pQJW] (-2.5,3) rectangle (2.5,5);
\node at (0,3.8) {$\pjwm[v{-}1]$};
\end{tikzpicture}
=
\begin{tikzpicture}[anchorbase,scale=0.25,tinynodes]
\draw[JW] (-2.5,3) rectangle (2.5,5);
\node at (0,3.8) {$\pjwm[v{-}1]$};
\end{tikzpicture}
+
\lambda_{v,S}\cdot
\begin{tikzpicture}[anchorbase,scale=0.25,tinynodes]
\draw[pQJW] (-1.5,1) rectangle (1.5,2);
\node at (0,1.3) {\scalebox{0.6}{$\pjwm[v{-}c{-}1]$}};
\draw[pQJW] (-1.5,-1) rectangle (1.5,-2);
\node at (0,-1.7) {\scalebox{0.6}{$\pjwm[v{-}c{-}1]$}};
\draw[usual] (0,-1) to (0,1);
\draw[usual] (1,-1) to[out=90,in=180] (1.5,-0.5) 
to[out=0,in=90] (2,-1) to (2,-2) node[right,xshift=-2pt,yshift=1pt]{$c$};
\draw[usual] (1,1) to[out=270,in=180] (1.5,0.5) 
to[out=0,in=270] (2,1) to (2,2) node[right,xshift=-2pt,yshift=-2pt]{$c$};
\end{tikzpicture}
=
\begin{tikzpicture}[anchorbase,scale=0.25,tinynodes]
\draw[JW] (-2.5,3) rectangle (2.5,5);
\node at (0,3.8) {$\pjwm[v{-}1]$};
\end{tikzpicture}
+
\lambda_{v,S}\cdot
\begin{tikzpicture}[anchorbase,scale=0.25,tinynodes]
\draw[JW] (-1.5,1) rectangle (1.5,2);
\node at (0,1.3) {\scalebox{0.8}{$\pjwm[v{-}c{-}1]$}};
\draw[JW] (-1.5,-1) rectangle (1.5,-2);
\node at (0,-1.7) {\scalebox{0.8}{$\pjwm[v{-}c{-}1]$}};
\draw[usual] (0,-1) to (0,1);
\draw[usual] (1,-1) to[out=90,in=180] (1.5,-0.5) 
to[out=0,in=90] (2,-1) to (2,-2) node[right,xshift=-2pt,yshift=1pt]{$c$};
\draw[usual] (1,1) to[out=270,in=180] (1.5,0.5) 
to[out=0,in=270] (2,1) to (2,2) node[right,xshift=-2pt,yshift=-2pt]{$c$};
\end{tikzpicture}
,
\end{gather*}
where we point out that $\generation[v{-}c]=0$ and thus,
$\pqjw[v{-}c{-}1]=\qjw[v{-}c{-}1]$.
The two summands of $\pqjw[v{-}1]$ 
are orthogonal idempotents.
Hence, expanding the projectors at the top and bottom 
we get
\begin{gather}\label{eq:bigdiag}
\begin{tikzpicture}[anchorbase,scale=0.25,tinynodes]
\draw[JW] (-2.5,3) rectangle (2.5,5);
\node at (0,3.8) {$\pjwm[a\ppar^{(k)}{-}1]$};
\draw[JW] (3.5,3) rectangle (8.5,5);
\node at (6,3.8) {$\pjwm[b\ppar^{(t)}{-}1]$};
\draw[pQJW] (0.5,-1) rectangle (5.5,1);
\node at (3,-0.2) {$\pjwm[v{-}1]$};
\draw[pQJW] (0.5,7) rectangle (5.5,9);
\node at (3,7.8) {$\pjwm[v{-}1]$};
\draw[usual] (2,5) to[out=90,in=180] (3,6)node[above,xshift=0.15cm,yshift=-0.1cm]{$x$} to[out=0,in=90] (4,5);
\draw[usual] (-1.5,5) to[out=90,in=270] (1.5,7);
\draw[usual] (7.5,5) to[out=90,in=270] (4.5,7);
\draw[usual] (2,3) to[out=270,in=180] (3,2)node[below,xshift=0.15cm,yshift=0.05cm]{$x$} to[out=0,in=270] (4,3);
\draw[usual] (-1.5,3) to[out=270,in=90] (1.5,1);
\draw[usual] (7.5,3) to[out=270,in=90] (4.5,1);
\end{tikzpicture}
=
\begin{tikzpicture}[anchorbase,scale=0.25,tinynodes]
\draw[JW] (-2.5,3) rectangle (2.5,5);
\node at (0,3.8) {$\pjwm[a\ppar^{(k)}{-}1]$};
\draw[JW] (3.5,3) rectangle (8.5,5);
\node at (6,3.8) {$\pjwm[b\ppar^{(t)}{-}1]$};
\draw[JW] (0.5,-1) rectangle (5.5,1);
\node at (3,-0.2) {$\pjwm[v{-}1]$};
\draw[JW] (0.5,7) rectangle (5.5,9);
\node at (3,7.8) {$\pjwm[v{-}1]$};
\draw[usual] (2,5) to[out=90,in=180] (3,6)node[above,xshift=0.15cm,yshift=-0.1cm]{$x$} to[out=0,in=90] (4,5);
\draw[usual] (-1.5,5) to[out=90,in=270] (1.5,7);
\draw[usual] (7.5,5) to[out=90,in=270] (4.5,7);
\draw[usual] (2,3) to[out=270,in=180] (3,2)node[below,xshift=0.15cm,yshift=0.05cm]{$x$} to[out=0,in=270] (4,3);
\draw[usual] (-1.5,3) to[out=270,in=90] (1.5,1);
\draw[usual] (7.5,3) to[out=270,in=90] (4.5,1);
\end{tikzpicture}
+\lambda_{v,S}^{2}\cdot
\begin{tikzpicture}[anchorbase,scale=0.25,tinynodes]
\draw[JW] (-2.5,3) rectangle (2.5,5);
\node at (0,3.8) {$\pjwm[a\ppar^{(k)}{-}1]$};
\draw[JW] (3.5,3) rectangle (8.5,5);
\node at (6,3.8) {$\pjwm[b\ppar^{(t)}{-}1]$};
\draw[JW] (0.5,-1) rectangle (5.5,1);
\node at (3,-0.2) {$\pjwm[v{-}c{-}1]$};
\draw[JW] (0.5,-3) rectangle (5.5,-5);
\node at (3,-4.2) {$\pjwm[v{-}c{-}1]$};
\draw[JW] (0.5,7) rectangle (5.5,9);
\node at (3,7.8) {$\pjwm[v{-}c{-}1]$};
\draw[JW] (0.5,11) rectangle (5.5,13);
\node at (3,11.8) {$\pjwm[v{-}c{-}1]$};
\draw[usual] (2,5) to[out=90,in=180] (3,6)node[above,xshift=0.15cm,yshift=-0.1cm]{$x$} to[out=0,in=90] (4,5);
\draw[usual] (-1.5,5) to[out=90,in=270] (1.5,7);
\draw[usual] (7.5,5) to[out=90,in=270] (4.5,7);
\draw[usual] (2,3) to[out=270,in=180] (3,2)node[below,xshift=0.15cm,yshift=0.05cm]{$x$} to[out=0,in=270] (4,3);
\draw[usual] (-1.5,3) to[out=270,in=90] (1.5,1);
\draw[usual] (7.5,3) to[out=270,in=90] (4.5,1);
\draw[usual] (4.5,-1) to[out=270,in=180] (6.25,-1.75)node[below,xshift=-0.15cm,yshift=0.05cm]{$c$} to[out=0,in=270] (8,3);
\draw[usual] (4.5,9) to[out=90,in=180] (6.26,9.75)node[above,xshift=-0.15cm,yshift=-0.12cm]{$c$} to[out=0,in=90] (8,5);
\draw[usual] (4.5,-3) to[out=90,in=180] (5.5,-2.5) to[out=0,in=90] (6.5,-3) to (6.5,-5);
\draw[usual] (4.5,11) to[out=270,in=180] (5.5,10.5) to[out=0,in=270] (6.5,11) to (6.5,13);
\draw[usual] (2,9) to (2,11);
\draw[usual] (2,-1) to (2,-3);
\end{tikzpicture}
.
\end{gather}
The first summand is covered by classical recoupling theory 
and, using \cite[Section 9.10]{KaLi-TL-recoupling}, it evaluates to
\begin{gather*}
\twebstild
\cdot
\begin{tikzpicture}[anchorbase,scale=0.25,tinynodes]
\draw[JW] (0.5,-1) rectangle (5.5,1);
\node at (3,-0.2) {$\pjwm[v{-}1]$};
\end{tikzpicture}
%\\
=
\twebstild
\cdot
\left(
\begin{tikzpicture}[anchorbase,scale=0.25,tinynodes]
\draw[pQJW] (0.5,-1) rectangle (5.5,1);
\node at (3,-0.2) {$\pjwm[v{-}1]$};
\end{tikzpicture}
-\lambda_{v,S}\cdot
\begin{tikzpicture}[anchorbase,scale=0.25,tinynodes]
\draw[JW] (-1.5,1) rectangle (1.5,2);
\node at (0,1.3) {\scalebox{0.65}{$\pjwm[v{-}c{-}1]$}};
\draw[JW] (-1.5,-1) rectangle (1.5,-2);
\node at (0,-1.7) {\scalebox{0.65}{$\pjwm[v{-}c{-}1]$}};
\draw[usual] (0,-1) to (0,1);
\draw[usual] (1,-1) to[out=90,in=180] (1.5,-0.5) 
to[out=0,in=90] (2,-1) to (2,-2) node[right,xshift=-2pt,yshift=1pt]{$c$};
\draw[usual] (1,1) to[out=270,in=180] (1.5,0.5) 
to[out=0,in=270] (2,1) to (2,2) node[right,xshift=-2pt,yshift=-2pt]{$c$};
\end{tikzpicture}
\right)
.
\end{gather*}
Here we have rewritten the scalar in 
\cite[Section 9.10]{KaLi-TL-recoupling} by collecting terms 
into quantum binomials, giving 
us the expression in \eqref{eq:theta-scalars1}, and then 
we applied the definition of $\pqjw[v{-}1]$ backwards.
The second summand in \eqref{eq:bigdiag} can be evaluated using 
\fullref{lemma:kl-simplification} twice to give
\begin{gather*}
\big(\tfrac{\qnum{m+n-2c}{\vpar}!\qnum{m}{\vpar}!}
{\qnum{m+n-c}{\vpar}!\qnum{m-c}{\vpar}!}\big)^{2}
\twebstild[v{-}2c]
\cdot
\begin{tikzpicture}[anchorbase,scale=0.25,tinynodes]
\draw[JW] (-1.5,1) rectangle (1.5,2);
\node at (0,1.3) {\scalebox{0.65}{$\pjwm[v{-}c{-}1]$}};
\draw[JW] (-1.5,-1) rectangle (1.5,-2);
\node at (0,-1.7) {\scalebox{0.65}{$\pjwm[v{-}c{-}1]$}};
\draw[usual] (0,-1) to (0,1);
\draw[usual] (1,-1) to[out=90,in=180] (1.5,-0.5) 
to[out=0,in=90] (2,-1) to (2,-2) node[right,xshift=-2pt,yshift=1pt]{$c$};
\draw[usual] (1,1) to[out=270,in=180] (1.5,0.5) 
to[out=0,in=270] (2,1) to (2,2) node[right,xshift=-2pt,yshift=-2pt]{$c$};
\end{tikzpicture}
.
\end{gather*}
Collecting terms and using \fullref{lemma:theta-scalar}, proves the claimed
formula. Finally, note that the left-hand side of \eqref{eq:theta-theorem} has
non-negative $\ppar\lpar$-adic valuation, and the morphisms $\pqjw[v{-}1]$ and
$\loopdowngen{v}{S}$  descend to the basis elements $\pjw[v{-}1]$ and
$\loopdown{S}{v{-}1}$ of the endomorphism algebra of $\pjw[v{-}1]$ . Hence, the
scalars in \eqref{eq:theta-scalars1} and \eqref{eq:theta-scalars2} can also be
specialized to $(\kk,\qpar)$.
\end{proof}

\begin{example}\label{example:thetawebs}
For characteristic $\ppar=5$ we get
\begin{gather*}
\scalebox{0.91}{$\begin{tikzpicture}[anchorbase,scale=0.25,tinynodes]
\draw[JW] (-2.5,3) rectangle (2.5,5);
\node at (0,3.8) {$\pjwm[4]$};
\draw[JW] (3.5,3) rectangle (8.5,5);
\node at (6,3.8) {$\pjwm[4]$};
\draw[pJW] (0.5,-1) rectangle (5.5,1);
\node at (3,-0.2) {$\pjwm[8]$};
\draw[pJW] (0.5,7) rectangle (5.5,9);
\node at (3,7.8) {$\pjwm[8]$};
\draw[usual] (2,5) to[out=90,in=180] (3,6)node[above,xshift=0.15cm,yshift=-0.12cm]{$0$} to[out=0,in=90] (4,5);
\draw[usual] (-1.5,5) to[out=90,in=270] (1.5,7);
\draw[usual] (7.5,5) to[out=90,in=270] (4.5,7);
\draw[usual] (2,3) to[out=270,in=180] (3,2)node[below,xshift=0.15cm,yshift=0.00cm]{$0$} to[out=0,in=270] (4,3);
\draw[usual] (-1.5,3) to[out=270,in=90] (1.5,1);
\draw[usual] (7.5,3) to[out=270,in=90] (4.5,1);
\end{tikzpicture}
=
\begin{tikzpicture}[anchorbase,scale=0.25,tinynodes]
\draw[pJW] (0.5,-1) rectangle (5.5,1);
\node at (3,-0.2) {$\pjwm[8]$};
\end{tikzpicture}
,\quad
\begin{tikzpicture}[anchorbase,scale=0.25,tinynodes]
\draw[JW] (-2.5,3) rectangle (2.5,5);
\node at (0,3.8) {$\pjwm[4]$};
\draw[JW] (3.5,3) rectangle (8.5,5);
\node at (6,3.8) {$\pjwm[4]$};
\draw[pJW] (0.5,-1) rectangle (5.5,1);
\node at (3,-0.2) {$\pjwm[6]$};
\draw[pJW] (0.5,7) rectangle (5.5,9);
\node at (3,7.8) {$\pjwm[6]$};
\draw[usual] (2,5) to[out=90,in=180] (3,6)node[above,xshift=0.15cm,yshift=-0.12cm]{$1$} to[out=0,in=90] (4,5);
\draw[usual] (-1.5,5) to[out=90,in=270] (1.5,7);
\draw[usual] (7.5,5) to[out=90,in=270] (4.5,7);
\draw[usual] (2,3) to[out=270,in=180] (3,2)node[below,xshift=0.15cm,yshift=0.00cm]{$1$} to[out=0,in=270] (4,3);
\draw[usual] (-1.5,3) to[out=270,in=90] (1.5,1);
\draw[usual] (7.5,3) to[out=270,in=90] (4.5,1);
\end{tikzpicture}
=
2\cdot
\begin{tikzpicture}[anchorbase,scale=0.25,tinynodes]
\draw[pJW] (0.5,-1) rectangle (5.5,1);
\node at (3,-0.2) {$\pjwm[6]$};
\end{tikzpicture}
+
3
\cdot
\begin{tikzpicture}[anchorbase,scale=0.25,tinynodes]
\draw[JW] (-1.5,1) rectangle (1.5,2);
\node at (0,1.25) {$\pjwm[4]$};
\draw[JW] (-1.5,-1) rectangle (1.5,-2);
\node at (0,-1.725) {$\pjwm[4]$};
\draw[usual] (0,-1) to (0,1);
\draw[usual] (1,-1) to[out=90,in=180] (1.5,-0.5) 
to[out=0,in=90] (2,-1) to (2,-2) node[right,xshift=-2pt,yshift=1pt]{$2$};
\draw[usual] (1,1) to[out=270,in=180] (1.5,0.5) 
to[out=0,in=270] (2,1) to (2,2) node[right,xshift=-2pt,yshift=-2pt]{$2$};
\end{tikzpicture}
,\quad
\begin{tikzpicture}[anchorbase,scale=0.25,tinynodes]
\draw[JW] (-2.5,3) rectangle (2.5,5);
\node at (0,3.8) {$\pjwm[4]$};
\draw[JW] (3.5,3) rectangle (8.5,5);
\node at (6,3.8) {$\pjwm[4]$};
\draw[JW] (0.5,-1) rectangle (5.5,1);
\node at (3,-0.2) {$\pjwm[4]$};
\draw[JW] (0.5,7) rectangle (5.5,9);
\node at (3,7.8) {$\pjwm[4]$};
\draw[usual] (2,5) to[out=90,in=180] (3,6)node[above,xshift=0.15cm,yshift=-0.12cm]{$2$} to[out=0,in=90] (4,5);
\draw[usual] (-1.5,5) to[out=90,in=270] (1.5,7);
\draw[usual] (7.5,5) to[out=90,in=270] (4.5,7);
\draw[usual] (2,3) to[out=270,in=180] (3,2)node[below,xshift=0.15cm,yshift=0.00cm]{$2$} to[out=0,in=270] (4,3);
\draw[usual] (-1.5,3) to[out=270,in=90] (1.5,1);
\draw[usual] (7.5,3) to[out=270,in=90] (4.5,1);
\end{tikzpicture}
=
\begin{tikzpicture}[anchorbase,scale=0.25,tinynodes]
\draw[JW] (0.5,-1) rectangle (5.5,1);
\node at (3,-0.2) {$\pjwm[4]$};
\end{tikzpicture}$},
\end{gather*}
where \fullref{proposition:eve-times-eve} gives 
$\tmod(4)\hcirc\tmod(4)\cong\tmod(8)\oplus\tmod(6)\oplus\tmod(4)$.
\end{example}

\begin{theorem}\label{theorem:eve-fusion} Retain notation as in
\fullref{lemma:catfusion-problem} and let $\tmod(v-1)$ be a direct summand of
$\tmod(a\ppar^{(k)}-1)\hcirc\tmod(b\ppar^{(t)}-1)$ with $\generation[v]=0$ or
$\generation[v]=1$ (in the latter case $v$ has a unique non-trivial
down-admissible set, denoted $S$). Then the idempotent
$\pjw[{a\ppar^{(k)}-1,b\ppar^{(t)}-1}]^{v-1}$ in $\TL[\kk,\qpar]$ realizing the
projection onto $\tmod(v-1)$ is given by the ansatz
\eqref{eq:ansatz-idempotent}, {\ie} for $\generation[v]=1$ we have
\begin{gather}\label{eq:idem-eve}
\pjw[{a\ppar^{(k)}{-}1,b\ppar^{(t)}{-}1}]^{v-1}
=
\fusscalar{(v,\emptyset)}{a\ppar^{(k)},b\ppar^{(t)}}
\cdot
\xmor{a\ppar^{(k)}-1,b\ppar^{(t)}-1}{v-1}
+ 
\fusscalar{(v,S)}{a\ppar^{(k)},b\ppar^{(t)}}
\xmor{a\ppar^{(k)}-1,b\ppar^{(t)}-1}{v[S]-1},
\end{gather}  
with scalars
\begin{gather*}
\fusscalar{(v,\emptyset)}{a\ppar^{(k)},b\ppar^{(t)}} = (\twebs)^{-1},\quad 
\fusscalar{(v,S)}{a\ppar^{(k)},b\ppar^{(t)}}=\frac{(\twebs)^{-2}\twebss\big(\tfrac{\qnum{m+n-2c}{\qpar}!\qnum{m}{\qpar}!}
{\qnum{m+n-c}{\qpar}!\qnum{m-c}{\qpar}!}\big)^{2}}{1-2(\twebs)^{-1}\lambda_{v,S}\big(\tfrac{\qnum{m+n-2c}{\qpar}!\qnum{m}{\qpar}!}
{\qnum{m+n-c}{\qpar}!\qnum{m-c}{\qpar}!}\big)^{2}\twebs[{v[S]}]}.
\end{gather*}
For $\generation[v]=0$, the second term in \eqref{eq:idem-eve} is dropped.
\end{theorem}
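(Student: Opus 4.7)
The plan is to verify directly that the ansatz \eqref{eq:idem-eve}, with the given scalars, is an idempotent by reducing the computation of composite morphisms to digon calculations. Decompose $\xmor{a\ppar^{(k)}-1,b\ppar^{(t)}-1}{w-1} = \epsilon_w \vcirc \eta_w$ into its top and bottom halves, where $\eta_w$ is the morphism from $\pjw[a\ppar^{(k)}-1] \hcirc \pjw[b\ppar^{(t)}-1]$ to $\pjw[w-1]$ and $\epsilon_w$ goes in the opposite direction. Then $\xmor{}{w-1} \vcirc \xmor{}{w'-1} = \epsilon_w \vcirc \dmor{a\ppar^{(k)}-1,b\ppar^{(t)}-1}{w'-1}{w-1} \vcirc \eta_{w'}$, so all the relevant products reduce to digon evaluations.

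For $\generation[v] = 0$, the ansatz reduces to $P_v = f_{(v,\emptyset)} \xmor{}{v-1}$. Applying the generation-zero case of \fullref{theorem:eve-times-eve} (that is, \fullref{lemma:ssdigon}) yields $\dmor{}{v-1}{v-1} = \twebs \cdot \pjw[v-1]$, hence $(\xmor{}{v-1})^2 = \twebs \xmor{}{v-1}$, and the idempotency equation $P_v^2 = P_v$ forces $f_{(v,\emptyset)} = (\twebs)^{-1}$. This scalar is nonzero by \fullref{proposition:nonzeroscalar} because $\tmod(v-1)$ is a summand of the fusion.

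For $\generation[v] = 1$ with its unique minimal down-admissible stretch $S$, write $P_v = f \xmor{}{v-1} + g \xmor{}{v[S]-1}$, and compute the four composites $\xmor{}{v-1}^2$, $\xmor{}{v[S]-1}^2$, and the two cross-products. The first composite, by \fullref{theorem:eve-times-eve}, equals $\twebs \xmor{}{v-1} + \twebss \cdot (\epsilon_v \vcirc \loopdown{S}{v-1} \vcirc \eta_v)$, and the key computation is to evaluate the trailing term: writing $\loopdown{S}{v-1} = \Up{S} \vcirc \Down{S}$, this factors as $(\epsilon_v \vcirc \Up{S}) \vcirc (\Down{S} \vcirc \eta_v)$, and each of these halves simplifies by an application of \fullref{lemma:kl-simplification} to produce a factor of $A := \tfrac{\qnum{m+n-2c}{\qpar}!\qnum{m}{\qpar}!}{\qnum{m+n-c}{\qpar}!\qnum{m-c}{\qpar}!}$ and collapse to the canonical morphisms $\epsilon_{v[S]}$ and $\eta_{v[S]}$, giving $\epsilon_v \vcirc \loopdown{S}{v-1} \vcirc \eta_v = A^2 \xmor{}{v[S]-1}$. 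The remaining composites $\xmor{}{v[S]-1}^2$, $\xmor{}{v-1} \vcirc \xmor{}{v[S]-1}$, and $\xmor{}{v[S]-1} \vcirc \xmor{}{v-1}$ are evaluated by applying the same strategy as in the proof of \fullref{theorem:eve-times-eve}: expanding the top $\pqjw[v-1]$ into its two semisimple components via \fullref{lemma:pl-jw-q}, treating the classical term by standard recoupling (which gives contributions proportional to $\xmor{}{v-1}$ or $\xmor{}{v[S]-1}$ via theta values), and the loop term by \fullref{lemma:kl-simplification}, with the zigzag identity $\Down{S} \Up{S} \Down{S} \pjw[v-1] = 0$ from \eqref{eq:dud} used to kill unwanted higher-order pieces.

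Assembling, $P_v^2$ becomes a linear combination of $\xmor{}{v-1}$ and $\xmor{}{v[S]-1}$ whose coefficients are polynomials in $f, g$, and the scalars $\twebs$, $\twebss$, $\twebs[{v[S]}]$, $\lambda_{v,S}$, and $A$. The equation $P_v^2 = P_v$ yields a system for $(f, g)$; solving it using the identity from \fullref{lemma:theta-scalar} expressing $\twebss = -\lambda_{v,S}\twebs + \lambda_{v,S}^2 A \twebs[{v[S]}]$ produces precisely the formulas $f_{(v,\emptyset)} = (\twebs)^{-1}$ and $f_{(v,S)} = \frac{(\twebs)^{-2} \twebss A^2}{1 - 2(\twebs)^{-1} \lambda_{v,S} A^2 \twebs[{v[S]}]}$ claimed in the theorem. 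Finally, $P_v$ projects onto $\tmod(v-1)$ since both $\xmor{}{v-1}$ and $\xmor{}{v[S]-1}$ factor through tilting summands whose only common constituent in the fusion is $\tmod(v-1)$ itself (as $v[S] \in \supp[v]$). The main obstacle is the diagrammatic bookkeeping in the double application of \fullref{lemma:kl-simplification} to evaluate $\epsilon_v \vcirc \loopdown{S}{v-1} \vcirc \eta_v$, and the subsequent algebraic manipulation that matches the quadratic solution to the closed-form expression in the theorem.
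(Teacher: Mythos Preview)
Your approach matches the paper's: compute the products of $\morstuff{A}=\xmor{}{v-1}$ and $\morstuff{B}=\xmor{}{v[S]-1}$ via digon evaluations and \fullref{lemma:kl-simplification}, then verify idempotency. The one ingredient you overlook is that $\twebs[{v[S]}]=0$ in $\kk$: since $\wmod(v[S]-1)$ is already a Weyl factor of the summand $\tmod(v-1)$ and the fusion is Weyl-multiplicity-free, $\tmod(v[S]-1)$ is \emph{not} itself a summand, so \fullref{proposition:nonzeroscalar} forces this scalar to vanish. This gives $\morstuff{B}^{2}=0$ and $\morstuff{A}\morstuff{B}=\morstuff{B}\morstuff{A}=0$ outright, collapsing your anticipated quadratic in $g$ to the linear equation $g=(\twebs)^{-2}\twebss A^{2}$ (the denominator in the theorem's displayed formula is then identically~$1$). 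No appeal to \eqref{eq:dud} or \fullref{lemma:theta-scalar} is required.

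Two further small corrections: your description of $\morstuff{B}^{2}$ as being handled by ``expanding $\pqjw[v-1]$'' does not apply, since $v[S]$ is an eve and $\pjw[v[S]-1]$ is already a simple JW projector; one just uses \fullref{lemma:ssdigon} directly. And in the cross products $\morstuff{A}\morstuff{B}$, $\morstuff{B}\morstuff{A}$, the ``classical term'' from the expansion of $\pqjw[v-1]$ does not give nonzero theta contributions as you suggest; it vanishes because it encodes a morphism between the non-isomorphic simples $\qjw[v-1]$ and $\qjw[v[S]-1]$ in the semisimple setting. Only the loop term survives, yielding $\lambda_{v,S}A^{2}\twebs[{v[S]}]\cdot\morstuff{B}$, which again specializes to zero.
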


\begin{proof} 
For ease of notation, let
\begin{gather*}
\morstuff{A}=\xmor{a\ppar^{(k)}-1,b\ppar^{(t)}-1}{v-1} 
\quad\text{and}\quad
\morstuff{B}=\xmor{a\ppar^{(k)}-1,b\ppar^{(t)}-1}{v[S]-1}.
\end{gather*}
We now proceed to calculate the compositions $\morstuff{A}^{2}$, $\morstuff{A}\morstuff{B}$ and $\morstuff{B}^{2}$.
\fullref{theorem:eve-times-eve} and \fullref{proposition:nonzeroscalar} 
imply
\begin{gather*}
\morstuff{B}^{2}
=\twebs[{v[S]}]\cdot\morstuff{B}=0
\end{gather*}
since  $\tmod(v[S]-1)$ is not a summand of
$\tmod(a\ppar^{(k)}-1)\hcirc\tmod(b\ppar^{(t)}-1)$ ($\wmod(v[S]-1)$ is already a
Weyl factor of $\tmod(v-1)$ and
$\tmod(a\ppar^{(k)}-1)\hcirc\tmod(b\ppar^{(t)}-1)$ is Weyl-multiplicity-free).

To compute the other compositions, we work in the generic
setting of $\TL[\kkv,\vpar]$. Applying \fullref{lemma:kl-simplification} to each
of the sandwiched projectors, we obtain
\begin{gather*}
\begin{tikzpicture}[anchorbase,scale=0.25,tinynodes]
\draw[JW] (-2.5,-7) rectangle (2.5,-5);
\node at (0,-6.2) {$\qjwm[a\ppar^{(k)}{-}1]$};
\draw[JW] (3.5,-7) rectangle (8.5,-5);
\node at (6,-6.2) {$\qjwm[b\ppar^{(k)}{-}1]$};
\draw[JW] (-2.5,-19) rectangle (2.5,-17);
\node at (0,-18.2) {$\qjwm[a\ppar^{(k)}{-}1]$};
%%%
\draw[JW] (3.5,-19) rectangle (8.5,-17);
\node at (6,-18.2) {$\qjwm[b\ppar^{(k)}{-}1]$};
\draw[JW] (0.5,-11) rectangle (5.5,-9);
\node at (3,-10.2) {$\qjwm[v{-}c{-}1]$};
\draw[JW] (0.5,-15) rectangle (5.5,-13);
\node at (3,-14.2) {$\qjwm[v{-}c{-}1]$};
\draw[usual] (2,-17) to[out=90,in=180] (3,-16)node[above,xshift=0.15cm,yshift=-0.12cm]{x} to[out=0,in=90] (4,-17);
\draw[usual] (2,-7) to[out=270,in=180] (3,-8)node[below,xshift=0.15cm,yshift=0.00cm]{x} to[out=0,in=270] (4,-7);
\draw[usual] (-1.5,-17) to[out=90,in=270] (1.5,-15)node[left,xshift=-0.4cm,yshift=-0.2cm]{m};
\draw[usual] (7.5,-17) to[out=90,in=270] (4.5,-15);
\draw[usual] (7.9,-17) to[out=90,in=60] (4.5,-13)node[right,xshift=0.5cm,yshift=-0.2cm]{c};
\draw[usual] (-1.5,-7) to[out=270,in=90] (1.5,-9)node[left,xshift=-0.4cm,yshift=0.05cm]{m};
\draw[usual] (7.9,-7) to[out=270,in=300] (4.5,-11)node[right,xshift=0.5cm,yshift=0.05cm]{c};
\draw[usual] (7.5,-7) to[out=270,in=90] (4.5,-9);
\draw[usual] (1.5,-13) to (1.5,-11);
\end{tikzpicture}
=\big(\tfrac{\qnum{m+n-2c}{\vpar}!\qnum{m}{\vpar}!}
{\qnum{m+n-c}{\vpar}!\qnum{m-c}{\vpar}!}\big)^{2}
\cdot\morstuff{B},
\end{gather*}
and thus applying \fullref{theorem:eve-times-eve}, gives us
\begin{align*}
\morstuff{A}^{2}
&=
\twebstild\cdot\morstuff{A}
+
\twebsstild\big(\tfrac{\qnum{m+n-2c}{\vpar}!\qnum{m}{\vpar}!}
{\qnum{m+n-c}{\vpar}!\qnum{m-c}{\vpar}!}\big)^{2}\cdot\morstuff{B}.
\end{align*}
By expansion of the projectors $\pjw[v{-}1]$ and $\pjw[v{[S]}{-}1]$ in the following diagram
\begin{gather*}
\morstuff{B}\morstuff{A}=
\begin{tikzpicture}[anchorbase,scale=0.25,tinynodes]
\draw[JW] (-2.5,3) rectangle (2.5,5);
\node at (0,3.8) {$\qjwm[a\ppar^{(k)}{-}1]$};
\draw[JW] (3.5,3) rectangle (8.5,5);
\node at (6,3.8) {$\qjwm[b\ppar^{(k)}{-}1]$};
\draw[JW] (-2.5,-5) rectangle (2.5,-3);
\node at (0,-4.2) {$\qjwm[a\ppar^{(k)}{-}1]$};
\draw[JW] (3.5,-5) rectangle (8.5,-3);
\node at (6,-4.2) {$\qjwm[b\ppar^{(k)}{-}1]$};
\draw[pQJW] (0.5,-1) rectangle (5.5,1);
\node at (3,-0.2) {$\pjwm[v{[S]}{-}1]$};
\draw[usual] (2,-3) to[out=90,in=180] (3,-2)node[above,xshift=0.15cm,yshift=-0.1cm]{$y$} to[out=0,in=90] (4,-3);
\draw[usual] (2,3) to[out=270,in=180] (3,2)node[below,xshift=0.15cm,yshift=0.05cm]{$y$} to[out=0,in=270] (4,3);
%%%
\draw[usual] (-1.5,-3) to[out=90,in=270] (1.5,-1);
\draw[usual] (7.5,-3) to[out=90,in=270] (4.5,-1);
\draw[usual] (-1.5,3) to[out=270,in=90] (1.5,1);
\draw[usual] (7.5,3) to[out=270,in=90] (4.5,1);
%%%%%%%%%%%%%%%%%%%%%%%%%%%%%%%%%%%%%
%%
\draw[JW] (-2.5,-13) rectangle (2.5,-11);
\node at (0,-12.2) {$\qjwm[a\ppar^{(k)}{-}1]$};
%%%
\draw[JW] (3.5,-13) rectangle (8.5,-11);
\node at (6,-12.2) {$\qjwm[b\ppar^{(k)}{-}1]$};
\draw[pQJW] (0.5,-9) rectangle (5.5,-7);
\node at (3,-8.2) {$\pjwm[v{-}1]$};
\draw[usual] (2,-11) to[out=90,in=180] (3,-10)node[above,xshift=0.15cm,yshift=-0.12cm]{$x$} to[out=0,in=90] (4,-11);
\draw[usual] (2,-5) to[out=270,in=180] (3,-6)node[below,xshift=0.15cm,yshift=0.00cm]{$x$} to[out=0,in=270] (4,-5);
%%% to here
\draw[usual] (-1.5,-11) to[out=90,in=270] (1.5,-9);
\draw[usual] (7.5,-11) to[out=90,in=270] (4.5,-9);
\draw[usual] (-1.5,-5) to[out=270,in=90] (1.5,-7);
\draw[usual] (7.5,-5) to[out=270,in=90] (4.5,-7);
\end{tikzpicture}
=
\begin{tikzpicture}[anchorbase,scale=0.25,tinynodes]
\draw[JW] (-2.5,3) rectangle (2.5,5);
\node at (0,3.8) {$\qjwm[a\ppar^{(k)}{-}1]$};
\draw[JW] (3.5,3) rectangle (8.5,5);
\node at (6,3.8) {$\qjwm[b\ppar^{(k)}{-}1]$};
\draw[JW] (-2.5,-5) rectangle (2.5,-3);
\node at (0,-4.2) {$\qjwm[a\ppar^{(k)}{-}1]$};
\draw[JW] (3.5,-5) rectangle (8.5,-3);
\node at (6,-4.2) {$\qjwm[b\ppar^{(k)}{-}1]$};
\draw[pQJW] (0.5,-1) rectangle (5.5,1);
\node at (3,-0.2) {$\pjwm[v{[S]}{-}1]$};
\draw[usual] (2,-3) to[out=90,in=180] (3,-2)node[above,xshift=0.15cm,yshift=-0.1cm]{$y$} to[out=0,in=90] (4,-3);
\draw[usual] (2,3) to[out=270,in=180] (3,2)node[below,xshift=0.15cm,yshift=0.05cm]{$y$} to[out=0,in=270] (4,3);
%%%
\draw[usual] (-1.5,-3) to[out=90,in=270] (1.5,-1);
\draw[usual] (7.5,-3) to[out=90,in=270] (4.5,-1);
\draw[usual] (-1.5,3) to[out=270,in=90] (1.5,1);
\draw[usual] (7.5,3) to[out=270,in=90] (4.5,1);
%%%%%%%%%%%%%%%%%%%%%%%%%%%%%%%%%%%%%
%%
\draw[JW] (-2.5,-13) rectangle (2.5,-11);
\node at (0,-12.2) {$\qjwm[a\ppar^{(k)}{-}1]$};
%%%
\draw[JW] (3.5,-13) rectangle (8.5,-11);
\node at (6,-12.2) {$\qjwm[b\ppar^{(k)}{-}1]$};
\draw[JW] (0.5,-9) rectangle (5.5,-7);
\node at (3,-8.2) {$\pjwm[v{-}1]$};
\draw[usual] (2,-11) to[out=90,in=180] (3,-10)node[above,xshift=0.15cm,yshift=-0.12cm]{$x$} to[out=0,in=90] (4,-11);
\draw[usual] (2,-5) to[out=270,in=180] (3,-6)node[below,xshift=0.15cm,yshift=0.00cm]{$x$} to[out=0,in=270] (4,-5);
%%% to here
\draw[usual] (-1.5,-11) to[out=90,in=270] (1.5,-9);
\draw[usual] (7.5,-11) to[out=90,in=270] (4.5,-9);
\draw[usual] (-1.5,-5) to[out=270,in=90] (1.5,-7);
\draw[usual] (7.5,-5) to[out=270,in=90] (4.5,-7);
\end{tikzpicture}
+\lambda_{v,S}\cdot
\begin{tikzpicture}[anchorbase,scale=0.25,tinynodes]
\draw[JW] (-2.5,3) rectangle (2.5,5);
\node at (0,3.8) {$\qjwm[a\ppar^{(k)}{-}1]$};
\draw[JW] (3.5,3) rectangle (8.5,5);
\node at (6,3.8) {$\qjwm[b\ppar^{(k)}{-}1]$};
\draw[JW] (-2.5,-5) rectangle (2.5,-3);
\node at (0,-4.2) {$\qjwm[a\ppar^{(k)}{-}1]$};
\draw[JW] (3.5,-5) rectangle (8.5,-3);
\node at (6,-4.2) {$\qjwm[b\ppar^{(k)}{-}1]$};
\draw[pQJW] (0.5,-1) rectangle (5.5,1);
\node at (3,-0.2) {$\pjwm[v{[S]}{-}1]$};
\draw[usual] (2,-3) to[out=90,in=180] (3,-2)node[above,xshift=0.15cm,yshift=-0.1cm]{$y$} to[out=0,in=90] (4,-3);
\draw[usual] (2,3) to[out=270,in=180] (3,2)node[below,xshift=0.15cm,yshift=0.05cm]{$y$} to[out=0,in=270] (4,3);
%%%
\draw[usual] (-1.5,-3) to[out=90,in=270] (1.5,-1);
\draw[usual] (7.5,-3) to[out=90,in=270] (4.5,-1);
\draw[usual] (-1.5,3) to[out=270,in=90] (1.5,1);
\draw[usual] (7.5,3) to[out=270,in=90] (4.5,1);
%%%%%%%%%%%%%%%%%%%%%%%%%%%%%%%%%%%%%
\draw[JW] (-2.5,-17) rectangle (2.5,-15);
\node at (0,-16.2) {$\qjwm[a\ppar^{(k)}{-}1]$};
%%%
\draw[JW] (3.5,-17) rectangle (8.5,-15);
\node at (6,-16.2) {$\qjwm[b\ppar^{(k)}{-}1]$};
\draw[JW] (0.5,-9) rectangle (5.5,-7);
\node at (3,-8.2) {$\qjwm[v{-}c{-}1]$};
\draw[JW] (0.5,-13) rectangle (5.5,-11);
\node at (3,-12.2) {$\qjwm[v{-}c{-}1]$};
\draw[usual] (2,-15) to[out=90,in=180] (3,-14)node[above,xshift=0.15cm,yshift=-0.12cm]{x} to[out=0,in=90] (4,-15);
\draw[usual] (2,-5) to[out=270,in=180] (3,-6)node[below,xshift=0.15cm,yshift=0.00cm]{x} to[out=0,in=270] (4,-5);
\draw[usual] (-1.5,-15) to[out=90,in=270] (1.5,-13)node[left,xshift=-0.4cm,yshift=-0.2cm]{m};
\draw[usual] (7.5,-15) to[out=90,in=270] (4.5,-13);
\draw[usual] (7.9,-15) to[out=90,in=60] (4.5,-11)node[right,xshift=0.5cm,yshift=-0.2cm]{c};
\draw[usual] (-1.5,-5) to[out=270,in=90] (1.5,-7)node[left,xshift=-0.4cm,yshift=0.05cm]{m};
\draw[usual] (7.9,-5) to[out=270,in=300] (4.5,-9)node[right,xshift=0.5cm,yshift=0.05cm]{c};
\draw[usual] (7.5,-5) to[out=270,in=90] (4.5,-7);
\draw[usual] (1.5,-11) to (1.5,-9);
\end{tikzpicture}
=
\lambda_{v,S}\cdot
\begin{tikzpicture}[anchorbase,scale=0.25,tinynodes]
\draw[JW] (-2.5,3) rectangle (2.5,5);
\node at (0,3.8) {$\qjwm[a\ppar^{(k)}{-}1]$};
\draw[JW] (3.5,3) rectangle (8.5,5);
\node at (6,3.8) {$\qjwm[b\ppar^{(k)}{-}1]$};
\draw[JW] (-2.5,-5) rectangle (2.5,-3);
\node at (0,-4.2) {$\qjwm[a\ppar^{(k)}{-}1]$};
\draw[JW] (3.5,-5) rectangle (8.5,-3);
\node at (6,-4.2) {$\qjwm[b\ppar^{(k)}{-}1]$};
\draw[JW] (0.5,-1) rectangle (5.5,1);
\node at (3,-0.2) {$\pjwm[v{[S]}{-}1]$};
\draw[usual] (2,-3) to[out=90,in=180] (3,-2)node[above,xshift=0.15cm,yshift=-0.1cm]{$y$} to[out=0,in=90] (4,-3);
\draw[usual] (2,3) to[out=270,in=180] (3,2)node[below,xshift=0.15cm,yshift=0.05cm]{$y$} to[out=0,in=270] (4,3);
%%%
\draw[usual] (-1.5,-3) to[out=90,in=270] (1.5,-1);
\draw[usual] (7.5,-3) to[out=90,in=270] (4.5,-1);
\draw[usual] (-1.5,3) to[out=270,in=90] (1.5,1);
\draw[usual] (7.5,3) to[out=270,in=90] (4.5,1);
%%%%%%%%%%%%%%%%%%%%%%%%%%%%%%%%%%%%%
\draw[JW] (-2.5,-17) rectangle (2.5,-15);
\node at (0,-16.2) {$\qjwm[a\ppar^{(k)}{-}1]$};
%%%
\draw[JW] (3.5,-17) rectangle (8.5,-15);
\node at (6,-16.2) {$\qjwm[b\ppar^{(k)}{-}1]$};
\draw[JW] (0.5,-9) rectangle (5.5,-7);
\node at (3,-8.2) {$\qjwm[v{-}c{-}1]$};
\draw[JW] (0.5,-13) rectangle (5.5,-11);
\node at (3,-12.2) {$\qjwm[v{-}c{-}1]$};
\draw[usual] (2,-15) to[out=90,in=180] (3,-14)node[above,xshift=0.15cm,yshift=-0.12cm]{x} to[out=0,in=90] (4,-15);
\draw[usual] (2,-5) to[out=270,in=180] (3,-6)node[below,xshift=0.15cm,yshift=0.00cm]{x} to[out=0,in=270] (4,-5);
\draw[usual] (-1.5,-15) to[out=90,in=270] (1.5,-13)node[left,xshift=-0.4cm,yshift=-0.2cm]{m};
\draw[usual] (7.5,-15) to[out=90,in=270] (4.5,-13);
\draw[usual] (7.9,-15) to[out=90,in=60] (4.5,-11)node[right,xshift=0.5cm,yshift=-0.2cm]{c};
\draw[usual] (-1.5,-5) to[out=270,in=90] (1.5,-7)node[left,xshift=-0.4cm,yshift=0.05cm]{m};
\draw[usual] (7.9,-5) to[out=270,in=300] (4.5,-9)node[right,xshift=0.5cm,yshift=0.05cm]{c};
\draw[usual] (7.5,-5) to[out=270,in=90] (4.5,-7);
\draw[usual] (1.5,-11) to (1.5,-9);
\end{tikzpicture}
,
\end{gather*}
where all other summands in the expansion 
are annihilated 
since there are no common\makeautorefname{lemma}{Lemmas} Weyl factors. Further, by applying \fullref{lemma:kl-simplification} and \ref{lemma:ssdigon},\makeautorefname{lemma}{Lemma}
\begin{align*}
\morstuff{A}\morstuff{B}
=\lambda_{v,S} \big(\tfrac{\qnum{m+n-2c}{\vpar}!\qnum{m}{\vpar}!}
{\qnum{m+n-c}{\vpar}!\qnum{m-c}{\vpar}!}\big)^{2}
\twebstild[{v[S]}]\cdot \morstuff{B},
\end{align*}
which is symmetric, {\ie} $\morstuff{A}\morstuff{B}=\morstuff{B}\morstuff{A}$. 

It is now straightforward to verify that
$\fusscalar{(v,\emptyset)}{a\ppar^{(k)},b\ppar^{(t)}}\cdot
\morstuff{A}+\fusscalar{(v,S)}{a\ppar^{(k)},b\ppar^{(t)}}\cdot
\morstuff{B}$ is an idempotent.
\end{proof}

\begin{example}
For characteristic $\ppar=5$ we have $\tmod(4)\hcirc\tmod(4)=\tmod(8)\oplus\tmod(6)\oplus\tmod(4)$. 
We computed the digon scalars in \fullref{example:thetawebs}.
Now let us determine the idempotents for the
three summands of $\tmod(4)\hcirc\tmod(4)$. Note that $\generation[8]=\generation[4]=0$
and $\generation[6]=1$.
\begin{enumerate}

\item It follows from \fullref{example:thetawebs} 
that the idempotent endomorphisms of the summands 
$\tmod(8)$ and $\tmod(4)$, respectively, 
in the endomorphism space of $\tmod(4)\hcirc\tmod(4)$ are
\begin{gather*}
\pjw[{4,4}]^{8}=
\begin{tikzpicture}[anchorbase,scale=0.25,tinynodes]
\draw[JW] (-2.5,3) rectangle (2.5,5);
\node at (0,3.8) {$\qjwm[4]$};
\draw[JW] (3.5,3) rectangle (8.5,5);
\node at (6,3.8) {$\qjwm[4]$};
\draw[JW] (-2.5,-5) rectangle (2.5,-3);
\node at (0,-4.2) {$\qjwm[4]$};
\draw[JW] (3.5,-5) rectangle (8.5,-3);
\node at (6,-4.2) {$\qjwm[4]$};
\draw[pJW] (0.5,-1) rectangle (5.5,1);
\node at (3,-0.2) {$\pjwm[8]$};
\draw[usual] (2,-3) to[out=90,in=180] (3,-2)node[above,xshift=0.15cm,yshift=-0.12cm]{$0$} to[out=0,in=90] (4,-3);
\draw[usual] (2,3) to[out=270,in=180] (3,2)node[below,xshift=0.15cm,yshift=0.00cm]{$0$} to[out=0,in=270] (4,3);
%%%
\draw[usual] (-1.5,-3) to[out=90,in=270] (1.5,-1);
\draw[usual] (7.5,-3) to[out=90,in=270] (4.5,-1);
\draw[usual] (-1.5,3) to[out=270,in=90] (1.5,1);
\draw[usual] (7.5,3) to[out=270,in=90] (4.5,1);
\end{tikzpicture}
\quad\text{and}\quad
\pjw[{4,4}]^{4}=
\begin{tikzpicture}[anchorbase,scale=0.25,tinynodes]
\draw[JW] (-2.5,3) rectangle (2.5,5);
\node at (0,3.8) {$\qjwm[4]$};
\draw[JW] (3.5,3) rectangle (8.5,5);
\node at (6,3.8) {$\qjwm[4]$};
\draw[JW] (-2.5,-5) rectangle (2.5,-3);
\node at (0,-4.2) {$\qjwm[4]$};
\draw[JW] (3.5,-5) rectangle (8.5,-3);
\node at (6,-4.2) {$\qjwm[4]$};
\draw[JW] (0.5,-1) rectangle (5.5,1);
\node at (3,-0.2) {$\pjwm[4]$};
\draw[usual] (2,-3) to[out=90,in=180] (3,-2)node[above,xshift=0.15cm,yshift=-0.12cm]{$2$} to[out=0,in=90] (4,-3);
\draw[usual] (2,3) to[out=270,in=180] (3,2)node[below,xshift=0.15cm,yshift=0.00cm]{$2$} to[out=0,in=270] (4,3);
%%%
\draw[usual] (-1.5,-3) to[out=90,in=270] (1.5,-1);
\draw[usual] (7.5,-3) to[out=90,in=270] (4.5,-1);
\draw[usual] (-1.5,3) to[out=270,in=90] (1.5,1);
\draw[usual] (7.5,3) to[out=270,in=90] (4.5,1);
\end{tikzpicture}
.
\end{gather*}

\item It follows from \fullref{example:thetawebs} and \fullref{theorem:eve-fusion} that the idempotent
endomorphism of the summand $\tmod(6)$ in $\tmod(4)\hcirc\tmod(4)$ is
\begin{gather*}
\pjw[{4,4}]^{6}=3\cdot
\begin{tikzpicture}[anchorbase,scale=0.25,tinynodes]
\draw[JW] (-2.5,3) rectangle (2.5,5);
\node at (0,3.8) {$\qjwm[4]$};
\draw[JW] (3.5,3) rectangle (8.5,5);
\node at (6,3.8) {$\qjwm[4]$};
\draw[JW] (-2.5,-5) rectangle (2.5,-3);
\node at (0,-4.2) {$\qjwm[4]$};
\draw[JW] (3.5,-5) rectangle (8.5,-3);
\node at (6,-4.2) {$\qjwm[4]$};
\draw[pJW] (0.5,-1) rectangle (5.5,1);
\node at (3,-0.2) {$\pjwm[6]$};
\draw[usual] (2,-3) to[out=90,in=180] (3,-2)node[above,xshift=0.15cm,yshift=-0.12cm]{$1$} to[out=0,in=90] (4,-3);
\draw[usual] (2,3) to[out=270,in=180] (3,2)node[below,xshift=0.15cm,yshift=0.00cm]{$1$} to[out=0,in=270] (4,3);
%%%
\draw[usual] (-1.5,-3) to[out=90,in=270] (1.5,-1);
\draw[usual] (7.5,-3) to[out=90,in=270] (4.5,-1);
\draw[usual] (-1.5,3) to[out=270,in=90] (1.5,1);
\draw[usual] (7.5,3) to[out=270,in=90] (4.5,1);
\end{tikzpicture}
+2\cdot
\begin{tikzpicture}[anchorbase,scale=0.25,tinynodes]
\draw[JW] (-2.5,3) rectangle (2.5,5);
\node at (0,3.8) {$\qjwm[4]$};
\draw[JW] (3.5,3) rectangle (8.5,5);
\node at (6,3.8) {$\qjwm[4]$};
\draw[JW] (-2.5,-5) rectangle (2.5,-3);
\node at (0,-4.2) {$\qjwm[4]$};
\draw[JW] (3.5,-5) rectangle (8.5,-3);
\node at (6,-4.2) {$\qjwm[4]$};
\draw[pJW] (0.5,-1) rectangle (5.5,1);
\node at (3,-0.2) {$\pjwm[2]$};
\draw[usual] (2,-3) to[out=90,in=180] (3,-2)node[above,xshift=0.15cm,yshift=-0.12cm]{$3$} to[out=0,in=90] (4,-3);
\draw[usual] (2,3) to[out=270,in=180] (3,2)node[below,xshift=0.15cm,yshift=0.00cm]{$3$} to[out=0,in=270] (4,3);
%%%
\draw[usual] (-1.5,-3) to[out=90,in=270] (1.5,-1);
\draw[usual] (7.5,-3) to[out=90,in=270] (4.5,-1);
\draw[usual] (-1.5,3) to[out=270,in=90] (1.5,1);
\draw[usual] (7.5,3) to[out=270,in=90] (4.5,1);
\end{tikzpicture}.
\end{gather*}

\end{enumerate}
\end{example}

\section{Applications}\label{section:app}

We now derive some consequences for the monoidal 
structure of $\tilt[{\kk,\qpar}]$. These results are 
generalizations to mixed characteristic of well-known 
results.

\subsection{Tensor ideals, cells and Verlinde quotients}

A (two-sided) \emph{$\hcirc$-ideal} $\catstuff{I}$ in a monoidal category is a
collection of morphisms that is closed under composition and tensoring with
arbitrary morphisms. Recall also that a \emph{thick $\hcirc$-ideal}
$\catstuff{J}$ in a monoidal category is a $\hcirc$-ideal that is generated by
the identity morphisms on a set of objects. 
In general, not every $\hcirc$-ideal
is thick, so it is remarkable that these notations coincide for
$\tilt[{\kk,\qpar}]$, as we shall now see. Note that in part (d) we use again
that $\pqjw[v{-}1]$ are secretly defined over $\Qv$ to take the
$\ppar\lpar$-adic valuation.

In the following theorem, we will use $\{\placeholder\}^{\oplus}$ 
as a notation for the additive closure of a given set of objects, 
meaning the full subcategory additively (via taking direct sums) 
generated by the specified objects.

\begin{theorem}\label{theorem:cells} 
Let $\catstuff{J}_{v{-}1}$ be the thick
$\hcirc$-ideal in $\tilt[{\kk,\qpar}]$ that is monoidally generated by $\tmod(v-1)$,
{\ie} the thick ideal corresponding to set of objects containing the direct
summands of $\tmod(v-1)\hcirc\tmod(w-1)$ for any $w\in\N$.
\begin{enumerate}

\item For any $k\in\N[0]$, we have
$\catstuff{J}_{\ppar^{(k)}{-}1}=\{\tmod(v-1)\mid v\geq\ppar^{(k)}\}^{\oplus}$.

\item For any 
$\hcirc$-ideal $\catstuff{I}\neq 0$ 
(not necessarily thick), there exists $k\in\N[0]$ such that 
$\catstuff{I}=\catstuff{J}_{\ppar^{(k)}{-}1}$.

\item If $\ppar\neq 2$ and $\lpar\neq 2$, then
\begin{gather*}
\catstuff{J}_{\ppar^{(k)}{-}1}
\stackrel{\ppar{\neq}2,\lpar{\neq}2}{=}
\big\{\tmod(v-1)\mid
\ord\big(\dim_{\tilt[{\kkv,\vpar}]}\big(\overline{\tmod}(v-1)\big)\big)
\leq k\big\}^{\oplus},
\end{gather*}
where $\overline{\tmod}(v-1)$ is the image of $\pqjw[v{-}1]$ in $\tilt[{\kkv,\vpar}]$.

\end{enumerate}
\end{theorem}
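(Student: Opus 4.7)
The plan is to prove the three parts in order, reducing (b) and (c) to (a) as much as possible.

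For part (a), I would establish both inclusions using Weyl-factor analysis and fusion rules. The inclusion $\supseteq$ follows since $\tmod(\ppar^{(k)}-1)\cong\wmod(\ppar^{(k)}-1)$ is an eve (hence a Weyl module), so for $v\geq\ppar^{(k)}$ the tensor product $\tmod(\ppar^{(k)}-1)\hcirc\tmod(v-\ppar^{(k)})$ has $\wmod(v-1)$ as its unique top Weyl factor (by Clebsch--Gordan on Weyl factors), forcing $\tmod(v-1)$ to appear as an indecomposable summand. For $\subseteq$, since $\tmod(1)$ monoidally generates $\tilt[{\kk,\qpar}]$, it suffices to show $\{\tmod(v-1):v\geq\ppar^{(k)}\}^{\oplus}$ is closed under $\placeholder\hcirc\tmod(1)$. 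This is a short case analysis using \fullref{proposition:times-t1}: the candidate summands $\tmod(v)$ and $\tmod(v-2\ppar^{(i)})$ (for $0\leq i\leq\taill{v}$ with $a_{i}\neq 0$) all satisfy the bound when $v\geq\ppar^{(k)}$, using that $\taill{v}$ lies strictly below the leading digit position of $v$ together with a short inspection of the $\ppar\lpar$-adic expansion.

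For part (b), I would proceed in two steps. First, show that every nonzero $\hcirc$-ideal $\catstuff{I}$ contains $1_{\tmod(v-1)}$ for some $v$: starting from a nonzero morphism in $\catstuff{I}$, one uses the pivotal/self-dual structure to produce an endomorphism of some indecomposable $\tmod(v-1)$; since $\End_{\tilt}(\tmod(v-1))$ is local (with nilpotent radical spanned by loops $\loopdown{S}{v-1}$), if this endomorphism has nonzero image in the residue field $\kk$ it is invertible, so closing under composition in the ambient ring gives $1_{\tmod(v-1)}\in\catstuff{I}$. Ruling out the pathological case that all such endomorphisms are purely radical requires the cellular structure (\fullref{proposition:cellular}) and the explicit presentation of the zigzag algebra $\ealg$ (\fullref{theorem:main-tl-section}) together with the fusion rules, which allow one to extract nontrivial identity components after repeated tensoring. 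Second, take $v_{0}$ minimal with $1_{\tmod(v_{0}-1)}\in\catstuff{I}$ and argue, using the fusion rules (\fullref{proposition:times-t1} and \fullref{proposition:eve-times-eve}), that $v_{0}$ must be a prime eve: a non-prime-eve $v_{0}$ admits a proper ancestor $\mother[v_{0}]<v_{0}$, and iterated tensor decompositions produce a smaller indecomposable summand of $\tmod(v_{0}-1)\hcirc(\cdot)$ lying in $\catstuff{I}$, contradicting minimality. Hence $v_{0}=\ppar^{(k)}$, and part (a) yields $\catstuff{I}=\catstuff{J}_{\ppar^{(k)}-1}$.

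For part (c), the approach is a direct computation using the character formula in \fullref{proposition:qdim}, which gives $\dim_{\tilt[{\kkv,\vpar}]}(\overline{\tmod}(v-1))=\pm\qnum{\motherr{v}{\infty}}{\vpar}\prod_{a_{i}\neq 0,\,i<j}\qnum{2}{\vpar^{a_{i}\ppar^{(i)}}}$. Under the hypothesis $\ppar,\lpar\neq 2$, each quantum-two factor specializes to $\pm 2\neq 0$ at $\vpar=\qpar$ (since $\qpar^{\ppar^{(i)}}=\pm 1$ for $i\geq 1$, with $\lpar\neq 2$ handling the $i=0$ case), hence contributes zero to $\ord$. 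The leading factor $\qnum{a_{j}\ppar^{(j)}}{\vpar}$ has the same $\ord$ as $\qnum{\ppar^{(j)}}{\vpar}$ by the change-of-variables identity in Section 2 (since $a_{j}$ is coprime to $\plpar$ under the hypothesis). Finally, iterating the factorization $\qnum{\ppar^{(j)}}{\vpar}=\qnum{\lpar}{\vpar}\prod_{i=1}^{j-1}\qnum{\ppar}{\vpar^{\ppar^{i-1}\lpar}}$, each $\qnum{\ppar}{\cdot}$ factor evaluates to $\pm\ppar$ at roots of $\qnum{\lpar}{\vpar}$, thereby contributing one unit to $\ord$, so $\ord(\qnum{\ppar^{(j)}}{\vpar})=j$ equals the leading digit position. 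The stated characterization then follows by combining this with part (a). The main obstacle throughout is the identity-extraction step in (b), where converting an arbitrary nonzero morphism in an ideal into an identity on some indecomposable requires the specific cellular and fusion structure of $\SLtwo$-tilting modules — this thickness property is not automatic in general pivotal Krull--Schmidt categories.
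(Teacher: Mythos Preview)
Your arguments for parts (a) and (c) are essentially the same as the paper's and are fine.

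The real issue is part (b), where your approach diverges from the paper's and leaves a genuine gap. You try to produce an endomorphism of some indecomposable lying in $\catstuff{I}$ and then argue it is invertible; you correctly note that the hard case is when every such endomorphism lands in the radical, but you do not actually resolve this case---invoking ``the cellular structure together with fusion rules'' is not an argument, and it is not clear how repeated tensoring alone forces a nonzero identity component to appear. Your step 2 also has a terminological slip: ancestors $\mother[v_{0}]$ are only defined for non-eves, so the sentence ``a non-prime-eve $v_{0}$ admits a proper ancestor'' does not cover the case $v_{0}=a\ppar^{(k)}$ with $a>1$.

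The paper sidesteps both difficulties with a bending trick. Given a nonzero $g\in\catstuff{I}\cap\Hom\big(\tmod(w-1),\tmod(x-1)\big)$, tensor-hom adjunction (using self-duality) produces a nonzero $g_{b}\in\catstuff{I}\cap\Hom\big(\munit,\tmod(x-1)\hcirc\tmod(w-1)\big)$, and projecting to summands yields a nonzero $f\colon\munit\to\tmod(v-1)$ in $\catstuff{I}$ with $v$ minimal. The existence of such an $f$ forces $\wmod(0)$ to be a Weyl factor of $\tmod(v-1)$, which by \fullref{proposition:multiplicities} means $v=2\ppar^{(k)}-1$ for some $k$. Now compose with the inclusion $\tmod(2\ppar^{(k)}-2)\hookrightarrow\tmod(\ppar^{(k)}-1)\hcirc\tmod(\ppar^{(k)}-1)$ and bend back: this gives a nonzero endomorphism of $\tmod(\ppar^{(k)}-1)$ in $\catstuff{I}$. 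Since $\tmod(\ppar^{(k)}-1)$ is simple, its endomorphism ring is $\kk$, so the identity lies in $\catstuff{I}$---no radical-vs-unit analysis is needed. Minimality of $v$ then gives $\catstuff{I}=\catstuff{J}_{\ppar^{(k)}-1}$ directly, without any separate reduction to prime eves. The moral: working with morphisms out of $\munit$ rather than endomorphisms turns the Weyl-factor classification into the whole argument.
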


In particular, by \fullref{theorem:cells} 
the $\hcirc$-ideals are the strongly connected components 
of the fusion graph of $\tmod(1)$, {\cf} \fullref{figure:fusion-graph}.

\begin{proof}
For part (a), we will use the fact that $\tmod(1)$ monoidally generates $\tilt[{\kk,\qpar}]$, 
see {\eg} \fullref{proposition:eve-times-eve}.	
We argue inductively that the summands of $\tmod(\ppar^{(k)}-1)\hcirc\tmod(1)^{\hcirc d}$ 
belong to $\{\tmod(w-1)\mid w\geq\ppar^{(k)}\}^{\oplus}$. 
Indeed, for $d=0$ there is 
nothing to show, and the case with $d>0$ then follows inductively from \fullref{proposition:times-t1}. 
It is also clear from \fullref{proposition:times-t1} that 
every element of $\{\tmod(w-1)\mid w\geq v\}^{\oplus}$ will appear for some $d$ big enough.

Next we prove part (b).
Given some $\hcirc$-ideal $\catstuff{I}\neq 0$, take any non-zero
$\morstuff{g}\in \catstuff{I}\cap
\Hom_{\tilt[{\kk,\qpar}]}\big(\tmod(w-1),\tmod(x-1)\big)$, which exists for
suitable $w,x\in\N$. By tensor-hom adjunction, there is an associated ``bent''
morphism $\morstuff{g}_{b}\in\catstuff{I}\cap
\Hom_{\tilt[{\kk,\qpar}]}\big(\munit,\tmod(x-1)\hcirc\tmod(w-1)\big)$, which is
again non-zero. Considering the direct summands of $\tmod(w-1)\hcirc\tmod(x-1)$,
we deduce that there exists a non-zero $\morstuff{f}\in\catstuff{I}\cap
\Hom_{\tilt[{\kk,\qpar}]}\big(\munit,\tmod(v-1)\big)$ and we assume that
$v\in\N$ has been chosen to be minimal with this property. Thus, $\tmod(v-1)$
has $\munit=\wmod(0)$ as a Weyl factor, which implies $v=2\ppar^{(k)}-1$ for some
$k$ by \fullref{proposition:multiplicities}. 
Composing $f$ with the inclusion
$\tmod(2\ppar^{(k)}-2)\hookrightarrow\tmod(\ppar^{(k)}-1)\hcirc\tmod(\ppar^{(k)}-1)$,
we obtain a non-zero morphism in $\catstuff{I}\cap
\Hom_{\tilt[{\kk,\qpar}]}\big(\munit,\tmod(\ppar^{(k)}-1)\hcirc\tmod(\ppar^{(k)}-1)\big)$.
Reversing the bending trick produces a non-zero endomorphism of
$\tmod(\ppar^{(k)}-1)$ that is in $\catstuff{I}$. As $\tmod(\ppar^{(k)}-1)$ is
simple, we conclude $\idmor_{\tmod(\ppar^{(k)}-1)}\in\catstuff{I}$, so
$\catstuff{I}$ contains at least $\catstuff{J}_{\ppar^{(k)}{-}1}$. By minimality
of $k$, we get $\catstuff{I}=\catstuff{J}_{\ppar^{(k)}{-}1}$.

The final statement is a consequence of 
\fullref{proposition:qdim}, using \fullref{proposition:qlucas}.
\end{proof}

A \emph{cell} in $\tilt[{\kk,\qpar}]$ is defined as an equivalence class of
indecomposable objects with respect to the preorder
\begin{gather*}
\tmod(v-1)\leq_{J}\tmod(w-1)
:\Leftrightarrow
\exists x\in\N:\tmod(w-1)\text{ is a direct summand of }
\tmod(v-1)\hcirc\tmod(x-1).
\end{gather*}

\begin{corollary}
The cells of $\tilt[{\kk,\qpar}]$ are of the form 
\[
\catstuff{J}_{k}=\{\tmod(v-1)\mid\ppar^{(k)}
\leq v<\ppar^{(k+1)}\}\quad \text{for}\;k\in\N[0].
\]
Thus, $\tilt[{\kk,\qpar}]$ has infinitely many finite cells if and only if $\ppar<\infty$ and $\lpar<\infty$.
\end{corollary}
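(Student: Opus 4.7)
The plan is to read off the cells directly from the classification of $\hcirc$-ideals established in \fullref{theorem:cells}. By definition of the cell preorder, two indecomposable objects $\tmod(v-1)$ and $\tmod(w-1)$ lie in the same cell exactly when the $\hcirc$-ideals they generate coincide, so the task reduces to identifying which thick ideal $\catstuff{J}_{\ppar^{(k)}{-}1}$ is the smallest one containing a given $\tmod(v-1)$.

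First, I would recall from \fullref{theorem:cells}.(a) that $\catstuff{J}_{\ppar^{(k)}{-}1}=\{\tmod(w-1)\mid w\geq\ppar^{(k)}\}^{\oplus}$, so $\tmod(v-1)\in\catstuff{J}_{\ppar^{(k)}{-}1}$ if and only if $v\geq\ppar^{(k)}$. Combined with \fullref{theorem:cells}.(b), which says that every non-zero $\hcirc$-ideal is of the form $\catstuff{J}_{\ppar^{(k)}{-}1}$, this means that the $\hcirc$-ideal generated by $\tmod(v-1)$ equals $\catstuff{J}_{\ppar^{(k)}{-}1}$ for the unique $k\in\N[0]$ satisfying $\ppar^{(k)}\leq v<\ppar^{(k+1)}$ (where if $\ppar^{(k+1)}=\infty$ we interpret the inequality as $v\geq\ppar^{(k)}$). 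Consequently, two indecomposables $\tmod(v-1)$ and $\tmod(w-1)$ are equivalent under $\leq_J$ if and only if they lie in the same half-open interval $[\ppar^{(k)},\ppar^{(k+1)})$, yielding $\catstuff{J}_{k}=\{\tmod(v-1)\mid\ppar^{(k)}\leq v<\ppar^{(k+1)}\}$.

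For the second assertion, I would analyze the sequence $\ppar^{(k)}=\ppar^{k-1}\lpar$ (with $\ppar^{(0)}=1$) under the cases of \fullref{example:pl}. If both $\ppar<\infty$ and $\lpar<\infty$, then $\ppar^{(k)}<\infty$ for all $k$, so each $\catstuff{J}_k$ is a finite set and there are infinitely many such cells. Conversely, if $\lpar=\infty$ then $\ppar^{(k)}=\infty$ for all $k\geq 1$, leaving the single (infinite) cell $\catstuff{J}_0$; if $\ppar=\infty$ but $\lpar<\infty$, then $\ppar^{(1)}=\lpar<\infty$ while $\ppar^{(k)}=\infty$ for $k\geq 2$, so there is exactly one finite cell $\catstuff{J}_0$ together with one infinite cell $\catstuff{J}_1$.

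There is no real obstacle here beyond bookkeeping; the work has been done in \fullref{theorem:cells}, and the only subtlety is handling the $\infty$ cases of $\ppar^{(k)}$ correctly, which is why I would spell out the three cases above explicitly rather than attempting a uniform statement.
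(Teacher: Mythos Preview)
Your proposal is correct and follows exactly the route the paper intends: the corollary is stated without proof (it carries only a $\square$), as it is meant to be an immediate consequence of \fullref{theorem:cells}. Your argument spells out precisely the two observations needed---that the thick $\hcirc$-ideal generated by $\tmod(v-1)$ is the smallest $\catstuff{J}_{\ppar^{(k)}{-}1}$ containing it, and the case analysis on $\ppar,\lpar$---so there is nothing to add.
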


The quotients
$\tilt[{\kk,\qpar}]_{\ppar^{(k)}}=\tilt[{\kk,\qpar}]/\catstuff{J}_{\ppar^{(k+1)}{-}1}$
are $\kk$-linear, additive, idempotent closed, Krull--Schmidt, pivotal
categories with finitely many indecomposable objects, namely $\tmod(v-1)$ for
$v\in\{1,\dots,\ppar^{(k+1)}-1\}$, and finite-dimensional hom-spaces (these
properties are sometimes called \emph{fiat}). 

To state the
following lemma we say that an object $\obstuff{X}\in\catstuff{C}$ in a category
$\catstuff{C}$ with the properties listed above is called \emph{split} if, for
any morphism $\morstuff{f}$, the morphisms
$\idmor_{\obstuff{X}}\hcirc\morstuff{f}$ and
$\morstuff{f}\hcirc\idmor_{\obstuff{X}}$ are \emph{halves-of-idempotents},
{\ie} that there exists $\morstuff{g}$ such that
$\morstuff{g}(\idmor_{\obstuff{X}}\hcirc\morstuff{f})$ and
$(\idmor_{\obstuff{X}}\hcirc\morstuff{f})\morstuff{g}$
are idempotents, and similarly 
for $\morstuff{f}\hcirc\idmor_{\obstuff{X}}$ 
(with a potentially different $\morstuff{g}$). We say
$\catstuff{C}$ if \emph{separated}, if it has enough splitting objects. (The
precise definitions of splitting and separated can be found in \cite[Section
2]{BeEtOs-pverlinde}.)

In the following, we work over $\K$ and use the existence of the category
$\allmod$ and the simple modules $\lmod(v-1)$, for which we do not have a
diagrammatic interpretation.

\begin{lemma}
$\tilt[{\K,\qpar}]_{\ppar^{(k)}}$ is separated and 
the cell $\catstuff{J}_{k}$ coincides with its ideal of splitting objects.
\end{lemma}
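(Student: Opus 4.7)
The plan is to identify the thick $\hcirc$-ideal $\catstuff{I}_{\mathrm{spl}}$ of splitting objects in $\tilt[{\K,\qpar}]_{\ppar^{(k)}}$ with the top cell $\catstuff{J}_{k}$; separation then follows automatically since $\catstuff{J}_{k}$ is a non-trivial thick ideal generated by the simple eves $\tmod(a\ppar^{(k)}{-}1)$ with $1\leq a<\plpar$. Splitting objects form a thick $\hcirc$-ideal by the general framework of \cite{BeEtOs-pverlinde}, and by \fullref{theorem:cells} every nonzero $\hcirc$-ideal of $\tilt[{\K,\qpar}]_{\ppar^{(k)}}$ is of the form $\catstuff{J}_{m}$ for some $m\leq k$, so it suffices to show (i) the eve $\tmod(\ppar^{(k)}{-}1)$ is splitting, and (ii) for every $v<\ppar^{(k)}$, the tilting $\tmod(v-1)$ is not.

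For (i), I would use the fusion rule \fullref{proposition:eve-times-eve} to decompose $\tmod(\ppar^{(k)}{-}1)\hcirc\tmod(w-1)$ for $w<\ppar^{(k+1)}$: each summand $\tmod(u-1)$ has leading digit $1$ at position $k$, so $u<\ppar^{(k+1)}$ and the summand survives in the quotient. The remaining step is to show that $\End\big(\tmod(\ppar^{(k)}{-}1)\hcirc\tmod(w-1)\big)$ becomes semisimple modulo $\catstuff{J}_{\ppar^{(k+1)}{-}1}$. Using the zigzag relations of \fullref{theorem:main-tl-section}, this reduces to showing that every loop $\loopdown{S}{u-1}$ on a summand, once conjugated by the inclusion/projection of $\tmod(u-1)$ into the tensor product, factors through the higher-cell ideal. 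This follows from the partial trace formula \fullref{proposition:jw-properties}(c), which absorbs such loops into morphisms targeting tiltings of weight $\geq\ppar^{(k+1)}$. Semisimplicity then forces $\idmor_{\tmod(\ppar^{(k)}{-}1)}\hcirc f$ to be a half of an idempotent for every morphism $f$.

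For (ii), pick $w$ with $\ppar^{(k)}\leq w<\ppar^{(k+1)}$ and $\generation[w]\geq 1$, and let $S$ be a minimal down-admissible stretch for $w$. Then $f=\loopdown{S}{w-1}$ is a nonzero square-zero endomorphism by \eqref{eq:dud}, and $\idmor_{\tmod(v-1)}\hcirc f$ is nonzero and nilpotent in $\End\big(\tmod(v-1)\hcirc\tmod(w-1)\big)$ in the quotient, by faithfulness of tensoring with $\tmod(v-1)$ on morphisms not already killed in the quotient (which is controlled by inspecting which summands of $\tmod(v-1)\hcirc\tmod(w-1)$ survive, using \fullref{proposition:times-t1} and its iterations). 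Since any half of an idempotent restricts to an isomorphism on the summand cut out by its associated image-projector, a nonzero nilpotent cannot be a half of an idempotent, so $\tmod(v-1)$ is not splitting.

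The main obstacle lies in step (i): while the fusion rule controls the object-level decomposition, establishing semisimplicity of the endomorphism algebra of $\tmod(\ppar^{(k)}{-}1)\hcirc\tmod(w-1)$ in the quotient requires a careful bookkeeping of how loops attached to each summand interact with the trace-absorption identities across both tensor factors. I expect this is handled most transparently by translating the analysis to the Ringel dual algebra of \fullref{theorem:main-tl-section} truncated to cells $\leq k$, where the vanishing of loops becomes visible from the combinatorics of down-admissible stretches, and where the top-cell simples are precisely the objects whose endomorphism algebras collapse to $\K$ after truncation.
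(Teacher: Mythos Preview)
Your step (i) contains a genuine gap: the endomorphism algebra of $\tmod(\ppar^{(k)}{-}1)\hcirc\tmod(w-1)$ is \emph{not} semisimple in the quotient $\tilt[{\K,\qpar}]_{\ppar^{(k)}}$ in general. For a concrete counterexample take $\ppar=\lpar=3$ and $k=1$: the fusion rule gives $\tmod(2)\hcirc\tmod(2)\cong\tmod(4)\oplus\tmod(2)$, and the summand $\tmod(4)$ (with $5=\pbase{1,2}{3,3}$ of generation one) carries a nonzero nilpotent loop $\loopdown{0}{4}$, which factors through $\tmod(0)$ and hence survives in the quotient by $\catstuff{J}_{8}$. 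Your claimed reduction via \fullref{proposition:jw-properties}(c) does not apply: that formula concerns tracing strands off a projector, not absorbing loops on summands of a tensor product, and the loop here does not factor through any object of the higher cell. What the lemma actually requires is weaker than semisimplicity---only that morphisms \emph{of the specific form} $\idmor_{\tmod(\ppar^{(k)}{-}1)}\hcirc\morstuff{f}$ are split---and the loop above is not of this form (since $\End(\tmod(2))=\K$). So the counterexample does not obstruct the lemma itself, only your route to it.

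The paper takes a different route, working in the ambient abelian category $\allmod$ over $\K$ rather than inside the tilting category. Following \cite[Section 3.4]{BeEtOs-pverlinde} directly, the key input is the mixed analog of \cite[Lemma 3.3]{BeEtOs-pverlinde}: the tensor product $\tmod(\ppar^{(k)}{-}1)\hcirc\lmod(w-1)$ of the Steinberg-type eve with an arbitrary \emph{simple} module is again tilting, which is deduced from \fullref{proposition:donkin-etc}. From this, \cite[Proposition 3.2]{BeEtOs-pverlinde} yields that $\tmod(\ppar^{(k)}{-}1)$ is splitting, and the classification of $\hcirc$-ideals from \fullref{theorem:cells} then pins down the splitting ideal as exactly $\catstuff{J}_{k}$; your step (ii) is absorbed into this last invocation. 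The appeal to simple (generally non-tilting) modules is essential and has no direct counterpart in the Temperley--Lieb calculus---the paper explicitly flags this just before the lemma. A purely diagrammatic argument would instead require a direct verification that each $\idmor_{\tmod(\ppar^{(k)}{-}1)}\hcirc\morstuff{f}$ admits a generalized inverse, which is substantially more delicate than your sketch indicates.
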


\begin{proof}
Using \fullref{theorem:cells} as well 
as \fullref{proposition:donkin-etc}, the arguments 
are {\muta} as in \cite[Section 3.4]{BeEtOs-pverlinde}. 
In a bit more detail, using the same arguments as in \cite[Lemma 3.3]{BeEtOs-pverlinde} \fullref{proposition:donkin-etc} implies that 
$\tmod(\ppar^{(k-1)}-1)\hcirc\lmod(w-1)$ for $w<\ppar^{(k)}-1$.
This in turn implies, 
similarly as in \cite[Proposition 3.2]{BeEtOs-pverlinde}, that 
$\tmod(\ppar^{(k-1)}-1)$ is a splitting object in 
$\tilt[{\K,\qpar}]_{\ppar^{(k)}}$. Then the proof is completed following the classification 
of $\hcirc$-ideals in \fullref{theorem:cells}.
\end{proof}

We can thus define abelianizations of $\tilt[{\K,\qpar}]_{\ppar^{(k)}}$ 
in the sense of \cite[Section 2]{BeEtOs-pverlinde}, which we denote by
$\verlinde_{\ppar^{(k)}}$ and which
could be called \emph{mixed Verlinde categories}.

\begin{proposition}\label{proposition:cells}
Let $k\in\N[0]$.

\begin{enumerate}

\item The category $\tilt[{\kk,\qpar}]_{\ppar^{(k)}}$ has cells given by the
images of $\catstuff{J}_{j}$ for $j\in\{0,\dots,k\}$, with $\catstuff{J}_{k}$
being the cell of projective objects. Thus, $\tilt[{\kk,\qpar}]_{\ppar^{(k)}}$
has $\ppar^{(k)}-\ppar^{(k-1)}$ indecomposable projectives, namely the images of
$\tmod{v-1}$ for $\ppar^{(k)} \leq v<\ppar^{(k+1)}$.

\item The Cartan matrix of $\verlinde_{\ppar^{(k)}}$ is a
$\ppar^{(k)}-\ppar^{(k-1)}$-square matrix with entries given by the common Weyl
factors of $\tmod(v-1)$ and $\tmod(w-1)$ with $\ppar^{(k)} \leq
v,w<\ppar^{(k+1)}$ (which are in turn given by
\fullref{proposition:multiplicities}).

\end{enumerate}

\end{proposition}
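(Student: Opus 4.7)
Both parts are obtained by combining \fullref{theorem:cells} with the preceding lemma on splitting objects and with the standard tilting-theoretic description of hom-spaces. For part (a), I would first invoke \fullref{theorem:cells}(a) to identify $\catstuff{J}_{\ppar^{(k+1)}{-}1}$ with the additive closure of $\{\tmod(v-1)\mid v\geq\ppar^{(k+1)}\}$. The quotient $\tilt[{\kk,\qpar}]_{\ppar^{(k)}}$ therefore retains precisely the indecomposables $\tmod(v-1)$ with $v<\ppar^{(k+1)}$, and the preorder $\leq_{J}$ restricts cleanly to these objects (any summand produced by tensoring stays in $\{0,\dots,k\}$ or is killed), so the cells are exactly the images of $\catstuff{J}_{j}$ for $j\in\{0,\dots,k\}$. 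The top cell $\catstuff{J}_{k}$ was shown in the preceding lemma to coincide with the ideal of splitting objects in $\tilt[{\K,\qpar}]_{\ppar^{(k)}}$; under the abelianization prescribed by \cite{BeEtOs-pverlinde} these become the projective generators of $\verlinde_{\ppar^{(k)}}$, accounting for the stated indecomposable projectives.

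For part (b), the entry of the Cartan matrix at position $(v,w)$ is $\dim_{\kk}\Hom_{\verlinde_{\ppar^{(k)}}}(P_{v},P_{w})$, which by construction of the abelianization equals $\dim_{\kk}\Hom_{\tilt[{\kk,\qpar}]_{\ppar^{(k)}}}\big(\tmod(v-1),\tmod(w-1)\big)$. The plan is to compute this via the classical identity available for any two objects of $\tilt[{\kk,\qpar}]$ carrying Weyl and dual Weyl filtrations (this uses Ext-vanishing between $\wmod$ and $\dwmod$, valid integrally by the arguments cited after \fullref{lemma:no-delta-overlap}):
\begin{gather*}
\dim_{\kk}\Hom\big(\tmod(v-1),\tmod(w-1)\big)
={\textstyle\sum_{\lambda}}\,
\wmult{v}{\lambda}\wdmult{w}{\lambda}.
\end{gather*}
By \fullref{proposition:multiplicities} each factor is $0$ or $1$, with value $1$ exactly when $\lambda\in\supp[v]$, respectively $\lambda\in\supp[w]$, so the sum equals $|\supp[v]\cap\supp[w]|$, the number of common Weyl factors.

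The main obstacle is verifying that passing from $\tilt[{\kk,\qpar}]$ to the quotient $\tilt[{\kk,\qpar}]_{\ppar^{(k)}}$ does not alter the hom-spaces between top-cell objects, {\ie} that no non-zero morphism $\tmod(v-1)\to\tmod(w-1)$ with $\ppar^{(k)}\leq v,w<\ppar^{(k+1)}$ factors through an indecomposable of $\catstuff{J}_{\ppar^{(k+1)}-1}$. I would handle this using the cellular basis of \fullref{proposition:cellular}: any morphism $\tmod(v-1)\to\tmod(w-1)$ decomposes as a sum of terms factoring through $\pjw[\lambda-1]$ for $\lambda\in\supp[v]\cap\supp[w]$; since $\lambda\leq\min(v,w)<\ppar^{(k+1)}$ such intermediate indecomposables survive the quotient, so the hom-space is preserved and the Weyl-factor count in the previous paragraph really computes the Cartan entry in $\verlinde_{\ppar^{(k)}}$.
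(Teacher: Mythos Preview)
Part (a) is fine and matches the paper's terse justification. For (b) the paper simply defers to \cite[Section~4]{BeEtOs-pverlinde}, whereas you take an explicit route via the Weyl-factor count of hom-spaces in $\tilt$; this is a reasonable alternative, and your formula $\dim\Hom_{\tilt}\big(\tmod(v{-}1),\tmod(w{-}1)\big)=|\supp[v]\cap\supp[w]|$ is correct.

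The gap is in your resolution of the ``main obstacle''. You argue that since every cellular basis element factors through some $\tmod(\lambda{-}1)$ with $\lambda<\ppar^{(k+1)}$, the hom-space is preserved in the quotient. But this is a non sequitur: a morphism that factors through a surviving object can \emph{also} factor through a killed object $\tmod(u{-}1)$ with $u\geq\ppar^{(k+1)}$, and then it dies in the quotient regardless of the first factorisation. What you actually need is that every composite $\tmod(v{-}1)\to\tmod(u{-}1)\to\tmod(w{-}1)$ with $v,w<\ppar^{(k+1)}\leq u$ is already zero in $\tilt$. This is true but requires an argument beyond what you gave: one can extract it from the quiver relations of \fullref{theorem:main-tl-section} (any up-path from some $\mu<\ppar^{(k+1)}$ to $u\geq\ppar^{(k+1)}$ must at some step apply $\Up{S}$ with $S$ containing the leading-digit index of the intermediate object, and after composing with the down-path and simplifying one meets a zigzag $\Down{S}\Up{S}$ whose down-admissible hull does not exist, hence zero by relation~(6)); alternatively, one can bypass the comparison with $\tilt$ entirely and compute the Cartan matrix directly inside the abelianization via the simple composition factors, which is the route taken in \cite{BeEtOs-pverlinde}.
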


\begin{proof}
Part (a) is clear by the above as the projective objects 
always form the maximal cell, whilst part (b)
follows {\muta} as in \cite[Section 4]{BeEtOs-pverlinde}.
\end{proof}

The Cartan matrix of $\verlinde_{\ppar^{(k)}}$ thus has a fractal pattern, see \fullref{figure:cartan}.
Being careful with the distinction of $\ppar$ and $\lpar$ on the zeroth digit,
we leave it to the reader to generalize other results from \cite[Section 4]{BeEtOs-pverlinde} 
to the mixed Verlinde categories.

\begin{figure}[ht]
\includegraphics[width=0.49\textwidth]{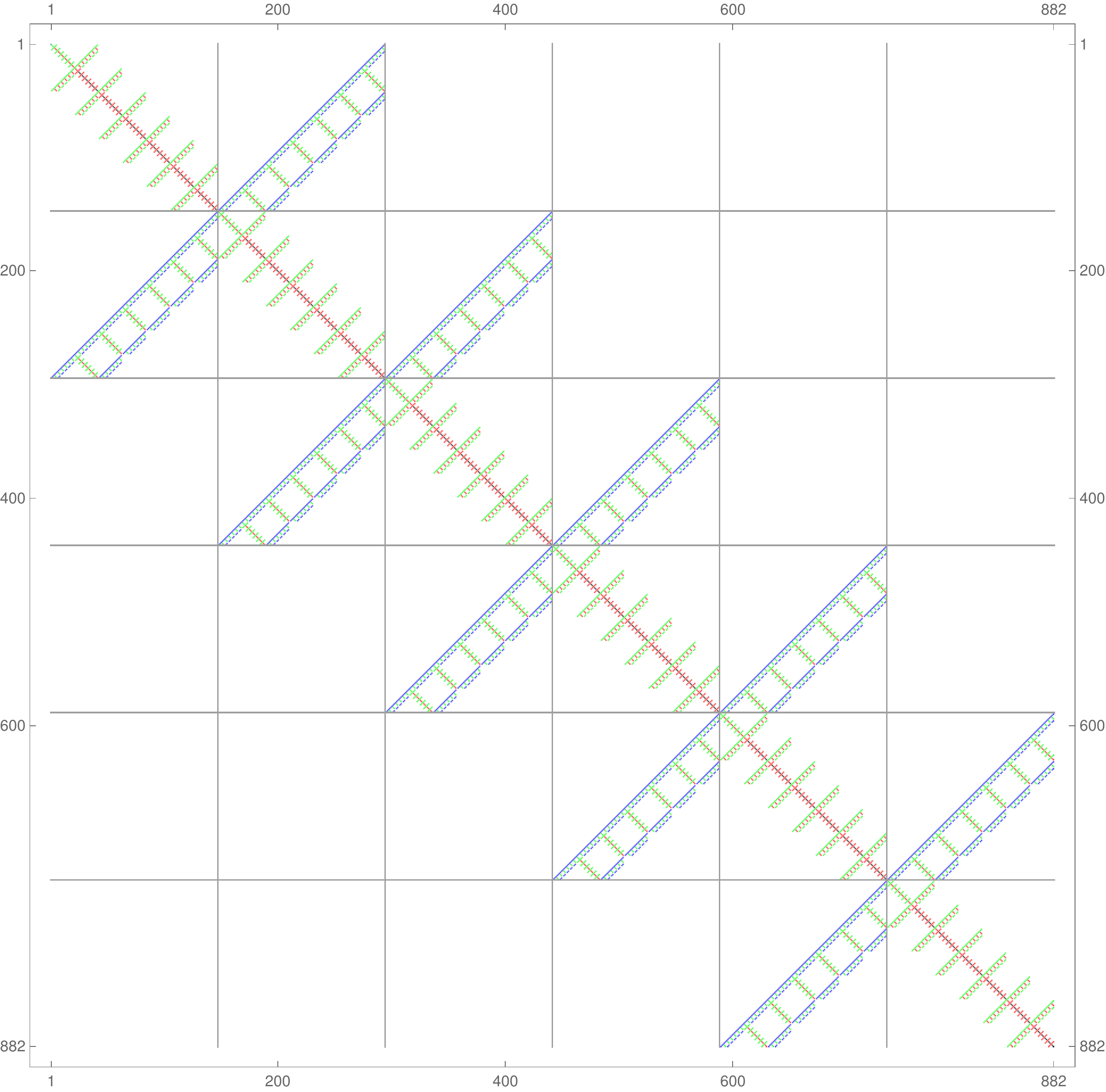}
\includegraphics[width=0.49\textwidth]{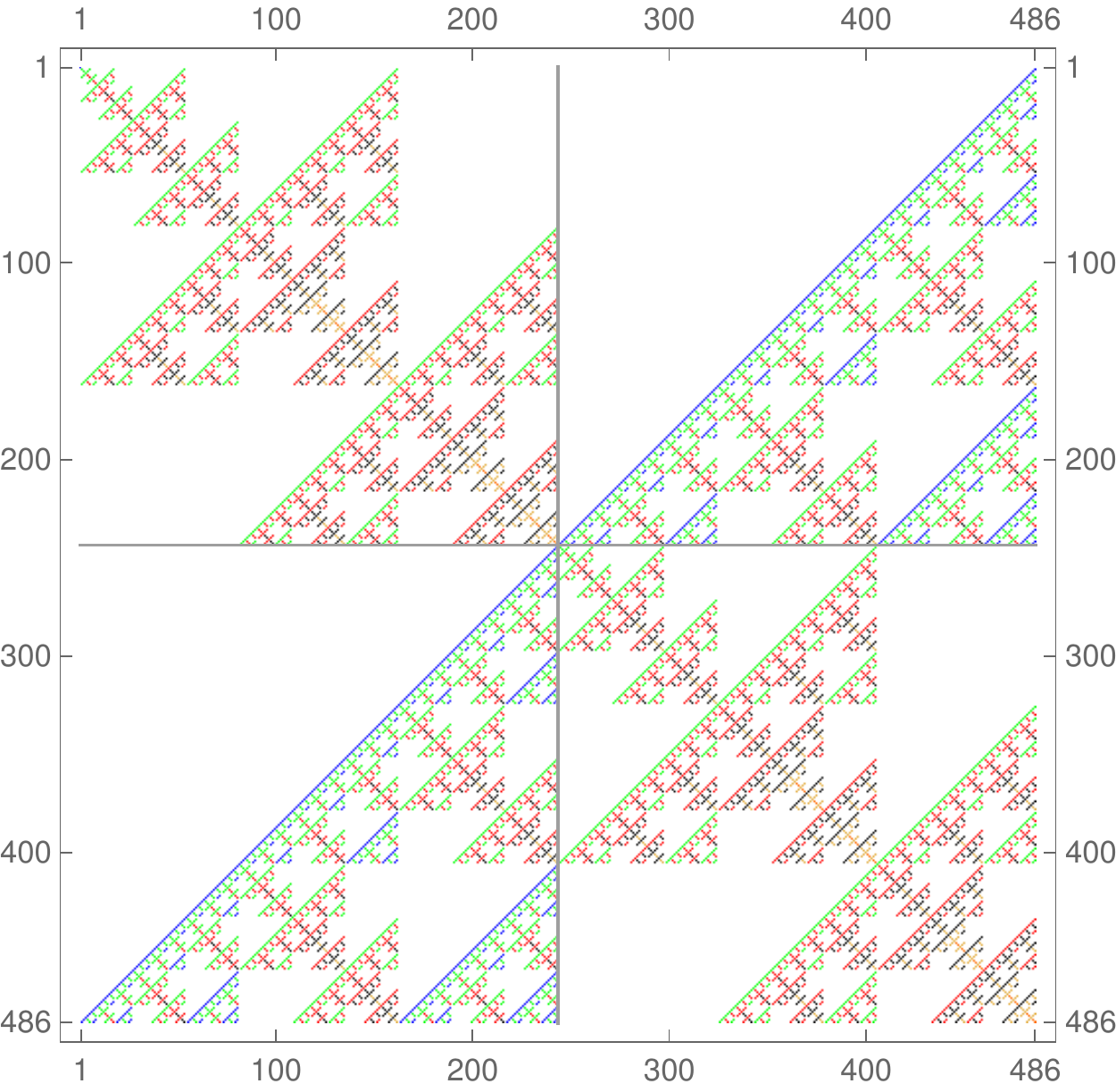}
\caption{\emph{From left to right:} The Cartan matrix of $\verlinde_{\ppar^{(4)}}$ in mixed characteristic 
$\mchar=(7,3)$; the Cartan matrix of $\verlinde_{\ppar^{(6)}}$ in characteristic $\ppar=3$. The colors indicate higher multiplicities.}
\label{figure:cartan}
\end{figure}

\begin{remark}
For the quantum group case $\mchar=(\infty,\lpar)$, Ostrik
\cite{Os-tensor-ideals-tilting} showed that the right cells in the affine Weyl
group are in bijection with cells in $\tilt[{\kk,\qpar}]$, which in turn are in
bijection with the thick $\hcirc$-ideals in $\tilt[{\kk,\qpar}]$. The
Riche--Williamson 
conjecture \cite{RiWi-tilting-p-canonical} implies the same in
characteristic $\ppar$ for right $\ppar$-cells. The above
discussion can be seen as a mixed characteristic version of these. We also like
to mention a statement analogous to \fullref{theorem:cells}.(c) proven in
\cite{Os-qdim-tiltings} for $\mchar=(\infty,\lpar)$.
\end{remark}

\subsection{Modified traces diagrammatically}\label{subscetion:mdim}

Recall from {\eg}
\cite{GeKuPaMi-gen-traces-modified-dimensions} 
and \cite{GeKuPaMi-ambidextrous-objects} that a (right) \emph{modified trace}
(short: \emph{mtrace}) $\mTr{\catstuff{I}}{\placeholder}$ on a $\hcirc$-ideal
$\catstuff{I}$ in a $\kk$-linear, pivotal category $\catstuff{C}$ is a
collection of $\kk$-linear functions $\{\mTr{\catstuff{I}}{\obstuff{T}}
\colon\End_{\catstuff{C}}(\obstuff{T})\to\kk\}$ satisfying \emph{cyclicity on
$\catstuff{I}$}, {\ie}
\begin{gather*}
\mTr{\catstuff{I}}{\obstuff{T}}(\morstuff{gf})=
\mTr{\catstuff{I}}{\obstuff{T}^{\prime}}(\morstuff{fg}) \quad \text{for } f\colon \obstuff{T}\to \obstuff{T}^{\prime}, g\colon \obstuff{T}^{\prime}\to \obstuff{T}
\end{gather*}
and the \emph{mtrace property on $\catstuff{I}$}. For
$\morstuff{f}\in\End_{\catstuff{C}}(\obstuff{T}\hcirc\obstuff{X})$, we let
$\MTr{r}_{\obstuff{X}}(\morstuff{f}) \in \End_{\catstuff{C}}(\obstuff{T})$
denote the partial right trace $\obstuff{X}$, determined by the pivotal
structure on $\catstuff{C}$. The mtrace property then requires that for every
object $\obstuff{T}\in\catstuff{I}$ and every object
$\obstuff{X}\in\catstuff{C}$ and any
$\morstuff{f}\in\End_{\catstuff{C}}(\obstuff{T}\hcirc\obstuff{X})$, we have
\begin{gather*}
\mTr{\catstuff{I}}{\obstuff{T}\hcirc\obstuff{X}}(\morstuff{f})
=
\mTr{\catstuff{I}}{\obstuff{T}}\big(\MTr{r}_{\obstuff{X}}(\morstuff{f})\big)
.
\end{gather*}
Below we will omit the subscripts if no confusion can arise.

Following ideas from {\eg} \cite{HeWe-mtrace}, we define a (right) mtrace
$\MTr{(k)}$ for the $\hcirc$-ideals $\catstuff{J}_{\ppar^{(k)}{-}1}$
from \fullref{theorem:cells} as follows.
We write a colored box for 
$\pjw[v{-}1]\morstuff{f}\pjw[v{-}1]$, which is a morphism 
in $\End_{\TL[{\kk,\qpar}]}(\obstuff{T}(v-1))$. 
On the indecomposable objects $\obstuff{T}(v-1)$ 
in $\catstuff{J}_{\ppar^{(k)}{-}1}$ 
and $\morstuff{f}\in\End_{\TL[{\kk,\qpar}]}
\big(\obstuff{T}(v-1)\big)$, we define
$\MTr{(k)}(\morstuff{f})$ using absorption, by
\begin{gather}\label{eq:mtrace-def}
\begin{tikzpicture}[anchorbase,scale=0.25,tinynodes]
\draw[JW] (0,-1) rectangle (5,1);
\node at (2.5,-0.2) {$\pjwm[\ppar^{(k)}{-}1]$};
\draw[pJW] (0,1) rectangle (7,3);
\node at (3.5,1.8) {$\morstuff{f}$};
\draw[JW] (0,3) rectangle (5,5);
\node at (2.5,3.8) {$\pjwm[\ppar^{(k)}{-}1]$};
\draw[usual] (6,1) to[out=270,in=180] (7,0)
to[out=0,in=270] (8,1) to[out=90,in=270] (8,3) to[out=90,in=0] (7,4)
to[out=180,in=90] (6,3);
\end{tikzpicture}
=
\mTr{(k)}{}(\morstuff{f})
\cdot
\begin{tikzpicture}[anchorbase,scale=0.25,tinynodes]
\draw[JW] (0.5,-1) rectangle (5.5,1);
\node at (3,-0.2) {$\pjwm[\ppar^{(k)}{-}1]$};
\end{tikzpicture}
.
\end{gather}
(Note that $\End_{\TL[{\kk,\qpar}]}
\big(\obstuff{T}(\ppar^{(k)}-1)\big)\cong\kk$.) 
We call this \emph{tracing down to the eve} and
extend this assignment additively to the whole of 
$\catstuff{J}_{\ppar^{(k)}{-}1}$.

\begin{lemma}\label{lemma:mtrace}
The family of $\kk$-linear functions $\MTr{(k)}$ is a 
non-trivial mtrace on $\catstuff{J}_{\ppar^{(k)}{-}1}$.
\end{lemma}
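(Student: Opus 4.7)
The plan is to establish, in turn, non-triviality, cyclicity, and the partial trace property of $\MTr{(k)}$.

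Non-triviality is immediate from the definition: specializing \eqref{eq:mtrace-def} to $v = \ppar^{(k)}$ with $\morstuff{f} = \pjw[\ppar^{(k)}-1]$ gives $\MTr{(k)}(\idmor_{\tmod(\ppar^{(k)}-1)}) = 1$. Since $\tmod(\ppar^{(k)}-1)$ is the monoidal generator of $\catstuff{J}_{\ppar^{(k)}-1}$ by \fullref{theorem:cells}(a), this suffices. Note that this is an instance where the ordinary categorical trace fails (one has $\dim_{\tilt[\kk,\qpar]}\bigl(\tmod(\ppar^{(k)}-1)\bigr) = 0$ by \fullref{proposition:qdim}, since $\qnum{\ppar^{(k)}}{\qpar} = 0$), justifying the need for a modified trace.

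For cyclicity, suppose $\morstuff{g} \colon \tmod(v-1) \to \tmod(w-1)$ and $\morstuff{h} \colon \tmod(w-1) \to \tmod(v-1)$ with $v, w \geq \ppar^{(k)}$. I would apply classical absorption \fullref{proposition:jw-properties}(a) to rewrite both $\MTr{(k)}(\morstuff{hg})$ and $\MTr{(k)}(\morstuff{gh})$ as the scalar obtained by closing off the rightward free strands of suitably sandwiched diagrams. After this rewriting, the two closed Temperley--Lieb diagrams are related by a planar isotopy, and hence represent the same categorical trace scalar in the pivotal category $\TL[\kk,\qpar]$.

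The partial trace property -- that $\MTr{(k)}_{\obstuff{T}\hcirc\obstuff{X}}(\morstuff{f}) = \MTr{(k)}_{\obstuff{T}}\bigl(\MTr{r}_{\obstuff{X}}(\morstuff{f})\bigr)$ for $\obstuff{T} = \tmod(v-1) \in \catstuff{J}_{\ppar^{(k)}-1}$, $\obstuff{X} \in \tilt[\kk,\qpar]$ and $\morstuff{f} \in \End_{\tilt[\kk,\qpar]}(\obstuff{T}\hcirc\obstuff{X})$ -- is the main content. Both sides are obtained by applying left absorption of $\pjw[\ppar^{(k)}-1]$ into $\pjw[v-1]$ so as to place $\morstuff{f}$ in a picture with $v - \ppar^{(k)}$ auxiliary rightward strands (from the $\pjw[v-1]$ side) together with the strands of $\obstuff{X}$, and then closing all of them off. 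The two sides correspond to the two orderings of this closure, and they coincide by the associativity and monoidality of right-partial trace in the pivotal category $\TL[\kk,\qpar]$.

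The main obstacle is the left-right asymmetry of the mixed JW projectors: the $\pjw[v-1]$ form a left-aligned family but not a right-aligned one (\fullref{example:left-right}), and the definition \eqref{eq:mtrace-def} relies crucially on \emph{left} absorption while the partial trace is taken on the \emph{right}. Consequently, one has to verify that the right-partial trace operation does not disturb this embedding. This commutation reduces to the local observation that the idempotent modifications on the left side of $\pjw[v-1]$ live in a region disjoint from the strands being traced off on the right, so the two operations are interchangeable; this is ultimately why the simple-looking formula \eqref{eq:mtrace-def} produces a well-defined mtrace despite the lack of right absorption.
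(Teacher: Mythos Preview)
Your arguments for non-triviality and the partial trace property are correct and match the paper's approach (the same absorption-and-closure picture). The gap is in cyclicity. You assert that the two diagrams for $\MTr{(k)}(\morstuff{hg})$ and $\MTr{(k)}(\morstuff{gh})$ are related by a planar isotopy, but this is not justified: $\morstuff{g}$ and $\morstuff{h}$ act on \emph{all} $v-1$ (resp.\ $w-1$) strands, including the $\ppar^{(k)}-1$ open ones on the left, so $\morstuff{g}$ cannot be slid around the closed right loop while keeping the open boundary fixed. The standard pivotal-category argument gives cyclicity of the \emph{full} categorical trace, not of this partial trace.

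The paper circumvents this by passing to $\kkv$. Closing the remaining $\ppar^{(k)}-1$ strands there yields $\MtTr{(k)}(\morstuff{f}) = (-1)^{\ppar^{(k)}-1}\tilde{\MTr{}}(\morstuff{f})/\qnum{\ppar^{(k)}}{\vpar}$, where $\tilde{\MTr{}}$ is the ordinary categorical trace. Since $\qnum{\ppar^{(k)}}{\vpar}\neq 0$ in $\kkv$ and $\tilde{\MTr{}}$ is cyclic, $\MtTr{(k)}$ is cyclic; specializing $\vpar\mapsto\qpar$ then gives cyclicity of $\MTr{(k)}$ over $\kk$. This detour is essential: attempting the same comparison directly over $\kk$ (close all strands and invoke cyclicity of the ordinary trace) fails because $\qnum{\ppar^{(k)}}{\qpar}=0$, so both sides collapse to $0=0$ and nothing is learned.
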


\begin{proof}
Comparing the equalities \eqref{eq:mtrace-def} and
\begin{gather*}
\MTr{(k)}_{\obstuff{T}\hcirc\obstuff{X}}(\morstuff{f})
=
\begin{tikzpicture}[anchorbase,scale=0.25,tinynodes]
\draw[pJW] (0,-1) rectangle (5,1);
\node at (2.5,-0.2) {$\obstuff{T}$};
\draw[pJW] (6,-1) rectangle (11,1);
\node at (8.5,-0.2) {$\obstuff{X}$};
\draw[pJW] (0,1) rectangle (11,3);
\node at (5.5,1.8) {$\morstuff{f}$};
\draw[pJW] (0,3) rectangle (5,5);
\node at (2.5,3.8) {$\obstuff{T}$};
\draw[pJW] (6,3) rectangle (11,5);
\node at (8.5,3.8) {$\obstuff{X}$};
\draw[usual] (10,-1) to[out=270,in=180] (11,-2)
to[out=0,in=270] (12,-1) to[out=90,in=270] (12,5) to[out=90,in=0] (11,6)
to[out=180,in=90] (10,5);
\draw[usual] (4,-1) to[out=270,in=180] (5.5,-2.5) to (11.5,-2.5)
to[out=0,in=270] (13,-1) to[out=90,in=270] (13,5) to[out=90,in=0] (11.5,6.5) to (5.5,6.5) to[out=180,in=90] (4,5);
\end{tikzpicture}
=
\MTr{(k)}_{\obstuff{T}}
\left(
\begin{tikzpicture}[anchorbase,scale=0.25,tinynodes]
\draw[pJW] (0,-1) rectangle (5,1);
\node at (2.5,-0.2) {$\obstuff{T}$};
\draw[pJW] (6,-1) rectangle (11,1);
\node at (8.5,-0.2) {$\obstuff{X}$};
\draw[pJW] (0,1) rectangle (11,3);
\node at (5.5,1.8) {$\morstuff{f}$};
\draw[pJW] (0,3) rectangle (5,5);
\node at (2.5,3.8) {$\obstuff{T}$};
\draw[pJW] (6,3) rectangle (11,5);
\node at (8.5,3.8) {$\obstuff{X}$};
\draw[usual] (10,-1) to[out=270,in=180] (11,-2)
to[out=0,in=270] (12,-1) to[out=90,in=270] (12,5) to[out=90,in=0] (11,6)
to[out=180,in=90] (10,5);
\end{tikzpicture}
\right)
=
\MTr{(k)}_{\obstuff{T}}(\MTr{r}_{\obstuff{X}}(\morstuff{f}))
,
\end{gather*}
we see that 
the mtrace property follows by construction (and absorption).
Note also that
we have $\MTr{(k)}(\pjw[\ppar^{(k)}-1])=1$, so
$\MTr{(k)}$ is non-trivial. Let $\tilde{\MTr{}}$ be the usual 
trace in $\tilt[{\kkv,\vpar}]$. Then, by absorption we have
\begin{gather}\label{eq:mtrace}
\MtTr{(k)}(\morstuff{f})
\cdot
\begin{tikzpicture}[anchorbase,scale=0.25,tinynodes]
\draw[JW] (0.5,-1) rectangle (5.5,1);
\node at (3,-0.2) {$\pjwm[\ppar^{(k)}{-}1]$};
\draw[usual] (4,-1) to[out=270,in=180] (5,-2)
to[out=0,in=270] (6,-1) to[out=90,in=270] (6,1) to[out=90,in=0] (5,2)
to[out=180,in=90] (4,1);
\end{tikzpicture}
=
\begin{tikzpicture}[anchorbase,scale=0.25,tinynodes]
\draw[JW] (0,-1) rectangle (5,1);
\node at (2.5,-0.2) {$\pjwm[\ppar^{(k)}{-}1]$};
\draw[pQJW] (0,1) rectangle (7,3);
\node at (3.5,1.8) {$\morstuff{f}$};
\draw[JW] (0,3) rectangle (5,5);
\node at (2.5,3.8) {$\pjwm[\ppar^{(k)}{-}1]$};
\draw[usual] (6,1) to[out=270,in=180] (7,0)
to[out=0,in=270] (8,1) to[out=90,in=270] (8,3) to[out=90,in=0] (7,4)
to[out=180,in=90] (6,3);
\draw[usual] (4,-1) to[out=270,in=180] (7,-2.5)
to[out=0,in=270] (10,-1) to[out=90,in=270] (10,5) to[out=90,in=0] (7,6.5)
to[out=180,in=90] (4,5);
\end{tikzpicture}
=
\begin{tikzpicture}[anchorbase,scale=0.25,tinynodes]
\draw[pQJW] (0.5,-1) rectangle (5.5,1);
\node at (3,-0.2) {$\morstuff{f}$};
\draw[usual] (4,-1) to[out=270,in=180] (5,-2)
to[out=0,in=270] (6,-1) to[out=90,in=270] (6,1) to[out=90,in=0] (5,2)
to[out=180,in=90] (4,1);
\end{tikzpicture}
=\tilde{\MTr{}}
(\morstuff{f}).
\end{gather}
Since
$\tilde{\MTr{}}(\pjw[\ppar^{(k)}-1])
=(-1)^{\ppar^{(k)}-1}\qnum{\ppar^{(k)}}{\vpar}$,  we have 
$\MtTr{(k)}(\morstuff{f})=(-1)^{\ppar^{(k)}-1}
\tilde{\MTr{}}(\morstuff{f})/\qnum{\ppar^{(k)}}{\vpar}$ in $\kkv$. 
This implies cyclicity since $\tilde{\MTr{}}$ is cyclic.
\end{proof} 

For $\obstuff{T}(v-1)\in\catstuff{J}_{\ppar^{(k)}{-}1}$ we call
$\kdim{k}{\tilt[{\kk,\qpar}]}\big(\tmod(v-1)\big)=
\MTr{(k)}(\pjw[v-1])$ its \emph{modified dimension}.

\begin{theorem}\label{theorem:mtrace}
Each $\hcirc$-ideal in $\TL[{\kk,\qpar}]$ admits a 
non-trivial mtrace $\MTr{(k)}$ which satisfies
\begin{gather*}
\kdim{k}{\tilt[{\kk,\qpar}]}\big(\tmod(v-1)\big)
=(-1)^{v-1}(-1)^{\ppar^{(k)}-1}
\tfrac{\qnum{\motherr{v}{\infty}}{\qpar}}{\qnum{\ppar^{(k)}}{\vpar}}
{%\textstyle
	\prod_{a_{i}\neq 0}}\,\qnum{2}{\qpar^{a_{i}\ppar^{(i)}}},
\end{gather*}
where $\obstuff{T}(v-1)\in\catstuff{J}_{\ppar^{(k)}{-}1}\setminus\catstuff{J}_{\ppar^{(k+1)}{-}1}$ and the product runs over all non-zero and non-leading digits of $v$. Every
such mtrace descends to the corresponding categories $\tilt[{\kk,\qpar}]_{\ppar^{(k)}}$ and
$\verlinde[{\kk,\qpar}]_{\ppar^{(k)}}$.
\end{theorem}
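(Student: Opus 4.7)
The existence and non-triviality of the mtraces $\MTr{(k)}$ are exactly the content of \fullref{lemma:mtrace}, so it remains to establish the closed form for the modified dimensions and the descent. My strategy is to deduce both from a single identity together with one character computation.

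Applying the identity \eqref{eq:mtrace} from the proof of \fullref{lemma:mtrace} to $\morstuff{f}=\pqjw[v-1]$ reduces the problem to computing the categorical dimension of $\overline{\tmod}(v-1)$ in the semisimple setting $\tilt[{\kkv,\vpar}]$. That dimension was already computed in \fullref{proposition:qdim} (whose proof specializes to $\tilt[{\kkv,\vpar}]$, where $\pjw[v-1]$ agrees with $\pqjw[v-1]$), giving $\tilde{\MTr{}}(\pqjw[v-1])=(-1)^{v-1}\qnum{\motherr{v}{\infty}}{\vpar}\prod_{a_{i}\neq 0}\qnum{2}{\vpar^{a_{i}\ppar^{(i)}}}$. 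Dividing by $(-1)^{\ppar^{(k)}-1}\qnum{\ppar^{(k)}}{\vpar}$ and specializing $\vpar\mapsto\qpar$ produces the stated formula. The apparent $0/0$ behavior at specialization is handled by the telescoping identity $\qnum{nm}{\vpar}=\qnum{n}{\vpar^{m}}\qnum{m}{\vpar}$: since $v\geq\ppar^{(k)}$ forces the leading eve $\motherr{v}{\infty}=a_{j}\ppar^{(j)}$ to satisfy $j\geq k$, the factor $\qnum{\ppar^{(k)}}{\vpar}$ divides $\qnum{a_{j}\ppar^{(j)}}{\vpar}$ cleanly and the ratio stays regular at $\qpar$.

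For the descent, I reduce to showing that $\kdim{k}{\tilt[{\kk,\qpar}]}\big(\tmod(v-1)\big)=0$ for every $v\geq\ppar^{(k+1)}$. Once this vanishing is established, $\MTr{(k)}$ vanishes on the sub-ideal $\catstuff{J}_{\ppar^{(k+1)}-1}$, because on an indecomposable $\tmod(v-1)$ every endomorphism is a scalar multiple of $\idmor_{\tmod(v-1)}$ plus a nilpotent, and cyclicity forces nilpotent endomorphisms to have zero mtrace. For $v\geq\ppar^{(k+1)}$ the leading digit sits at some $j\geq k+1$, and the telescoping identity produces an extra factor $\qnum{\ppar^{(j)}}{\vpar}/\qnum{\ppar^{(k)}}{\vpar}=\qnum{\ppar^{j-k}}{\vpar^{\ppar^{(k)}}}$, which at $\vpar=\qpar$ specializes to $\qnum{\ppar^{j-k}}{\pm 1}=\pm\ppar^{j-k}$, and hence to zero in $\kk$ (in the complex quantum group case the same mechanism works via the vanishing of $\qnum{\lpar}{\qpar}$). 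This yields descent to $\tilt[{\kk,\qpar}]_{\ppar^{(k)}}$ and, via the universal property of the abelian envelope discussed after \fullref{proposition:cells}, further to $\verlinde_{\ppar^{(k)}}$. The principal bookkeeping obstacle is precisely this specialization; once the telescoping identity is in hand the rest is mechanical.
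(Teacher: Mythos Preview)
Your derivation of the modified-dimension formula is correct and matches the paper's terse proof: both combine \eqref{eq:mtrace} with \fullref{proposition:qdim}, and your explicit treatment of the specialization via the telescoping identity $\qnum{nm}{\vpar}=\qnum{n}{\vpar^{m}}\qnum{m}{\vpar}$ is in fact more careful than the paper, which leaves this implicit.

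The descent argument has a genuine gap. The assertion that cyclicity forces nilpotent endomorphisms to have zero mtrace is false as a general principle: on the local $\kk$-algebra $\kk[x]/(x^{2})$ the linear functional $a+bx\mapsto b$ is cyclic yet sends the nilpotent $x$ to $1$. Categorical cyclicity across objects does not immediately rescue this either, since cycling a basis loop $\loopdown{S}{w{-}1}$ lands you in $\End\big(\tmod(w[S]-1)\big)$, and $w[S]$ can fall below $\ppar^{(k+1)}$ (take $w=\ppar^{(k+1)}+1$ and $S=\{0\}$), where you have not established any vanishing. The clean fix uses the \emph{mtrace property} rather than cyclicity alone. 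For $w\geq\ppar^{(k+1)}$ the object $\tmod(w-1)$ is a summand of $\tmod(\ppar^{(k+1)}-1)\hcirc\obstuff{X}$ for some $\obstuff{X}$ by \fullref{theorem:cells}; writing $\iota,\pi$ for the inclusion and projection, cyclicity and the mtrace property give
\[
\MTr{(k)}(\morstuff{f})=\MTr{(k)}(\iota\morstuff{f}\pi)
=\mTr{(k)}{\tmod(\ppar^{(k+1)}-1)}\big(\MTr{r}_{\obstuff{X}}(\iota\morstuff{f}\pi)\big),
\]
and since $\tmod(\ppar^{(k+1)}-1)$ is simple this is a scalar multiple of $\kdim{k}{\tilt[{\kk,\qpar}]}\big(\tmod(\ppar^{(k+1)}-1)\big)$, which vanishes by your telescoping computation (or directly by the eve partial-trace formula in \fullref{proposition:jw-properties}). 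Equivalently, one may insert $\pjw[\ppar^{(k+1)}{-}1]$ into the defining diagram \eqref{eq:mtrace-def} by left absorption and observe that the remaining partial trace factors through zero.
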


\begin{proof}
By \fullref{theorem:cells}, we know that all $\hcirc$-ideals 
are of the form $\catstuff{J}_{\ppar^{(k)}{-}1}$, while 
\fullref{lemma:mtrace} defines an mtrace on these. 
Moreover, \eqref{eq:mtrace} and \fullref{proposition:qdim} 
imply the claimed formula. The property that these descend to 
$\tilt[{\kk,\qpar}]_{\ppar^{(k)}}$ and $\verlinde[{\kk,\qpar}]_{\ppar^{(k)}}$ 
is clear.
\end{proof}

\subsection{The M{\"u}ger center is often trivial}

We assume in this section that our parameter $\qpar$ 
has a square root.
Recall that we have a braiding on $\TL[{\kk,\qpar}]$, 
using the Kauffman formula \eqref{eq:kauffman}.

Under these assumptions, recall that $\tilt[{\kk,\qpar}]$ is
additive, idempotent closed, Krull--Schmidt, $\K$-linear, 
braided and pivotal. 
Let $\catstuff{C}$ be a category having these 
properties, and let $\munit$ and $\beta$ 
denote the monoidal unit and the braiding of $\catstuff{C}$.

Following \cite{Mu-modular}, we define 
the \emph{M{\"u}ger center} 
$\catstuff{MZ}(\catstuff{C})$ of $\catstuff{C}$ to be 
the full subcategory 
of $\catstuff{C}$ consisting of all objects $\obstuff{X}$ such that
$\beta_{\obstuff{X},\obstuff{Y}}
\beta_{\obstuff{Y},\obstuff{X}}=\idmor_{\obstuff{X},\obstuff{Y}}$ for any $\obstuff{Y}\in\catstuff{C}$.

Clearly $\munit^{\oplus m}\in\catstuff{MZ}(\catstuff{C})$ 
for all $m\in\N$, and we call $\catstuff{MZ}(\catstuff{C})$ 
\emph{trivial} if 
$\munit^{\oplus m}$ are the only objects in 
$\catstuff{MZ}(\catstuff{C})$. In other words, 
$\catstuff{MZ}(\catstuff{C})\simeq\catstuff{Vect}_{\kk}$, the latter
being the category of finite-dimensional $\kk$-vector spaces.

\begin{theorem}\label{theorem:mueger}
The M{\"u}ger center $\catstuff{MZ}(\tilt[\kk,\qpar])$ is non-trivial 
if and only if $\qpar=\pm 1$.
\end{theorem}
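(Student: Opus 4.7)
\emph{Proof plan.} The plan is to treat the two implications separately, based on the explicit formula obtained by squaring the Kauffman relation \eqref{eq:kauffman}:
\[
\beta^2_{\tmod(1),\tmod(1)} = \qpar\cdot\idmor + (1-\qpar^{-2})\,U,
\]
where $U$ is the cup-cap composite on $\tmod(1)^{\otimes 2}$ satisfying $U^2 = -\qnum{2}{\qpar}\,U$.

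For the ``if'' direction, at $\qpar=1$ the formula yields $\beta^2_{\tmod(1),\tmod(1)}=\idmor$, so $\tmod(1)\in\catstuff{MZ}(\tilt[{\kk,\qpar}])$. Since $\tmod(1)$ monoidally generates $\tilt[{\kk,\qpar}]$ and the Müger center is closed under tensor products and direct summands (a standard consequence of the hexagon axioms), I conclude $\catstuff{MZ}=\tilt[{\kk,\qpar}]$, which is non-trivial. At $\qpar=-1$ the coefficient of $U$ vanishes and $\beta^2_{\tmod(1),\tmod(1)}=-\idmor$; I then invoke the braided-category identity
\[
\beta_{V\otimes V,W}\beta_{W,V\otimes V} = (\beta_{V,W}\otimes\idmor)(\idmor\otimes\beta^2_{V,W})(\beta_{W,V}\otimes\idmor),
\]
obtained from the two hexagons, with $V=W=\tmod(1)$, to deduce $\beta^2_{\tmod(1)^{\otimes 2},\tmod(1)}=(-1)^2\idmor=\idmor$. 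Since checking the double braiding against the monoidal generator $\tmod(1)$ is enough to certify membership in $\catstuff{MZ}$, this shows $\tmod(1)^{\otimes 2}\in\catstuff{MZ}$, and hence so is its summand $\tmod(2)$, giving a non-trivial object.

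For the ``only if'' direction, assume $\qpar\neq\pm 1$; by summand-closure it suffices to rule out $\tmod(v-1)\in\catstuff{MZ}$ for every $v\geq 2$. The ribbon compatibility $\theta_{V\otimes W}=\beta^2_{V,W}(\theta_V\otimes\theta_W)$, applied to $\tmod(1)^{\otimes 2}\cong\tmod(2)\oplus\munit$ with the two scalars of $\beta^2_{\tmod(1),\tmod(1)}$ on these summands, forces $\theta_{\tmod(1)}^2=\qpar^3$; an induction on $v$ via $\tmod(v-1)\otimes\tmod(1)$ then shows that the twist acts on every composition factor of $\tmod(n)$ by the scalar $\qpar^{n(n+2)/2}$. (Twists on non-simple indecomposables can carry a nilpotent part, but only the scalar part will enter the constraints below, so this issue is inert.) Under the assumption $\tmod(v-1)\in\catstuff{MZ}$, on every indecomposable summand $\tmod(u)$ of $\tmod(v-1)\otimes\tmod(1)$ the scalar part of $\beta^2|_{\tmod(u)}$ must equal $1$, which simplifies to $\qpar^{[u(u+2)-(v-1)(v+1)-3]/2}=1$.

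Finally, I combine this with the decomposition from \fullref{proposition:times-t1}. The summand $\tmod(v)$ is always present and delivers $\qpar^{v-1}=1$. If the zeroth digit $a_0$ of $v$ is positive, then $\tmod(v-2)$ also appears ($x_0\geq 1$) and yields $\qpar^{-(v+1)}=1$; together these force $\qpar^2=1$, contradicting $\qpar\neq\pm 1$. Otherwise $a_0=0$ (which requires $\lpar<\infty$, since at $\lpar=\infty$ the only digit is $a_0=v$); then $\taill{v}=0$ and $x_0=0$, so $\tmod(v-1)\otimes\tmod(1)\cong\tmod(v)$ is the only summand, and the lone constraint is $\qpar^{v-1}=1$. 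But $a_0=0$ means $\lpar\mid v$, hence $\qpar^v=(\qpar^{\lpar})^{v/\lpar}=(\pm 1)^{v/\lpar}=\pm 1$, and the constraint becomes $\pm\qpar^{-1}=1$, again forcing $\qpar=\pm 1$. The main technical point to handle carefully is the blockwise constancy of the twist scalar on composition factors of indecomposable tiltings, which is what lets the constraints from ribbon structure descend to genuine conditions on $\qpar$.
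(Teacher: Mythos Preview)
Your approach is correct and genuinely different from the paper's. For the ``only if'' direction, the paper computes the \emph{encircling} of $\pjw[v-1]$ by a single strand (an auxiliary lemma) and shows that this endomorphism differs from $-\qnum{2}{\qpar}\cdot\pjw[v-1]$ whenever $\qpar\neq\pm 1$: either the identity coefficient $-\qnum{2}{\qpar^v}$ fails to equal $-\qnum{2}{\qpar}$ (when $a_0=0$), or a nonzero loop term $\loopdown{0}{v-1}$ survives with coefficient $s_v(\qpar)\neq 0$ (when $a_0\neq 0$). Your argument instead extracts scalar constraints from the ribbon identity on summands of $\tmod(v-1)\hcirc\tmod(1)$ via \fullref{proposition:times-t1}, which is more conceptual but requires two ingredients you should make explicit. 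First, the ribbon structure on $\TL[\kk,\qpar^{1/2}]$ (standard, given by the curl, but not stated in the paper). Second, the justification that nilpotent parts of twists do not pollute the scalar constraint: if $N\in\End\big(\tmod(v-1)\big)$ is nilpotent then so is $N\hcirc\idmor_{\tmod(1)}$, hence it lies in the Jacobson radical of $\End\big(\tmod(v-1)\hcirc\tmod(1)\big)$; since $e\,\mathrm{rad}(R)\,e=\mathrm{rad}(eRe)$ for any idempotent $e$ in a finite-dimensional algebra $R$, its compression to any summand $\tmod(u)$ stays in the radical, so the scalar parts genuinely match. The paper's route buys a self-contained diagrammatic computation using the tools already developed; yours buys a proof that would transport to other ribbon settings.

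For the ``if'' direction at $\qpar=-1$, your argument is in fact more accurate than the paper's: the double braiding $\beta^2_{\tmod(1),\tmod(1)}=-\idmor$ is not the identity, so $\catstuff{MZ}(\tilt[\kk,\qpar])$ is not all of $\tilt[\kk,\qpar]$ as the paper asserts in passing---only the tensor-even part is central. Your exhibition of $\tmod(2)\subset\tmod(1)^{\hcirc 2}$ as a nontrivial central object via the hexagon is the correct fix, and since the theorem only claims non-triviality, this suffices.
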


\begin{remark}
In the case $\catstuff{C}$ is a ribbon, 
finite tensor category, having a trivial
M{\"u}ger center is equivalent to $\catstuff{C}$ 
being modular in the sense of
Kerler--Lyubashenko -- see \cite[Theorem 1.1]{Sh-non-degeneracy}. 
In particular,
\fullref{theorem:mueger} is a good indication that 
tilting modules in mixed
characteristic may give rise to non-trivial link 
and $3$-manifold invariants.
\end{remark}

In order to prove \fullref{theorem:mueger}, we need two auxiliary lemmas. To this
end, recall that the endomorphism spaces in braided tensor categories have
central elements given by ``encircled identity morphisms'', illustrated below. On simple
objects these act as scalars, but $\tmod(v-1)$ is usually not simple and so we obtain the following.

\begin{lemma}\label{lemma:circle}
For $v=\plbase{a_{j},\dots,a_{0}}$, we have 
in $\tilt[{\kkvs,\vpar}]$ that
\begin{gather}\label{eq:circle}
\begin{tikzpicture}[anchorbase,scale=0.25,tinynodes]
\draw[usual] (3,2.5) to[out=90,in=0] (2,3.25) to[out=180,in=90] (1,2.5);
\draw[usual,crossline] (2,1) to[out=90,in=270] (2,4);
\draw[usual,crossline] (1,2.5) to[out=270,in=180] (2,1.75) to[out=0,in=270] (3,2.5);
\draw[pQJW] (0,0) rectangle (4,1);
\node at (2,0.3) {$\qjwm[v{-}1]$};
\end{tikzpicture}
=
\begin{tikzpicture}[anchorbase,scale=0.25,tinynodes]
\draw[usual] (3,2.5) to[out=90,in=0] (2,3.25) to[out=180,in=90] (1,2.5);
\draw[usual,crossline] (2,1) to[out=90,in=270] (2,4);
\draw[usual,crossline] (1,2.5) to[out=270,in=180] (2,1.75) to[out=0,in=270] (3,2.5);
\draw[pQJW] (0,4) rectangle (4,5);
\node at (2,4.3) {$\qjwm[v{-}1]$};
\end{tikzpicture}
=
\begin{cases}
-\qnum{2}{\vpar^{v}}\cdot
\begin{tikzpicture}[anchorbase,scale=0.25,tinynodes]
\draw[pQJW] (0,-1) rectangle (4,1);
\node at (2,-0.2) {$\qjwm[v{-}1]$};
\end{tikzpicture}
+\text{Rest}
&\text{if }a_{0}=0,
\\
-\qnum{2}{\vpar^{v}}\cdot
\begin{tikzpicture}[anchorbase,scale=0.25,tinynodes]
\draw[pQJW] (0,-1) rectangle (4,1);
\node at (2,-0.2) {$\qjwm[v{-}1]$};
\end{tikzpicture}
+s_{v}(\vpar)
\cdot
\begin{tikzpicture}[anchorbase,scale=0.25,tinynodes]
\draw[pQJW] (0,0) rectangle (3,1);
\node at (1.56,0.3) {\scalebox{0.6}{$\qjwm[v{-}a_{0}{-}1]$}};
\draw[pQJW] (0,3) rectangle (3,4);
\node at (1.56,3.3) {\scalebox{0.6}{$\qjwm[v{-}a_{0}{-}1]$}};
\draw[usual] (1,1) to (1,3);
\draw[usual] (2.5,1) to[out=90,in=180] (3,1.5) to[out=0,in=90] (3.5,1) to (3.5,0)node[below]{$a_{0}$};
\draw[usual] (2.5,3) to[out=270,in=180] (3,2.5) to[out=0,in=270] (3.5,3) to (3.5,4)node[above,yshift=-2pt]{$a_{0}$};
\end{tikzpicture}
+\text{Rest}
&\text{if }a_{0}\neq 0,
\end{cases}
\end{gather}
where Rest are potentially non-zero lower order terms, {\ie} linear combinations
of loops $\loopdowngen{S}{v{-}1}$ for $S\neq\{0\}$. The scalar is
\begin{gather}\label{eq:circle-scalar}
s_{v}(\vpar)=
(-1)^{a_{0}}(\vpar^{v-2a_{0}}-\vpar^{-v+2a_{0}})
(\vpar^{a_{0}}-\vpar^{-a_{0}}).
\end{gather}
Moreover, if $v\in\eve$, then 
there are no lower order terms in \eqref{eq:circle}.
\end{lemma}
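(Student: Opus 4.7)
I would work in the semisimple setting $\tilt[{\kkvs,\vpar}]$, where $\pqjw[{v-1}]$ is the idempotent whose image is the multiplicity-free direct sum of simple tilting modules $\overline{\tmod}(v-1)\cong\bigoplus_{w\in\supp[v]}\tmod(w-1)$ recorded in \eqref{eq:tilting-semisimple}. The encircled identity is induced by the ribbon structure, so it lies in the centre of every endomorphism algebra; pivotality together with planar isotopy supplies the first equality in \eqref{eq:circle} (enc.\ id at the top equals enc.\ id at the bottom). What remains is the main formula.

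\paragraph{Step 1.} First I would verify that the encircled identity acts on a simple JW projector by the scalar
\[
(\text{enc.\ id})\cdot\qjw[{w-1}] = -\qnum{2}{\vpar^w}\cdot\qjw[{w-1}].
\]
This is the standard Hopf-link evaluation of $\tmod(1)$ against $\tmod(w-1)$; diagrammatically it follows by expanding the two crossings of the loop via \eqref{eq:kauffman}, annihilating the mixed terms using \eqref{eq:0kill}, and collecting the surviving contributions with the partial trace formula \eqref{eq:0trace}.

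\paragraph{Step 2.} By planar isotopy in the ribbon calculus, the single encircling loop can be slid downwards through the cups of $\upo{S}$ until it encircles only the middle bundle of $v[S]-1$ strands passing through $\qjw[{v[S]-1}]$. Combining with Step 1 then gives the eigenvalue identity
\[
(\text{enc.\ id})\cdot\loopdowngen{S}{v-1} = -\qnum{2}{\vpar^{v[S]}}\cdot\loopdowngen{S}{v-1}
\]
for every down-admissible $S$.

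\paragraph{Step 3.} Substituting into the defining expansion \eqref{eq:pljwdef}, $\pqjw[{v-1}]=\sum_S\lambda_{v,S}\loopdowngen{S}{v-1}$, and separating the $S=\emptyset$ term (with $\lambda_{v,\emptyset}=1$), I obtain
\[
(\text{enc.\ id})\cdot\pqjw[{v-1}] = -\qnum{2}{\vpar^v}\pqjw[{v-1}] + \sum_{\emptyset\neq S}\lambda_{v,S}\bigl(\qnum{2}{\vpar^v}-\qnum{2}{\vpar^{v[S]}}\bigr)\loopdowngen{S}{v-1}.
\]

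\paragraph{Step 4.} To identify the short-loop coefficient in the case $a_0\neq 0$, I extract the contribution from the minimal down-admissible $S=\{0\}$. Using $\lambda_{v,\{0\}}=(-1)^{a_0}\qnum{v-2a_0}{\vpar}/\qnum{v-a_0}{\vpar}$ and the factorization
\[
\qnum{2}{\vpar^v}-\qnum{2}{\vpar^{v-2a_0}} = (\vpar^{a_0}-\vpar^{-a_0})(\vpar^{v-a_0}-\vpar^{-v+a_0}),
\]
routine algebra shows that the resulting coefficient of $\loopdowngen{\{0\}}{v-1}$ equals exactly $s_v(\vpar)$ as in \eqref{eq:circle-scalar}. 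All contributions from $S\neq\emptyset,\{0\}$ make up "Rest". Finally, if $v\in\eve$ then $\generation[v]=0$ so $\supp[v]=\{v\}$ and the only down-admissible set is $\emptyset$; the sum over $S\neq\emptyset$ is empty and no lower-order terms appear. (In the boundary case $v=a_0\in\eve$ the set $\{0\}$ fails the second admissibility condition, so the short-loop diagram is itself vacuous and no $s_v$ contribution arises.)

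\paragraph{Main obstacle.} The most delicate step is Step 2 — rigorously justifying the sliding of the encircling loop through the cup/cap structure of the trapezes. This is best handled either by an isotopy argument in the ribbon calculus (the loop is topologically linked with the bundle but with no specific strand, and narrowing the bundle via cups does not obstruct free motion) or by appealing to naturality of the ribbon twist applied to $\upo{S}$ and $\downo{S}$ as morphisms in $\TL[{\kkvs,\vpar}]$. The combinatorial identification in Step 4 is routine but requires some care when the law of small primes forces the relevant minimal stretch to differ from the singleton $\{0\}$.
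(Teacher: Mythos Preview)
Your argument is correct and follows the same strategy as the paper: expand $\pqjw[v{-}1]$ via \eqref{eq:pljwdef}, slide the encircling loop onto each simple JW projector to pick up the eigenvalue $-\qnum{2}{\vpar^{v[S]}}$, and then extract the two leading coefficients. Your Step~3 add-and-subtract manipulation is a clean shortcut; the paper instead phrases this as a unitriangular change of basis from the $\loopdowngen{S}{v{-}1}$ (simple JW loops) to the $\loopdownqgen{S}{v{-}1}$ (semisimple $\ppar\lpar$JW loops), invoking the analog of \cite[Lemma~3.17]{TuWe-quiver}.

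One point you gloss over: the displayed diagram in \eqref{eq:circle} carries $\pqjw[v{-}a_{0}{-}1]$ boxes, not the simple $\qjw[v{-}a_{0}{-}1]$ boxes that appear in your $\loopdowngen{\{0\}}{v{-}1}$. The paper handles this by explicitly recording the expansions
\[
\qjw[v{-}1]=\pqjw[v{-}1]-\lambda_{v,\{0\}}\cdot(\text{pQJW diagram})+\text{Rest},
\qquad
\loopdowngen{\{0\}}{v{-}1}=(\text{pQJW diagram})+\text{Rest},
\]
and arguing via unitriangularity that only these two contribute to the top two coefficients. Your computation of $s_{v}(\vpar)$ as the coefficient of $\loopdowngen{\{0\}}{v{-}1}$ is correct, and since the difference between the tilde loop and the pQJW diagram is lower-order (hence absorbed into Rest), the conclusion stands---but you should say this. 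The paper's motivation for working with the pQJW diagram is that it, unlike $\loopdowngen{\{0\}}{v{-}1}$, survives specialisation to $(\kk,\qpar)$, which is what the subsequent \fullref{lemma:scalar} and \fullref{theorem:mueger} require.

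Your ``main obstacle'' (Step~2) is not really one: sliding the loop through the trapezes is immediate from naturality of the double braiding $\beta_{-,1}\beta_{1,-}$ applied to the morphisms $\upo{S}$ and $\downo{S}$, exactly as the paper's \eqref{eq:circle-action} records without further comment.
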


In contrast, the scalar resulting from an unlinked
circle is $-\qnum{2}{\vpar}$ times the projector.

\begin{proof}
For $v\in\eve$, in which case $\pqjw[v{-}1]=\qjw[v{-}1]$, it is well-known that
\eqref{eq:circle} holds without lower order terms, see {\eg} \cite[Section
9.8]{KaLi-TL-recoupling}. We will use this throughout the proof. For the
other cases, we calculate that
\begin{gather}\label{eq:circle-action}
\scalebox{0.9}{$\displaystyle
\begin{tikzpicture}[anchorbase,scale=0.25,tinynodes]
\draw[usual] (3,2.5) to[out=90,in=0] (2,3.25) to[out=180,in=90] (1,2.5);
\draw[usual,crossline] (2,1) to[out=90,in=270] (2,4);
\draw[usual,crossline] (1,2.5) to[out=270,in=180] (2,1.75) to[out=0,in=270] (3,2.5);
\draw[pQJW] (0,0) rectangle (4,1);
\node at (2,0.3) {$\qjwm[v{-}1]$};
\end{tikzpicture}
=
\sum_{S\in\supp[v]}
\lambda_{v,S}
\cdot
\begin{tikzpicture}[anchorbase,scale=0.25,tinynodes]
\draw[usual] (3,4.5) to[out=90,in=0] (2,5.25) to[out=180,in=90] (1,4.5);
\draw[usual,crossline] (2,3) to[out=90,in=270] (2,6);
\draw[usual,crossline] (1,4.5) to[out=270,in=180] (2,3.75) to[out=0,in=270] (3,4.5);
\draw[JW] (0,0) rectangle (4,1.5);
\node at (2,0.55) {$\qjwm[{v[S]{-}1}]$};
\trd{5}{1.5}{$\qjwm[S]$}{-5}{0}
\tru{5}{1.5}{$\qjwm[S]$}{-5}{1.5}
\end{tikzpicture}
=
\sum_{S\in\supp[v]}
\lambda_{v,S}
\cdot
\begin{tikzpicture}[anchorbase,scale=0.25,tinynodes]
\draw[usual] (3,3) to[out=90,in=0] (2,3.75) to[out=180,in=90] (1,3);
\draw[usual,crossline] (2,1.5) to[out=90,in=270] (2,4.5);
\draw[usual,crossline] (1,3) to[out=270,in=180] (2,2.25) to[out=0,in=270] (3,3);
\draw[JW] (0,0) rectangle (4,1.5);
\node at (2,0.55) {$\qjwm[{v[S]{-}1}]$};
\trd{5}{1.5}{$\qjwm[S]$}{-5}{0}
\tru{5}{1.5}{$\qjwm[S]$}{-5}{4.5}
\end{tikzpicture}
=
-\sum_{S\in\supp[v]}
\lambda_{v,S}\qnum{2}{\vpar^{v[S]}}
\cdot
\begin{tikzpicture}[anchorbase,scale=0.25,tinynodes]
\draw[JW] (0,0) rectangle (4,1.5);
\node at (2,0.55) {$\qjwm[{v[S]{-}1}]$};
\trd{5}{1.5}{$\qjwm[S]$}{-5}{0}
\tru{5}{1.5}{$\qjwm[S]$}{-5}{1.5}
\end{tikzpicture}$}
.
\end{gather}
The right-hand side in \eqref{eq:circle-action} is 
a linear combination of loops
$\loopdowngen{S}{v{-}1}$. However, these are not 
well-defined over $\kk$, so we
need to rewrite \eqref{eq:circle-action} in terms 
of $\loopdowngen{S}{v{-}1}$ (the
loops that specialize to $\loopdown{S}{v{-}1}$). 
We do not know the full
change-of-basis matrix (see \cite[Lemma 4.8]{TuWe-quiver} 
for generation $2$), 
but we know that this matrix is unitriangular 
by an analog of \cite[Lemma
3.17]{TuWe-quiver}, which is proven {\ver} 
as therein. 
In particular, the case $a_{0}=0$ 
in \eqref{eq:circle}
follows, and also the coefficient of $\pqjw[v{-}1]$ 
is as claimed. To prove the case $a_{0}\neq 0$ in \eqref{eq:circle}, 
we expand
\begin{gather*}
\begin{tikzpicture}[anchorbase,scale=0.25,tinynodes]
\draw[JW] (0,-1) rectangle (4,1);
\node at (2,-0.2) {$\qjwm[v{-}1]$};
\end{tikzpicture}
=
\begin{tikzpicture}[anchorbase,scale=0.25,tinynodes]
\draw[pQJW] (0,-1) rectangle (4,1);
\node at (2,-0.2) {$\qjwm[v{-}1]$};
\end{tikzpicture}
-(-1)^{a_{0}}\tfrac{\qnum{v-2a_{0}}{\vpar}}{\qnum{v-a_{0}}{\vpar}}
\cdot
\begin{tikzpicture}[anchorbase,scale=0.25,tinynodes]
\draw[pQJW] (0,0) rectangle (3,1);
\node at (1.56,0.3) {\scalebox{0.6}{$\qjwm[v{-}a_{0}{-}1]$}};
\draw[pQJW] (0,3) rectangle (3,4);
\node at (1.56,3.3) {\scalebox{0.6}{$\qjwm[v{-}a_{0}{-}1]$}};
\draw[usual] (1,1) to (1,3);
\draw[usual] (2.5,1) to[out=90,in=180] (3,1.5) to[out=0,in=90] (3.5,1) to (3.5,0)node[below]{$a_{0}$};
\draw[usual] (2.5,3) to[out=270,in=180] (3,2.5) to[out=0,in=270] (3.5,3) to (3.5,4)node[above,yshift=-2pt]{$a_{0}$};
\end{tikzpicture}
+\text{Rest}
,\quad
\begin{tikzpicture}[anchorbase,scale=0.25,tinynodes]
\draw[JW] (0,0) rectangle (3,1);
\node at (1.5,0.3) {\scalebox{0.6}{$\qjwm[v{-}a_{0}{-}1]$}};
\draw[JW] (0,3) rectangle (3,4);
\node at (1.5,3.3) {\scalebox{0.6}{$\qjwm[v{-}a_{0}{-}1]$}};
\draw[usual] (1,1) to (1,3);
\draw[usual] (2.5,1) to[out=90,in=180] (3,1.5) to[out=0,in=90] (3.5,1) to (3.5,0)node[below]{$a_{0}$};
\draw[usual] (2.5,3) to[out=270,in=180] (3,2.5) to[out=0,in=270] (3.5,3) to (3.5,4)node[above,yshift=-2pt]{$a_{0}$};
\end{tikzpicture}
=
\begin{tikzpicture}[anchorbase,scale=0.25,tinynodes]
\draw[pQJW] (0,0) rectangle (3,1);
\node at (1.56,0.3) {\scalebox{0.6}{$\qjwm[v{-}a_{0}{-}1]$}};
\draw[pQJW] (0,3) rectangle (3,4);
\node at (1.56,3.3) {\scalebox{0.6}{$\qjwm[v{-}a_{0}{-}1]$}};
\draw[usual] (1,1) to (1,3);
\draw[usual] (2.5,1) to[out=90,in=180] (3,1.5) to[out=0,in=90] (3.5,1) to (3.5,0)node[below]{$a_{0}$};
\draw[usual] (2.5,3) to[out=270,in=180] (3,2.5) to[out=0,in=270] (3.5,3) to (3.5,4)node[above,yshift=-2pt]{$a_{0}$};
\end{tikzpicture}
+\text{Rest}.
\end{gather*}
By unitriangularity of the basis change, these two terms are the only ones among
the $\loopdowngen{S}{v{-}1}$ that contribute to the coefficients of
$\pqjw[v{-}1]=\loopdowngen{\emptyset}{v{-}1}$ and $\loopdowngen{\{0\}}{v{-}1}$ in \eqref{eq:circle}.

Collecting terms, we compute that the coefficient of $\loopdowngen{\{0\}}{v{-}1}$ is the one displayed in
\eqref{eq:circle-scalar}:
\begin{gather*}
s_{v}(\vpar)=
-(-1)^{a_{0}}\tfrac{\qnum{v-2a_{0}}{\vpar}}{\qnum{v-a_{0}}{\vpar}}
\big(
-\qnum{2}{\vpar^{v}}+\qnum{2}{\vpar^{v-2a_{0}}}
\big)
=
(-1)^{a_{0}}(\vpar^{v-2a_{0}}-\vpar^{-v+2a_{0}})
(\vpar^{a_{0}}-\vpar^{-a_{0}})
.
\end{gather*}
This completes the proof.
\end{proof}

\begin{remark}
In the previous proof, one may be tempted to argue that
$\qnum{2}{\qpar^{v[S]}}=\qnum{2}{\qpar^{v}}$ holds in $\kk$, and so one should
be able to factor it out from the sum in \eqref{eq:circle-action}. This is not
allowed, since the individual summands are not well-defined in
$\TL[{\kk,\qpar}]$. Indeed, this argument would predict that no lower order
terms appear, not even $\loopdowngen{\{0\}}{v{-}1}$ in \eqref{eq:circle}, which is certainly wrong.
\end{remark}

\begin{lemma}\label{lemma:scalar}
For $a_{0}\neq 0$ and $\qpar\neq\pm 1$, the scalar 
$s_{v}(\vpar)$ from 
\eqref{eq:circle-scalar} specializes to a non-zero scalar $s_{v}(\qpar)\in\kk$.
\end{lemma}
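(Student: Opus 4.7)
The plan is to observe first that $s_v(\vpar)=(-1)^{a_0}(\vpar^{v-2a_0}-\vpar^{-v+2a_0})(\vpar^{a_0}-\vpar^{-a_0})$ is a Laurent polynomial in $\vpar$, so the specialization $\vpar\mapsto\qpar$ produces a well-defined scalar $s_v(\qpar)\in\kk$ with no denominator issues whatsoever. The only real content is then non-vanishing, and since $s_v(\qpar)$ factors as a product of two terms of the shape $\qpar^m-\qpar^{-m}=(\qpar-\qpar^{-1})\qnum{m}{\qpar}$, and the common factor $\qpar-\qpar^{-1}$ is nonzero by the hypothesis $\qpar\neq\pm 1$, I would reduce the lemma to showing that $\qnum{a_0}{\qpar}\neq 0$ and $\qnum{v-2a_0}{\qpar}\neq 0$.

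The first non-vanishing is immediate: by definition of the zeroth digit, $1\leq a_0\leq\lpar-1$, so by the minimality of $\lpar$ as the smallest positive integer with $\qnum{\lpar}{\qpar}=0$ we have $\qnum{a_0}{\qpar}\neq 0$. For the second I would first establish the general criterion that for $m\in\Z$ one has $\qnum{m}{\qpar}=0$ if and only if $\lpar\mid m$; this reduces to $\qpar^{2m}=1$, and $\qpar^{2\lpar}=1$ together with minimality forces the order of $\qpar^2$ in $\kk^*$ to be exactly $\lpar$ (the criterion is vacuous in the $\lpar=\infty$ case, since then $\qnum{m}{\qpar}\neq 0$ for all positive $m$).

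Then, using that $\ppar^{(i)}=\ppar^{i-1}\lpar$ is divisible by $\lpar$ for every $i\geq 1$, the $\ppar\lpar$-adic expansion of $v$ gives $v\equiv a_0\pmod{\lpar}$, and hence $v-2a_0\equiv -a_0\pmod{\lpar}$. Since $1\leq a_0\leq\lpar-1$, it follows that $\lpar\nmid(v-2a_0)$ and therefore $\qnum{v-2a_0}{\qpar}\neq 0$. No step looks like a serious obstacle; the only care needed is to handle the semisimple case $\lpar=\infty$ uniformly, where there are no higher digits and $v=a_0$ already, so that $v-2a_0=-a_0$ and the second non-vanishing reduces to the first.
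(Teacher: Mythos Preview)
Your proof is correct and follows essentially the same approach as the paper. Both arguments handle the first factor identically and then exploit $v\equiv a_{0}\pmod{\lpar}$ for the second; the paper is marginally more direct, observing that $\qpar^{\lpar}=\pm 1$ forces $(\qpar^{v-2a_{0}}-\qpar^{-v+2a_{0}})=\pm(\qpar^{a_{0}}-\qpar^{-a_{0}})$ outright, whereas you pass through the general criterion $\qnum{m}{\qpar}=0\Leftrightarrow\lpar\mid m$ to reach the same conclusion.
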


\begin{proof}
First, 
$\qpar^{a_{0}}-\qpar^{-a_{0}}=\qnum{a_{0}}{\qpar}(\qpar-\qpar^{-1})$ is non-zero since $a_{0}<\lpar$. Further note that 
$(\qpar^{v-2a_{0}}-\qpar^{-v+2a_{0}})=
\pm(\qpar^{a_{0}}-\qpar^{-a_{0}})$, so the second 
factor is non-zero by the same observation.
\end{proof}

\begin{proof}[Proof of \fullref{theorem:mueger}]
For $\qpar=\pm 1$, the Kauffman formula \eqref{eq:kauffman} shows that
$\catstuff{MZ}(\tilt[\kk,\qpar])=\tilt[\kk,\qpar]$, so let us focus on the case
where $\qpar\neq\pm 1$ and show that $\catstuff{MZ}(\tilt[\kk,\qpar])$ is
trivial.

To this end, it suffices to check that no indecomposable $\tmod(v-1)$ besides
$\tmod(0)=\munit$ is M{\"u}ger central. (Note that the braiding for direct sums
is defined componentwise, so objects of $\catstuff{MZ}(\tilt[\kk,\qpar])$ are
direct sums of indecomposable objects in $\catstuff{MZ}(\tilt[\kk,\qpar])$.)

Suppose, conversely, that for $v\in\N[>1]$ we have $\tmod(v-1)\in
\catstuff{MZ}(\tilt[\kk,\qpar])$. Then $\tmod(v-1)$ braids trivially with
$\tmod(1)$ and we get:
\begin{gather}\label{eq:mueger}
\begin{tikzpicture}[anchorbase,scale=0.25,tinynodes]
\draw[usual] (3,2.5) to[out=90,in=0] (2,3.25) to[out=180,in=90] (1,2.5);
\draw[usual,crossline] (2,1) to[out=90,in=270] (2,4);
\draw[usual,crossline] (1,2.5) to[out=270,in=180] (2,1.75) to[out=0,in=270] (3,2.5);
\draw[pJW] (0,0) rectangle (4,1);
\node at (2,0.3) {$\qjwm[v{-}1]$};
\end{tikzpicture}
=
\begin{tikzpicture}[anchorbase,scale=0.25,tinynodes]
\draw[usual] (5,2.5) to[out=90,in=0] (4,3.25) to[out=180,in=90] (3,2.5);
\draw[usual,crossline] (2,1) to[out=90,in=270] (2,4);
\draw[usual,crossline] (3,2.5) to[out=270,in=180] (4,1.75) to[out=0,in=270] (5,2.5);
\draw[pJW] (0,0) rectangle (4,1);
\node at (2,0.3) {$\qjwm[v{-}1]$};
\end{tikzpicture}
=
-\qnum{2}{\qpar}\cdot
\begin{tikzpicture}[anchorbase,scale=0.25,tinynodes]
\draw[usual,crossline] (2,1) to[out=90,in=270] (2,4);
\draw[pJW] (0,0) rectangle (4,1);
\node at (2,0.3) {$\qjwm[v{-}1]$};
\end{tikzpicture}.
\end{gather}
However, \eqref{eq:circle} contradicts \eqref{eq:mueger}: for $a_{0}\neq 0$ 
because the scalar $s_{v}(\qpar)$ is non-zero by \fullref{lemma:scalar}, and for $a_{0}=0$ because
$-\qnum{2}{\qpar^{v}}=\pm 2$ when $\qpar\neq\pm 1$. (To see the latter, note that
$x^{2}\mp 2x+1=0$ has only the solutions $x=\pm 1$.)
\end{proof}

\end{document}